\setlist[enumerate,1]{label=(\roman*)}
\newlist{admissible}{enumerate}{10}
\setlist[admissible]{leftmargin=1.4cm,label=(A\arabic*),start=0}
\crefname{admissiblei}{}{}
\Crefname{admissiblei}{}{}
\titleformat{\part}[block]{\Large\fillast}{\thepart.}{1em}{\scshape}[\vspace{0.25cm}]
\titleformat{\section}[block]{\large\fillast}{\thesection.}{1em}{\bfseries}[\vspace{1mm}]
\titleformat{\subsection}[block]{\normalsize}{\thesubsection}{0.5em}{\itshape}[]
\titlespacing{\subsection}{0mm}{1.5em}{0.5em}[0mm]
\newtheorem{theorem}{Theorem}[section]
\newtheorem{proposition}[theorem]{Proposition}
\newtheorem{lemma}[theorem]{Lemma}
\newtheorem{corollary}[theorem]{Corollary}
\newtheorem{mtheorem}{Theorem}
\theoremstyle{definition}
\newtheorem{definition}[theorem]{Definition}
\newtheorem{algorithm}[theorem]{Algorithm}
\theoremstyle{remark}
\newtheorem{rmk}[theorem]{Remark}
\newtheorem{example}[theorem]{Example}
\crefname{mtheorem}{Theorem}{Theorems}
\definecolor{asmpgray}{gray}{0.85}
\newsavebox{\asmpbox}
\newenvironment{assumption}{
\begin{lrbox}{\asmpbox}
\begin{varwidth}{0.75\textwidth}
\ignorespaces
}
{
\end{varwidth}
\end{lrbox}
\begin{center}
\setlength{\fboxsep}{5pt}
\fcolorbox{black}{asmpgray}{\usebox{\asmpbox}}
\end{center}
\ignorespacesafterend
}
\newenvironment{lem}{\begin{lemma}}{\end{lemma}}
\newenvironment{prop}{\begin{proposition}}{\end{proposition}}
\newenvironment{cor}{\begin{corollary}}{\end{corollary}}
\newenvironment{thm}{\begin{theorem}}{\end{theorem}}
\newenvironment{rem}{\begin{rmk}}{\end{rmk}}
\newenvironment{exmp}{\begin{example}}{\end{example}}
\newcommand\bfA{\mathbf{A}}
\newcommand\bfB{\mathbf{B}}
\newcommand\bfC{\mathbf{C}}
\newcommand\bfD{\mathbf{D}}
\newcommand\bfG{\mathbf{G}}
\newcommand\bfH{\mathbf{H}}
\newcommand\bfK{\mathbf{K}}
\newcommand\bfL{\mathbf{L}}
\newcommand\bfM{\mathbf{M}}
\newcommand\bfP{\mathbf{P}}
\newcommand\bfS{\mathbf{S}}
\newcommand\bfT{\mathbf{T}}
\newcommand\bfU{\mathbf{U}}
\newcommand\bfV{\mathbf{V}}
\newcommand\bfX{\mathbf{X}}
\newcommand\bfZ{\mathbf{Z}}
\newcommand{\Chevie}{{\sf CHEVIE}}
\newcommand{\simto}{\,\mathop{\rightarrow}\limits^\sim\,}
\newcommand{\reg}{\ensuremath{\mathrm{reg}}}
\newcommand{\uni}{\ensuremath{\mathrm{uni}}}
\newcommand{\spr}{\ensuremath{\mathrm{spr}}}
\newcommand{\simc}{\ensuremath{\mathrm{sc}}}
\newcommand{\der}{\ensuremath{\mathrm{der}}}
\newcommand{\sgn}{\ensuremath{\mathrm{sgn}}}
\newcommand{\Hei}{\ensuremath{\mathrm{H}}}
\newcommand{\SHei}{\ensuremath{\mathrm{SH}}}
\newcommand{\Ql}{\ensuremath{\overline{\mathbb{Q}}_{\ell}}}
\newcommand\bA\bfA
\newcommand\bB\bfB
\newcommand\bC\bfC
\newcommand\bD\bfD
\newcommand\bG\bfG
\newcommand\bH\bfH
\newcommand\bK\bfK
\newcommand\bL\bfL
\newcommand\bM\bfM
\newcommand\bP\bfP
\newcommand\bS\bfS
\newcommand\bT\bfT
\newcommand\bU\bfU
\newcommand\bV\bfV
\newcommand\bX\bfX
\newcommand\bZ\bfZ
\newcommand{\lie}[1]{\ensuremath{\mathfrak{#1}}}
\DeclareMathOperator{\Ad}{Ad}
\DeclareMathOperator{\Res}{Res}
\DeclareMathOperator{\Ind}{Ind}
\DeclareMathOperator{\Inf}{Inf}
\DeclareMathOperator{\Out}{Out}
\DeclareMathOperator{\ad}{ad}
\DeclareMathOperator{\Id}{Id}
\DeclareMathOperator{\Lie}{Lie}
\DeclareMathOperator{\Aut}{Aut}
\DeclareMathOperator{\Inn}{Inn}
\DeclareMathOperator{\GL}{GL}
\DeclareMathOperator{\SL}{SL}
\DeclareMathOperator{\Sp}{Sp}
\DeclareMathOperator{\SO}{SO}
\DeclareMathOperator{\Or}{O}
\DeclareMathOperator{\PGL}{PGL}
\DeclareMathOperator{\Cl}{Cl}
\DeclareMathOperator{\St}{St}
\DeclareMathOperator{\Ker}{Ker}
\DeclareMathOperator{\Hom}{Hom}
\DeclareMathOperator{\Tr}{Tr}
\DeclareMathOperator{\Irr}{Irr}
\DeclareMathOperator{\Uch}{UCh}
\DeclareMathOperator{\USh}{USh}
\DeclareMathOperator{\Fam}{Fam}
\DeclareMathOperator{\Class}{Class}
\DeclareMathOperator{\Span}{Span}
\DeclareMathOperator{\Char}{Char}
\crefname{equation}{}{}
\Crefname{equation}{}{}
\DeclareFontFamily{U}{mathx}{\hyphenchar\font45}
\DeclareFontShape{U}{mathx}{m}{n}{
      <5> <6> <7> <8> <9> <10>
      <10.95> <12> <14.4> <17.28> <20.74> <24.88>
      mathx10
      }{}
\DeclareSymbolFont{mathx}{U}{mathx}{m}{n}
\DeclareFontFamily{OT1}{cmbr}{\hyphenchar\font45}
\DeclareFontShape{OT1}{cmbr}{m}{n}{%
<-9>cmbr8%
<9-10>cmbr9%
<10-17>cmbr10%
<17->cmbr17%
}{}
\DeclareSymbolFont{cmbr}{OT1}{cmbr}{m}{n}
\DeclareMathAccent{\widecheck}{0}{mathx}{"71}
\DeclareMathSymbol{\A}{7}{cmbr}{"41}
\DeclareMathSymbol{\B}{7}{cmbr}{"42}
\DeclareMathSymbol{\C}{7}{cmbr}{"43}
\DeclareMathSymbol{\D}{7}{cmbr}{"44}
\DeclareMathSymbol{\E}{7}{cmbr}{"45}
\DeclareMathSymbol{\F}{7}{cmbr}{"46}
\DeclareMathSymbol{\G}{7}{cmbr}{"47}
\numberwithin{equation}{section}
\title{}
\author{Olivier Brunat, Olivier Dudas, Jay Taylor}
\date{\today}
\begin{document}
\fancyhf{}
\fancyhead[R]{\thepage}
\thispagestyle{empty}
\setstretch{1.2}

{\rule{0.85\linewidth}{0.2mm}\\[10pt]
\centering
\begin{minipage}[c]{0.75\textwidth}
\centering\Large {\bf Unitriangular Shape of Decomposition Matrices of Unipotent Blocks}\\[6pt]
{\it Olivier Brunat, Olivier Dudas, and Jay Taylor}\\
\end{minipage}\\[10pt]
\rule{0.5\linewidth}{0.2mm}\\}

\begin{abstract}
We show that the decomposition matrix of unipotent $\ell$-blocks of a finite reductive group $\bG(\mathbb{F}_q)$ has a unitriangular shape, assuming $q$ is a power of a good prime and $\ell$ is very good for $\bG$. This was conjectured by Geck in 1990 as part of his PhD thesis. We establish this result by constructing projective modules using a modification of generalised Gelfand--Graev characters introduced by Kawanaka. We prove that each such character has at most one unipotent constituent which occurs with multiplicity one. This establishes a 30 year old conjecture of Kawanaka.
\end{abstract}

\tableofcontents

\section{Introduction}\label{sec:introduction}

\subsection{Overview}
One of the core features of the representation theory of finite groups is the ability to take an ordinary representation, defined over a field of characteristic zero, and reduce it to obtain a modular representation, defined over a field of positive characteristic $\ell>0$. The $\ell$-decomposition matrix of a finite group is a matrix whose rows are indexed by ordinary irreducible representations and whose columns are indexed by modular irreducible representations. The entries of this matrix give the multiplicities of modular irreducibles in the reduction of ordinary irreducibles. 

Since ordinary representations are in general better understood than modular representations, it is crucial to have a good understanding of the decomposition matrix. The focus of this paper is to obtain new information on decomposition matrices when $G = \bG^F$ is a finite reductive group and $\ell \nmid q$ (the non-defining characteristic setting). By definition $G$ is the subgroup of $F$-fixed points of a connected reductive algebraic group $\bG$, defined over an algebraic closure $K = \overline{\mathbb{F}_q}$, where $F : \bG \to \bG$ is a Frobenius endomorphism endowing $\bG$ with an $\mathbb{F}_q$-structure.

Decomposition matrices can be computed blockwise. In addition, for finite reductive groups, it is conjectured that any $\ell$-block is Morita equivalent to a so-called unipotent $\ell$-block of a possibly disconnected group \cite{Brou}, which is a block containing ordinary unipotent characters, and that in this case the decomposition matrices coincide. Such a reduction, known as the Jordan decomposition of blocks, was proven to hold in many cases by work of Bonnaf\'e--Rouquier \cite{BoRo03} and more recently Bonnaf\'e--Dat--Rouquier \cite{BDR17}. Therefore from now on we shall focus only on the unipotent $\ell$-blocks of $G$.

Even for groups of small rank, determining the decomposition matrix of unipotent $\ell$-blocks is considered a very hard problem. For example, the $3\times3$-decomposition matrix for the 3-dimensional
unitary groups $\mathrm{SU}_3(q)$ when $\ell \mid q+1$ was determined only in 2002 by Okuyama--Waki \cite{OW02}. Since then, new decomposition matrices were obtained using the $\ell$-adic cohomology of Deligne--Lusztig varieties for larger unitary groups or other small rank groups. However, for this method to be successful, one needs prior knowledge on the general shape of the decomposition matrix. 

In 1990 Geck observed in his thesis \cite{GeThesis} two general properties of the decomposition matrices of unipotent blocks which he conjectured to hold for any finite reductive group, assuming that $\ell$ is not too small, namely:
 \begin{itemize}
   \item the $\ell$-reduction of a \emph{cuspidal} unipotent character remains irreducible,
   \item the rows and columns of the decomposition matrix can be arranged so that it has a lower unitriangular shape, with identity blocks on families.
 \end{itemize}
The first conjecture was recently solved by Malle and the second author \cite{DuMa18}. Our main result is a proof of the second conjecture.

\subsection{Unitriangular shape}
The Weyl group $W$ of $\bG$ is its combinatorial skeleton. In \cite[\S4]{Lu84} Lusztig has defined a partition $\Fam(W)$ of the irreducible characters of the Weyl group $W$ into families
\begin{equation*}
\Irr(W) = \bigsqcup_{\mathscr{F} \in \Fam(W)} \mathscr{F}.
\end{equation*}
We have a natural action of $F$ on $W$ which permutes the set $\Fam(W)$ and we denote by $\Fam(W)^F$ those families that are fixed by this action. For each $F$-stable family $\mathscr{F} \in \Fam(W)^F$ there is a corresponding family of ordinary unipotent characters of $G$, which we denote by $\Uch(\mathscr{F})$. This gives a partition
\begin{equation*}
\Uch(G) = \bigsqcup_{\mathscr{F} \in \Fam(W)^F} \Uch(\mathscr{F})
\end{equation*}
of the set of all ordinary unipotent characters of $G$.

To each family $\mathscr{F} \in \Fam(W)$ there is a corresponding unipotent conjugacy class $\mathcal{O}_{\mathscr{F}} \subseteq \bG$, related via the Springer correspondence. If $\mathscr{F}$ is $F$-stable then $\mathcal{O}_{\mathscr{F}}$ is characterised by the fact that $\Uch(\mathscr{F})$ is exactly the set of unipotent characters with \emph{unipotent support} $\mathcal{O}_{\mathscr{F}}$, see \cite[Prop.~4.2]{GeMa00}. If $\mathcal{O}_1,\mathcal{O}_2 \subseteq \bG$ are two unipotent classes then we set $\mathcal{O}_1 \preceq \mathcal{O}_2$ if $\mathcal{O}_1 \subseteq \overline{\mathcal{O}_2}$ (the Zariski closure). This defines a natural partial ordering of the classes. Roughly, the unipotent support of a character is then defined to be the largest $F$-stable unipotent conjugacy class, with respect to $\preceq$, on which the character takes a non-zero value, see \cite[Thm.~1.26]{Tay19}.

If $\mathscr{F} = \{1_W\}$ is the family containing the trivial character of $W$ then $\mathcal{O}_{\mathscr{F}}$ is the regular unipotent class, which is the unique maximal element with respect to $\preceq$. We have $\Uch(\mathscr{F}) = \{1_G\}$ contains only the trivial character whose $\ell$-reduction is irreducible. On the other hand, if $\mathscr{F} = \{\sgn_W\}$ is the family containing the sign character of $W$ then $\mathcal{O}_{\mathscr{F}}$ is the trivial class, which is the unique minimal element with respect to $\preceq$. We have $\Uch(\mathscr{F}) = \{\St_G\}$ contains only the Steinberg character whose $\ell$-reduction can potentially have many irreducible constituents.

Throughout we will work under some mild restrictions on the characteristic of the fields involved. For clarity we recall that a prime $p$ is said to be \emph{bad}  for a quasi-simple group $\bG$ if $p=2$ and $\bG$ is not of type $\A_n$, $p=3$ and $\bG$ is of exceptional type, or $p=5$ and $\bG$ is of type $\E_8$. For any arbitrary group $\bG$ we say $p$  is \emph{good} if it is not bad for all the quasi-simple components of $\bG$. Another restriction will come from the order of the finite group $Z(\bfG)_F$ which is the largest (finite) quotient of $Z(\bfG)$ on which $F$ acts trivially. This order involves only bad primes unless $\bG$ has quasi-simple components of type $\A_n$.

The following, being our main result, gives the general shape of the decomposition matrix of unipotent blocks with respect to the ordering on unipotent classes. Our result solves a conjecture of Geck \cite[Conj.~2.1]{Ge12}, which strengthens earlier conjectures stated in \cite{GeThesis,GeHi94}.

\begin{mtheorem}\label{thm:mainA}
Recall that $\bG$ is a connected reductive algebraic group defined over $K$ with Frobenius endomorphism $F : \bG \to \bG$. We make the following assumptions on $\ell$ and $p = \Char(K)$:
\begin{itemize}
\item $\ell \neq p$ (non-defining characteristic),
\item $\ell$ and $p$ are good for $\bfG$,
\item $\ell$ does not divide the order of $Z(\bfG)_F$, the largest quotient of $Z(\bfG)$ on which $F$ acts trivially.
\end{itemize}
Let $\mathscr{F}_1 \leqslant \cdots \leqslant \mathscr{F}_r$ be any total ordering of the $F$-stable families $\Fam(W)^F$ such that if $\mathcal{O}_{\mathscr{F}_i} \preceq \mathcal{O}_{\mathscr{F}_j}$ then $\mathscr{F}_i \geqslant \mathscr{F}_j$. Then the modular irreducible unipotent representations of $G = \bG^F$ can be ordered such that the decomposition matrix of the unipotent $\ell$-blocks of $G$ has the following shape
\begin{equation*}
\left(\begin{array}{cccc}
D_{\mathscr{F}_1} & 0 & \cdots & 0 \\ 
\star & D_{\mathscr{F}_2}   & \ddots & \vdots \\
\vdots & \ddots & \ddots & 0\\
 \star & \cdots & \star & D_{\mathscr{F}_r}  \\
 \star & \cdots & \cdots & \star\\
\end{array}\right),
\end{equation*}
where each diagonal block $D_{\mathscr{F}_i}$ is the identity matrix, whose rows are indexed by the elements of $\Uch(\mathscr{F}_i)$.
\end{mtheorem}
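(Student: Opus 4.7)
The plan is to reduce the theorem to producing, for each unipotent character $\rho \in \Uch(\mathscr{F})$, a virtual projective $\ell$-modular character $\Psi_\rho$ of $G$ such that $\langle \Psi_\rho, \rho \rangle_G = 1$ while $\langle \Psi_\rho, \chi \rangle_G = 0$ for every other $\chi \in \Uch(G)$. Under our hypotheses on $\ell$ and $p$, the unipotent characters form a basic set for the unipotent $\ell$-blocks by the Geck--Hiss theorem, so the unipotent submatrix $D_u$ of the decomposition matrix is a square, $\mathbb{Z}$-invertible integer matrix. Writing $\Psi_\rho = \sum_\phi a_{\rho,\phi} \Phi_\phi$ in the basis of indecomposable projective characters and applying Brauer reciprocity yields the matrix equation $A D_u^T = I$. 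Combining this with an additional support property --- namely, that the PIMs appearing in $\Psi_\rho$ correspond to unipotent classes $\preceq \mathcal{O}_\mathscr{F}$ --- a linear-algebra argument then produces a bijection between modular unipotent irreducibles and ordinary unipotent characters placing $D_u$ in the desired lower block-unitriangular form, with identity diagonal blocks indexed by families.

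\textbf{Construction via Kawanaka's modified GGGCs.} To realise the $\Psi_\rho$ I would follow Kawanaka, who attaches to each pair $(\mathcal{O}, \eta)$ consisting of an $F$-stable unipotent class $\mathcal{O}$ and an irreducible character $\eta$ of the $F$-fixed component group of the centraliser of some $u \in \mathcal{O}^F$ a virtual character $K_{\mathcal{O}, \eta}$, defined as an explicit integer combination of ordinary generalised Gelfand--Graev characters attached to elements of $\mathcal{O}^F$. Two points then need to be established. First, $K_{\mathcal{O}, \eta}$ is a virtual projective $\ell$-modular character: this follows from Geck's theorem on the projectivity of ordinary GGGCs in good characteristic, for which the hypotheses on $\ell$ (good for $\bG$, coprime to $|Z(\bG)_F|$) are precisely designed. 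Second, Kawanaka's conjecture asserts that $K_{\mathcal{O}, \eta}$ has at most one unipotent constituent, occurring with multiplicity one; together with a count on the indexing set this gives a bijection $(\mathcal{O}, \eta) \leftrightarrow \rho_{\mathcal{O}, \eta}$ onto $\Uch(G)$, with $\rho_{\mathcal{O}, \eta} \in \Uch(\mathscr{F})$ when $\mathcal{O} = \mathcal{O}_\mathscr{F}$. Setting $\Psi_{\rho_{\mathcal{O}, \eta}} = K_{\mathcal{O}, \eta}$ then supplies the family of projective characters demanded by the strategy.

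\textbf{Main obstacle: the multiplicity-one statement.} The crux of the argument, and the most demanding step, is the proof of Kawanaka's conjecture. The approach I would take is to compute $\langle K_{\mathcal{O}, \eta}, \rho \rangle_G$ explicitly in terms of Lusztig's parametrisation of unipotent characters within a family and the generalised Springer correspondence. Kawanaka's modification is engineered so that the restriction of $K_{\mathcal{O}, \eta}$ to $\mathcal{O}^F$ recovers, up to explicit constants, a column of Lusztig's non-abelian Fourier matrix on $\mathscr{F}$; the orthogonality of this matrix then collapses the scalar product with any element of $\Uch(\mathscr{F})$ to a single non-zero entry. Vanishing of the scalar product on unipotent characters with unipotent support strictly above $\mathcal{O}$ follows from the support properties of the underlying GGGCs combined with Lusztig's characterisation of unipotent support via non-vanishing, while vanishing below $\mathcal{O}$ requires a more subtle argument exploiting the Gelfand--Graev-style cancellations built into Kawanaka's combination. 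The technical heart of the work will lie in the character-sheaf computations linking $K_{\mathcal{O}, \eta}$ to Lusztig's Fourier transform --- controlling the $F$-action carefully throughout and, where needed, reducing from an arbitrary connected reductive $\bG$ to a simply-connected setting in which the generalised Springer correspondence and the relevant cuspidal character sheaves are fully understood.
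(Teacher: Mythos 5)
Your strategy asks for more than what is achievable, and this propagates through the whole argument. You demand a virtual projective $\Psi_\rho$ whose unipotent projection is \emph{exactly} $\rho$, i.e.\ $\langle\Psi_\rho,\chi\rangle_G=0$ for every other $\chi\in\Uch(G)$. That is strictly stronger than what any construction based on GGGCs can deliver, and stronger than what is needed: the decomposition matrix is claimed to be lower unitriangular, not the identity, so one must expect (and does encounter) contributions from other families. The Kawanaka characters are genuine projective characters $K_{[a,\psi]}$ attached to $\mathcal{O}_{\mathscr{F}}$ whose unipotent projection has the form $\rho_{[a,\psi]}+\text{(terms in }\Uch(\mathscr{F}'^*)\text{ with }\mathcal{O}_{\mathscr{F}}\prec\mathcal{O}_{\mathscr{F}'})$ — only the constituent inside $\Uch(\mathscr{F}^*)$ is pinned to multiplicity one. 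Correspondingly you misstate Kawanaka's conjecture: it asserts at most one unipotent constituent \emph{in the fixed family} $\Uch(\mathscr{F}^*)$, with multiplicity one, not that there is a unique unipotent constituent overall. If your version were true, the ordering hypothesis in the theorem would be superfluous.

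Two further concrete problems. First, your parametrisation is too small: the Kawanaka characters needed to cover $\Uch(\mathscr{F}^*)$ are indexed by the Drinfeld-double set $\mathscr{M}(\bar A_{\mathscr{F}})$ of pairs $[a,\psi]$ with $a$ in Lusztig's canonical quotient and $\psi\in\Irr(C_{\bar A_{\mathscr{F}}}(a))$, matching $|\Uch(\mathscr{F}^*)|$; indexing merely by $\Irr$ of the component group undercounts. Second, in the ``main obstacle'' paragraph you claim vanishing of $\langle K,\rho'\rangle$ both for unipotent support above and below a given class, and say the latter is handled ``by Gelfand--Graev-style cancellations''. That vanishing simply does not hold and no amount of cancellation will enforce it: the one-sided bound from the GGGC/wave-front-set argument of Geck--Malle gives only that constituents $\rho'\in\Uch(\mathscr{F}')$ satisfy $\mathcal{O}_{\mathscr{F}'}\preceq\mathcal{O}_{\mathscr{F}}$ (after dualising), and that is precisely what is used. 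The correct final step is not to read off a decomposition-matrix identity from the $\Psi_\rho$'s directly, but to pass from the (non-indecomposable) projective $K_\rho$ to the unique indecomposable summand $Q_\rho$ hitting $\rho$; the support bound together with the within-family multiplicity-one statement forces the $Q_\rho$ to be pairwise distinct, hence a complete set of unipotent PIMs, and yields the block-unitriangular shape. You should also include the reduction to $\bG$ adjoint simple via regular embeddings and adjoint quotients, which is where the hypothesis $\ell\nmid|Z(\bfG)_F|$ is used to transfer decomposition numbers and where the admissible coverings of the canonical quotient are actually constructed.
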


This result had been previously obtained for groups of type $A$ by Dipper--James \cite{DipJa89} and Geck \cite{Ge91}, classical groups at a linear prime $\ell$ by Gruber--Hiss \cite{GrHi97}, and exceptional groups of type ${}^3D_4$ by Geck \cite{Ge91bis}, $G_2$ by Hiss \cite{Hiss89} and $F_4$ by Wings \cite{Wings}. The assumption on $\ell$ ensures that the unipotent characters form a basic set for the unipotent $\ell$-blocks \cite{Ge93}. As stated \cref{thm:mainA} need not hold when $\ell$ divides the order of $Z(\bG)_F$. Already this is the case when $G = \SL_2(q)$ and $\ell = 2$. In future work we intend to explain what must be done to extend the previous theorem to the case where $\ell$ is bad or divides $Z(\bfG)_F$. 

\subsection{Kawanaka characters}
The strategy for proving \cref{thm:mainA} goes back to Geck and Geck--Hiss \cite{Ge91,GeHi94}. It relies on constructing projective modules whose characters involve only one unipotent character of a given family, with multiplicity one, and other unipotent characters from larger families only. When $p$ is good, such projective modules were constructed by Kawanaka in \cite{Ka87} using a modified version of the generalised Gelfand-Graev representations. The difficult part is to show the statement about the unipotent constituents of their character, which was conjectured by Kawanaka in \cite[2.4.5]{Ka87}. In order to explain Kawanaka's construction and our second main result we need to recall a few further facts regarding Lusztig's classification of unipotent characters.

Recall that we have a natural involution $-\otimes\sgn_W : \Irr(W) \to \Irr(W)$ given by tensoring with the sign character. This involution permutes the families of $W$ and we denote by $\mathscr{F}^* = \mathscr{F}\otimes\sgn_W$ the \emph{dual family}. This duality on families corresponds to an order reversing duality on unipotent classes. Namely, we have $\mathcal{O}_{\mathscr{F}} \preceq \mathcal{O}_{\mathscr{F}'}$ if and only if $\mathcal{O}_{\mathscr{F}'^*} \preceq \mathcal{O}_{\mathscr{F}^*}$.

To simplify the notation we assume now that the $\mathbb{F}_q$-structure on $\bG$, determined by $F$, is split. To each family $\mathscr{F} \in \Fam(W)$ Lusztig has associated a small finite group $\bar A_{\mathscr{F}}$ known as the \emph{canonical quotient}. The elements in $\Uch(\mathscr{F}^*)$ are parametrised by the set $\mathscr{M}(\bar{A}_{\mathscr{F}})$ of $\bar A_{\mathscr{F}}$-conjugacy classes of pairs $(a,\psi)$ where $a \in \bar A_{\mathscr{F}}$ and $\psi \in \Irr(C_{\bar A_{\mathscr{F}}}(a))$. Under some extra assumption on $\mathscr{F}$ one can form the Kawanaka character $K_{[a,\psi]}$ of $G$ attached to the conjugacy class of $(a,\psi)$, see \cref{ssec:kawnakadef}. When $\ell$ is not too small (not dividing the order of the finite group $\bar A_{\mathscr{F}}$), then $K_{[a,\psi]}$ is the character of a projective module. The following settles Kawanaka's conjecture in the affirmative.

\begin{mtheorem}\label{thm:mainB}
Assume $\bG$ is an adjoint simple group, $p$ is a good prime for $\bG$, and $\mathscr{F} \in \Fam(W)^F$ is an $F$-stable family. Then for each $[a,\psi] \in \mathscr{M}(\bar{A}_{\mathscr{F}})$ there is a Kawanaka character $K_{[a,\psi]}$ of $G$ whose projection on the space spanned by the unipotent characters $\Uch(G)$ is of the form
\begin{equation*}
\rho_{[a,\psi]} + \bigoplus_{\substack{\mathscr{F}' \in \Fam(W)^F\\ \mathcal{O}_{\mathscr{F}} \prec \mathcal{O}_{\mathscr{F}'} }} \mathbb{Z}\Uch(\mathscr{F}'^*).
\end{equation*}
for a unique unipotent character $\rho_{[a,\psi]} \in \Uch(\mathscr{F}^*)$. Moreover, $K_{[a,\psi]}$ is the character of a projective module when $\ell$ is a good prime for $G$.
\end{mtheorem}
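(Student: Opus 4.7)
The plan is to analyse the projection of $K_{[a,\psi]}$ onto $\mathbb{Z}\Uch(G)$ in two stages: an upper-triangularity statement obtained from wave front set estimates, and a precise identification of the top term via the non-abelian Fourier transform. By the construction recalled earlier, $K_{[a,\psi]}$ is a $\mathbb{Z}$-linear combination of Kawanaka's Heisenberg-type enlargements of the generalised Gelfand--Graev characters $\Gamma_u$ attached to unipotent elements $u \in \mathcal{O}_{\mathscr{F}}^F$, with coefficients encoding the pair $(a,\psi)$ through class functions on the component groups $A(u) = C_{\bG}(u)/C_{\bG}(u)^\circ$.

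For the upper-triangularity I would appeal to the wave front set theorem in good characteristic, due to Achar--Aubert and extended by Taylor: every unipotent constituent $\rho$ of a GGGR $\Gamma_u$ with $u \in \mathcal{O}^F$ has wave front set $\succeq \mathcal{O}$. Combined with Lusztig's identification of the wave front set of $\rho \in \Uch(\mathscr{F}')$ as $\mathcal{O}_{\mathscr{F}'^*}$, this shows that every unipotent constituent of $K_{[a,\psi]}$ belongs to $\Uch(\mathscr{F}''^*)$ for some $F$-stable family $\mathscr{F}''$ with $\mathcal{O}_{\mathscr{F}} \preceq \mathcal{O}_{\mathscr{F}''}$. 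This already yields the desired triangular shape and reduces the theorem to computing the projection onto the top piece $\mathbb{Z}\Uch(\mathscr{F}^*)$, corresponding to $\mathscr{F}'' = \mathscr{F}$.

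The heart of the proof is this top-piece computation, where the multiplicity-one statement must emerge. Lusztig's theory, made effective in good characteristic through work of Shoji, Geck--Malle and Taylor, describes the matrix of multiplicities $\langle\rho,\Gamma_u\rangle$ for $\rho \in \Uch(\mathscr{F}^*)$ parametrised by $[b,\phi] \in \mathscr{M}(\bar A_{\mathscr{F}})$ and $u \in \mathcal{O}_{\mathscr{F}}^F$ mapping under $A(u) \to \bar A_{\mathscr{F}}$ to a class $[a',\psi']$: up to an explicit unit normalisation, this matrix is the non-abelian Fourier transform matrix on $\mathscr{M}(\bar A_{\mathscr{F}})$. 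Kawanaka's coefficients in the definition of $K_{[a,\psi]}$ are tailored precisely to invert this Fourier transform, so that the $\Uch(\mathscr{F}^*)$-component collapses by Schur orthogonality on $\mathscr{M}(\bar A_{\mathscr{F}})$ into the single unipotent character $\rho_{[a,\psi]}$ with coefficient one. The main obstacle, and where the hard work lies, is to check that the various normalisation constants---the comparison map $A(u) \to \bar A_{\mathscr{F}}$, the Frobenius equivariance of the underlying character sheaves, and Lusztig's ``$\varepsilon$''-signs---conspire to produce the integer coefficient $1$ on the nose; the adjoint-simple hypothesis restricts us to a finite list of families in which $\bar A_{\mathscr{F}}$ is explicitly tabulated and a case-by-case verification is feasible.

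Finally, for the projectivity statement: each $\Gamma_u$, together with its Kawanaka enlargement, is induced from a linear character of a subgroup whose order is a $p$-power times a divisor of $|\bar A_{\mathscr{F}}|$. Under the assumption that $\ell$ is good for $\bG$ (and $\ell \neq p$ as throughout), we have $\ell \nmid |\bar A_{\mathscr{F}}|$, so all the relevant inducing subgroups have $\ell'$-order; their characters are automatically projective over $\overline{\mathbb{F}_{\ell}}$, and both induction and the integer linear combination defining $K_{[a,\psi]}$ preserve projectivity.
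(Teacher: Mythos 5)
Your outline has the right two-stage shape (triangularity from wave front set estimates, then identification of the top piece via a non-abelian Fourier transform), and the first stage is essentially what the paper does, appealing to the Geck--Malle/Achar--Aubert/Taylor wave front set results. But there are two substantial gaps in the second stage, and a missing prerequisite, that would prevent this from going through as written.

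First, the missing prerequisite: you take for granted that the Kawanaka character $K_{[a,\psi]}$ exists. In the paper this is far from automatic --- one first needs to construct an \emph{admissible covering} $(A,\lambda)$ of Lusztig's canonical quotient $\bar A_{\mathscr{F}}$ inside $C_{\bG}(u)$, and the key condition \cref{en:K2} of \cref{def:admissiblesplitting} can fail (e.g.\ for $\mathrm{SO}_V$ with partition $(5,3,1)$ the full component group admits no such covering). Establishing the existence of admissible coverings for Lusztig's canonical quotient, and only for it, occupies all of \cref{ssec:symplectic} and \cref{ssec:exc}, including delicate explicit computations with quasi-semisimple elements in exceptional groups. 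Your proposal does not address this at all, and without it the characters $K_{[a,\psi]}$ are not even defined.

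Second, and this is the genuine gap in the ``top-piece'' argument: you propose to show that the matrix $\langle\rho,\Gamma_u\rangle$ for $\rho\in\Uch(\mathscr{F}^*)$ is (up to normalisation) the non-abelian Fourier transform, and then to pin down the unit normalisations by case-by-case inspection. Computing the exact values $\langle\rho,\Gamma_u\rangle$ is precisely what is not known, and the paper's argument is designed to avoid ever needing them. Instead one defines the Fourier transform $F_{[b,\phi]}$ of the \emph{Kawanaka characters themselves} (not of the multiplicity matrix), proves in \cref{prop:fourier-values} that $F_{[b,\phi]}$ vanishes on a mixed class $sv$ unless $s$ is $G$-conjugate to $b$, and then matches $D_{\bG}(F_{[b,\phi]})$ against the characteristic function of a character sheaf $\chi_{[a,\psi]}$ via Lusztig's restriction theorem \cref{thm:lusztig}. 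The matching in \cref{prop:fouriermultiplicities} only produces an \emph{unknown root of unity} $\zeta_{[b,\phi]}$; the multiplicity-one conclusion in \cref{thm:kawanakaconj} is then extracted from $\langle\widehat K_{[a,\psi]},\widehat K_{[a,\psi]}\rangle_G\le 1$ by the unitarity of the Fourier transform (when $A\cong\bar A_{\mathscr{F}}$) or by a counting argument with roots of unity (when $A$ is a strictly larger abelian group, as happens for classical groups with $Z\neq 1$). This second case, where the admissible covering $A$ is not isomorphic to $\bar A_{\mathscr{F}}$, is not contemplated in your proposal at all. Your case-by-case normalisation verification would also require, as intermediate input, the fusion theorem \cref{thm:fusion-unip-classes} (proved in the paper via transverse slices) to know that only a single $C_{\bG}^\circ(b)$-class of unipotent elements contributes to the scalar product; nothing in your argument substitutes for this.

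Your projectivity argument is correct in spirit and essentially agrees with the paper.
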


\begin{rem}
It is unclear whether the unipotent character $\rho_{[a,\psi]}$ from \cref{thm:mainB} coincides with the unipotent character labelled by $[a,\psi]$ under Lusztig's parametrisation in \cite[\S4]{Lu84}.
\end{rem}

\subsection{Content}
Much of this paper concerns the actual construction and properties of the characters $K_{[a,\psi]}$. The first technical obstruction concerns Lusztig's canonical quotient $\bar A_{\mathscr{F}}$. This is naturally a quotient of the component group $A_\bG(u) := C_\bG(u)/C_\bG^\circ(u)$ with $u \in \mathcal{O}_{\mathscr{F}}$. To define the Kawanaka characters we must find a suitable subgroup $A \leqslant C_{\bG}(u)$ which maps surjectively onto $\bar{A}_{\mathscr{F}}$ under the natural map $C_{\bG}(u) \to A_{\bG}(u) \to \bar{A}_{\mathscr{F}}$.

We note that, in general, the subgroup we choose will not map onto $A_{\bG}(u)$. For the Kawanaka characters to exist, $A$ must satisfy the conditions of what we call an \emph{admissible covering} of $\bar{A}_{\mathscr{F}}$. The exact details of this are given in \cref{def:admissiblesplitting}. Much of the latter part of the paper concerns the existence of such admissible coverings. We show the existence of admissible coverings for classical groups in \cref{ssec:symplectic} and for exceptional groups in \cref{ssec:exc}. For this, we consider ${}^3\D_4$ to be an exceptional group.

Of course, the main issue, which is the core of the paper, is to show that $K_{[a,\psi]}$ can involve at most one unipotent character of the family $\Uch(\mathscr{F}^*)$ and that such a constituent occurs with multiplicity one. This is achieved by considering a Fourier transform $F_{[a,\psi]}$ of $K_{[a,\psi]}$ with respect to $A$. To simplify, let us assume that $A \cong \bar A_{\mathscr{F}}$. The Fourier transforms of unipotent characters in $\Uch(\mathscr{F}^*)$ with respect to the finite group $\bar A_{\mathscr{F}}$ coincide with the (suitably normalised) characteristic functions of certain character sheaves $\USh(\mathscr{F}^*)$ associated to the family $\mathscr{F}$. \Cref{thm:mainB} amounts to showing that $F_{[a,\psi]}$ involves only one character sheaf.

The advantage of working in this setting is that there are very few conjugacy classes where both $F_{[a,\psi]}$ and the characteristic functions of character sheaves in $\USh(\mathscr{F}^*)$ take non-zero values. Furthermore, these values can be explicitly computed. This is shown in \cref{prop:fourier-values} for the class functions $F_{[a,\psi]}$ and recently by Lusztig in \cite{Lu15} for character sheaves. We show in \cref{prop:fouriermultiplicities} that the contribution of $\Uch(\mathscr{F}^*)$ in $F_{[a,\psi]}$ coincides, up to a root of unity, with the characteristic function of the character sheaf associated with $[a,\psi]$ in \cite{Lu15}. Some technical difficulties arise when $A$ is larger than $\bar A_{\mathscr{F}}$. However, in these cases we may take $A$ to be abelian, which allows us to overcome these difficulties and establish \cref{thm:mainB} whenever an admissible covering exists.

\subsection{Applications}
The solution to Kawanaka's conjecture given in \cref{thm:mainB} provides a geometric parametrisation of the unipotent characters, in terms of the Drinfeld doubles of the various admissible coverings constructed in this paper. Since the action of $\mathrm{Aut}(G)$ is easy to describe on the Kawanaka characters we obtain a new way to compute that action on characters. We expect that the same should hold for irreducible characters in other isolated series, hence providing new information on their behaviour under the action of $\mathrm{Aut}(G)$.
Such a strategy was already successfully applied to prove that the inductive McKay condition holds for groups of type $\A$ \cite{CaSp17}, where only GGGCs are needed. We note that such methods can also be applied to Galois automorphisms, which is current work of the third author and A. A. Schaeffer Fry.

On the other hand, a result of Navarro--Tiep \cite{NT11} gives a reduction of the Alperin Weight Conjecture to the case of simple groups. Their reduction involves the action of the automorphism group on irreducible Brauer characters. Under our assumption on $\ell$ the set of unipotent characters $\Uch(G)$ is a basic set for the unipotent blocks of $G$ by \cite[Thm.~A]{Ge93}. As the set $\Uch(G)$ is stable under the action of the automorphism group it follows from \cref{thm:mainA} that there is an $\Aut(G)$-equivariant bijection between $\Uch(G)$ and the irreducible Brauer characters in the unipotent blocks of $G$, see \cite[Lem.~2.3]{Den18}. The action of $\Aut(G)$ on $\Uch(G)$ is well understood, hence we get an explicit description of $\Aut(G)$ on unipotent Brauer characters.

\subsection{Further directions}
There are two restrictions on the prime numbers in \cref{thm:mainA}. The first restriction, on the characteristic $p$ of the ground field, comes from our construction of the projective modules via GGGCs. Recent work of Geck \cite{Ge18} provides an extension of this construction for bad prime numbers $p$. It will be interesting to see whether one can generalise that construction to the Kawanaka characters and extend \cref{thm:mainB} to the case where $p$ is bad. This would involve dealing with Lusztig's special pieces instead of special unipotent classes.

The second restriction concerns the characteristic $\ell$ of the coefficient field of the representations. When $\ell$ is bad one needs to take into account the modular representation theory of the canonical quotient ${\bar A}_\mathscr{F}$ which is no longer an $\ell'$-group. Inducing projective characters of the admissible coverings instead of irreducible characters provides projective characters for the finite reductive group using \cref{thm:mainB}. However, as observed in \cite{Ge94,Cha18} one needs to consider other unipotent classes, called $\ell$-special, and prove a generalisation of Kawanaka's conjecture for some isolated families of non-unipotent characters. This would show that when $\ell$ is bad there is still a unitriangular basic set which is labelled as in \cite{Cha18}.

Finally, let us note that \cref{thm:mainA} provides a classification of unipotent Brauer characters in terms of families. We expect this classification to coincide with the one obtained via the mod-$\ell$ intersection cohomology of Deligne--Lusztig varieties (probably replacing intersection cohomology sheaves by parity sheaves \cite{JMW14}). The existence of $\ell$-torsion in the cohomology should also explain the difference of behaviour between the cases of good and bad characteristic.

\subsection*{Acknowledgments}
The first and second authors gratefully acknowledge financial support by
the ANR grant GeRepMod ANR-16-CE40-0010-01. The second author also acknowledges financial support by
the DFG grant SFB-TRR 195.
During part of this work the third author was supported by an INdAM Marie-Curie Fellowship. Visits of the first and second authors to the University of Padova were supported by the third authors fellowship along with grants CPDA125818/12 and 60A01-4222/15 of the University of Padova. A visit of the third author to Paris VII was supported by the ANR grant ANR-16-CE40-0010-01. The authors warmly thank the hospitality of these institutions for facilitating their collaboration.

Our proof of \cref{prop:fusion-nilp-orb} is based on an argument using Slodowy slices, originally communicated to us by Daniel Juteau. We warmly thank him for sharing his idea with us. We also thank Anthony Henderson for clarifying remarks concerning \cref{lem:slod-slice-prop} in the case of Slodowy slices. Finally, we thank Gunter Malle for his valuable comments on a preliminary version of our paper.

\section{Basic Setup and Notation}\label{sec:basic-setup}
\subsection{Finite Groups}
Throughout, we fix a prime number $\ell$. Assume $G$ is a group acting on a set $\Omega$ with action $\cdot : G \times \Omega \to \Omega$. Unless stated otherwise all group actions are assumed to be left actions. For any set $X$ we have natural left and right actions of $G$ on the set of functions $f : \Omega \to X$ defined by $f^g(\omega) = f(g\cdot\omega)$ and ${}^gf(\omega) = f(g^{-1}\cdot\omega)$ for any $g \in G$ and $\omega \in \Omega$. Similarly we have natural left and right actions of $G$ on the set of functions $f : X \to \Omega$ defined by ${}^gf(x) = g\cdot f(x)$ and $f^g(x) = g^{-1}\cdot f(x)$ for any $g \in G$ and $x \in X$.

If $G$ is a finite group then we denote by $\Irr(G)$ the irreducible $\Ql$-characters of $G$. This is an orthonormal basis of the space of $\Ql$-class functions $\Class(G)$ with respect to the usual inner product
\begin{equation*}
\langle f,f'\rangle_G = \frac{1}{|G|}\sum_{g \in G} f(g)\overline{f'(g)}
\end{equation*}
where $\overline{\phantom{x}} : \Ql \to \Ql$ is a fixed involutive automorphism sending each root of unity to its inverse.

Let $\iota : H \to G$ be a homomorphism of groups then we have a natural restriction map $\iota^* : \Class(G) \to \Class(H)$, defined by $\iota^*(f) = f\circ\iota$, and an induction map $\iota_* : \Class(H) \to \Class(G)$ defined by
\begin{equation*}
(\iota_*(f))(x) = \frac{1}{|G|}\sum_{\substack{(g,h) \in G\times H\\ {}^gx = \iota(h)}} f(h)
\end{equation*}
for all $x \in G$. The maps $\iota^*$ and $\iota_*$ are adjoint with respect to $\langle-,-\rangle_G$ and $\langle -,-\rangle_H$. Moreover, if $\kappa : H \to L$ is another group homomorphism then $(\kappa\circ\iota)^* =  \iota^*\circ\kappa^*$ and $(\kappa\circ\iota)_* = \kappa_* \circ \iota_*$.

We note that if $\iota$ is injective, and we identify $H$ with $\iota(H)$, then $\iota_*$ is simply identified with the usual induction map $\Ind_H^G$. Furthermore, if $\iota$ is surjective then $\iota_*(\chi) = 0$ for any $\chi \in \Irr(H)$ with $\Ker(\iota) \not\leqslant \Ker(\chi)$ and $\iota_*(\chi)$ is the deflation of $\chi$ if $\Ker(\iota) \leqslant \Ker(\chi)$. Hence, in general, we have $\iota_*$ is deflation from $H$ to $\iota(H)$ followed by induction from $\iota(H)$ to $G$.

\subsection{Reductive Groups}\label{subsec:reductive-groups}

Throughout we assume that $\bG$ is a connected reductive algebraic group over $K := \overline{\mathbb{F}}_p$, an algebraic closure of the finite field $\mathbb{F}_p$ with prime cardinality $p>0$. We will tacitly assume that $p$ is a \emph{good} prime for $\bG$ which is different from $\ell$. The Lie algebra of $\bG$ will be denoted by $\Lie(\bG)$ and the derived subgroup will be denoted by $\bG_{\der} \leqslant \bG$.

For any $x \in \Lie(\bG)$ and $g \in \bG$ we will write $g\cdot x$ for the element $\Ad(g)x$ where $\Ad : \bG \to \GL(V)$ is the adjoint representation, with $V = \Lie(\bG)$. This defines an action of $\bG$ on $\Lie(\bG)$ which we will refer to as the adjoint action. Moreover, we denote by $\ad~:~\Lie(\bG) \to \GL(V)$ the derivative of $\Ad$. This is the adjoint representation of $\Lie(\bG)$ which satisfies $(\ad x)(y) = [x,y]$ for all $x,y\in \Lie(\bG)$ where $[-,-] : \Lie(\bG)\times\Lie(\bG) \to \Lie(\bG)$ is the Lie bracket.

Let $F : \bG \to \bG$ be a Frobenius endomorphism endowing $\bG$ with an $\mathbb{F}_q$-rational structure. Given an $F$-stable subgroup $\bH$ of $\bG$ we will often denote by $H:=\bH^F$ the finite group of fixed points under $F$.
We assume fixed an $F$-stable maximal torus and Borel subgroup $\bT_0 \leqslant \bB_0 \leqslant \bG$. If $\bT \leqslant \bG$ is any $F$-stable maximal torus of $\bG$ then $F$ induces an automorphism of the Weyl group $W_{\bG}(\bT) := N_{\bG}(\bT)/\bT$, which we also denote by $F$. We will typically denote the Weyl group $W_{\bG}(\bT_0)$ simply by $W$.

We recall here that a morphism $\varphi : \bG \to \widetilde{\bG}$ between algebraic groups is an \emph{isotypic morphism} if the image $\varphi(\bG)$ contains the derived subgroup of $\widetilde{\bG}$ and the kernel $\Ker(\varphi)$ is contained in the centre $Z(\bG)$ of $\bG$. If $\widetilde{\bG}$ is a connected reductive algebraic group and $\varphi$ is isotypic then $\widetilde{\bG} = \varphi(\bG)\cdot Z(\widetilde{\bG})$. If one of these groups is defined over $\mathbb{F}_q$ then we will implicitly assume $\varphi$ is also defined over $\mathbb{F}_q$.

If $\mathbb{G}_m$ denotes the multiplicative group of the field $K$ then for any algebraic group $\bH$ we denote by $X(\bH) = \Hom(\bH,\mathbb{G}_m)$ the character group of $\bH$ and $\widecheck{X}(\bH) = \Hom(\mathbb{G}_m,\bH)$ the set of cocharacters. The product on $X(\bH)$ will be written additively. As above, $\bH$ acts on $X(\bH)$ and $\widecheck{X}(\bH)$ and we denote by $C_{\bH}(\lambda) = C_{\bH}(\lambda(\mathbb{G}_m))$ the stabiliser of $\lambda \in \widecheck{X}(\bH)$.

We note that if $\bH$ is abelian then $\widecheck{X}(\bH)$ is also naturally an abelian group whose product we again write additively. Moreover, if $\bH$ is a torus then we denote by $\langle -,-\rangle : X(\bH) \times \widecheck{X}(\bH) \to \mathbb{Z}$ the usual perfect pairing. We also denote by $\langle -,-\rangle$ the natural extension of this pairing to a non-degenerate $\mathbb{Q}$-bilinear form $\big(\mathbb{Q}\otimes_{\mathbb{Z}}X(\bH)\big) \times \big(\mathbb{Q}\otimes_{\mathbb{Z}}\widecheck{X}(\bH)\big) \to \mathbb{Q}$.

Let us recall that to the triple $(\bG,\bB_0,\bT_0)$ we have a corresponding based root datum
\begin{equation*}
\mathcal{R}(\bG,\bT_0,\bB_0) = (X(\bT_0),\Phi(\bT_0),\Delta(\bT_0,\bB_0),\widecheck{X}(\bT_0),\widecheck{\Phi}(\bT_0),\widecheck{\Delta}(\bT_0,\bB_0)).
\end{equation*}
Here $\Phi(\bT_0) \subset X(\bT_0)$ are the roots of $\bG$ relative to $\bT_0$, and $\Delta(\bT_0,\bB_0) \subseteq \Phi(\bT_0)$ are the simple roots determined by $\bB_0$. Similarly $\widecheck{\Phi}(\bT_0) \subset \widecheck{X}(\bT_0)$ are the coroots and $\widecheck{\Delta}(\bT_0,\bB_0) \subseteq \widecheck{\Phi}(\bT_0)$ are the simple coroots. Forgetting the simple roots and coroots we get the usual root datum
\begin{equation*}
\mathcal{R}(\bG,\bT_0) = (X(\bT_0),\Phi(\bT_0),\widecheck{X}(\bT_0),\widecheck{\Phi}(\bT_0)).
\end{equation*}
We will say a root datum $\mathcal{R} = (X,\Phi,\widecheck{X},\widecheck{\Phi})$ is \emph{torsion free} if both the quotients $X/\mathbb{Z}\Phi$ and $\widecheck{X}/\mathbb{Z}\widecheck{\Phi}$ have no torsion. Note that all root data are assumed to be reduced.

\part{Unipotent classes of reductive groups}
\section{Nilpotent Orbits}
We will denote by $\mathcal{N}(\bG) \subseteq \Lie(\bG)$ the nilpotent cone of the Lie algebra. The adjoint action of $\bG$ on $\lie{g} := \Lie(\bG)$ preserves $\mathcal{N}(\bG)$ and any orbit of this restricted action is called a \emph{nilpotent orbit}. The resulting set of orbits will be denoted by $\mathcal{N}(\bG)/\bG$.

\subsection{Cocharacters and Gradings}\label{subsection:cocharacters}
If $\lambda \in \widecheck{X}(\bG)$ is a cocharacter then we have a corresponding grading $\lie{g} = \bigoplus_{i \in \mathbb{Z}} \lie{g}(\lambda,i)$ where
\begin{equation*}
\lie{g}(\lambda,i) = \{x \in \lie{g} \mid \lambda(k)\cdot x = k^ix\text{ for all }k \in \mathbb{G}_m\}.
\end{equation*}
We note that $\lie{g}(\lambda,-i) = \lie{g}(-\lambda,i)$ for all $i \in \mathbb{Z}$. The group $C_{\bG}(\lambda)$ preserves each weight space $\lie{g}(\lambda,i)$ and, by a result of Richardson \cite{Ric85}, has finitely many orbits on $\lie{g}(\lambda,i)$ for any $i\neq 0$. In particular, there exists a unique open dense $C_{\bG}(\lambda)$-orbit $\lie{g}(\lambda,i)_{\reg} \subseteq \lie{g}(\lambda,i)$ for each $i \neq 0$.

Associated to $\lambda$ we have a corresponding parabolic subgroup $\bP_{\bG}(\lambda) \leqslant \bG$ defined as in \cite[3.2.15, 8.4.5]{Spr09}. This parabolic subgroup has the property that 
\begin{equation*}
\Lie(\bP_{\bG}(\lambda)) = \bigoplus_{i \geqslant 0} \lie{g}(\lambda,i).
\end{equation*}
The group $\bL_{\bG}(\lambda) := C_{\bG}(\lambda) \leqslant \bP_{\bG}(\lambda)$ is a Levi complement of the parabolic. In particular, if $\bU_{\bG}(\lambda) \lhd \bP_{\bG}(\lambda)$ is the unipotent radical then $\bP_{\bG}(\lambda) = \bL_{\bG}(\lambda) \ltimes \bU_{\bG}(\lambda)$. The Lie algebras of these subgroups have the following weight space decompositions
\begin{equation*}
\Lie(\bL_{\bG}(\lambda)) = \lie{g}(\lambda,0) \qquad\text{and}\qquad \Lie(\bU_{\bG}(\lambda)) = \bigoplus_{i > 0} \lie{g}(\lambda,i).
\end{equation*}
Furthermore, we have $\bP_{\bG}(-\lambda)$ is the unique opposite parabolic subgroup, i.e., we have $\bP_{\bG}(\lambda) \cap \bP_{\bG}(-\lambda) = \bL_{\bG}(\lambda) = \bL_{\bG}(-\lambda)$.

For any integer $i > 0$ there exist unique closed connected unipotent subgroups $\bU_{\bG}(\lambda,i) \leqslant \bU_{\bG}(\lambda)$ and $\bU_{\bG}(\lambda,-i) \leqslant \bU_{\bG}(-\lambda)$ such that $\Lie(\bU_{\bG}(\lambda,i)) = \lie{u_g}(\lambda,i)$ and $\Lie(\bU_{\bG}(\lambda,-i)) = \lie{u_g}(\lambda,-i)$ where
\begin{equation*}
\lie{u_g}(\lambda,i) :=  \bigoplus_{j \geqslant i} \lie{g}(\lambda,j)\qquad\text{and}\qquad\lie{u_g}(\lambda,-i) :=  \bigoplus_{j \geqslant i} \lie{g}(\lambda,-j).
\end{equation*}
Indeed $\bU_{\bG}(\lambda,i)$ is simply generated by the root subgroups of $\bG$, defined with respect to some maximal torus in $C_{\bG}(\lambda)$, corresponding to the root spaces that are contained in one of the weight spaces $\lie{g}(\lambda,j)$ for $j \geq i$.

With this notation we clearly have for any $i >0$ that $\bU_{\bG}(\lambda,-i) = \bU_{\bG}(-\lambda,i)$ and $\bU_{\bG}(\lambda,1) = \bU_{\bG}(\lambda)$ is the unipotent radical of the parabolic. If the ambient group $\bG$ is clear then we will drop the subscripts writing $\bP(\lambda)$, $\bU(\lambda)$, $\bL(\lambda)$, $\lie{u}(\lambda,i)$, etc., instead of $\bP_{\bG}(\lambda)$, $\bU_{\bG}(\lambda)$, $\bL_{\bG}(\lambda)$, $\lie{u_g}(\lambda,i)$, etc. For any $g \in \bG$ and cocharacter $\lambda \in \widecheck{X}(\bG)$ we have $\bP({}^g\lambda) = {}^g\bP(\lambda)$ and $\bL({}^g\lambda) = {}^g\bL(\lambda)$. Moreover, for each integer $i \in \mathbb{Z}$ we get that
\begin{align}\label{eq:conj-cochar-spaces}
\lie{g}({}^g\lambda,i) = g\cdot\lie{g}(\lambda,i) && \lie{g}({}^g\lambda,i)_{\reg} = g\cdot\lie{g}(\lambda,i)_{\reg},
\end{align}
where the last equality is defined only when $i \neq 0$. This implies immediately that $\bU({}^g\lambda,i) = {}^g\bU(\lambda,i)$ for any $0 \neq i \in \mathbb{Z}$, in particular $\bU({}^g\lambda) = {}^g\bU(\lambda)$.

Note also that we have a map $F : \widecheck{X}(\bT_0) \to \widecheck{X}(\bT_0)$, denoted $\lambda \mapsto F\cdot \lambda$, defined as follows. We have a Frobenius endomorphism $F_q : \mathbb{G}_m \to \mathbb{G}_m$ defined by $F_q(k) = k^q$. Given a cocharacter $\lambda \in \widecheck{X}(\bG)$ we define, as in \cite[3.23]{Tay16}, a cocharacter $F\cdot\lambda = F\circ\lambda\circ F_q^{-1} \in \widecheck{X}(\bG)$. We denote by $\widecheck{X}(\bG)^F$ those characters $\lambda \in \widecheck{X}(\bG)$ satisfying $F\cdot\lambda = \lambda$.

From the definition of $\bP(\lambda)$ and $\bL(\lambda)$ it follows that $F(\bP(\lambda)) = \bP(F\cdot\lambda)$ and $F(\bL(\lambda)) = \bL(F\cdot\lambda)$. In particular, if $\lambda \in \widecheck{X}(\bG)^F$ then $\bP(\lambda)$ and $\bL(\lambda)$ are $F$-stable. Moreover, for any $i \in \mathbb{Z}$ we have
\begin{align}\label{eq:Frob-weight-space}
F(\lie{g}(\lambda,i)) = \lie{g}(F\cdot\lambda,i) && F(\lie{g}(\lambda,i)_{\reg}) = \lie{g}(F\cdot\lambda,i)_{\reg},
\end{align}
where, again, the last equality is defined only when $i \neq 0$. This implies $F(\bU(\lambda,i)) = \bU(F\cdot\lambda,i)$ for any $i > 0$ so $\bU(\lambda,i)$ is $F$-stable if $\lambda \in \widecheck{X}(\bG)^F$.

\begin{rem}
Let us briefly point out why $F\cdot\lambda$ is a cocharacter. For this, let $\sigma : K[\bG] \to K[\bG]$ and $\sigma_q : K[\mathbb{G}_m] \to K[\mathbb{G}_m]$ be the arithmetic Frobenius endomorphisms given by $\sigma(f) = (F^*)^{-1}(f^q)$ and $\sigma_q(f) = (F_q^*)^{-1}(f^q)$ respectively. Note that $\sigma$ and $\sigma_q$ are semilinear ring isomorphisms in the sense that $\sigma(af) = a^q\sigma(f)$ for all $a \in K$ and $f \in K[\bG]$. With this we obtain a bijection
\begin{align*}
\Hom_{K\text{-alg}}(K[\bG],K[\mathbb{G}_m]) &\to \Hom_{K\text{-alg}}(K[\bG],K[\mathbb{G}_m]),\\
\gamma &\mapsto \gamma^{[q]} := \sigma_q \circ \gamma \circ \sigma^{-1}.
\end{align*}
Now for any $f \in K[\bG]$ we have $\sigma^{-1}(f)^q = F^*(f)$ hence we get that $(\lambda^*)^{[q]} = (F_q^*)^{-1} \circ \lambda^* \circ F^* = (F \cdot \lambda)^*$. This implies that $F\cdot \lambda$ is a cocharacter.
\end{rem}

\subsection{Weighted Dynkin Diagrams}\label{subsec:weighted-dynkin}
Let us fix a triple $(\bG_{\mathbb{C}},\bT_{\mathbb{C}},\bB_{\mathbb{C}})$, defined over $\mathbb{C}$, such that $\mathcal{R}(\bG_{\mathbb{C}},\bT_{\mathbb{C}},\bB_{\mathbb{C}}) \cong \mathcal{R}(\bG,\bT_0,\bB_0)$. We will denote by $\Phi = \Phi(\bT_{\mathbb{C}}) = \Phi(\bT_0)$ and $\Delta = \Delta(\bT_{\mathbb{C}},\bB_{\mathbb{C}}) = \Delta(\bT_0,\bB_0)$ the common sets of roots and simple roots which are identified under the isomorphism of based root data. Now, let $\{0\} \neq \mathcal{O} \in \mathcal{N}(\bG_{\mathbb{C}})/\bG_{\mathbb{C}}$ be a non-zero nilpotent orbit and let $\{e,h,f\} \subseteq \lie{g}_{\mathbb{C}} := \Lie(\bG_{\mathbb{C}})$ be an $\lie{sl}_2$-triple such that $e \in \mathcal{O}$. As $\lie{g}_{\mathbb{C}}$ is a Lie algebra over $\mathbb{C}$ such a triple exists by the Jacobson--Morozov Theorem, see for example \cite[Thm.~3.3.1]{CoMcG93}.

After possibly replacing $\{e,h,f\}$ by a $\bG_{\mathbb{C}}$-conjugate we can assume that $h \in \Lie(\bT_{\mathbb{C}})$ and $\alpha(h) \geqslant 0$ for all $\alpha \in \Delta$. Here we identify $\Phi$ with a subset of $\Hom(\Lie(\bT_{\mathbb{C}}),\mathbb{C})$ by differentiating. We then get an additive function $d_{\mathcal{O}} : \Phi \to \mathbb{Z}$, defined by
\begin{equation}\label{eq:def-weighted-dynk}
d_{\mathcal{O}}(\alpha) = \alpha(h),
\end{equation}
which is called a \emph{weighted Dynkin diagram}; by convention we set $d_{\{0\}}$ equal to the $0$ function, i.e., $d_{\{0\}}(\alpha) = 0$ for all $\alpha \in \Phi$. The set of all weighted Dynkin diagrams will be denoted by $\mathcal{D}(\Phi,\Delta) = \{d_{\mathcal{O}} \mid \mathcal{O} \in \mathcal{N}(\bG_{\mathbb{C}})/\bG_{\mathbb{C}}\}$. It is known that the assignment $\mathcal{N}(\bG_{\mathbb{C}})/\bG_{\mathbb{C}} \to \mathcal{D}(\Phi,\Delta)$, given by $\mathcal{O} \mapsto d_{\mathcal{O}}$, is a well-defined bijection.

Now assume $d \in \mathcal{D}(\Phi,\Delta)$ then there exists a unique vector $\lambda_d \in \mathbb{Q}\otimes_{\mathbb{Z}} \mathbb{Z}\widecheck{\Phi}$ such that
\begin{equation*}
d(\alpha) = \langle \alpha,\lambda_d\rangle
\end{equation*}
for all $\alpha \in \Phi$. Indeed, if $\widecheck{\omega}_{\beta} \in \mathbb{Q}\otimes_{\mathbb{Z}} \mathbb{Z}\widecheck{\Phi}$ denotes the dual basis element to $\beta \in \Delta$, so that $\langle \alpha, \widecheck{\omega}_{\beta}\rangle = \delta_{\alpha,\beta}$ for any $\alpha,\beta \in \Delta$, then $\lambda_d = \sum_{\alpha \in \Delta} d(\alpha)\widecheck{\omega}_{\alpha}$. It follows from the remark after \cite[5.6.5]{Car} that $\lambda_d \in \mathbb{Z}\widecheck{\Phi}$ so we obtain a unique cocharacter $\lambda_d \in \widecheck{X}(\bT_0) \subseteq \widecheck{X}(\bG)$. The following result, due to Kawanaka and Premet, gives the parameterisation of nilpotent orbits in terms of weighted Dynkin diagrams, see \cite[Thm.~2.1.1]{Kaw86}, \cite[Thm.~2.7]{Pre03} and \cite[3.22]{Tay16}.

\begin{thm}[Kawanaka, Premet]\label{thm:classification-nil-orbits}
Recall that $p$ is a good prime for $\bG$. 
The map $\mathcal{D}(\Phi,\Delta) \to \mathcal{N}(\bG)/\bG$ defined by
\begin{equation*}
d \mapsto \bG\cdot\lie{g}(\lambda_d,2)_{\reg}
\end{equation*}
is a bijection.
\end{thm}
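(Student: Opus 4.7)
The plan is to reduce the classification in good characteristic to the Jacobson--Morozov classification over $\mathbb{C}$ by replacing $\lie{sl}_2$-triples with Premet's notion of an \emph{associated cocharacter}: a cocharacter $\lambda \in \widecheck{X}(\bG)$ is associated to a nilpotent $e \in \mathcal{N}(\bG)$ if $e \in \lie{g}(\lambda,2)_{\reg}$. In characteristic $0$ such a $\lambda$ is obtained by integrating the semisimple element $h$ of any $\lie{sl}_2$-triple containing $e$; the substitute in good positive characteristic is Premet's theorem, asserting that every nilpotent $e$ admits an associated cocharacter, unique up to conjugation by $C_\bG(e)^\circ$.

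First I would check that $d \mapsto \bG \cdot \lie{g}(\lambda_d,2)_{\reg}$ lands in $\mathcal{N}(\bG)/\bG$: the torus $\lambda_d(\mathbb{G}_m)$ acts on $\lie{g}(\lambda_d,2)$ with a single non-zero weight, so all its elements are nilpotent, and the existence of a unique open dense $C_\bG(\lambda_d)$-orbit is Richardson's theorem, already recalled in \cref{subsection:cocharacters}. For surjectivity I would take $\mathcal{O} \in \mathcal{N}(\bG)/\bG$, pick $e \in \mathcal{O}$ with an associated cocharacter $\lambda$, conjugate so that $\lambda$ factors through $\bT_0$, and then conjugate by $W$ to arrange $\langle \alpha, \lambda \rangle \geqslant 0$ for all $\alpha \in \Delta$. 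One then identifies the resulting function $\alpha \mapsto \langle \alpha, \lambda \rangle$, defined as in \cref{eq:def-weighted-dynk}, with an element $d \in \mathcal{D}(\Phi,\Delta)$, using that both the characteristic-$0$ weighted Dynkin diagrams and the cocharacters coming from Premet's theorem match the Bala--Carter labelling of nilpotent orbits, which is available uniformly in good characteristic. For injectivity, if $d \neq d'$ yield the same orbit I would conjugate a common representative $e$ to lie in both $\lie{g}(\lambda_d,2)_{\reg}$ and $\lie{g}(\lambda_{d'},2)_{\reg}$; the uniqueness part of Premet's theorem then makes $\lambda_d$ and $\lambda_{d'}$ conjugate under $C_\bG(e)^\circ$, and dominance forces $\lambda_d = \lambda_{d'}$ and hence $d = d'$.

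The main obstacle is Premet's existence result. In characteristic $0$ it reduces to Jacobson--Morozov and is essentially immediate, but in positive characteristic $\lie{sl}_2$-triples need not exist, and the construction of $\lambda$ proceeds via Kempf--Rousseau optimal cocharacters after reducing to the distinguished case through Bala--Carter; the goodness of $p$ is crucial both to control the relevant root subgroups and to ensure that the cocharacter produced in this way really is associated, i.e., that $e$ lands in the open dense orbit on $\lie{g}(\lambda,2)$ rather than merely on its closure. A secondary point is the compatibility of the Bala--Carter parameterisations in characteristic $0$ and good characteristic $p$, which is what ultimately lets one use the same indexing set $\mathcal{D}(\Phi,\Delta)$ on both sides of the bijection.
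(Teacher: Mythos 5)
The paper does not prove this theorem; it cites Kawanaka \cite[Thm.~2.1.1]{Kaw86}, Premet \cite[Thm.~2.7]{Pre03}, and the third author \cite[3.22]{Tay16}, and then builds its own machinery (Dynkin cocharacters, \cref{lem:assoc-dynkin-cocharacters}) on top of the cited result. Your sketch is a faithful reconstruction of Premet's argument and is correct in outline: Richardson's theorem for well-definedness, Premet's existence-and-uniqueness of associated cocharacters for surjectivity and injectivity, conjugation to a dominant cocharacter in $\widecheck{X}(\bT_0)$, and the agreement of the Kempf--Rousseau/Bala--Carter data with the characteristic-$0$ weighted Dynkin diagrams as the key identification.

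Two small remarks for accuracy rather than gaps. First, your working definition of ``associated cocharacter'' ($e \in \lie{g}(\lambda,2)_{\reg}$) is the McNinch--Sommers/Premet characterisation, whereas the paper's \cref{ssec:cocharacter} uses Jantzen's formulation (``$e \in \lie{g}(\lambda,2)$ and $e$ is distinguished in a Levi $\bL$ with $\lambda(\mathbb{G}_m) \leqslant \bL_{\der}$''); the equivalence of these is precisely \cref{lem:assoc-dynkin-cocharacters}, which the paper proves \emph{after} \cref{thm:classification-nil-orbits}, so if you want a self-contained argument you should either work with Jantzen's definition or note the equivalence is part of the package you are citing. Second, in the surjectivity step the fact that the dominant cocharacter you produce actually lies in the finite set $\mathcal{D}(\Phi,\Delta)$ of characteristic-$0$ weighted Dynkin diagrams (i.e., that $\langle \alpha,\lambda\rangle \in \{0,1,2\}$ for $\alpha \in \Delta$ and that the resulting labels exhaust the char-$0$ list) is exactly the substance of Premet's Theorem 2.7; you identify this correctly as the ``main obstacle'' and ``secondary point'', so there is no logical gap, but this is the one place where citing Premet is doing essentially all the work.
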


Let $\mathcal{D}(\bG) = \{{}^g\lambda_d \mid d \in \mathcal{D}(\Phi,\Delta)$ and $g \in \bG\} \subseteq \widecheck{X}(\bG)$ be all the conjugates of the cocharacters coming from the weighted Dynkin diagrams; this is clearly a $\bG$-invariant subset of $\widecheck{X}(\bG)$. The elements of $\mathcal{D}(\bG)$ are called \emph{Dynkin cocharacters}. Note that the map $\mathcal{D}(\Phi,\Delta) \to \mathcal{D}(\bG)$, given by $d \mapsto \lambda_d$, defines a bijection $\mathcal{D}(\Phi,\Delta) \to \mathcal{D}(\bG)/\bG$. This is clear from \cref{thm:classification-nil-orbits,eq:conj-cochar-spaces} because if $d, d' \in \mathcal{D}(\Phi,\Delta)$ are weighted Dynkin diagrams and $\lambda_{d'} = {}^g\lambda_d$ then $\bG\cdot\lie{g}(\lambda_{d'},2)_{\reg} = \bG\cdot\lie{g}(\lambda_d,2)_{\reg}$, which implies $d = d'$. With this we can trivially rephrase \cref{thm:classification-nil-orbits} to get that the map $\mathcal{D}(\bG) \to \mathcal{N}(\bG)/\bG$ defined by $\lambda \mapsto \bG \cdot \lie{g}(\lambda,2)_{\reg}$ induces a bijection $\mathcal{D}(\bG)/\bG \to \mathcal{N}(\bG)/\bG$.

If $e \in \mathcal{N}(\bG)$ is a nilpotent element then we set
\begin{equation*}
\mathcal{D}_e(\bG) = \{\lambda \in \mathcal{D}(\bG) \mid e \in \lie{g}(\lambda,2)_{\reg}\}.
\end{equation*}
For any closed subgroup $\bH \leqslant \bG$ we set $\mathcal{D}_e(\bG,\bH) := \mathcal{D}_e(\bG) \cap \widecheck{X}(\bH)$. Now assume $d \in \mathcal{D}(\Phi,\Delta)$ is the weighted Dynkin diagram such that $e \in \bG\cdot \lie{g}(\lambda_d,2)_{\reg}$. If $\lambda \in \mathcal{D}_e(\bG)$ then we must have $\bG\cdot \lie{g}(\lambda,2)_{\reg} = \bG\cdot \lie{g}(\lambda_d,2)_{\reg}$ so $\lambda = {}^g\lambda_d$. In particular, we have
\begin{equation}\label{eq:dynk-cochars-are-conj}
\mathcal{D}_e(\bG) \subseteq {}^\bG\lambda_d,
\end{equation}
where ${}^\bG\lambda_d$ is the $\bG$-orbit of $\lambda_d$. Note also that $\mathcal{D}_{g\cdot e}(\bG) = {}^g\mathcal{D}_e(\bG)$ for any $g \in \bG$. In particular, the group $C_\bG(e)$ acts on $\mathcal{D}_e(\bG)$. We will see in \cref{lem:assoc-dynkin-cocharacters} that this action is transitive.

\subsection{Action by the Longest Element}
Let $w_0 \in W = W_{\bG}(\bT_0)$ denote the longest element of the Weyl group, relative to the choice of simple roots $\Delta = \Delta(\bT_0,\bB_0)$. Recall that we have a permutation of the roots $\rho : \Phi \to \Phi$ given by $\rho(\alpha) = -{}^{w_0}\alpha$ which satisfies $\rho(\Delta) = \Delta$. We will need the following concerning this action on the set of weighted Dynkin diagrams.

\begin{prop}\label{prop:weighted-dynk-conj-to-neg}
For any weighted Dynkin diagram $d \in \mathcal{D}(\Phi,\Delta)$ we have $d\circ\rho = d$ and ${}^{w_0}\lambda_d = -\lambda_d$.
\end{prop}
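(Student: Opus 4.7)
The plan is to reduce both claims to the single assertion that $w_0\cdot h = -h$, where $\{e,h,f\} \subseteq \lie{g}_{\mathbb{C}}$ is an $\lie{sl}_2$-triple with $e \in \mathcal{O}$ and $h \in \Lie(\bT_{\mathbb{C}})$ dominant (here $\mathcal{O}$ is the orbit in $\bG_{\mathbb{C}}$ attached to $d$). Granting this, the identity $d \circ \rho = d$ follows from the direct calculation
\[
d(\rho(\alpha)) = (-w_0\alpha)(h) = -\alpha(w_0^{-1}\cdot h) = -\alpha(-h) = \alpha(h) = d(\alpha),
\]
using that the $W$-action on $\Lie(\bT_{\mathbb{C}})^*$ is contragredient to the action on $\Lie(\bT_{\mathbb{C}})$. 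For the second claim, for every $\alpha \in \Phi$ I would compute
\[
\langle \alpha,\, {}^{w_0}\lambda_d\rangle = \langle w_0^{-1}\alpha,\, \lambda_d\rangle = d(w_0\alpha) = (w_0\alpha)(h) = \alpha(w_0^{-1}\cdot h) = \alpha(-h) = -\langle \alpha, \lambda_d\rangle,
\]
and conclude ${}^{w_0}\lambda_d = -\lambda_d$ from the non-degeneracy of the pairing $\mathbb{Z}\Phi \times \mathbb{Z}\widecheck{\Phi} \to \mathbb{Z}$, combined with the fact (already noted in the excerpt) that $\lambda_d \in \mathbb{Z}\widecheck{\Phi}$ and that ${}^{w_0}\lambda_d$ lies in the same lattice.

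To prove $w_0\cdot h = -h$, I would construct a second $\lie{sl}_2$-triple realising $\mathcal{O}$ whose neutral element is exactly $-w_0\cdot h$. The starting observation is that $\{-f,-h,-e\}$ is itself an $\lie{sl}_2$-triple (the bracket relations follow at once from those for $\{e,h,f\}$) and, more importantly, that $-f$ lies in $\mathcal{O}$: integrating the triple to a homomorphism $\phi : \SL_2 \to \bG_{\mathbb{C}}$, a direct $2\times 2$ computation shows that $\phi\bigl(\begin{smallmatrix}0 & 1\\ -1 & 0\end{smallmatrix}\bigr)$ conjugates $e$ to $-f$, hence $-f \in \bG_{\mathbb{C}}\cdot e = \mathcal{O}$. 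Choosing a lift $\dot{w}_0 \in N_{\bG_{\mathbb{C}}}(\bT_{\mathbb{C}})$ of $w_0$ and applying $\Ad(\dot{w}_0)$ yields the triple $\{-\dot{w}_0\cdot f,\, -w_0\cdot h,\, -\dot{w}_0\cdot e\}$ whose nilpositive element is again in $\mathcal{O}$ and whose neutral element $-w_0\cdot h \in \Lie(\bT_{\mathbb{C}})$ is dominant, because the opposition involution $-w_0$ preserves the fundamental chamber.

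To finish, I would invoke the uniqueness of a dominant neutral element associated to $\mathcal{O}$: any two $\lie{sl}_2$-triples realising $\mathcal{O}$ with neutral element in the dominant chamber of $\Lie(\bT_{\mathbb{C}})$ share the same neutral element. After conjugating in $\bG_{\mathbb{C}}$ we may assume the nilpositive parts coincide; the uniqueness of Jacobson--Morozov triples up to $C_{\bG_{\mathbb{C}}}(e)^\circ$ then makes the two neutral elements $\bG_{\mathbb{C}}$-conjugate, and two $\bG_{\mathbb{C}}$-conjugate dominant elements of $\Lie(\bT_{\mathbb{C}})$ necessarily coincide. Applied to the original triple and the transformed one above, this forces $h = -w_0\cdot h$, as required. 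The main technical input is the elementary $\SL_2$-calculation establishing that $-f \in \mathcal{O}$; the remainder is bookkeeping with the $W$-action on weights and the uniqueness of the dominant $h$.
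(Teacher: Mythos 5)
Your argument is correct and uses essentially the same approach as the paper: both hinge on the observation that the element $\pm\bigl(\begin{smallmatrix}0&1\\-1&0\end{smallmatrix}\bigr)\in\SL_2$ conjugates $e$ to $-f$ and negates the neutral element, together with the uniqueness of the dominant weight (equivalently, of the weighted Dynkin diagram) attached to the nilpotent orbit. The only difference is organizational: the paper proves $d\circ\rho=d$ first from the dominance of ${}^{\dot w_0^{-1}\tilde s}\lambda_d$ and then deduces ${}^{w_0}\lambda_d=-\lambda_d$, whereas you establish $w_0\cdot h=-h$ directly and derive both claims from it.
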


\begin{proof}
The case $d=0$ holds trivially, so we assume $d \neq 0$. As this statement is purely about the action of the Weyl group on the cocharacter lattice we may answer this over $\mathbb{C}$. In fact, it is clear that it suffices to show this assuming that $\bG_{\mathbb{C}}$ is adjoint; so we assume this is the case.

Let us choose a homomorphism $\varphi : \SL_2(\mathbb{C}) \to \bG_{\mathbb{C}}$ such that the image $\mathrm{Im}(\varphi) \leqslant \bG_{\mathbb{C}}$ is a closed subgroup and the kernel is central. Moreover, let $d_1\varphi : \lie{sl}_2(\mathbb{C}) \to \lie{g}_{\mathbb{C}}$ be the derivative of $\varphi$ and set
\begin{align*}
e &= d_1\varphi\left(\begin{bmatrix}
0 & 0\\
1 & 0
\end{bmatrix}\right), &
h &= d_1\varphi\left(\begin{bmatrix}
-1 & 0\\
0 & 1
\end{bmatrix}\right),
&
f &= d_1\varphi\left(\begin{bmatrix}
0 & 1\\
0 & 0
\end{bmatrix}\right).
\end{align*}
We then have $\{e,h,f\} \subseteq \lie{g}_{\mathbb{C}}$ is an $\lie{sl}_2$-triple. It follows from \cite[5.5.5, 5.5.6]{Car}, which applies in characteristic $0$, that we may assume $\varphi$ is chosen such that $\{e,h,f\}$ realises the weighted Dynkin diagram $d$ as in \cref{eq:def-weighted-dynk}. Moreover, the remark after \cite[5.6.5]{Car} implies that
\begin{equation*}
\lambda_d(k) = \varphi\begin{bmatrix}
k^{-1} & 0\\
0 & k
\end{bmatrix}
\end{equation*}
for any $k \in \mathbb{G}_m$.

Consider the usual element
\begin{equation*}
s = \begin{bmatrix}
0 & -1\\
1 & 0
\end{bmatrix} \in \SL_2(\mathbb{C})
\end{equation*}
and set $\tilde{s} = \varphi(s) \in \bG_{\mathbb{C}}$. By a straightforward computation we have that ${}^{\tilde{s}}\lambda_d = -\lambda_d$. Now, if $g = \dot{w}_0^{-1}\tilde{s}$, with $\dot{w}_0 \in N_{\bG_{\mathbb{C}}}(\bT_{\mathbb{C}})$ a representative of $w_0$, then for any $\alpha \in \Delta$ we have
\begin{equation*}
\langle \alpha , {}^g\lambda_d \rangle = \langle \alpha, -{}^{\dot{w}_0^{-1}}\lambda_d) \rangle = \langle - {}^{w_0}\alpha, \lambda_d \rangle =\langle \rho(\alpha) , \lambda_d \rangle =  d(\rho(\alpha)) \geq 0
\end{equation*}
because $\rho(\alpha) \in \Delta$. This implies that $d \circ\rho \in \mathcal{D}(\Phi,\Delta)$ is a weighted Dynkin diagram, being afforded by the triple $\{g\cdot e,g\cdot h,g\cdot f\}$ and the Dynkin cocharacter ${}^g\lambda_d$. However, $g\cdot e$ is in the same $\bG_{\mathbb{C}}$-orbit as $e$ so $d\circ\rho = d$. Correspondingly, we must have ${}^g\lambda_d = \lambda_d$ so ${}^{\dot w_0}\lambda_d = {}^{\tilde{s}}\lambda_d = -\lambda_d$.
\end{proof}

\begin{cor}\label{cor:conj-to-neg-space}
For any weighted Dynkin diagram $d \in \mathcal{D}(\Phi,\Delta)$ we have $\bG\cdot \lie{g}(\lambda_d,-2)_{\reg} = \bG\cdot \lie{g}(\lambda_d,2)_{\reg}$.
\end{cor}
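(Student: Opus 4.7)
The plan is to deduce this immediately from \Cref{prop:weighted-dynk-conj-to-neg} together with two transformation identities already recorded in this section: the conjugation formula \Cref{eq:conj-cochar-spaces} and the sign symmetry $\lie{g}(-\lambda,i) = \lie{g}(\lambda,-i)$ noted in \Cref{subsection:cocharacters}. The key observation is that the proposition just proved gives a single element of $\bG$ that conjugates $\lambda_d$ to $-\lambda_d$, and this is enough to transport $\lie{g}(\lambda_d,2)_{\reg}$ into $\lie{g}(\lambda_d,-2)_{\reg}$ without disturbing the open dense property, so the $\bG$-orbits must coincide.

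Concretely, I would pick a representative $\dot{w}_0 \in N_{\bG}(\bT_0)$ of $w_0$, so that ${}^{\dot{w}_0}\lambda_d = -\lambda_d$ by \Cref{prop:weighted-dynk-conj-to-neg}. Applying \Cref{eq:conj-cochar-spaces} with $g = \dot{w}_0$ and $i = 2$ gives
\begin{equation*}
\dot{w}_0 \cdot \lie{g}(\lambda_d,2)_{\reg} \;=\; \lie{g}({}^{\dot{w}_0}\lambda_d,2)_{\reg} \;=\; \lie{g}(-\lambda_d,2)_{\reg} \;=\; \lie{g}(\lambda_d,-2)_{\reg},
\end{equation*}
where the last equality uses the sign symmetry of the grading. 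Saturating by $\bG$ on both sides yields the claimed equality of orbits. There is no real obstacle here: the substance of the corollary is already contained in \Cref{prop:weighted-dynk-conj-to-neg}, and the corollary merely records the geometric consequence on the regular parts of the $\pm 2$ weight spaces.
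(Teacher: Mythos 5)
Your proof is correct and follows exactly the same route as the paper: combine ${}^{w_0}\lambda_d = -\lambda_d$ from \Cref{prop:weighted-dynk-conj-to-neg} with the sign symmetry $\lie{g}(-\lambda,i)=\lie{g}(\lambda,-i)$ and the conjugation formula \Cref{eq:conj-cochar-spaces}. The paper's own proof is a one-liner that only displays the sign symmetry, leaving the invocation of the preceding proposition implicit (as it is stated as a corollary of it), so you have simply spelled out the same argument in full.
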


\begin{proof}
This is clear as $\lie{g}(-\lambda,i) = \lie{g}(\lambda,-i)$.
\end{proof}

\begin{cor}\label{cor:conj-to-neg-space-rat}
Any rational cocharacter $\lambda \in \mathcal{D}(\bG)^F := \mathcal{D}(\bG) \cap \widecheck{X}(\bG)^F$ is $\bG^F$-conjugate to $-\lambda$. In particular, for any rational element $e \in \lie{g}(\lambda,2)_{\reg}^F$ we have
\begin{equation*}
(\bG^F\cdot e) \cap \lie{g}(\lambda,-2)_{\reg} \neq \emptyset.
\end{equation*}
\end{cor}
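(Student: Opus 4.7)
The plan is to combine \cref{prop:weighted-dynk-conj-to-neg} with a standard Lang--Steinberg descent argument. Writing $\lambda = {}^g \lambda_d$ for some $g \in \bG$ and some weighted Dynkin diagram $d \in \mathcal{D}(\Phi,\Delta)$, and choosing any representative $\dot{w}_0 \in N_{\bG}(\bT_0)$ of the longest Weyl group element, \cref{prop:weighted-dynk-conj-to-neg} gives ${}^{\dot w_0} \lambda_d = -\lambda_d$, hence ${}^{g \dot w_0 g^{-1}} \lambda = -\lambda$. Thus the transporter
\begin{equation*}
T := \{x \in \bG \mid {}^x \lambda = -\lambda\}
\end{equation*}
is a non-empty coset of the stabiliser $C_{\bG}(\lambda)$, which by \cref{subsection:cocharacters} coincides with the Levi subgroup $\bL(\lambda) \leqslant \bP(\lambda)$ and is in particular connected.

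Next I would verify that $T$ is $F$-stable. For any $x \in \bG$ and cocharacter $\mu \in \widecheck{X}(\bG)$, a short computation starting from the definition $F\cdot\mu = F \circ \mu \circ F_q^{-1}$ yields the compatibility $F\cdot({}^x\mu) = {}^{F(x)}(F\cdot\mu)$. Combining this with $F\cdot\lambda = \lambda$ and the evident identity $F\cdot(-\lambda) = -\lambda$, we see that $x \in T$ implies $F(x) \in T$, so $T$ is $F$-stable.

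The key step is then to invoke Lang--Steinberg. Because $\bL(\lambda)$ is connected and $T$ is an $F$-stable coset of $\bL(\lambda)$, the set of fixed points $T^F$ is non-empty, and any $x \in T^F \subseteq \bG^F$ witnesses the first assertion. For the ``in particular'' statement, given an $F$-fixed element $e \in \lie{g}(\lambda,2)_{\reg}^F$ and such an $x$, applying \cref{eq:conj-cochar-spaces} yields
\begin{equation*}
x\cdot e \;\in\; x\cdot\lie{g}(\lambda,2)_{\reg} \;=\; \lie{g}({}^x\lambda,2)_{\reg} \;=\; \lie{g}(-\lambda,2)_{\reg} \;=\; \lie{g}(\lambda,-2)_{\reg},
\end{equation*}
while obviously $x\cdot e \in \bG^F\cdot e$ since $x \in \bG^F$, giving the required element of the intersection. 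No step here appears to pose a serious obstacle; the corollary is really just \cref{prop:weighted-dynk-conj-to-neg} transported to the rational setting via the connectedness of $C_{\bG}(\lambda)$.
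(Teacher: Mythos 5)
Your proposal is correct and takes essentially the same route as the paper: both start from \cref{prop:weighted-dynk-conj-to-neg} to find a (not yet rational) conjugating element, observe that the transporter from $\lambda$ to $-\lambda$ is an $F$-stable coset of the connected group $C_{\bG}(\lambda)$, and apply Lang--Steinberg to produce a rational representative; the ``in particular'' part then follows from \cref{eq:conj-cochar-spaces} exactly as you write. The only cosmetic difference is that you spell out the transporter-coset formulation explicitly, whereas the paper phrases the descent as a ``standard application of Lang--Steinberg inside $C_{\bG}(\lambda)$.''
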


\begin{proof}
We have $\lambda = {}^x\lambda_d$ for some Dynkin diagram $d \in \mathcal{D}(\Phi,\Delta)$ and element $x \in \bG$. Therefore by \cref{prop:weighted-dynk-conj-to-neg} there exists an element $g \in \bG$ such that ${}^g\lambda = -\lambda$. As $F\cdot \lambda = \lambda$ we must have $F\cdot (-\lambda) = -\lambda$ so ${}^{F(g)}\lambda = {}^g\lambda$. By a standard application of the Lang--Steinberg theorem inside the connected (Levi) subgroup $C_{\bG}(\lambda)$ we may thus assume that $g \in \bG^F$ and ${}^g\lambda = -\lambda$. After \cref{eq:conj-cochar-spaces} we get that
\begin{equation*}
g\cdot e \in \lie{g}({}^g\lambda,2)_{\reg} = \lie{g}(-\lambda,2)_{\reg} = \lie{g}(\lambda,-2)_{\reg},
\end{equation*}
which proves the last statement.
\end{proof}

\subsection{Associated Cocharacters}\label{ssec:cocharacter}
Let $e \in \mathcal{N}(\bG)$ be a nilpotent element. Recall that $e$ is said to be \emph{$
\bG$-distinguished}, or simply \emph{distinguished}, if every torus $\bT \leqslant C_{\bG}(e)$ is contained in $Z(\bG)$. Following \cite[5.3]{Jan04} we say that a cocharacter $\lambda \in \widecheck{X}(\bG)$ is \emph{associated to $e$} if the following hold:
\begin{itemize}
	\item $e \in \lie{g}(\lambda,2)$,
	\item there exists a Levi subgroup $\bL \leqslant \bG$ such that $e \in \mathcal{N}(\bL)$ is $\bL$-distinguished and $\lambda(\mathbb{G}_m) \leqslant \bL_{\der}$.
\end{itemize}
We will denote by $\mathcal{A}_e(\bG) \subseteq \widecheck{X}(\bG)$ the subset of cocharacters that are associated to $e$. If $\bH \leqslant \bG$ is a closed subgroup then, as above, we set $\mathcal{A}_e(\bG,\bH) = \mathcal{A}_e(\bG) \cap \widecheck{X}(\bH)$. The two sets of cocharacters $\mathcal{A}_e(\bG)$ and $\mathcal{D}_e(\bG)$ represent the two classifications of nilpotent orbits; namely the Bala--Carter classification and the classification by weighted Dynkin diagrams. The following clarifies the relationship between these two sets; this is essentially shown in \cite{Pre03}.

\begin{lem}[Premet]\label{lem:assoc-dynkin-cocharacters}
For any nilpotent element $e \in \mathcal{N}(\bG)$ we have $\mathcal{A}_e(\bG) = \mathcal{D}_e(\bG)$. Consequently, $C_\bG^\circ(e)$ acts transitively on $\mathcal{D}_e(\bG)$.
\end{lem}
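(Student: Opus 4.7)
The plan is to establish the set equality $\mathcal{A}_e(\bG) = \mathcal{D}_e(\bG)$ by proving two inclusions, and then deduce the transitivity statement from Premet's modular analogue of the Jacobson--Morozov theorem, which asserts that $C_\bG^\circ(e)$ acts transitively on $\mathcal{A}_e(\bG)$. Both sets are non-empty --- $\mathcal{A}_e(\bG)$ by Premet's existence result and $\mathcal{D}_e(\bG)$ by \cref{thm:classification-nil-orbits} --- and both are stable under the action of $C_\bG(e)$.

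For the inclusion $\mathcal{D}_e(\bG) \subseteq \mathcal{A}_e(\bG)$, I would first conjugate so that $\lambda = \lambda_d \in \widecheck{X}(\bT_0)$ is the Dynkin cocharacter attached to the weighted Dynkin diagram $d$ of the orbit $\bG\cdot e$, with $e \in \lie{g}(\lambda,2)_{\reg}$. Let $\bL \supseteq \bT_0$ be the standard Levi of $\bG$ whose simple roots are exactly $\{\alpha \in \Delta : d(\alpha) = 0\}$. Writing $\lambda_d = \sum_{\alpha \in \Delta} d(\alpha)\,\widecheck{\omega}_\alpha$, the cocharacter $\lambda$ is orthogonal to every simple root of $\bL$ and thus factors through $\bL_{\der}$. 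The Bala--Carter classification in good characteristic (due to Premet) then gives that $e$ is $\bL$-distinguished, exhibiting $\lambda \in \mathcal{A}_e(\bG)$.

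For the reverse inclusion $\mathcal{A}_e(\bG) \subseteq \mathcal{D}_e(\bG)$, let $\lambda \in \mathcal{A}_e(\bG)$ with Levi $\bL$ in which $e$ is distinguished and $\lambda(\mathbb{G}_m) \subseteq \bL_{\der}$. Applying \cref{thm:classification-nil-orbits} inside $\bL_{\der}$, let $\mu \in \widecheck{X}(\bL_{\der})$ be the Dynkin cocharacter of $e$ regarded as an element of $\mathcal{N}(\bL)$; then $e \in \lie{l}(\mu,2)_{\reg}^{\bL}$. Because $e$ is $\bL$-distinguished, Premet's transitivity applied inside $\bL$ shows $\mu$ lies in the $C_{\bL}^\circ(e)$-orbit of $\lambda$, so one may replace $\lambda$ by $\mu$. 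It remains to verify that $e$ lies in the $\bG$-regular locus $\lie{g}(\mu,2)_{\reg}$: by \cref{thm:classification-nil-orbits} applied to $\bG$, there is a unique $\bG$-orbit whose intersection with $\lie{g}(\mu,2)$ is open and dense, and identifying this orbit with $\bG\cdot e$ via the matching of weighted Dynkin diagrams gives $\mu \in \mathcal{D}_e(\bG)$.

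Once the equality is established, the transitivity of $C_\bG^\circ(e)$ on $\mathcal{D}_e(\bG)$ is immediate from the corresponding result on $\mathcal{A}_e(\bG)$. The main obstacle is the delicate matching step in the reverse inclusion: one must confirm that the Dynkin cocharacter $\mu$ constructed inside $\bL_{\der}$ is a genuine Dynkin cocharacter of $\bG$ (i.e.\ lies in $\mathcal{D}(\bG)$), and that the $\bL$-regular locus $\lie{l}(\mu,2)_{\reg}^{\bL}$ is contained in the $\bG$-regular locus $\lie{g}(\mu,2)_{\reg}$ rather than being a proper subset of some smaller $C_\bG(\mu)$-orbit. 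This relies on the compatibility of the Bala--Carter parameters under the Levi inclusion $\bL \hookrightarrow \bG$ in good characteristic.
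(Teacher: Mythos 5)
Your overall strategy (prove $\mathcal{D}_e(\bG)\subseteq\mathcal{A}_e(\bG)$ via Bala--Carter, then get equality) is the right general shape, but there is a concrete error in the first inclusion and an acknowledged gap in the second, and the paper closes both with an orbit argument you are missing.

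For the inclusion $\mathcal{D}_e(\bG)\subseteq\mathcal{A}_e(\bG)$, the Levi you construct is wrong. The standard Levi with simple roots $\{\alpha\in\Delta : d(\alpha)=0\}$ is $\bL(\lambda_d)=C_\bG(\lambda_d)$, the Jacobson--Morozov Levi; it is \emph{not} the Bala--Carter Levi, and in general $e\notin\bL(\lambda_d)$ at all (take $e$ regular in $\SL_3$, where $d=(2,2)$ and this Levi is just $\bT_0$). Worse, the deduction ``orthogonal to every simple root of $\bL$ $\Rightarrow$ factors through $\bL_{\der}$'' is backwards: $\langle\alpha,\lambda_d\rangle=0$ for all roots $\alpha$ of $\bL$ forces $\lambda_d(\mathbb{G}_m)\subseteq Z^\circ(\bL)$, and since $Z^\circ(\bL)\cap\bL_{\der}$ is finite a nontrivial cocharacter cannot land in both. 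The correct Levi is a minimal one containing $e$; the paper does not reconstruct it but simply cites Premet \cite[Prop.~2.5]{Pre03} and McNinch \cite[Prop.~16]{Mcn04} (via \cite[3.22]{Tay16}) to conclude that $\lambda_d$ is associated to any $f\in\lie{g}(\lambda_d,2)_{\reg}$, then transports by $\bG$-conjugation to deal with an arbitrary $\lambda={}^g\lambda_d\in\mathcal{D}_e(\bG)$.

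For the reverse inclusion you attempt a hands-on matching of regular loci and acknowledge you cannot close it; the paper avoids this entirely. Once $\mathcal{D}_e(\bG)\subseteq\mathcal{A}_e(\bG)$ is in hand, the equality is forced by a one-line orbit argument: by \cite[5.3]{Jan04} the group $C_\bG^\circ(e)$ acts transitively on $\mathcal{A}_e(\bG)$, and it stabilises the non-empty subset $\mathcal{D}_e(\bG)$ (both facts already recorded in \cref{subsec:weighted-dynkin}), so a $C_\bG^\circ(e)$-stable non-empty subset of a transitive $C_\bG^\circ(e)$-set must be the whole thing. This also delivers the transitivity claim simultaneously, whereas your plan treats it as a corollary to be proved after the two inclusions. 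If you want to salvage your write-up, drop the explicit Levi construction and the regular-locus matching, and replace both with the citation for the first inclusion plus this stabiliser argument.
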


\begin{proof}
By \cref{thm:classification-nil-orbits} there exists a unique weighted Dynkin diagram $d \in \mathcal{D}(\Phi,\Delta)$ such that $e \in \bG \cdot \lie{g}(\lambda_d,2)_{\reg}$. Arguing as in the proof of \cite[3.22]{Tay16} it follows from \cite[Prop.~2.5]{Pre03} and \cite[Prop.~16]{Mcn04} that $\lambda_d$ is associated to any nilpotent element $f \in \lie{g}(\lambda_d,2)_{\reg}$. In particular, there exists a Levi subgroup $\bM \leqslant \bG$ such that $f \in \mathcal{N}(\bM)$ is $\bM$-distinguished and $\lambda_d(\mathbb{G}_m) \leqslant \bM_{\der}$.

Now assume $\lambda \in \mathcal{D}_e(\bG)$ is a cocharacter then $\lambda = {}^g\lambda_d$ for some $g \in \bG$ by \cref{eq:dynk-cochars-are-conj}. By assumption $e \in \lie{g}(\lambda,2)_{\reg} = g\cdot\lie{g}(\lambda_d,2)_{\reg}$ so there exists an element $f \in \lie{g}(\lambda_d,2)_{\reg}$ such that $e = g\cdot f$. If $\bL = {}^g\bM$ then we have $\Lie(\bL) = g\cdot \Lie(\bM)$, so $e \in \Lie(\bL)$, and $\lambda(\mathbb{G}_m) \leqslant \bL_{\der} = {}^g\bM_{\der}$. As $C_{\bL}(e) = {}^gC_{\bM}(f)$ we clearly have $e$ is $\bL$-distinguished so $\lambda \in \mathcal{A}_e(\bG)$ is associated to $e$. This shows $\mathcal{D}_e(\bG) \subseteq \mathcal{A}_e(\bG)$.

Finally, by \cite[5.3]{Jan04} the group $C_{\bG}^{\circ}(e)$ acts transitively on $\mathcal{A}_e(\bG)$. Since it stabilises $\mathcal{D}_e(\bG)$ (see \cref{subsec:weighted-dynkin}) we must have $\mathcal{D}_e(\bG) = \mathcal{A}_e(\bG)$.
\end{proof}

\section{Unipotent Classes}\label{sec:unipotent-classes}

\subsection{Springer Morphisms}\label{subsec:springer-morph}
We will denote by $\mathcal{U}(\bG) \subseteq \bG$ the unipotent variety of $\bG$. Clearly the conjugation action of $\bG$ on itself preserves $\mathcal{U}(\bG)$ and any orbit of this restricted action is called a \emph{unipotent class}. The resulting set of orbits will be denoted by $\mathcal{U}(\bG)/\bG$.

As $\bG$ is equipped with a Frobenius endomorphism $F : \bG \to \bG$ so is its Lie algebra $\lie{g}$; we will denote this again by $F : \lie{g} \to \lie{g}$. Note that the adjoint action is $F$-equivariant, in the sense that
\begin{equation*}
F(g\cdot x) = F(g)\cdot F(x)
\end{equation*}
for any $g \in \bG$ and $x \in \lie{g}$. Recall that a \emph{Springer homeomorphism} is a homeomorphism $\phi_{\spr} : \mathcal{U}(\bG) \to \mathcal{N}(\bG)$ which is $\bG$-equivariant, in the sense that $\phi_{\spr}({}^g u) = g \cdot \phi_{\spr}(u)$ for all $g \in \bG$ and $u \in \mathcal{U}(\bG)$, and $F$-equivariant, in the sense that $F\circ\phi_{\spr} = \phi_{\spr}\circ F$. If such a homeomorphism exists then we obtain a corresponding $F$-equivariant bijection 
$$\mathcal{U}(\bG)/\bG \simto \mathcal{N}(\bG)/\bG$$
between the unipotent classes and nilpotent orbits of $\bG$.

In general there can be many different Springer homeomorphisms, as observed by Serre \cite[\S10]{Mcn05}. In this regard, we will need the following classic result of Springer and Serre, see \cite[III, 3.12]{SpSt} and \cite[\S10]{Mcn05}.

\begin{thm}[Springer, Serre]\label{thm:springer-morphism}
Recall that $p$ is assumed to be good for $\bG$. There exists a Springer homeomorphism $\phi_{\spr} : \mathcal{U}(\bG) \to \mathcal{N}(\bG)$ and the resulting bijection $\mathcal{U}(\bG)/\bG \simto \mathcal{N}(\bG)/\bG$ given by $\phi_{\spr}$ is independent of the choice of $\phi_{\spr}$.
\end{thm}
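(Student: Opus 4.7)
The theorem splits into two assertions: existence of some Springer homeomorphism under the good-characteristic hypothesis, and independence of the induced orbit bijection from the particular choice. The plan is to address these in turn.

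For existence, I would first reduce to the case where $\bG$ is simple and simply connected; the general case follows by combining the simply-connected case with isotypic morphisms and the obvious correspondence for central tori. In the simply-connected simple case, under the assumption that $p$ is good, the classical construction of Springer (cf.~\cite[III,~3.12]{SpSt}), or the uniform variant due to Bardsley--Richardson, produces a $\bG$-equivariant isomorphism of varieties $\phi: \mathcal{U}(\bG) \to \mathcal{N}(\bG)$. To secure $F$-equivariance, I would either choose the input data (an invariant symmetric bilinear form on $\lie{g}$ together with a Chevalley-type system of generators) so that it is $F$-stable, or begin with any $\bG$-equivariant $\phi$ and absorb the discrepancy between $\phi$ and $F^{-1} \circ \phi \circ F$ by a Lang--Steinberg style argument inside the group of $\bG$-equivariant automorphisms of $\mathcal{N}(\bG)$.

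For independence, given two Springer homeomorphisms $\phi_1, \phi_2$, the composite $\psi := \phi_2 \circ \phi_1^{-1} : \mathcal{N}(\bG) \to \mathcal{N}(\bG)$ is a $\bG$-equivariant homeomorphism, and it is enough to prove the following rigidity statement: any such $\psi$ preserves every $\bG$-orbit. Two a-priori constraints are immediate. First, $\psi$ is a Zariski homeomorphism, so it permutes the finite orbit set $\mathcal{N}(\bG)/\bG$ in a closure- and dimension-preserving fashion. Second, bijectivity combined with $\bG$-equivariance of both $\psi$ and $\psi^{-1}$ forces the stabilisers to coincide, namely $C_\bG(e) = C_\bG(\psi(e))$ for every $e \in \mathcal{N}(\bG)$. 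The rigidity input then comes from the weighted Dynkin classification in \cref{thm:classification-nil-orbits} together with \cref{lem:assoc-dynkin-cocharacters}: using the Bala--Carter/associated-cocharacter picture, one recovers an associated cocharacter $\lambda \in \mathcal{A}_e(\bG)$ from $C_\bG(e)$ (up to $C_\bG^\circ(e)$-conjugacy), and shows that both $e$ and $\psi(e)$ lie, up to conjugation, in $\lie{g}(\lambda,2)_{\reg}$; this forces $\bG \cdot \psi(e) = \bG \cdot e$ and therefore identifies the two orbit bijections.

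The hard part is this last rigidity step. The bare equality $C_\bG(e) = C_\bG(\psi(e))$ does not transparently imply $\bG$-conjugacy; one has to extract from the centraliser both a Levi $\bL$ and the distinguished orbit datum in $\bL$, which is precisely where the good-characteristic hypothesis enters through the results of Premet invoked in \cref{lem:assoc-dynkin-cocharacters}. Once this rigidity is in hand, both $\phi_1$ and $\phi_2$ induce the same map on the finite set of $\bG$-orbits, completing the proof.
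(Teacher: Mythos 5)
The paper does not actually prove this theorem; it is cited as a result of Springer and Serre, with references to \cite[III,~3.12]{SpSt} for the existence and to a remark of Serre recorded in \cite[\S10]{Mcn05} for the independence of the induced orbit bijection. So there is no paper-internal proof against which to compare your argument, only these references.

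Your decomposition into existence and independence, and your list of a-priori constraints for the independence step (dimension, closure relations, and the centraliser equality $C_\bG(e) = C_\bG(\psi(e))$), are all correct. But the actual rigidity step is left as a gap, and the route you sketch through the Bala--Carter picture does not close it. The phrase ``one recovers an associated cocharacter $\lambda \in \mathcal{A}_e(\bG)$ from $C_\bG(e)$'' is problematic: the image of an associated cocharacter is never contained in $C_\bG(e)$ (it acts on $e$ by weight $2$, not weight $0$), so $\lambda$ cannot be extracted from $C_\bG(e)$ as a subgroup. What one \emph{can} extract is a Levi $\bL = C_\bG(\bS)$ for $\bS$ a maximal torus of $C_\bG^\circ(e)$, and then $e$ and $\psi(e)$ are both $\bL$-distinguished with $C_\bL(e) = C_\bL(\psi(e))$. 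But two distinguished nilpotents of $\bL$ sharing a centraliser are not visibly conjugate: you would need to know their distinguished parabolics agree, and that is not automatic from the centraliser alone. So the conclusion ``both $e$ and $\psi(e)$ lie, up to conjugation, in $\lie{g}(\lambda,2)_\reg$'' is asserted but not derived; it is essentially the entire content of the theorem.

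What actually closes the gap in Serre's argument is additional input beyond the centraliser equality. Two ingredients are available. First, a limit argument: since $\psi$ commutes with the $\lambda(\mathbb{G}_m)$-action and fixes $0$ (the unique closed orbit), and since $0 \in \overline{\lambda(\mathbb{G}_m)\cdot e}$, one gets $0 \in \overline{\lambda(\mathbb{G}_m)\cdot \psi(e)}$, which forces $\psi(e)$ into the strictly positive (or strictly negative, but conjugate by \cref{prop:weighted-dynk-conj-to-neg}) part of the $\lambda$-grading. Second, a double-centraliser constraint: separability in good characteristic gives $\Lie C_\bG(e) = \lie{c}_{\lie{g}}(e)$, whence $\psi(e) \in \lie{c}_{\lie{g}}(\lie{c}_{\lie{g}}(e))$, a commutative subalgebra whose nilpotent part is small enough to pin down the orbit (for $\lie{gl}_n$ it is $K[e]$, where the claim is elementary; in general this is where the good-characteristic structure theory enters). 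Neither of these appears in your sketch, so as written the independence half of the theorem is not established.
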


We often have a stronger statement available to us concerning Springer homeomorphisms. Namely, that there exists a Springer homeomorphism which is an isomorphism of varieties; we call such a morphism a \emph{Springer isomorphism}. In general, Springer isomorphisms need not exist even in good characteristic, as is exhibited by the example of $\bG = \PGL_2$ when $p=2$, see \cite[7.0.3]{Sob18}. However, if $\bG$ is proximate, in the following sense, then every Springer homeomorphism is an isomorphism, see \cite[3.4]{Tay16}.

\begin{definition}[{}{\cite[2.10]{Tay16}}]\label{def:proximate}
We say $\bG$ is \emph{proximate} if some (any) simply connected covering $\bG_{\simc} \to \bG_{\der}$ of the derived subgroup $\bG_{\der} \leqslant \bG$ is a separable morphism.
\end{definition}

The assumption that $\bG$ is proximate is required for the construction of Kawanaka's GGGCs. We take this opportunity to mention the following fact concerning proximate groups, which will be used in \cref{sec:GGGCs}.

\begin{lem}\label{lem:subs-prox-are-prox}
Assume $\bG$ is proximate and $p$ is good for $\bG$ then any closed connected reductive subgroup $\bH \leqslant \bG$ of maximal rank is also proximate.
\end{lem}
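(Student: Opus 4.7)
The plan is to translate \emph{proximate} into a numerical condition on the cocharacter lattice and then apply Steinberg's theorem on torsion in root systems. First, I would observe that because the centre of a simply connected semisimple group is a diagonalisable finite group scheme whose elementary divisors are those of the abstract fundamental group, the kernel of the isogeny $\bG_{\simc} \to \bG_{\der}$ is \'etale if and only if $p \nmid |\pi_1(\bG_{\der})|$ as an abstract group. Thus $\bG$ being proximate is equivalent to $p \nmid |\pi_1(\bG_{\der})|$, and similarly $\bH$ being proximate is equivalent to $p \nmid |\pi_1(\bH_{\der})|$.

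Next, fix a common maximal torus $\bT \leqslant \bH \leqslant \bG$ (available because $\bH$ has maximal rank) and let $\widecheck{X} = \widecheck{X}(\bT)$, with coroot subsystems $\widecheck{\Phi}_\bH \subseteq \widecheck{\Phi}_\bG$. The maximal torus $\bT \cap \bH_{\der}$ of $\bH_{\der}$ is the connected subgroup of $\bT$ generated by the images of the coroots in $\widecheck{\Phi}_\bH$, so its cocharacter lattice is the saturation of $\mathbb{Z}\widecheck{\Phi}_\bH$ inside $\widecheck{X}$, namely $\widecheck{X} \cap \mathbb{Q}\widecheck{\Phi}_\bH$. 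A short $\Ext^1$ computation then gives $|\pi_1(\bH_{\der})| = [\widecheck{X} \cap \mathbb{Q}\widecheck{\Phi}_\bH : \mathbb{Z}\widecheck{\Phi}_\bH]$, with the analogous formula for $\bG$. Factoring this index through the intermediate lattice $\mathbb{Z}\widecheck{\Phi}_\bG \cap \mathbb{Q}\widecheck{\Phi}_\bH$ yields
\begin{equation*}
|\pi_1(\bH_{\der})| = [\widecheck{X} \cap \mathbb{Q}\widecheck{\Phi}_\bH : \mathbb{Z}\widecheck{\Phi}_\bG \cap \mathbb{Q}\widecheck{\Phi}_\bH] \cdot [\mathbb{Z}\widecheck{\Phi}_\bG \cap \mathbb{Q}\widecheck{\Phi}_\bH : \mathbb{Z}\widecheck{\Phi}_\bH].
\end{equation*}
The first factor injects into the torsion subgroup of $\widecheck{X}/\mathbb{Z}\widecheck{\Phi}_\bG$, hence divides $|\pi_1(\bG_{\der})|$, which is prime to $p$ because $\bG$ is proximate. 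The second factor is precisely the torsion part of $\mathbb{Z}\widecheck{\Phi}_\bG/\mathbb{Z}\widecheck{\Phi}_\bH$; by Steinberg's theorem on torsion primes in root systems, its prime divisors lie among the torsion primes of $\widecheck{\Phi}_\bG$, which in each crystallographic type coincide with the bad primes of $\bG$. Thus both factors are coprime to $p$, and hence so is $|\pi_1(\bH_{\der})|$, proving $\bH$ proximate.

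The main obstacle is the second factor: one needs to know that the torsion of $\mathbb{Z}\widecheck{\Phi}_\bG/\mathbb{Z}\widecheck{\Phi}_\bH$ only involves bad primes whenever $\widecheck{\Phi}_\bH$ is a closed subsystem of $\widecheck{\Phi}_\bG$, which is exactly Steinberg's classical theorem on torsion primes in root systems. The rest of the argument is formal lattice bookkeeping once proximateness is translated into non-$p$-divisibility of the appropriate cocharacter-lattice index.
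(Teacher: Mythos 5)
Your argument is correct and rests on the same two ingredients as the paper's proof, namely the characterisation of proximateness via $p$-torsion in $\widecheck{X}/\mathbb{Z}\widecheck{\Phi}_\bG$ (equivalently, non-$p$-divisibility of $|\pi_1(\bG_{\der})|$) and the Springer--Steinberg result that $\mathbb{Z}\widecheck{\Phi}_\bG/\mathbb{Z}\widecheck{\Phi}_\bH$ has no $p$-torsion when $p$ is good for $\bG$; the paper simply packages these through the short exact sequence $0 \to \mathbb{Z}\widecheck{\Phi}_\bG/\mathbb{Z}\widecheck{\Phi}_\bH \to \widecheck{X}/\mathbb{Z}\widecheck{\Phi}_\bH \to \widecheck{X}/\mathbb{Z}\widecheck{\Phi}_\bG \to 0$ rather than by factoring an index through an intermediate lattice. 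One small inaccuracy worth flagging: torsion primes form a \emph{subset} of the bad primes but need not coincide with them (type $\C_n$ has $2$ bad but no torsion primes), although the containment in one direction is all your argument uses.
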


\begin{proof}
Let $\bT \leqslant \bH$ be a maximal torus of $\bH$ then, by assumption, we have $\bT \leqslant \bG$ is a maximal torus of $\bG$. Let $\mathcal{R}(\bG,\bT) = (X,\Phi,\widecheck{X},\widecheck{\Phi})$ and $\mathcal{R}(\bH,\bT) = (X,\Psi,\widecheck{X},\widecheck{\Psi})$ be the corresponding root data. As remarked in \cite[2.15]{Tay16} we have $\bG$ is proximate if and only if $\widecheck{X}/\mathbb{Z}\widecheck{\Phi}$ has no $p$-torsion.

Now, we have a short exact sequence
\begin{equation*}
0 \longrightarrow \mathbb{Z}\widecheck{\Phi}/\mathbb{Z}\widecheck{\Psi} \longrightarrow \widecheck{X}/\mathbb{Z}\widecheck{\Psi} \longrightarrow \widecheck{X}/\mathbb{Z}\widecheck{\Phi} \longrightarrow 0.
\end{equation*}
Since $p$ is good for $\bG$ we have $\mathbb{Z}\widecheck{\Phi}/\mathbb{Z}\widecheck{\Psi}$ has no $p$-torsion by \cite[4.5]{SpSt}. On the other hand, $\widecheck{X}/\mathbb{Z}\widecheck{\Phi}$ has no $p$-torsion since $\bG$ is proximate. Therefore by the previous short exact sequence, $\widecheck{X}/\mathbb{Z}\widecheck{\Psi}$ has no $p$-torsion and $\bH$ is proximate.
\end{proof}

\subsection{Canonical Parabolic and Levi Subgroups}\label{subsec:canonical-para-levi}
We assume fixed a Springer homeomorphism $\phi_{\spr} : \mathcal{U}(\bG) \to \mathcal{N}(\bG)$ as in \cref{thm:springer-morphism}. Using this Springer homeomorphism we may invoke the classification of nilpotent orbits, as in \cref{thm:classification-nil-orbits}, to obtain a classification of unipotent conjugacy classes. Now, for any unipotent element $u \in \mathcal{U}(\bG)$ and closed subgroup $\bH \leqslant \bG$ we set
\begin{equation*}
\mathcal{D}_u(\bG) := \mathcal{D}_{\phi_{\spr}(u)}(\bG) \qquad \text{and} \qquad \mathcal{D}_u(\bG,\bH) := \mathcal{D}_{\phi_{\spr}(u)}(\bG,\bH).
\end{equation*}

If $\bH$ is any closed subgroup of $\bG$ containing $u$, we will write $A_{\bH}(u)$ for the component group $C_{\bH}(u)/C_{\bH}^{\circ}(u)$ of the centraliser $C_\bH(u)$. We record the following concerning these component groups, see \cite[\S1.4]{Sp85} or \cite[\S 8.B]{Bon06}.

\begin{lem}\label{lem:inj-comp-grp}
Assume $\bG$ is connected reductive and $\bL \leqslant \bG$ is a Levi subgroup. If $g \in \bL$ then the natural homomorphism $A_{\bL}(g) \to A_{\bG}(g)$ is injective.
\end{lem}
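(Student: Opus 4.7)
The statement $A_\bL(g) \hookrightarrow A_\bG(g)$ is equivalent to the inclusion
\begin{equation*}
C_\bL(g) \cap C_\bG^\circ(g) \subseteq C_\bL^\circ(g).
\end{equation*}
The plan is to exhibit the left-hand side as the set of fixed points of a torus acting on the connected group $C_\bG^\circ(g)$, and then argue that this fixed-point set is connected.

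First, realise $\bL$ as a centraliser of a torus: set $\bS := Z(\bL)^\circ$, so that $\bL = C_\bG(\bS)$. Because $g \in \bL$ commutes with $\bS$ and $\bS$ is connected, we have $\bS \leqslant C_\bG^\circ(g)$. Any $x \in C_\bL(g) \cap C_\bG^\circ(g)$ lies in $\bL = C_\bG(\bS)$, so $x$ belongs to $C_{C_\bG^\circ(g)}(\bS)$. Conversely, $C_{C_\bG^\circ(g)}(\bS) \subseteq C_\bL(g) \cap C_\bG^\circ(g)$ is immediate. Thus, showing that $C_{C_\bG^\circ(g)}(\bS)$ is connected suffices: since it contains $1$ and is contained in $C_\bL(g)$, it must lie inside $C_\bL^\circ(g)$.

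To prove this connectedness I would reduce to the unipotent case via Jordan decomposition. Write $g = su$ with $s, u \in \bL$ commuting, $s$ semisimple, $u$ unipotent. Setting $\bG_s := C_\bG^\circ(s)$ (connected reductive by Steinberg), one checks that $C_\bG^\circ(g) = C_{\bG_s}^\circ(u)$, and that $\bL_s := C_\bL^\circ(s) = C_{\bG_s}(\bS)$ is a Levi subgroup of $\bG_s$ containing $u$ and $\bS$. The claim $C_\bL(g)\cap C_\bG^\circ(g)\subseteq C_\bL^\circ(g)$ then translates into the analogous statement for the unipotent element $u$ inside the Levi pair $\bL_s \leqslant \bG_s$, so we may replace $(\bG,\bL,g)$ by $(\bG_s,\bL_s,u)$ and assume $g=u$ is unipotent.

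In the unipotent case, fix a cocharacter $\lambda \in \mathcal{A}_u(\bL) \subseteq \mathcal{A}_u(\bG)$, which exists by \cref{lem:assoc-dynkin-cocharacters} (and the fact that a Levi of a Levi is a Levi). The Premet--Jantzen structure of the centraliser gives a semidirect decomposition
\begin{equation*}
C_\bG^\circ(u) = \bigl( C_\bG(u) \cap \bL(\lambda) \bigr) \ltimes R_u\bigl(C_\bG^\circ(u)\bigr),
\end{equation*}
with reductive part $C_\bG(u,\lambda) := C_\bG(u) \cap \bL(\lambda)$. Because $\lambda \in \widecheck{X}(\bL) = \widecheck{X}(C_\bG(\bS))$ the torus $\bS$ lies in $\bL(\lambda)$ and acts on this decomposition preserving each factor. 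Taking $\bS$-fixed points therefore gives
\begin{equation*}
C_{C_\bG^\circ(u)}(\bS) = C_{C_\bG(u,\lambda)}(\bS) \ltimes C_R(\bS),
\end{equation*}
where $R := R_u(C_\bG^\circ(u))$. The first factor is a centraliser of a torus in the connected reductive group $C_\bG(u,\lambda)$, hence connected by Steinberg; the second is the fixed-point set of a torus acting on a connected unipotent group, which is connected as well (in good characteristic one can trivialise $R$ equivariantly via its Lie algebra). Hence $C_{C_\bG^\circ(g)}(\bS)$ is connected and the result follows.

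The main technical obstacle is the verification in the final step that the cocharacter-induced decomposition of $C_\bG^\circ(u)$ is compatible with conjugation by $\bS$ and that the unipotent radical $R_u(C_\bG^\circ(u))$ has connected $\bS$-fixed points; this is where the good characteristic hypothesis on $p$ plays its role via Premet's theorem on the structure of unipotent centralisers.
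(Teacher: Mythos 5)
Your opening move is exactly the paper's: identify the kernel with $C_\bL(g)\cap C_\bG^\circ(g) = \bL\cap C_\bG^\circ(g)$, then use $\bL = C_\bG(Z^\circ(\bL))$ to rewrite it as $C_{C_\bG^\circ(g)}(\bS)$ with $\bS := Z^\circ(\bL)$. At that point, however, the proof is essentially done: the centraliser of a torus in \emph{any} connected linear algebraic group is connected — this is a classical result, valid in all characteristics, and it is precisely what the paper cites (Springer, \emph{Linear Algebraic Groups}, 6.4.7(i); equivalently Borel 11.12). You instead reprove this connectedness from scratch by a Jordan decomposition reduction to the unipotent case, followed by the Premet--Jantzen semidirect decomposition $C_\bG^\circ(u) = C_\bG(u,\lambda)\ltimes R_u(C_\bG^\circ(u))$ and a separate analysis of the reductive and unipotent pieces. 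This works, but it imports the good-characteristic hypothesis on $p$ and a substantial amount of structure theory that the statement does not require. It also leaves some details implicit that would need checking (e.g.\ that passing to $(\bG_s,\bL_s,u)$ really preserves the torus $\bS$ and that $\bL_s$ is a Levi of $\bG_s$ with $\bL_s = C_{\bG_s}(\bS)$). The classical one-line citation is both more general and more robust; when you find yourself reaching for Premet's theorem to prove something about component groups of arbitrary (not necessarily unipotent) elements, it is worth pausing to ask whether a softer general fact about torus centralisers already does the job.
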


\begin{proof}
The kernel of this map is $C_{\bL}(g) \cap C_{\bG}^{\circ}(g) = \bL \cap C_{\bG}^{\circ}(g)$. As $\bL$ is a Levi subgroup $\bL = C_{\bG}(Z^{\circ}(\bL))$ so the kernel is $C_{C_{\bG}^{\circ}(g)}(Z^{\circ}(\bL))$ which is connected by \cite[6.4.7(i)]{Spr09}.
\end{proof}

\begin{prop}\label{prop:bumper-para-levi}
Assume $u \in \bG$ is a unipotent element and $\lambda \in \mathcal{D}_u(\bG)$ is a Dynkin cocharacter. If $C_{\bL(\lambda)}(u) := \bL(\lambda) \cap C_{\bG}(u)$ then the following hold:
\begin{enumerate}
	\item $C_{\bG}(u) = C_{\bL(\lambda)}(u)\cdot C_{\bU(\lambda)}(u)$ and $C_{\bU(\lambda)}(u)$ is the unipotent radical of $C_{\bG}(u)$,
	\item the natural map $A_{\bL(\lambda)}(u) \to A_{\bG}(u)$ is an isomorphism.
\end{enumerate}
\end{prop}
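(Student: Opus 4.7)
The plan is to transfer the problem through the Springer homeomorphism of \cref{thm:springer-morphism} into the Lie algebra setting, and then to invoke the well-known structure theorem for the centraliser of a nilpotent element along an associated cocharacter. More precisely, I would set $e = \phi_{\spr}(u) \in \mathcal{N}(\bG)$; since $\phi_{\spr}$ is a $\bG$-equivariant bijection we have $C_{\bG}(u) = C_{\bG}(e)$, and this identification respects the intersections with $\bL(\lambda)$ and with $\bU(\lambda)$. Moreover \cref{lem:assoc-dynkin-cocharacters} gives $\lambda \in \mathcal{D}_e(\bG) = \mathcal{A}_e(\bG)$, so $\lambda$ is an associated cocharacter for $e$, and in particular $e \in \lie{g}(\lambda,2)$.

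For part (i), I would appeal to the standard Bala--Carter/Jacobson--Morozov decomposition of the centraliser of a nilpotent element along an associated cocharacter. In good characteristic this reads
\begin{equation*}
C_{\bG}(e) \subseteq \bP(\lambda), \qquad C_{\bG}(e) = C_{\bL(\lambda)}(e) \ltimes C_{\bU(\lambda)}(e),
\end{equation*}
with $C_{\bU(\lambda)}(e)$ connected and equal to the unipotent radical of $C_{\bG}^{\circ}(e)$. This is essentially the content of \cite[\S5]{Jan04}, combined with \cite[Prop.~2.5]{Pre03} to transport the classical characteristic-zero statement to good positive characteristic. Translating back via $\phi_{\spr}$ gives (i) immediately.

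For part (ii), the natural map $\iota : A_{\bL(\lambda)}(u) \to A_{\bG}(u)$ is injective by \cref{lem:inj-comp-grp}, as $\bL(\lambda)$ is a Levi subgroup of $\bG$. For surjectivity, take $g \in C_{\bG}(u)$; by (i) we may factor $g = xv$ with $x \in C_{\bL(\lambda)}(u)$ and $v \in C_{\bU(\lambda)}(u)$, and since $C_{\bU(\lambda)}(u)$ is connected we have $v \in C_{\bG}^{\circ}(u)$, so the class of $g$ in $A_{\bG}(u)$ coincides with $\iota$ applied to the class of $x$. The main obstacle in this scheme is ensuring that the semi-direct Levi decomposition of $C_{\bG}(e)$, and in particular the connectedness of $C_{\bU(\lambda)}(e)$, hold in our setting of a possibly non-simply-connected connected reductive $\bG$ in good characteristic; this is precisely where the good-prime assumption, together with Premet's refinements in \cite{Pre03}, do the work.
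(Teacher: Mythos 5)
The overall route—reduce (i) to the structure of nilpotent centralisers along associated cocharacters, then deduce (ii)—is the same as the paper's, but there is a concrete error in your proof of (ii). You invoke \cref{lem:inj-comp-grp} to get injectivity of $A_{\bL(\lambda)}(u) \to A_{\bG}(u)$ "as $\bL(\lambda)$ is a Levi subgroup". But that lemma has the hypothesis $g \in \bL$, which fails here: a nonzero nilpotent $e = \phi_{\spr}(u)$ has weight $2$ for $\lambda$, so $u \notin \bL(\lambda)$ in general, and the paper explicitly flags this in the remark immediately after the statement. The lemma simply does not apply to $\bL(\lambda)$ and $u$.

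The repair is to extract injectivity, like surjectivity, from (i). Once $C_{\bG}(u) = C_{\bL(\lambda)}(u)\cdot C_{\bU(\lambda)}(u)$ is a semidirect decomposition with $C_{\bU(\lambda)}(u)$ the (connected) unipotent radical, a dimension count gives $C_{\bG}^{\circ}(u) = C_{\bL(\lambda)}^{\circ}(u)\cdot C_{\bU(\lambda)}(u)$; then if $x \in C_{\bL(\lambda)}(u)\cap C_{\bG}^{\circ}(u)$ and $x = yz$ with $y \in C_{\bL(\lambda)}^{\circ}(u)$, $z \in C_{\bU(\lambda)}(u)$, one gets $z = y^{-1}x \in \bL(\lambda)\cap\bU(\lambda) = \{1\}$, so $x \in C_{\bL(\lambda)}^{\circ}(u)$. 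This is exactly what the paper means by "(ii) follows immediately" from (i). On part (i) your proof also glosses over a step the paper makes explicit: the precise reference is Premet \cite[2.3(iii)]{Pre03}, and to apply it one first observes that (i) is stable under isotypic morphisms, so one can reduce to a group of the isogeny type handled by Premet. Your appeal to Jantzen plus Premet is pointing in the right direction, but you acknowledge the isogeny issue without actually resolving it, which is the part that carries the weight.
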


\begin{proof}
Assume (i) holds. As $C_{\bU(\lambda)}(u)$ is the unipotent radical it is connected so $C_{\bU(\lambda)}(u) \leqslant C_{\bG}^{\circ}(u)$. This implies that $C_{\bG}^{\circ}(u) = (C_{\bL(\lambda)}(u) \cap C_{\bG}^{\circ}(u))\cdot C_{\bU(\lambda)}(u)$ and (ii) follows immediately. It is easy to check that the property in (i) is preserved under taking isotypic morphisms. The statement then follows from \cite[2.3(iii)]{Pre03}. We leave the details to the reader.
\end{proof}

\begin{rem}
Note (ii) does not follow from \cref{lem:inj-comp-grp} because we do not necessarily have that $u \in \bL(\lambda)$.
\end{rem}

It follows from \cref{eq:Frob-weight-space} that $F\cdot \mathcal{D}_e(\bG) = \mathcal{D}_{F(e)}(\bG)$ for any $e \in \mathcal{N}(\bG)$. Hence, if $e \in \mathcal{N}(\bG)^F$ is $F$-fixed then $F$ preserves the subset $\mathcal{D}_e(\bG) \subseteq \widecheck{X}(\bG)$. It is shown in \cite[3.25]{Tay16} that if $e \in \mathcal{N}(\bG)^F$ is $F$-fixed then there exists an $F$-fixed cocharacter $\lambda \in \mathcal{D}_e(\bG)^F$, which is unique up to $\bG^F$-conjugacy. Combining this with \cref{cor:conj-to-neg-space-rat} we get the following.

\begin{lem}\label{lem:rat-conj-cochar-to-neg}
Assume $u \in \mathcal{U}(\bG)^F$ is a rational unipotent element then the following hold:
\begin{enumerate}
	\item $\mathcal{D}_u(\bG)^F\neq \emptyset$ and the natural action of $C_{\bG}(u)^F$ on $\mathcal{D}_u(\bG)^F$ is transitive,
	\item any cocharacter $\lambda \in \mathcal{D}_u(\bG)^F$ is $\bG^F$-conjugate to $-\lambda$.
\end{enumerate}
\end{lem}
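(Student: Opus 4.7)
The plan is to transfer everything to the nilpotent side via the fixed Springer homeomorphism. Setting $e = \phi_{\spr}(u) \in \mathcal{N}(\bG)^F$, the $\bG$- and $F$-equivariance of $\phi_{\spr}$ yield $C_{\bG}(u) = C_{\bG}(e)$ and $\mathcal{D}_u(\bG) = \mathcal{D}_e(\bG)$, with both sides compatible with the natural $F$-actions. Part (ii) then becomes immediate: since $\mathcal{D}_u(\bG)^F \subseteq \mathcal{D}(\bG)^F$ by definition, \cref{cor:conj-to-neg-space-rat} directly produces an element of $\bG^F$ conjugating $\lambda$ to $-\lambda$, so no further argument is required.

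All the substance lies in (i). Non-emptiness of $\mathcal{D}_u(\bG)^F$ is precisely the statement recalled in the paragraph preceding the lemma, attributed to \cite[3.25]{Tay16}, so I simply invoke it. For the transitivity claim, Premet's \cref{lem:assoc-dynkin-cocharacters} already gives a transitive action of the \emph{connected} group $C_{\bG}^\circ(u)$ on the full set $\mathcal{D}_u(\bG)$, so the remaining task is to descend this to a transitive action on the $F$-fixed subset via the Lang--Steinberg theorem. Concretely, given $\lambda_0,\lambda \in \mathcal{D}_u(\bG)^F$, I pick $g \in C_{\bG}^\circ(u)$ with $\lambda = {}^g\lambda_0$. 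Then $F$-stability of $\lambda_0$ and $\lambda$ forces $g^{-1}F(g) \in \mathrm{Stab}_{C_{\bG}^\circ(u)}(\lambda_0) = C_{\bG}^\circ(u) \cap C_{\bG}(\lambda_0) = C_{\bG}^\circ(u) \cap \bL(\lambda_0)$, and Lang--Steinberg applied inside the connected group $C_{\bG}^\circ(u)$ then allows me to modify $g$ by an element of this stabiliser so that it becomes $F$-fixed, \emph{provided} the stabiliser is connected.

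The key step, which I expect to be the only genuine obstacle, is this connectedness assertion. Every element of $C_{\bG}^\circ(u) \cap \bL(\lambda_0)$ centralises $u$ and lies in $\bL(\lambda_0)$, so the stabiliser coincides with $C_{\bL(\lambda_0)}(u) \cap C_{\bG}^\circ(u)$; the resulting quotient $\bigl(C_{\bL(\lambda_0)}(u) \cap C_{\bG}^\circ(u)\bigr)/C_{\bL(\lambda_0)}^\circ(u)$ is precisely the kernel of the natural map $A_{\bL(\lambda_0)}(u) \to A_{\bG}(u)$. By \cref{prop:bumper-para-levi}(ii) this map is an isomorphism, so its kernel is trivial and the stabiliser equals $C_{\bL(\lambda_0)}^\circ(u)$, which is connected by definition. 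Lang--Steinberg then yields transitivity of $C_{\bG}^\circ(u)^F$, and hence a fortiori of $C_{\bG}(u)^F$, on $\mathcal{D}_u(\bG)^F$. Beyond the identification of the stabiliser via \cref{prop:bumper-para-levi}(ii), no further ingredient is needed.
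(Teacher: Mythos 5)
Your argument is correct, and for the transitivity claim in (i) it takes a genuinely different, more self-contained route than the paper. The paper simply cites \cite[3.25]{Tay16} for both existence of an $F$-fixed Dynkin cocharacter and its uniqueness up to conjugacy, and combines this with \cref{cor:conj-to-neg-space-rat} for (ii); the lemma carries no separate proof of its own. You instead reconstruct the transitivity explicitly: starting from Premet's transitivity of $C_{\bG}^\circ(u)$ on $\mathcal{D}_u(\bG)$ (\cref{lem:assoc-dynkin-cocharacters}), you reduce the descent to $F$-fixed points to a Lang--Steinberg argument inside $C_{\bG}^\circ(u)$, whose applicability hinges on the connectedness of the stabiliser $\mathrm{Stab}_{C_{\bG}^\circ(u)}(\lambda_0)$. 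Your key observation — that this stabiliser is $C_{\bL(\lambda_0)}(u) \cap C_{\bG}^\circ(u)$ and that the corresponding quotient by $C_{\bL(\lambda_0)}^\circ(u)$ is exactly the kernel of $A_{\bL(\lambda_0)}(u) \to A_{\bG}(u)$, which is trivial by \cref{prop:bumper-para-levi}(ii) — is correct and cleanly reveals why the transitivity actually lands in $C_{\bG}^\circ(u)^F$ rather than merely up to $\bG^F$-conjugacy. This buys a self-contained proof that makes the role of \cref{prop:bumper-para-levi}(ii) transparent, at the modest cost of a longer argument. Part (ii) is handled identically to the paper via \cref{cor:conj-to-neg-space-rat}.
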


The fact that there exists a character $\lambda \in \mathcal{D}_u(\bG)^F$ when $u \in \mathcal{U}(\bG)^F$ is rational implies that the corresponding subgroups $\bP(\lambda)$, $\bL(\lambda)$ and $\bU(\lambda)$ are all $F$-stable, see \cref{subsection:cocharacters}.

\subsection{Rational Class Representatives}\label{ssec:rationalclassesrep}
Given an element $u \in \bG$ we will denote by $\Cl_{\bG}(u) = \{gug^{-1} \mid g \in \bG\}$ the corresponding $\bG$-conjugacy class of $u$. Recall that if $u \in \bG^F$ is $F$-fixed then the conjugacy class $\Cl_{\bG}(u)$ is $F$-stable. Moreover, the finite group $\bG^F$ acts by conjugation on the fixed points $\Cl_{\bG}(u)^F$. We denote by $\Cl_{\bG}(u)^F/\bG^F$ the resulting set of orbits, which are simply the $\bG^F$-conjugacy classes that are contained in $\Cl_{\bG}(u)^F$.

Recall from \cref{subsec:canonical-para-levi} that $A_{\bG}(u) = C_{\bG}(u)/C_{\bG}^{\circ}(u)$ denotes the component group of the centraliser of $u$. If $g \in \bG$ is an element such that ${}^gu \in \Cl_{\bG}(u)^F$ is $F$-fixed then $g^{-1}F(g) \in C_{\bG}(u)$. As $F(u) = u$ we have $F$ induces an automorphism $F : A_{\bG}(u) \to A_{\bG}(u)$ of the component group. We have an equivalence relation $\sim_F$ on $A_{\bG}(u)$ given by $a \sim_F b$ if and only if $a = x^{-1}bF(x)$ for some $x \in A_{\bG}(u)$. We denote by $H^1(F,A_{\bG}(u))$ the resulting set of equivalence classes, which are the $F$-conjugacy classes of $A_{\bG}(u)$. We note the following well-known parameterisation result, see for example \cite[Prop.~3.21]{DiMibook}.

\begin{prop}\label{prop:rat-class-param}
The map $\Cl_{\bG}(u)^F \to A_{\bG}(u)$ defined by ${}^gu \mapsto g^{-1}F(g)C_{\bG}^{\circ}(u)$ induces a well-defined bijection $\Cl_{\bG}(u)^F/\bG^F \simto H^1(F,A_{\bG}(u))$.
\end{prop}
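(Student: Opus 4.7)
The plan is to verify well-definedness, surjectivity and injectivity in turn, with the Lang--Steinberg theorem playing the decisive role in the last two.

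First, I would check that whenever $^{g}u \in \Cl_{\bG}(u)^F$, the element $g^{-1}F(g)$ actually lies in $C_{\bG}(u)$: applying $F$ to $^{g}u = {}^{g}u$ and using $F(u)=u$ gives ${}^{F(g)}u = {}^{g}u$, i.e.\ $g^{-1}F(g) \in C_{\bG}(u)$. Independence of the choice of lift $g$ is immediate: if $g' = gc$ with $c \in C_{\bG}(u)$, then $(g')^{-1}F(g') = c^{-1}(g^{-1}F(g))F(c)$, which modulo $C_{\bG}^\circ(u)$ is $F$-conjugate to $g^{-1}F(g)C_{\bG}^\circ(u)$. If further we replace $g$ by $xg$ with $x \in \bG^F$, then $(xg)^{-1}F(xg) = g^{-1}F(g)$, so the map descends to a well-defined $\bar\phi : \Cl_{\bG}(u)^F/\bG^F \to H^1(F,A_{\bG}(u))$.

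For surjectivity, given $a \in A_{\bG}(u)$, I would pick a lift $c \in C_{\bG}(u)$. Since $\bG$ is connected, Lang--Steinberg provides $g \in \bG$ with $g^{-1}F(g) = c$. Then $F({}^{g}u) = {}^{F(g)}u = {}^{gc}u = {}^{g}u$ because $c \in C_{\bG}(u)$, so $^{g}u \in \Cl_{\bG}(u)^F$, and by construction $\bar\phi({}^{g}u) = a$.

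Injectivity is the delicate step. Suppose $\bar\phi({}^{g_1}u) = \bar\phi({}^{g_2}u)$. Using the freedom to replace $g_1$ by $g_1 c$ with $c \in C_{\bG}(u)$ (which does not change $^{g_1}u$), the well-definedness computation lets me arrange that $g_1^{-1}F(g_1)$ and $g_2^{-1}F(g_2)$ already coincide modulo $C_{\bG}^\circ(u)$; write $g_2^{-1}F(g_2) = g_1^{-1}F(g_1)\,c_0$ with $c_0 \in C_{\bG}^\circ(u)$. Setting $y = g_2 g_1^{-1}$, a direct calculation yields
\begin{equation*}
y^{-1}F(y) \;=\; g_1 g_2^{-1}F(g_2)F(g_1)^{-1} \;=\; F(g_1)\,c_0\,F(g_1)^{-1}.
\end{equation*}
Since $g_1^{-1}F(g_1) \in C_{\bG}(u)$, conjugation by $F(g_1)$ carries $C_{\bG}^\circ(u)$ onto $C_{\bG}^\circ({}^{g_1}u)$, so $y^{-1}F(y)$ lies in the connected $F$-stable group $C_{\bG}^\circ({}^{g_1}u)$. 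A second application of Lang--Steinberg, now inside this connected centraliser, produces $z \in C_{\bG}^\circ({}^{g_1}u)$ with $z^{-1}F(z) = y^{-1}F(y)$. Then $x := yz^{-1}$ satisfies $F(x) = x$, while $^{x}({}^{g_1}u) = {}^{y}({}^{g_1}u) = {}^{g_2}u$ because $z$ centralises $^{g_1}u$. Thus $^{g_1}u$ and $^{g_2}u$ are $\bG^F$-conjugate.

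The main obstacle is the injectivity argument, which requires first reducing to an element of the connected centraliser by adjusting the lift, and then solving a Lang equation inside $C_{\bG}^\circ({}^{g_1}u)$; the point is that the \emph{connected} centraliser is $F$-stable and connected so Lang--Steinberg applies. Everything else is bookkeeping with cosets.
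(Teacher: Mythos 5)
Your proof is correct and is the standard Lang--Steinberg argument; the paper itself gives no proof, citing Digne--Michel instead, and that reference uses precisely this approach. The only thing worth making explicit is that in the well-definedness step the two cases you treat (replacing $g$ by $gc$ with $c\in C_{\bG}(u)$, and by $xg$ with $x\in\bG^F$) combine to cover the general situation: if ${}^{g_1}u$ and ${}^{g_2}u$ are $\bG^F$-conjugate then $g_2 = xg_1c$ for some $x\in\bG^F$ and $c\in C_{\bG}(u)$, so both replacements are indeed needed simultaneously. Your injectivity argument is the key step and is done correctly: normalising so that $g_1^{-1}F(g_1) \equiv g_2^{-1}F(g_2) \pmod{C_{\bG}^\circ(u)}$, then observing that $y = g_2g_1^{-1}$ satisfies a Lang equation inside the connected, $F$-stable group $C_{\bG}^\circ({}^{g_1}u)$, and dividing out the solution to get a rational conjugating element.
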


\section{Fusion of Unipotent Classes}\label{sec:fusion-unip-cls}
If $u \in \mathcal{U}(\bG)$ is a unipotent element then, by definition, $\Cl_{\bG}(u) \subseteq \mathcal{U}(\bG)$ is a unipotent class. Now, assume $\bH \leqslant \bG$ is a closed connected reductive subgroup of $\bG$. We have a natural map $\mathcal{U}(\bH)/\bH \to \mathcal{U}(\bG)/\bG$ given by $\Cl_{\bH}(u) \mapsto \Cl_{\bG}(u)$ which we call the fusion map; we will also denote this by $\mathcal{U}(\bH) \rightsquigarrow_{\bG} \mathcal{U}(\bG)$.

One way to attempt the problem of determining the fusion map is to characterise unipotent classes by their Jordan normal form under some rational representation of the group. When $\bG$ is simple of exceptional type and $\bH$ is a maximal subgroup then this has been done by Lawther \cite{Law09} in all characteristics. Here we are interested in the case where $\bH$ contains a maximal torus of $\bG$ and $p$ is good for $\bG$. Using the following result of Fowler--R\"ohrle we can give an easy combinatorial algorithm for computing the fusion map in terms of weighted Dynkin diagrams.

\begin{thm}[{}{Fowler--R\"ohrle, \cite[Thm.~1.1]{FowRohr08}}]\label{thm:fowler-rohrle}
Assume $p$ is a good prime for $\bG$ and $\bH \leqslant \bG$ is a closed connected reductive subgroup of $\bG$ of maximal rank. If $e \in \mathcal{N}(\bH)$ is a nilpotent element then $\mathcal{A}_e(\bH) = \mathcal{A}_e(\bG,\bH)$. In particular, if $u \in \mathcal{U}(\bH)$ is a unipotent element then $\mathcal{D}_u(\bH) = \mathcal{D}_u(\bG,\bH)$.
\end{thm}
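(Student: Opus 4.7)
The ``in particular'' clause about unipotent $u$ reduces to the nilpotent case via Lemma \ref{lem:assoc-dynkin-cocharacters} (applied in both $\bG$ and $\bH$, using that $p$ good for $\bG$ implies $p$ good for $\bH$ since the root system of $\bH$ is a subsystem of that of $\bG$) together with Theorem \ref{thm:springer-morphism} (the Springer class bijection is independent of the chosen homeomorphism). So the core task is to show $\mathcal{A}_e(\bH) = \mathcal{A}_e(\bG,\bH)$ for $e \in \mathcal{N}(\bH)$. My plan is to use the $\lie{sl}_2$-triple characterisation of associated cocharacters valid in good characteristic (see \cite{Pre03,Jan04}): for $\bX \in \{\bG,\bH\}$, one has $\lambda \in \mathcal{A}_e(\bX)$ if and only if $(e,d\lambda(1),f)$ is an $\lie{sl}_2$-triple for some $f \in \Lie(\bX)(\lambda,-2)$, with $f$ uniquely determined by $[e,f]=d\lambda(1)$ via the injectivity of $\ad(e)\colon\Lie(\bX)(\lambda,-2)\to\Lie(\bX)(\lambda,0)$ coming from $\lie{sl}_2$-representation theory.

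The inclusion $\mathcal{A}_e(\bH) \subseteq \mathcal{A}_e(\bG,\bH)$ is immediate from this characterisation: any $\lie{sl}_2$-triple $(e,h,f)$ witnessing $\lambda \in \mathcal{A}_e(\bH)$ is a fortiori an $\lie{sl}_2$-triple in $\lie{g}$ with $f \in \Lie(\bH)(\lambda,-2) \subseteq \lie{g}(\lambda,-2)$, so $\lambda \in \mathcal{A}_e(\bG) \cap \widecheck{X}(\bH)$. For the reverse inclusion, take $\lambda \in \widecheck{X}(\bH) \cap \mathcal{A}_e(\bG)$ with corresponding triple $(e,h,f)$ in $\lie{g}$, noting $h = d\lambda(1) \in \Lie(\bH)$; the goal is to exhibit such a triple inside $\Lie(\bH)$. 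Jacobson--Morozov applied in $\Lie(\bH)$ yields some $\lie{sl}_2$-triple $(e,h_0,f_0)$ entirely in $\Lie(\bH)$, with corresponding cocharacter $\lambda_0 \in \mathcal{A}_e(\bH) \subseteq \mathcal{A}_e(\bG,\bH)$ by the first inclusion. Kostant's theorem in $\lie{g}$ then furnishes $g \in C_\bG^\circ(e)$ with $\lambda = g\cdot\lambda_0$, so transporting the triple by $g$ produces $(e,h,g\cdot f_0)$ in $\lie{g}$; uniqueness of $f$ forces $f = g\cdot f_0$, hence $f \in g\cdot\Lie(\bH)$. To conclude $f \in \Lie(\bH)$ I would show $g\cdot\Lie(\bH) \cap \lie{g}(\lambda,-2) = \Lie(\bH)(\lambda,-2)$ using the maximal-rank hypothesis: $\bH$ and $\bG$ share a maximal torus, so the $\lambda$-weight decompositions cut cleanly through $\Lie(\bH)$, and $g$ may be refined to lie in $C_\bH^\circ(e)$ by a further conjugation.

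The main obstacle is precisely this last refinement: promoting the Kostant/Premet conjugacy in $\lie{g}$ to one inside $\Lie(\bH)$. The maximal-rank hypothesis must be used decisively here — without it, the centralisers $C_\bG^\circ(e)$ and $C_\bH^\circ(e)$ can differ substantially and the argument cannot close, because extra weight vectors of $\lambda$ in $\lie{g}\setminus\Lie(\bH)$ could carry $\lie{sl}_2$-triples whose semisimple element lies in $\Lie(\bH)$ but whose $f$-component is not realisable inside $\Lie(\bH)$. An alternative and possibly cleaner route is to bypass Kostant's theorem entirely and argue through the Bala--Carter classification: verify that the Bala--Carter Levi of $e$ in $\bG$ is obtained from the one in $\bH$ by centralising an intermediate torus supplied by the maximal-rank hypothesis, so that the cocharacter witnessed as associated in $\bG$ retains its witness inside $\bH$. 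Either route is expected to form the technical core of the proof.
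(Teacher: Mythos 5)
The paper does not reprove the first conclusion: it is quoted from \cite[Thm.~1.1]{FowRohr08}, and the paper's proof is just the observation that the ``in particular'' sentence follows from \cref{lem:assoc-dynkin-cocharacters} applied in both $\bG$ and $\bH$ (once one notes $\phi_{\spr}$ restricts to a Springer homeomorphism of $\bH$). Your handling of that last sentence is correct. The body of your proposal --- an attempted reproof of the Fowler--R\"ohrle theorem --- has two gaps. First, the $\lie{sl}_2$-triple characterisation of associated cocharacters you begin with is not available under the stated hypothesis: neither \cite{Pre03} nor \cite{Jan04} characterises them this way; they use the Bala--Carter definition recorded in the paper precisely because Jacobson--Morozov can fail in small good characteristic. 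Jantzen \cite[5.5--5.6]{Jan04} produces an $\lie{sl}_2$-triple from an associated cocharacter only under additional separability hypotheses (of the sort the paper isolates in \cref{def:proximate}); without them the injectivity of $\ad e$ on $\lie{g}(\lambda,-2)$ and the unique existence of the $f$ you rely on need not hold.

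Second, even where triples are well behaved, your argument for $\mathcal{A}_e(\bG,\bH)\subseteq\mathcal{A}_e(\bH)$ does not close. You produce $g\in C_\bG^\circ(e)$ with ${}^g\lambda_0=\lambda$ and then assert ``$g$ may be refined to lie in $C_\bH^\circ(e)$.'' But if one knew $\lambda$ and $\lambda_0\in\mathcal{A}_e(\bH)$ were $C_\bH^\circ(e)$-conjugate, transitivity of $C_\bH^\circ(e)$ on $\mathcal{A}_e(\bH)$ (\cref{lem:assoc-dynkin-cocharacters}) would already give $\lambda\in\mathcal{A}_e(\bH)$, making the triple-chasing superfluous; the claimed refinement is equivalent to the theorem, not a step of a proof. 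Nor is the identity $g\cdot\Lie(\bH)\cap\lie{g}(\lambda,-2)=\Lie(\bH)(\lambda,-2)$ forced by the maximal-rank hypothesis for a general $g\in C_\bG^\circ(e)$, so uniqueness of $f$ does not place it in $\Lie(\bH)$. The actual mechanism in \cite{FowRohr08} is the Kempf--Rousseau--Hesselink theory of optimal cocharacters for unstable vectors and its compatibility with passage to maximal-rank subgroups; neither your $\lie{sl}_2$-triple sketch nor the Bala--Carter alternative you gesture at supplies a substitute for that input.
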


\begin{proof}
The second conclusion follows from \cref{lem:assoc-dynkin-cocharacters}.
\end{proof}

To calculate the fusion map we will only need the inclusion $\mathcal{D}_e(\bH) \subseteq \mathcal{D}_e(\bG)$. We will also exclusively apply this in the case where $\bH = C_{\bG}^{\circ}(s)$ for $s \in \bG$ a semisimple element. We note that, in this case, the result we need is also proved by McNinch--Sommers \cite[Rem.~25]{McnSo03} using a different argument. The algorithm we give here will be used in the later parts of the paper. An analogous algorithm, given in terms of $\lie{sl}_2$-triples, was used by Sommers in \cite{So98}.

\begin{algorithm}[Unipotent Fusion]\label{alg:fusion}
Assume $p$ is a good prime for $\bG$ and $\bH \leqslant \bG$ is a closed connected reductive subgroup containing a maximal torus $\bT \leqslant \bH$ of $\bG$. Let $\mathcal{R}(\bG,\bT) = (X,\Phi,\widecheck{X},\widecheck{\Phi})$ and $\mathcal{R}(\bH,\bT) = (X,\Psi,\widecheck{X},\widecheck{\Psi})$ be the corresponding root data. Fix simple systems of roots $\Pi \subseteq \Psi$ and $\Delta \subseteq \Phi$. For $\alpha \in \Delta$, resp., $\alpha \in \Pi$, we denote by $\widecheck{\omega}_{\alpha} \in \mathbb{Q}\widecheck{\Phi}$, resp., $\widecheck{\pi}_{\alpha} \in \mathbb{Q}\widecheck{\Psi}$, the corresponding fundamental dominant coweight.
\begin{enumerate}[label={\bfseries [Step \arabic*]},leftmargin=2.1cm]
	\item[{\bfseries [Input]}] A weighted Dynkin diagram $d \in \mathcal{D}(\Psi,\Pi)$,
	\item Determine the matrix $A = (a_{\alpha,\beta})_{\alpha \in \Pi,\beta \in \Delta}$ such that $\widecheck{\alpha} = \sum_{\beta \in \Delta} a_{\alpha,\beta}\widecheck{\beta}$.
	\item Let $C = (\langle \widecheck{\alpha},\beta\rangle)_{\alpha,\beta \in \Pi}$ and $D = (\langle \widecheck{\alpha},\beta\rangle)_{\alpha,\beta \in \Delta}$ be the transposed Cartan matrices of $\Psi$ and $\Phi$ respectively. The natural inclusion map $\mathbb{Q}\widecheck{\Psi} \to \mathbb{Q}\widecheck{\Phi}$, with respect to the bases $(\widecheck{\pi}_{\alpha})_{\alpha \in \Pi}$ and $(\widecheck{\omega}_{\alpha})_{\alpha \in \Delta}$, is represented by the matrix $C^{-1}AD$. Define a function $f : \Delta \to \mathbb{Z}$ by setting
	\begin{equation*}
	(f(\alpha))_{\alpha \in \Delta} = (d(\beta))_{\beta \in \Pi}C^{-1}AD
	\end{equation*}
	and extend this linearly to a function $f : \Phi \to \mathbb{Z}$.
	
	\item Let $\Phi^+ \subseteq \Phi$ be the positive roots determined by $\Delta$. Find an element $w \in W_{\bG}(\bT)$, using \cite[Algorithm A]{GePf} for instance, such that
	\begin{equation*}
	\{\alpha \in \Phi^+ \mid {}^{w^{-1}}\alpha \not\in \Phi^+\} = \{\alpha \in \Phi^+ \mid f(\alpha) < 0\}.
	\end{equation*}
	We then have $f^w(\alpha) = f({}^w\alpha) \geqslant 0$ for all $\alpha \in \Phi^+$.
	
	\item[{\bfseries [Return]}] The weighted Dynkin diagram $f^w \in \mathcal{D}(\Phi,\Delta)$.
\end{enumerate}
Identifying unipotent classes with weighted Dynkin diagrams, as in \cref{thm:classification-nil-orbits,thm:springer-morphism}, we have the map $\mathcal{D}(\Psi,\Pi) \to \mathcal{D}(\Phi,\Delta)$ describes the fusion map $\mathcal{U}(\bH) \rightsquigarrow_{\bG} \mathcal{U}(\bG)$.
\end{algorithm}

\begin{rem}
That this algorithm returns a weighted Dynkin diagram is part of the content of the Fowler--R\"ohrle Theorem. This algorithm is easily implemented in \Chevie{} \cite{Chv} as it contains the weighted Dynkin diagrams of all unipotent classes. We note that our transposed Cartan matrices are simply the Cartan matrices in \Chevie{}. For Step 3 of the algorithm we can use \Chevie{}'s command \texttt{ElementWithInversions}.
\end{rem}

\begin{exmp}
Consider the case where $\bG$ is simple of type $\B_2$ and $p \neq 2$. We assume $\Delta = \{\alpha_1,\alpha_2\}$ is a simple system of roots with the corresponding positive roots being $\Phi^+ = (10,01,11,12)$ in Bourbaki notation \cite{Bki}. Hence, $\alpha_2$ is the short simple root. Let $\Psi = \{\pm 10,\pm 12\}$ be a subsystem of type $2\A_1$ with simple system $\Pi = \{10,-12\}$. The coroot corresponding to $-12$ is $-\widecheck{\alpha}_1-\widecheck{\alpha}_2$ so we have
\begin{equation*}
C = \begin{bmatrix}
2 & 0\\
0 & 2
\end{bmatrix}
\qquad
A = \begin{bmatrix}
1 & 0\\
-1 & -1
\end{bmatrix}
\qquad
D = \begin{bmatrix}
2 & -1\\
-2 & 2
\end{bmatrix}
\end{equation*}

Let $d$ be the weighted Dynkin diagram so that $d(\alpha) = 2$ for all $\alpha \in \Delta$, i.e., the weighted Dynkin diagram of the regular class. The function $f$ obtained in Step 2 of the algorithm has values $(2,-2,0,-2)$ on the positive roots taken in the ordering listed above. The element $w = s_2s_1$ is such that $\{\alpha \in \Phi^+ \mid {}^{w^{-1}}\alpha \not\in \Phi^+\} = \{01,12\}$ hence satisfies the condition in Step 3. Now we have $(f^w(\alpha) \mid \alpha \in \Phi^+) = (2,0,2,2)$. In terms of weighted Dynkin diagrams the fusion map is given as follows.
\begin{center}
\begin{tikzpicture}[baseline=-0.1cm,
vertex/.style={inner sep=0pt,minimum size=4.5mm,draw,circle,fill=white,align=center}]
\node[vertex] at (-0.1,0.4) {\small $2$};
\node[vertex] at (1.1,0.4) {\small $2$};

\node[vertex] at (-0.1,-0.4) {\small $0$};
\node[vertex] at (1.1,-0.4) {\small $2$};

\node at (2.2,-0.3) {\rotatebox{20}{$\rightsquigarrow$}};
\node at (2.2,0.2) {\rotatebox{-20}{$\rightsquigarrow$}};

\draw (3.3,0.1) -- (4.4,0.1);
\draw (3.3,-0.1) -- (4.4,-0.1);
\draw (4.05,0) -- (3.65,0.25);
\draw (4.05,0) -- (3.65,-0.25);

\node[vertex] at (3.3,0) {\small $2$};
\node[vertex] at (4.4,0) {\small $0$};
\end{tikzpicture}
\qquad
\text{and}
\qquad
\begin{tikzpicture}[baseline=-0.1cm,
vertex/.style={inner sep=0pt,minimum size=4.5mm,draw,circle,fill=white,align=center}]
\node[vertex] at (-0.1,0) {\small $2$};
\node[vertex] at (1.1,0) {\small $0$};

\node at (2.2,0) {$\rightsquigarrow$};

\draw (3.3,0.1) -- (4.4,0.1);
\draw (3.3,-0.1) -- (4.4,-0.1);
\draw (4.05,0) -- (3.65,0.25);
\draw (4.05,0) -- (3.65,-0.25);

\node[vertex] at (3.3,0) {\small $0$};
\node[vertex] at (4.4,0) {\small $1$};
\end{tikzpicture}
\end{center}

We apply this same procedure with $\Psi = \{\pm 01\}$ and $\Pi = \{01\}$ and $d$ also defined by $d(01) = 2$. We obtain the same weighted Dynkin diagram as above. This implies a regular element of the $2\A_1$ is conjugate, in $\bG$, to a regular element of the Levi subgroup with simple root $01$. If we assume $\bG = \SO_5(K)$, defined as in \cite[1.7.3]{Ge03}, then we see that the element must lie in the unipotent class of $\bG$ whose elements act on the natural module with Jordan blocks of size $(3,1,1)$.
\end{exmp}

Inspecting the tables in \cite{Law09} one sees that for a maximal connected reductive subgroup $\bH \leqslant \bG$ it often happens that the fusion map $\mathcal{U}(\bH) \rightsquigarrow_{\bG} \mathcal{U}(\bG)$ is trivial, in the sense that if $u,v \in \mathcal{U}(\bH)$ are two unipotent elements with $\Cl_{\bG}(u) = \Cl_{\bG}(v)$ then $\Cl_{\bH}(u) = \Cl_{\bH}(v)$. Of course, there can be non-trivial fusion as is shown by the above example. We will need the following which guarantees trivial fusion.

\begin{thm}\label{thm:fusion-unip-classes}
Recall our assumption that $p$ is good for $\bG$ and let $\bH \leqslant \bG$ be a closed connected reductive subgroup of maximal rank. If $u,v \in \mathcal{U}(\bH)$ are unipotent elements such that $u \in \overline{\Cl_{\bH}(v)}$ then $\Cl_{\bG}(u) = \Cl_{\bG}(v)$ if and only if $\Cl_{\bH}(u) = \Cl_{\bH}(v)$.
\end{thm}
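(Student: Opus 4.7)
The plan is to handle the two implications separately. The direction $\Cl_{\bH}(u) = \Cl_{\bH}(v) \Longrightarrow \Cl_{\bG}(u) = \Cl_{\bG}(v)$ is immediate from $\bH \subseteq \bG$, so the substance of the proof is the converse. Assuming $u \in \overline{\Cl_{\bH}(v)}$ and $\Cl_{\bG}(u) = \Cl_{\bG}(v)$, I would first transfer the statement to the nilpotent cone using the Springer homeomorphism from \cref{thm:springer-morphism}, setting $e := \phi_{\spr}(u)$ and $e' := \phi_{\spr}(v)$. Since $\phi_{\spr}$ is an $\bH$-equivariant homeomorphism and $\bH \leqslant \bG$, the hypotheses become $e \in \overline{\bH \cdot e'}$ and $\bG \cdot e = \bG \cdot e'$, while the target is $\bH \cdot e = \bH \cdot e'$.

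The approach is a dimension count using Slodowy slices. I would fix, via \cref{lem:assoc-dynkin-cocharacters}, a Dynkin cocharacter $\lambda \in \mathcal{D}_e(\bH)$; by \cref{thm:fowler-rohrle} it automatically belongs to $\mathcal{D}_e(\bG)$. A good-characteristic form of the Jacobson--Morozov theorem, due to Premet \cite{Pre03}, then produces an element $f \in \lie{h}(\lambda,-2)$ so that $\{e,\, d\lambda(1),\, f\}$ is an $\lie{sl}_2$-triple in $\lie{h}$, and \emph{a fortiori} in $\lie{g}$. Form the Slodowy slices $S_{\bH} := e + C_{\lie{h}}(f)$ and $S_{\bG} := e + C_{\lie{g}}(f)$, which clearly satisfy $S_{\bH} \subseteq S_{\bG}$.

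The essential input is the transverse slice property: for each nilpotent orbit $\mathcal{O}$ (in $\bH$, respectively in $\bG$) whose closure contains $e$, the intersection $\mathcal{O} \cap S_{\bullet}$ is non-empty, smooth, and of pure dimension $\dim \mathcal{O} - \dim \mathcal{O}_e$, where $\mathcal{O}_e$ is the orbit of $e$. Applied to $\mathcal{O}_{\bG} := \bG \cdot e' = \bG \cdot e$ this gives $\dim(\mathcal{O}_{\bG} \cap S_{\bG}) = 0$. Because $\bH \cdot e' \subseteq \mathcal{O}_{\bG}$ and $S_{\bH} \subseteq S_{\bG}$, the inclusion $\bH \cdot e' \cap S_{\bH} \subseteq \mathcal{O}_{\bG} \cap S_{\bG}$ shows that the left-hand side is also zero-dimensional. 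The same slice formula applied inside $\bH$ then forces $\dim \bH \cdot e' = \dim \bH \cdot e$. Since $\bH \cdot e \subseteq \overline{\bH \cdot e'}$ and orbit closures are irreducible, this equality of dimensions yields $\overline{\bH \cdot e} = \overline{\bH \cdot e'}$, whence $\bH \cdot e = \bH \cdot e'$, as required.

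The principal obstacle is verifying the transverse slice property simultaneously inside $\bG$ and the maximal-rank subgroup $\bH$, in good characteristic. I would isolate this as an auxiliary lemma (presumably the one the authors credit to A.\ Henderson), combining Premet's construction of the $\lie{sl}_2$-triple with Slodowy's classical transverse slice argument, suitably adapted to positive characteristic. One small point to check along the way is that $p$ remains good for $\bH$, which follows because $\bH$ and $\bG$ share a maximal torus and $p$ is good for $\bG$. Once the slice lemma is in place, the rest of the argument is essentially formal.
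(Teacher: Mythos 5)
Your reduction to the nilpotent cone and the overall transverse-slice strategy are exactly what the paper does, but the concrete construction of the slices contains a genuine gap. You build $S_{\bH}$ and $S_{\bG}$ as Slodowy slices $e + C_{\lie{h}}(f)$ and $e + C_{\lie{g}}(f)$ coming from an $\lie{sl}_2$-triple, and you cite \cite{Pre03} for the existence of the triple. But Premet's paper does not produce $\lie{sl}_2$-triples in good characteristic; what it produces is an \emph{associated cocharacter}. In fact the paper is explicit on this point (see the remark after \cref{lem:sigma-exists}): the decomposition $\lie{g} = [\lie{g},e] \oplus C_{\lie{g}}(f)$ is only asserted for $p \gg 0$, and the lemma is introduced precisely as a substitute for the Slodowy slice that works when $p$ is merely pretty good. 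Your argument therefore does not establish the theorem for all good $p$ — which is the whole point of the statement. The authors replace $C_{\lie{g}}(f)$ by a $\lambda$-invariant complement $\lie{s}$ to $[\lie{g},e]$; its existence (\cref{lem:sigma-exists}) rests on separability of centralisers, which is why the paper also inserts a reduction step (smooth covering followed by regular embedding) to land in the pretty-good case, another step your proposal is missing.

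A second, smaller point: you invoke the full slice-intersection dimension formula (``non-empty, smooth, of pure dimension $\dim\mathcal{O} - \dim\mathcal{O}_e$''), which is more than is needed and more than the paper proves. \Cref{lem:slod-slice-prop} establishes only the two weaker facts that $\Sigma \cap \bG\cdot e = \{e\}$ (via a contracting $\mathbb{G}_m$-action) and that the slice meets an orbit whenever it meets its closure (openness of $\bG\cdot\Sigma$). With a slice $\Sigma \subset \lie{h}$ extended to a slice $\tilde\Sigma \subset \lie{g}$ using the splitting $\lie{g} = \lie{h} \oplus \lie{n}$ of \cref{lem:direct-sum-orbits}, one then gets the chain $\emptyset \neq \bH\cdot e' \cap \Sigma \subseteq \bG\cdot e' \cap \tilde\Sigma = \{e\}$ and concludes $e \in \bH\cdot e'$ directly, with no dimension count at all. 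Your dimension-counting route would work once the slice machinery is available, but it is a longer path to the same conclusion, and the prerequisite slice machinery is exactly what your proposal leaves unverified.
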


The proof of \cref{thm:fusion-unip-classes} is given at the end of this section. We will prove this statement by passing to the nilpotent cone, using a Springer morphism, and then proving the analogous statement for nilpotent orbits. By passing to the nilpotent cone we can use the existence of transverse slices to nilpotent orbits to help prove the statement. The existence of such transverse slices will invoke a restriction on $p$ but this will be removed using some standard reduction arguments. For this restriction on $p$ we will need the following notion due to Herpel.

\begin{definition}[{}{Herpel, \cite{Herp13}}]\label{pa:root-data}
Let $\mathcal{R} = (X,\Phi,\widecheck{X},\widecheck{\Phi})$ be a root datum then we say $p$ is \emph{pretty good} for $\mathcal{R}$ if the following properties hold:
\begin{itemize}
	\item for any subset $\Psi \subseteq \Phi$ we have $X/\mathbb{Z}\Psi$ has no $p$-torsion
	\item for any subset $\widecheck{\Psi} \subseteq \widecheck{\Phi}$ we have $\widecheck{X}/\mathbb{Z}\widecheck{\Psi}$ has no $p$-torsion.
\end{itemize}
We say $p$ is \emph{pretty good} for $\bG$ if it is pretty good for the root datum $\mathcal{R}(\bG,\bT_0)$.
\end{definition}

If $\mathcal{R} = (X,\Phi,\widecheck{X},\widecheck{\Phi})$ is a root datum, as above, then for any closed and symmetric subset $\Psi \subseteq \Phi$ we have a corresponding root datum $\mathcal{R}_{\Psi} = (X,\Psi,\widecheck{X},\widecheck{\Psi})$. We will need the following fact concerning pretty good primes which is obvious from the definition.

\begin{lem}
If $p$ is pretty good for the root datum $\mathcal{R} = (X,\Phi,\widecheck{X},\widecheck{\Phi})$ then for any closed symmetric subset $\Psi \subseteq \Phi$ we have $p$ is pretty good for $\mathcal{R}_{\Psi}$.
\end{lem}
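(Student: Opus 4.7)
The plan is simply to unwind the definitions. By \cref{pa:root-data}, asking that $p$ be pretty good for $\mathcal{R}_{\Psi}$ amounts to requiring that $X/\mathbb{Z}\Psi'$ has no $p$-torsion for every subset $\Psi' \subseteq \Psi$, and that $\widecheck{X}/\mathbb{Z}\widecheck{\Psi}'$ has no $p$-torsion for every subset $\widecheck{\Psi}' \subseteq \widecheck{\Psi}$. The key observation is that the two conditions defining "pretty good" are quantified over \emph{all} subsets of $\Phi$ and $\widecheck{\Phi}$ respectively, not just those arising from closed symmetric subsystems.

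So first I would note that any subset $\Psi' \subseteq \Psi$ is, a fortiori, a subset of $\Phi$; since $p$ is pretty good for $\mathcal{R}$ by hypothesis, $X/\mathbb{Z}\Psi'$ has no $p$-torsion. Dually, any $\widecheck{\Psi}' \subseteq \widecheck{\Psi}$ is a subset of $\widecheck{\Phi}$, and the same argument yields that $\widecheck{X}/\mathbb{Z}\widecheck{\Psi}'$ has no $p$-torsion. Combining these two observations shows that $p$ satisfies both conditions of \cref{pa:root-data} for $\mathcal{R}_{\Psi}$, which completes the proof.

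There is no genuine obstacle here; the statement is a formal consequence of the fact that Herpel's definition is stated with universal quantifiers over arbitrary subsets rather than over (for instance) subsystems of root subdata. Note that the closedness and symmetry of $\Psi$ inside $\Phi$ are needed only to ensure that $\mathcal{R}_{\Psi}$ actually forms a root datum so that the statement makes sense; they play no role in the verification of the $p$-torsion conditions themselves.
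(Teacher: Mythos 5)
Your argument is correct and is precisely the observation the paper has in mind — the paper gives no proof at all, labelling the lemma "obvious from the definition," and your unwinding of the two universal quantifiers over arbitrary subsets is exactly the content of that remark. The one point worth keeping explicit, which you implicitly use, is that $\widecheck{\Psi}\subseteq\widecheck{\Phi}$, which holds because each coroot in $\widecheck{\Psi}$ is by definition the coroot in $\widecheck{\Phi}$ corresponding to a root of $\Psi$.
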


We note that if $p$ is a pretty good prime for $\bG$ then $\bG$ is proximate in the sense of \cref{subsec:springer-morph}, see \cite[2.15]{Tay16}. Moreover, by a result of Herpel \cite[Thm.~1.1]{Herp13}, we have the centraliser $C_{\bG}(x)$ of any element $x \in \lie{g}$ is separable, see \cite[3.10]{Tay16}. With this in mind we have the following existence result for transverse slices, which is shown in \cite[3.27]{Tay16}. 

\begin{prop}[{}{see \cite[3.27]{Tay16}}]\label{lem:sigma-exists}
Assume $p$ is a pretty good prime for $\bG$. If $e \in \mathcal{N}(\bG)$ and $\lambda \in \mathcal{D}_e(\bG)$ then there exists a $\lambda$-invariant complement $\mathfrak{s}\subseteq \lie{g}$ to $[\lie{g},e]$, i.e., $\lambda(\mathbb{G}_m)\cdot\mathfrak{s} = \mathfrak{s}$ and $\lie{g} = \lie{s} \oplus [\lie{g},e]$ as vector spaces. Furthermore, for any such subspace $\lie{s}$ the subset $\Sigma = e + \mathfrak{s}\subseteq \lie{g}$ is a transverse slice to the orbit $\bG\cdot e$.
\end{prop}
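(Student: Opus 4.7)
The two assertions can be handled separately; the first is essentially a piece of linear algebra with a grading, while the second reduces to a computation of differentials once we know that $C_{\bG}(e)$ is smooth.

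\textbf{Existence of a $\lambda$-invariant complement.} The cocharacter $\lambda$ equips $\lie{g}$ with the $\mathbb{Z}$-grading $\lie{g} = \bigoplus_{i \in \mathbb{Z}} \lie{g}(\lambda,i)$, and because $\lambda \in \mathcal{D}_e(\bG) = \mathcal{A}_e(\bG)$ (by \cref{lem:assoc-dynkin-cocharacters}) we have $e \in \lie{g}(\lambda,2)$. Consequently $\ad(e)$ shifts weights by $2$, so its image $[\lie{g},e]$ is a graded subspace, i.e.\ $[\lie{g},e] = \bigoplus_i \bigl([\lie{g},e] \cap \lie{g}(\lambda,i)\bigr)$. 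Choosing a vector space complement $\mathfrak{s}_i \subseteq \lie{g}(\lambda,i)$ to $[\lie{g},e] \cap \lie{g}(\lambda,i)$ for each $i$, the direct sum $\mathfrak{s} := \bigoplus_i \mathfrak{s}_i$ is a graded, hence $\lambda(\mathbb{G}_m)$-stable, complement to $[\lie{g},e]$ in $\lie{g}$.

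\textbf{Transverse slice property.} Now fix any such $\mathfrak{s}$ and set $\Sigma = e + \mathfrak{s}$. The plan is to show that the action morphism
\begin{equation*}
\mu : \bG \times \Sigma \longrightarrow \lie{g}, \qquad (g,x) \longmapsto \Ad(g)(x),
\end{equation*}
is smooth at $(1,e)$, which gives the Slodowy-style transversality. Its differential at $(1,e)$ sends $(X,v) \in \lie{g} \oplus \mathfrak{s}$ to $[X,e] + v$, whose image is $[\lie{g},e] + \mathfrak{s} = \lie{g}$; thus $d\mu_{(1,e)}$ is surjective. The kernel is $\{(X,0) : X \in C_{\lie{g}}(e)\}$, of dimension $\dim \lie{g} - \dim[\lie{g},e]$.

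\textbf{Role of the pretty good hypothesis.} The remaining point is to match dimensions so that surjectivity of $d\mu_{(1,e)}$ upgrades to smoothness of $\mu$ at $(1,e)$. This is precisely where the hypothesis enters: by Herpel's result cited before the statement, $p$ being pretty good forces $C_{\bG}(e)$ to be smooth, so $\dim C_{\bG}(e) = \dim C_{\lie{g}}(e)$, and therefore $\dim (\bG \cdot e) = \dim \bG - \dim C_{\bG}(e) = \dim[\lie{g},e]$, yielding $\dim \mathfrak{s} = \dim C_{\bG}(e)$. With this dimension count, surjectivity of the differential together with smoothness of the source $\bG \times \Sigma$ gives smoothness of $\mu$ at $(1,e)$, which is the definition of $\Sigma$ being a transverse slice to $\bG \cdot e$ at $e$. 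The only non-formal ingredient is the smoothness of $C_{\bG}(e)$, so this is the step where the hypothesis is really used; the remaining arguments are standard linear algebra and a tangent-space computation.
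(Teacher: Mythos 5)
Your construction of the $\lambda$-invariant complement and the computation of $d\mu_{(1,e)}$ are fine, but the role you assign to the pretty good hypothesis is wrong, and there is a secondary issue at the end. Between smooth varieties over an algebraically closed field, surjectivity of the differential at a point is exactly the Jacobian criterion for smoothness of the morphism there — no dimension count is needed, and both source and target here are visibly smooth. Since $[\lie{g},e]+\mathfrak{s}=\lie{g}$ by construction of $\mathfrak{s}$, the morphism $\mu$ is smooth at $(1,e)$ for \emph{any} $p$; the hypothesis does nothing in that step. What the hypothesis actually supplies is the other half of the transverse slice condition, the transversality $T_e\lie{g}=T_e\Sigma\oplus T_e(\bG\cdot e)$. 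Here $T_e\Sigma=\mathfrak{s}$, while $T_e(\bG\cdot e)\supseteq[\lie{g},e]$ with equality precisely when $C_\bG(e)$ is smooth (equivalently, the orbit map is separable). When $p$ is not pretty good the containment can be strict, so $\mathfrak{s}\cap T_e(\bG\cdot e)\neq\{0\}$ and $\Sigma$ is simply too large to be a slice, even though $\mu$ is still smooth at $(1,e)$. Herpel's theorem supplies the separability, after which the transversality \emph{is} the decomposition $\lie{g}=\mathfrak{s}\oplus[\lie{g},e]$ you already have; this is the point of the remark immediately following the proposition, and it is where the hypothesis earns its keep.

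Second, the transverse slice property as it is used later (see the proof of \cref{lem:slod-slice-prop}(b)) requires the action morphism $\bG\times\Sigma\to\lie{g}$ to be smooth, not merely smooth at $(1,e)$, and your argument only delivers smoothness in an open neighbourhood of $(1,e)$. To globalize, use the fact that $\mathfrak{s}\subseteq\bigoplus_{i\leq 0}\lie{g}(\lambda,i)$, so the contracting $\mathbb{G}_m$-action $\rho(k)x=k^2(\lambda(k^{-1})\cdot x)$ preserves $\Sigma$ with unique fixed point $e$; one checks that $\mu$ is equivariant for $\rho$ on $\lie{g}$ together with the action on $\bG\times\Sigma$ that conjugates the first coordinate by $\lambda(k)$ and scales the second by $\rho(k)$. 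The smooth locus of $\mu$ is then open, $\mathbb{G}_m$-invariant, and contains $(1,e)$; since $(1,e)$ lies in the closure of the $\mathbb{G}_m$-orbit of every $(1,e+s)$ with $s\in\mathfrak{s}$, the smooth locus is all of $\bG\times\Sigma$, as required.
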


\begin{rem}
The separability of centralisers implies that the tangent space at $e$ is $T_e(\bG\cdot e) = [\lie{g},e] = \{[x,e] \mid x \in \lie{g}\}$. Inspecting the proof of \cite[3.27]{Tay16} we see that if $\lie{s} \subseteq \lie{g}$ is a $\lambda$-invariant complement of $[\lie{g},e]$ then we automatically have $\lie{s} \subseteq \bigoplus_{i \leqslant 0} \lie{g}(\lambda,i)$ because $\lie{u}(\lambda,1) \subseteq [\lie{g},e]$.
\end{rem}

If $p \gg 0$ is large then, by the Jacobson--Morozov Theorem, any nilpotent element $e \in \mathcal{N}(\bG)$ is contained in an $\lie{sl}_2$-triple $\{e,h,f\}$. In this case we have $\lie{g} = [\lie{g},e] \oplus \lie{c_g}(f)$ and the subset $e + \lie{c_g}(f) \subseteq \lie{g}$ is a transverse slice to the orbit $\bG\cdot e$ known as the Slodowy slice. Here $\lie{c_g}(f) = \Lie(C_{\bG}(f))$ is the centraliser of $f \in \mathcal{N}(\bG)$ in the Lie algebra. Therefore, \cref{lem:sigma-exists} provides an alternative to the Slodowy slice when $p$ is pretty good for $\bG$. In this direction we will need the following properties of transverse slices which are well-known in the case of the Slodowy slice.

\begin{lem}\label{lem:slod-slice-prop}
Assume $p$ is a pretty good prime for $\bG$. If $e \in \mathcal{N}(\bG)$ and $\lie{s}\subseteq \lie{g}$ is a $\lambda$-invariant complement to $[\lie{g},e]$ then the transverse slice $\Sigma = e + \lie{s}$ satisfies the following properties:
\begin{enumerate}[label=(\alph*)]
	\item $\Sigma \cap \bG\cdot e = \{e\}$,
	\item if $e' \in \mathcal{N}(\bG)$ is such that $\overline{\bG\cdot e'} \cap \Sigma \neq\emptyset$ then $\bG\cdot e' \cap\Sigma\neq\emptyset$.
\end{enumerate}
\end{lem}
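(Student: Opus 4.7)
Both statements rest on the contracting $\mathbb{G}_m$-action on $\Sigma$ defined by $t \star x := t^{-2}\lambda(t)\cdot x$ for $t \in \mathbb{G}_m$ and $x \in \lie{g}$. Since $e \in \lie{g}(\lambda,2)$ and the remark after \cref{lem:sigma-exists} ensures $\lie{s} \subseteq \bigoplus_{i \leqslant 0}\lie{g}(\lambda,i)$ with $\lie{s}$ being $\lambda$-stable, a short verification shows that $\star$ preserves $\Sigma$, fixes $e$, and extends to a morphism $\mathbb{A}^1 \times \Sigma \to \Sigma$ with $0\star x = e$ for every $x \in \Sigma$. Any nilpotent orbit $\bG\cdot e''$ is also $\star$-invariant: it is $\bG$-stable, and any associated cocharacter $\mu \in \mathcal{D}_{e''}(\bG)$ satisfies $\mu(s)\cdot e'' = s^2 e''$, so $\bG\cdot e''$ is closed under scalar multiplication by $K^{\times}$.

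For (a), set $Y = \Sigma \cap \bG\cdot e$; it is $\star$-invariant and contains $e$. Since $p$ is pretty good, $C_\bG(e)$ is separable by \cite[3.10]{Tay16}, so $T_e(\bG\cdot e) = [\lie{g},e]$. Combining this with $T_e\Sigma = \lie{s}$ and $\lie{g} = \lie{s} \oplus [\lie{g},e]$ yields $T_eY = 0$, so $e$ is isolated in $Y$. If $x \in Y$, the morphism $\mathbb{A}^1 \to \Sigma$, $t \mapsto t\star x$, maps $\mathbb{G}_m$ into $Y$ and sends $0$ to $e$; since $e$ is isolated in $Y$ this morphism must be constant near $0$, hence constant, so $x = 1\star x = e$.

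For (b), the plan is to show that the action morphism $\alpha\colon \bG \times \Sigma \to \lie{g}$, $(g,x) \mapsto g\cdot x$, is smooth, so that $\bG\cdot\Sigma = \alpha(\bG \times \Sigma)$ is open in $\lie{g}$. A standard tangent space calculation identifies the locus where the differential of $\alpha$ at $(1,x)$ is surjective with the open subset $U = \{x \in \Sigma : [\lie{g},x] + \lie{s} = \lie{g}\}$. Since the adjoint action is a Lie algebra automorphism and $\lie{s}$ is $\lambda$-stable, one computes
\begin{equation*}
[\lie{g},t\star x] + \lie{s} = t^{-2}\lambda(t)\bigl([\lie{g},x] + \lie{s}\bigr),
\end{equation*}
which shows $U$ is $\star$-invariant. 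As $e \in U$ and every $\star$-orbit in $\Sigma$ limits to $e$, this invariance forces $U = \Sigma$; by $\bG$-equivariance of $\alpha$ smoothness extends to all of $\bG \times \Sigma$, so $\bG\cdot\Sigma$ is open in $\lie{g}$. Then if $\overline{\bG\cdot e'}\cap\Sigma \neq \emptyset$, the intersection $\overline{\bG\cdot e'} \cap \bG\cdot\Sigma$ is a nonempty open subset of the irreducible closure $\overline{\bG\cdot e'}$, and therefore must meet the open dense subset $\bG\cdot e'$; writing a common point as $g\cdot x$ with $x \in \Sigma$ produces $x = g^{-1}\cdot (g\cdot x) \in \bG\cdot e' \cap \Sigma$.

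The main technical hurdle is the $\star$-invariance argument that globalises smoothness of $\alpha$ from a neighbourhood of $e$ (where it is automatic by transversality) to all of $\Sigma$. This replaces the classical reliance on an $\lie{sl}_2$-triple and the centraliser $\lie{c_g}(f)$, which is not available at arbitrary pretty good $p$ but whose role is here taken over by the $\lambda$-invariant complement $\lie{s}$ provided by \cref{lem:sigma-exists}.
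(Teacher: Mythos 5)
Your approach matches the paper's in spirit (tangent-space computation plus a contracting $\mathbb{G}_m$-action on $\Sigma$), but there is a sign error in the definition of $\star$ that breaks part (a) as written. On $\lie{g}(\lambda,i)$ your action $t\star x = t^{-2}\lambda(t)\cdot x$ scales by $t^{i-2}$; since $\lie{s}\subseteq\bigoplus_{i\leq 0}\lie{g}(\lambda,i)$, this exponent is $\leq -2$, so the action \emph{repels} from $e$ as $t\to 0$ rather than contracting to it. In particular $t\mapsto t\star x$ does \emph{not} extend to a morphism $\mathbb{A}^1\to\Sigma$ with $0\star x = e$; it blows up at $0$ whenever $x\neq e$. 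The intended action is the one the paper uses, $\rho(t)x = t^2\bigl(\lambda(t^{-1})\cdot x\bigr)$, which scales $\lie{g}(\lambda,i)$ by $t^{2-i}$ and therefore contracts $\Sigma$ to $e$ as $t\to 0$. With that fix, your argument in (a) — $T_eY = 0$ so $e$ is isolated in $Y$, then the $\mathbb{A}^1$-extension forces every $\star$-orbit in $Y$ to collapse to $e$ — is correct and is a mild reformulation of the paper's argument (which phrases the last step via finiteness of $Y$ and irreducibility of $\mathbb{G}_m$-orbits rather than isolatedness of $e$; both are fine).

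Part (b) survives even with your sign, since there you only need that $U$ is open, $\star$-invariant, contains $e$, and every $\star$-orbit limits to $e$ at \emph{one} end of $\mathbb{G}_m$ (at $t\to\infty$ with your convention). It is worth noting that here you take a genuinely more self-contained route than the paper: the paper simply invokes that $\Sigma$ is a transverse slice (already established in \cref{lem:sigma-exists}, citing \cite[3.27]{Tay16}) to get smoothness of $\alpha$ as a black box, whereas you reprove the smoothness of $\alpha$ on all of $\bG\times\Sigma$ by propagating the local transversality at $e$ via $\star$-invariance of the surjectivity locus. This is exactly the mechanism hiding inside \cite[3.27]{Tay16}, so your version makes the dependence on the contracting action explicit, at the cost of duplicating work already done in the cited lemma. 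Once the sign is corrected throughout, the proof is complete.
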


\begin{proof}
(a). Clearly we have $T_e(\Sigma \cap \bG\cdot e) \subseteq T_e(\Sigma)\cap T_e(\bG\cdot e) = \mathfrak{s} \cap [\lie{g},e] = \{0\}$ so $\dim(\Sigma \cap \bG\cdot e) = 0$ which implies $\Sigma \cap \bG\cdot e$ is finite. We define a $\mathbb{G}_m$-action on $\lie{g}$ via the homomorphism $\rho : \mathbb{G}_m \to \GL(\lie{g})$ given by
\begin{equation*}
\rho(k)x = k^2(\lambda(k^{-1}) \cdot x)
\end{equation*}
for all $x \in \lie{g}$ and $k \in \mathbb{G}_m$. This action preserves $\Sigma$ and is a contracting $\mathbb{G}_m$-action with unique fixed point $e \in \Sigma^{\mathbb{G}_m}$ since $\lie{s} \subseteq \bigoplus_{i \leqslant 0} \lie{g}(\lambda,i)$. Now, by \cite[Lem.~2.10]{Jan04} the orbit $\bG\cdot e$ is preserved by this action hence so is the intersection $\Sigma\cap\bG\cdot e$. If $x \in \Sigma\cap\bG\cdot e$ then the $\mathbb{G}_m$-orbit of $x$ is irreducible because $\mathbb{G}_m$ is irreducible. As $\Sigma\cap\bG\cdot e$ is finite the $\mathbb{G}_m$-orbit of $x$ must be $\{x\}$ so $x \in \Sigma^{\mathbb{G}_m} = \{e\}$.

(b). As $\Sigma$ is a transverse slice to the orbit $\bG\cdot e$ we have the action map $\bG\times\Sigma \to \lie{g}$ is a smooth morphism. This implies the image $\bG\cdot\Sigma$ is an open subset of $\lie{g}$, see, for instance, the proof of \cite[3.27]{Tay16}. In particular, we have $\mathcal{O} = \bG\cdot\Sigma \cap \overline{\bG\cdot e'}$ is a non-empty open subset of $\overline{\bG\cdot e'}$ (it is non-empty by assumption).  As $\bG$ is irreducible so is $\overline{\bG\cdot e'}$ which implies
\begin{equation*}
\emptyset \neq \mathcal{O} \cap \bG\cdot e' = \bG\cdot \Sigma\cap\bG\cdot e' = \bG \cdot (\Sigma\cap\bG\cdot e')
\end{equation*}
because both $\mathcal{O}$ and $\bG\cdot e'$ are non-empty open subsets of $\overline{\bG\cdot e'}$. This shows that $\Sigma\cap\bG\cdot e'$ must be non-empty.
\end{proof}

Assume $\bT \leqslant \bG$ is a maximal torus and $\mathcal{R}(\bG,\bT) = (X,\Phi,\widecheck{X},\widecheck{\Phi})$. For each root $\alpha \in \Phi$ we have a corresponding $1$-dimensional root subspace $\lie{g}_{\alpha} \subseteq \lie{g}$ and, moreover, we have
\begin{equation*}
\lie{g} = \lie{t} \oplus \bigoplus_{\alpha \in \Phi} \lie{g}_{\alpha}
\end{equation*}
where $\lie{t} := \Lie(\bT)$.

We denote by $\mathcal{C}_{\bT}(\bG)$ the set of closed subgroups $\bH \leqslant \bG$ such that $\bH$ is connected reductive and $\bT \leqslant \bH$. In particular, the elements in  $\mathcal{C}_{\bT}(\bG)$ have maximal rank. If $\bH \in \mathcal{C}_{\bT}(\bG)$ and $\lie{h} := \Lie(\bH)$ then there exists a subset $\Psi \subseteq \Phi$ such that
\begin{equation*}
\lie{h} = \lie{t} \oplus\bigoplus_{\alpha \in \Psi} \lie{g}_{\alpha}.
\end{equation*}
As $p$ is a good prime for $\bG$ we have $\Psi$ is closed and symmetric by \cite[1.14]{DiMibook} and \cite[2.5]{BorTits65}, see also \cite[\S13.1]{MT}. In this case we have the root datum of $\bH$ is given by $\mathcal{R}(\bH,\bT) = (X,\Psi,\widecheck{X},\widecheck{\Psi})$.

\begin{lem}\label{lem:direct-sum-orbits}
If $\lie{n} = \bigoplus_{\alpha \in \Phi\setminus\Psi} \lie{g}_{\alpha}$ then we have $\lie{g} = \lie{h} \oplus \lie{n}$. Moroever, for any element $e \in \lie{h}$ we have $[\lie{g},e] = [\lie{h},e] \oplus [\lie{n},e]$.
\end{lem}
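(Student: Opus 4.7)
The statement splits into two assertions; both follow quickly from the root space decomposition combined with the fact that $\Psi$ is closed and symmetric (as noted just before the lemma, this uses $p$ good for $\bG$).

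For the first assertion, my plan is simply to combine the decompositions already written down:
\begin{equation*}
\lie{g} = \lie{t} \oplus \bigoplus_{\alpha \in \Phi} \lie{g}_{\alpha} = \Bigl(\lie{t} \oplus \bigoplus_{\alpha \in \Psi} \lie{g}_{\alpha}\Bigr) \oplus \bigoplus_{\alpha \in \Phi\setminus\Psi}\lie{g}_{\alpha} = \lie{h} \oplus \lie{n}.
\end{equation*}
This is a direct sum of vector spaces, and there is nothing further to verify.

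For the second assertion, the inclusion $[\lie{g},e] = [\lie{h},e] + [\lie{n},e]$ is immediate from bilinearity of the bracket and the first part. The substantive point is that this sum is direct, and for that it suffices to show that $[\lie{h},e] \subseteq \lie{h}$ and $[\lie{n},e] \subseteq \lie{n}$, since $\lie{h} \cap \lie{n} = 0$. The first containment is automatic because $\lie{h}$ is a Lie subalgebra and $e \in \lie{h}$. For the second containment I would check that $\lie{n}$ is stable under the adjoint action of $\lie{h}$, which reduces via the root space decomposition to the assertion that for $\alpha \in \Phi\setminus\Psi$ and $\beta \in \Psi$ with $\alpha+\beta \in \Phi$, one has $\alpha+\beta \notin \Psi$. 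This is where the hypothesis that $\Psi$ is closed and symmetric is used: if $\alpha+\beta \in \Psi$, then symmetry gives $-\beta \in \Psi$, and closedness applied to $\alpha+\beta$ and $-\beta$ (whose sum $\alpha$ lies in $\Phi$) would force $\alpha \in \Psi$, contradicting the choice of $\alpha$. One also needs to note $[\lie{t},\lie{g}_\alpha] \subseteq \lie{g}_\alpha \subseteq \lie{n}$ for $\alpha \in \Phi \setminus \Psi$, which is part of the root space decomposition.

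There is no genuine obstacle here; the lemma is a purely combinatorial consequence of the root space structure and of $\Psi$ being a closed symmetric subsystem. The only subtlety worth highlighting is the use of both properties (closed and symmetric) in the final step — neither alone suffices.
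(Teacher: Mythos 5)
Your proof is correct and follows essentially the same route as the paper: both reduce the directness of the sum to the containment $[\lie{h},\lie{n}]\subseteq\lie{n}$, which is then verified root-by-root using that $\Psi$ is closed and symmetric. The only cosmetic difference is that you spell out the intermediate step $[\lie{n},e]\subseteq\lie{n}$ and $[\lie{h},e]\subseteq\lie{h}$ before intersecting, whereas the paper goes directly to $[\lie{h},e]\cap[\lie{n},e]\subseteq\lie{h}\cap\lie{n}$.
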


\begin{proof}
The first statement is obvious and so clearly we have $[\lie{g},e] = [\lie{h},e] + [\lie{n},e]$. To see that this sum is direct it suffices to show that $[\lie{n},\lie{h}] \subseteq \lie{n}$ because then we have $[\lie{h},e] \cap [\lie{n},e] \subseteq \lie{h} \cap \lie{n} = \{0\}$ since $e \in \lie{h}$.

Recall that for any two roots $\alpha,\beta \in \Phi$ we have
\begin{equation*}
[\lie{g}_{\alpha},\lie{g}_{\beta}] \subseteq \begin{cases}
\lie{g}_{\alpha+\beta} &\text{if }\alpha+\beta \in \Phi\\
0 &\text{if }\alpha+\beta \not\in \Phi.
\end{cases}
\end{equation*}
Now, if $\alpha \in \Phi\setminus\Psi$ and $\beta \in \Psi$ are roots such that $\alpha + \beta \in \Phi$ is a root then we must have $\alpha + \beta \in \Phi\setminus \Psi$. If this were not the case then $\alpha = (\alpha+\beta) - \beta \in \Psi$ because $\Psi$ is closed and symmetric, a contradiction.
\end{proof}

\begin{prop}\label{prop:fusion-nilp-orb}
Assume $p$ is a pretty good prime for $\bG$ and $\bH \leqslant \bG$ is a closed connected reductive subgroup of maximal rank. If $e,e' \in \mathcal{N}(\bH)$ are nilpotent elements with $e \in \overline{\bH\cdot e'}$ then $\bG \cdot e = \bG\cdot e'$ if and only if $\bH\cdot e = \bH\cdot e'$.
\end{prop}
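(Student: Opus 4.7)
The ($\Leftarrow$) direction is immediate since $\bH\cdot e \subseteq \bG\cdot e$. For the non-trivial direction, assume $\bG\cdot e = \bG\cdot e'$. My plan is to construct a transverse slice $\Sigma \subseteq \lie{g}$ to $\bG\cdot e$ whose intersection $\Sigma \cap \lie{h}$ is itself a transverse slice to $\bH\cdot e$ in $\lie{h}$, and then play the two parts of \cref{lem:slod-slice-prop} against each other in $\bH$ and $\bG$.

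First, $p$ is pretty good for $\bH$ as well. Indeed, since $p$ is good for $\bG$ the root system $\Psi$ of $\bH$ is a closed symmetric subset of the root system $\Phi$ of $\bG$, as noted just before \cref{lem:direct-sum-orbits}, so the lemma recorded immediately after \cref{pa:root-data} applies. In particular \cref{lem:sigma-exists} and \cref{lem:slod-slice-prop} are available inside $\bH$. Pick $\lambda \in \mathcal{D}_e(\bH) = \mathcal{A}_e(\bH)$ using \cref{lem:assoc-dynkin-cocharacters}; by \cref{thm:fowler-rohrle} the same $\lambda$ also lies in $\mathcal{D}_e(\bG)$, so $\lambda$ is simultaneously associated to $e$ in both $\bH$ and $\bG$. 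This single cocharacter will control both slices.

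To build the compatible slices, start from $\lie{g} = \lie{h} \oplus \lie{n}$ and $[\lie{g},e] = [\lie{h},e] \oplus [\lie{n},e]$ from \cref{lem:direct-sum-orbits}. Since $\lie{n}$ is a sum of $\bT$-root spaces it is $\lambda$-stable, and $[\lie{n},e] \subseteq \lie{n}$ is $\lambda$-stable too because $e \in \lie{g}(\lambda,2)$; as $\lambda(\mathbb{G}_m)$ acts semisimply, I can choose a $\lambda$-invariant complement $\lie{s}_{\bN}\subseteq \lie{n}$ to $[\lie{n},e]$. Applying \cref{lem:sigma-exists} inside $\bH$ yields a $\lambda$-invariant complement $\lie{s}_{\bH}\subseteq \lie{h}$ to $[\lie{h},e]$, and then $\lie{s} := \lie{s}_{\bH} \oplus \lie{s}_{\bN}$ is a $\lambda$-invariant complement to $[\lie{g},e]$ in $\lie{g}$. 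Setting $\Sigma := e + \lie{s}$ and $\Sigma_{\bH} := e + \lie{s}_{\bH}$, \cref{lem:sigma-exists} gives transverse slices to $\bG\cdot e$ and $\bH\cdot e$ respectively, and clearly $\Sigma \cap \lie{h} = \Sigma_{\bH}$.

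To conclude, note $e \in \overline{\bH\cdot e'} \cap \Sigma_{\bH}$, so \cref{lem:slod-slice-prop}(b) inside $\bH$ produces some $x \in (\bH\cdot e') \cap \Sigma_{\bH}$. Since $x \in \bH\cdot e' \subseteq \bG\cdot e' = \bG\cdot e$ and $x \in \Sigma_{\bH} \subseteq \Sigma$, \cref{lem:slod-slice-prop}(a) inside $\bG$ forces $x = e$. Hence $e \in \bH\cdot e'$, giving $\bH\cdot e = \bH\cdot e'$. The main obstacle is the compatibility of the two slices in the third paragraph, and this is where \cref{thm:fowler-rohrle} is essential: without a single cocharacter $\lambda$ that is associated to $e$ in both groups at once, the direct-sum construction of $\lie{s}$ from $\lie{s}_{\bH}$ and $\lie{s}_{\bN}$ would not produce a slice to $\bG\cdot e$ extending a slice to $\bH\cdot e$.
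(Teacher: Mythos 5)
Your proof is correct and follows essentially the same route as the paper's: pick a cocharacter $\lambda$ associated to $e$ in $\bH$, use \cref{thm:fowler-rohrle} to see it is also associated to $e$ in $\bG$, build nested $\lambda$-invariant transverse slices $\Sigma_{\bH} \subseteq \Sigma$ via the decomposition of \cref{lem:direct-sum-orbits}, and then play \cref{lem:slod-slice-prop}(b) in $\bH$ against \cref{lem:slod-slice-prop}(a) in $\bG$. The only expository difference is that you make explicit the verification that $p$ is pretty good for $\bH$ (which the paper leaves implicit) and you name the subspaces $\lie{s}_{\bH}, \lie{s}_{\bN}$; the substance is identical.
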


\begin{proof}
Let $\lambda \in \mathcal{A}_e(\bH)$ be a cocharacter associated to $e$ in $\bH$ then $\lambda \in \mathcal{D}_e(\bH)$ by \cref{lem:assoc-dynkin-cocharacters}. We now choose a $\lambda$-invariant complement $\lie{s} \subseteq \lie{h}$ to $[\lie{h},e]$. The set $\Sigma = e + \lie{s}$ is then a transverse slice to the orbit $\bH\cdot e$, as in \cref{lem:sigma-exists}. We assume $\bT \leqslant \bH$ is a maximal torus containing $\lambda(\mathbb{G}_m)$ then $\bT$ is also a maximal torus of $\bG$ by assumption.

Consider the decomposition $\lie{g} = \lie{h} \oplus \lie{n}$ defined in \cref{lem:direct-sum-orbits}. As $\lambda(\mathbb{G}_m) \leqslant \bT$ each root space $\lie{g}_{\alpha}$ is $\lambda$-invariant, so $\lie{n}$ is $\lambda$-invariant. Moreover $[\lie{n},e]$ is $\lambda$-invariant so we can choose a $\lambda$-invariant complement $\lie{s}' \subseteq \lie{n}$ to $[\lie{n},e]$ by choosing a complement in each degree. By \cref{lem:direct-sum-orbits} we thus have
\begin{equation*}
\lie{g} = \lie{h} \oplus \lie{n} = (\lie{s} \oplus [\lie{h},e]) \oplus (\lie{s}' \oplus [\lie{n},e]) = (\lie{s}\oplus \lie{s}') \oplus [\lie{g},e]
\end{equation*}
so $\tilde{\lie{s}} := \lie{s}\oplus\lie{s}' \subseteq \lie{g}$ is a $\lambda$-invariant complement to $[\lie{g},e]$. By \cref{thm:fowler-rohrle} we have $\lambda \in \mathcal{D}_e(\bH) = \mathcal{A}_e(\bH) \subseteq \mathcal{A}_e(\bG) = \mathcal{D}_e(\bG)$ is associated to $e$ in $\bG$ so $\tilde{\Sigma} = e + \tilde{\lie{s}}$ is a transverse slice to the orbit $\bG\cdot e$ by \cref{lem:sigma-exists}.

Clearly if $\bH\cdot e = \bH\cdot e'$ then $\bG\cdot e = \bG \cdot e'$ so assume conversely that $\bG\cdot e = \bG \cdot e'$. By assumption $e \in \overline{\bH\cdot e'}$ so we have $\overline{\bH\cdot e'} \cap \Sigma \neq \emptyset$ hence $\bH \cdot e' \cap \Sigma \neq \emptyset$ by \cref{lem:slod-slice-prop}. However, again by \cref{lem:slod-slice-prop}, we have
\begin{equation*}
\emptyset \neq \bH \cdot e' \cap \Sigma \subseteq \bG \cdot e' \cap \tilde{\Sigma} = \bG \cdot e \cap \tilde{\Sigma} = \{e\}.
\end{equation*}
This implies that $e \in \bH\cdot e'$ so $\bH\cdot e = \bH\cdot e'$ as desired.
\end{proof}

\begin{proof}[Proof (of \cref{thm:fusion-unip-classes})]
Assume $\varphi : \bG \to \widetilde{\bG}$ is an isotypic morphism, see \cref{subsec:reductive-groups}. Such a morphism restricts to a homeomorphism $\varphi : \mathcal{U}(\bG) \to \mathcal{U}(\widetilde{\bG})$ which is equivariant with respect to the conjugation actions.

If $X \subseteq \bG$ is a subset then we denote by $\widetilde{X} = XZ^{\circ}(\widetilde\bG) \subseteq \widetilde{\bG}$ the product. The assignment $X \mapsto \widetilde{X}$ yields a bijection between maximal tori of $\bG$ and maximal tori of $\widetilde{\bG}$. Moreover, if $\bT \leqslant \bG$ is a fixed maximal torus then this defines a bijection $\mathcal{C}_{\bT}(\bG) \to \mathcal{C}_{\widetilde{\bT}}(\widetilde{\bG})$ by \cite[Prop. 13.5 and Thm. 13.6]{MT}. If $\bH \in \mathcal{C}_{\bT}(\bG)$ then the restriction of $\varphi$ to $\bH$ is also an isotypic morphism $\bH \to \widetilde{\bH}$. With this one readily checks that the statement in \cref{thm:fusion-unip-classes} is true for the pair $(\bG,\bH)$ if and only if it is true for the pair $(\widetilde{\bG},\widetilde{\bH})$.

We first apply this remark to a smooth covering $\widetilde{\bG} \to \bG$, see \cite[1.24]{Tay19}, so that we may assume the derived subgroup of $\bG$ is simply connected. Next, we apply it to a smooth regular embedding, see \cite[7.5]{Tay19}, so that we may assume the centre of $\bG$ is smooth and connected. In particular, we have $\mathcal{R}(\bG,\bT)$ is torsion free, as defined in \cref{subsec:reductive-groups}, so $p$ is a pretty good prime for $\bG$. Applying a Springer morphism, see \cref{thm:springer-morphism}, we see that the statement in \cref{thm:fusion-unip-classes} is equivalent to that in \cref{prop:fusion-nilp-orb} so we are done.
\end{proof}

\part{Central functions on finite reductive groups}

Recall from \cref{sec:basic-setup} that for every $F$-stable subgroup $\bH$ of $\bG$ we denote by $H:=\bH^F$ the finite group of fixed points under $F$.

\section{Generalised Gelfand--Graev Characters}\label{sec:GGGCs}
To each rational unipotent element $u \in \mathcal{U}(\bG)^F$ Kawanaka associated a character $\Gamma_u^G$ known as a generalised Gelfand--Graev Character (GGGC). If $s \in C_G(u)$ is a semisimple element centralising $u$ then $u \in C_G^{\circ}(s)$ and we may consider the GGGC $\Gamma_u^{C_G^{\circ}(s)}$. Our purpose here is to show that the data defining both $\Gamma_u^G$ and $\Gamma_u^{C_G^{\circ}(s)}$ are comparable.

\subsection{Kawanaka Data}\label{sec:kaw-data}
As before we set $\lie{g} = \Lie(\bG)$. The definition of GGGCs requires data which is both global and local, in the sense that some data is the same for all unipotent elements and other data depend on the choice of $u$. We start by introducing the global data, which is a triple $(\phi_{\spr},\kappa,\chi_p)$ consisting of:
\begin{itemize}
	\item a Springer isomorphism $\phi_{\spr} : \mathcal{U}(\bG) \simto \mathcal{N}(\bG)$, see \cref{subsec:springer-morph},
	\item a symmetric bilinear form $\kappa : \lie{g} \times \lie{g} \to K$ which is $\bG$-invariant, with respect to the adjoint action, and is defined over $\mathbb{F}_q$,
	\item and a non-trivial additive character $\chi_p : \mathbb{F}_p^+ \to \Ql^\times$ of the additive group of the finite field $\mathbb{F}_p$.
\end{itemize}
If $V \subseteq \lie{g}$ is a subspace then we will denote by $V^{\perp} = \{x \in \lie{g} \mid \kappa(x,v) = 0\text{ for all }v \in V\}$ the subspace orthogonal to $V$ with respect to $\kappa$. Note that we do not assume that $\kappa$ is non-degenerate. 

Since $\phi_{\spr}$ is $\bG$-equivariant, then for any cocharacter $\lambda \in \widecheck{X}(\bG)$ it restricts to a $\bP(\lambda)$-equivariant isomorphism $\bU(\lambda) \simto \mathfrak{u}(\lambda,1)$, see for example \cite[Rem.~10]{Mcn05}. We will require additional properties on this restriction, given in the following definition.

\begin{definition}\label{def:Kawanaka-datum}
Any triple $\mathcal{K} = (\phi_{\spr},\kappa,\chi_p)$, as above, is called a \emph{Kawanaka datum} for $\bG$ if the following hold:
\begin{enumerate}[leftmargin=1.7cm, label=(KD\arabic*)]
	\item for any cocharacter $\lambda \in \widecheck{X}(\bG)$ the following hold:
	\begin{enumerate}[label=($\lambda$\arabic*)]
		\item $\phi_{\spr}(\bU(\lambda,2)) = \mathfrak{u}(\lambda,2)$,
		\item for any $i \in \{1,2\}$ there exists a non-zero constant $c_i \in K$ such that for any $u,v \in \bU(\lambda,i)$ we have
		\begin{itemize}
			\item $\phi_{\spr}(uv) - \phi_{\spr}(u) - \phi_{\spr}(v) \in \mathfrak{u}(\lambda,i+1)$,
			\item $\phi_{\spr}([u,v]) - c_i[\phi_{\spr}(u),\phi_{\spr}(v)] \in \mathfrak{u}(\lambda,2i+1)$,
		\end{itemize}
	\end{enumerate}

	\item for any maximal torus $\bS \leqslant \bG$ and root $\alpha \in \Phi(\bS)$ we have
	\begin{equation*}
	\lie{g}_{\alpha}^{\perp} = \Lie(\bS) \oplus \bigoplus_{\beta \in \Phi(\bS)\setminus\{-\alpha\}} \lie{g}_{\beta}.
	\end{equation*}
\end{enumerate}
\end{definition}

\begin{rem}\label{rem:kaw-iso}
The conditions ($\lambda$1) and ($\lambda$2) in (KD1) ensure that the restriction of $\phi_{\spr}$ to any unipotent radical $\bU(\lambda)$ is a Kawanaka isomorphism in the sense of \cite[4.1]{Tay16}. Note also that the condition in (KD2) automatically holds whenever $\kappa$ is non-degenerate by the $\bS$-invariance of $\kappa$.
\end{rem}

Let us observe that Kawanaka data exist for proximate groups. Firstly, it follows from \cite[4.6]{Tay16}, and our assumption that $\bG$ is proximate, that there exists a Springer isomorphism $\phi_{\spr}$ satisfying (KD1). Note that whilst it is only required that $\phi_{\spr}(\bU(\lambda,2)) \subseteq \lie{u}(\lambda,2)$ in \cite[4.1(K1)]{Tay16} equality must hold because $\phi_{\spr}$ is an isomorphism and $\bU(\lambda,2)$ and $\lie{u}(\lambda,2)$ are closed subsets of the same dimension.

Now, in \cite[5.6]{Tay16} it is shown that there exists a form $\kappa$ satisfying (KD2) when $\bS = \bT_0$. However, if $\bS \neq \bT_0$ then there exists a $g \in \bG$ such that $\bS = {}^g\bT_0$. We have $\Phi(\bS) = {}^g\Phi(\bT_0)$ and for any $\alpha \in \Phi(\bT_0)$ we have $\lie{g}_{{}^g\alpha} = g\cdot\lie{g}_{\alpha}$. By the $\bG$-invariance of $\kappa$ we have $(\lie{g}_{{}^g\alpha})^{\perp} = (g\cdot\lie{g}_{\alpha})^{\perp} = g\cdot(\lie{g}_{\alpha}^{\perp})$ so (KD2) holds. To recapitulate, we have the following.

\begin{lem}
If $\bG$ is proximate then there exists a Kawanaka datum $\mathcal{K} = (\phi_{\spr},\kappa,\chi_p)$ for $\bG$.
\end{lem}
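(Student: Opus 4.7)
The plan is to construct the three components of the Kawanaka datum separately, the third being essentially trivial. For $\chi_p$, one simply picks any non-trivial additive character of $\mathbb{F}_p^+$, which exists since $\Ql^\times$ contains a primitive $p$-th root of unity. So the real work concerns $\phi_{\spr}$ and $\kappa$.

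For the Springer isomorphism, I would appeal to \cite[4.6]{Tay16}, which under the proximate hypothesis produces a Springer isomorphism whose restriction to each $\bU(\lambda)$ is a Kawanaka isomorphism in the sense of \cite[4.1]{Tay16}. That reference gives conditions equivalent to ($\lambda$1) and ($\lambda$2), modulo the observation that the inclusion $\phi_{\spr}(\bU(\lambda,2)) \subseteq \lie{u}(\lambda,2)$ must be an equality: $\phi_{\spr}$ is an isomorphism of varieties, $\bU(\lambda,2)$ and $\lie{u}(\lambda,2)$ are irreducible closed subsets of $\mathcal{U}(\bG)$ and $\mathcal{N}(\bG)$ respectively with the same dimension, so equality follows.

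For the bilinear form, \cite[5.6]{Tay16} produces a $\bG$-invariant symmetric bilinear form $\kappa$ defined over $\mathbb{F}_q$ such that (KD2) holds for the fixed maximal torus $\bT_0$. The last step is to promote this from $\bT_0$ to an arbitrary maximal torus $\bS \leqslant \bG$. Writing $\bS = {}^g\bT_0$ for some $g \in \bG$, the root decomposition of $\Lie(\bS)$ is obtained from that of $\Lie(\bT_0)$ by applying $\Ad(g)$; in particular $\Phi(\bS) = {}^g\Phi(\bT_0)$ and $\lie{g}_{{}^g\alpha} = g\cdot\lie{g}_{\alpha}$ for each $\alpha \in \Phi(\bT_0)$. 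The $\bG$-invariance of $\kappa$ then gives $(g\cdot\lie{g}_{\alpha})^{\perp} = g\cdot\lie{g}_{\alpha}^{\perp}$, and applying $\Ad(g)$ to the identity for $\bT_0$ yields the required identity for $\bS$.

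The assembled triple $(\phi_{\spr},\kappa,\chi_p)$ then satisfies all axioms of \cref{def:Kawanaka-datum}. No step should present a genuine obstacle: everything needed has already been prepared in \cite[\S 4 and \S 5]{Tay16} or in the paragraph preceding the statement, and the proof amounts to assembling these ingredients and recording the translation from (K1) of \cite[4.1]{Tay16} to conditions ($\lambda$1)--($\lambda$2) together with the conjugation argument extending (KD2) from $\bT_0$ to an arbitrary maximal torus.
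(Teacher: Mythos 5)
Your proposal is correct and follows essentially the same route as the paper: existence of $\phi_{\spr}$ via \cite[4.6]{Tay16} with the dimension argument upgrading the inclusion $\phi_{\spr}(\bU(\lambda,2)) \subseteq \lie{u}(\lambda,2)$ to an equality, and existence of $\kappa$ via \cite[5.6]{Tay16} together with the $\Ad(g)$-conjugation argument to pass from $\bT_0$ to an arbitrary maximal torus. The paper leaves the choice of $\chi_p$ implicit, which you have (harmlessly) spelled out.
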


\begin{rem}
Note we obtain a character $\chi_q = \chi_p\circ \Tr_{\mathbb{F}_q/\mathbb{F}_p} : \mathbb{F}_q^+ \to \Ql^\times$ where $\Tr_{\mathbb{F}_q/\mathbb{F}_p}$ is the field trace.
\end{rem}

\begin{assumption}
From now until the end of this section we assume that the group $\bG$ is proximate. We assume $\mathcal{K} = (\phi_{\spr},\kappa,\chi_p)$ is a fixed Kawanaka datum for $\bG$.
\end{assumption}

\subsection{Definition of GGGCs}\label{ssec:gggcdef}
We now briefly recall the construction of GGGCs; this construction is described in \cite[\S3.1]{Kaw86} and \cite[\S5]{Tay16}. Let $u \in \mathcal{U}(\bG)^F$ be a rational unipotent element. We denote by $\eta_u^G : \mathcal{U}(\bG)^F \to \Ql^\times$ the map defined by
\begin{equation*}
\eta_u^G(v) = \chi_q(\kappa(\phi_{\spr}(u),\phi_{\spr}(v))).
\end{equation*}
It follows from the $\bG$-equivariance of $\kappa$ and $\phi_\spr$ that for any $x \in G$ we have
\begin{equation}\label{eq:conj-eta-char}
{}^x\eta_{u}^G = \eta_{xux^{-1}}^G.
\end{equation}
If $\lambda \in \mathcal{D}_u(\bG)^F$ is a Dynkin cocharacter associated to $u$ then, by definition, $\phi_{\spr}(u) \in \lie{g}(\lambda,2)$ so
\begin{equation}\label{eq:ker-eta}
U(\lambda,-3) \subseteq \{v \in \mathcal{U}(\bG)^F \mid \eta_u^G(v) = 1\}
\end{equation}
and $\eta_u^G$ restricts to a linear character $U_G(\lambda,-2) \to \Ql^\times$, see \cite[5.8]{Tay16}.

\begin{lem}\label{lem:is-subgrp}
Let us consider $\lie{g}(\lambda,-1)$ as an algebraic group under addition. We have a surjective homomorphism of algebraic groups $\gamma : \bU(\lambda,-1) \to \lie{g}(\lambda,-1)$ defined by $\gamma(v) = \pi(\phi_{\spr}(v))$, where $\pi : \lie{g} \to \lie{g}(\lambda,-1)$ is the natural projection map. The kernel of $\gamma$ is $\bU(\lambda,-2)$. Hence, for any subspace $\lie{m} \subseteq \lie{g}(\lambda,-1)$ there exists a closed subgroup $\bU(\lambda,\lie{m}) := \gamma^{-1}(\lie{m})$ containing $\bU(\lambda,-2)$ as a normal subgroup. Moreover, we have $F(\bU(\lambda,\lie{m})) = \bU(F\cdot\lambda,F(\lie{m}))$ and for any $a \in \bG$ we have ${}^a\bU(\lambda,\lie{m}) = \bU({}^a\lambda,a\cdot\lie{m})$.
\end{lem}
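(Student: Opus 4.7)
The plan is to derive the lemma directly from the Kawanaka datum axioms, applied to the opposite cocharacter $-\lambda$, together with the $\bG$- and $F$-equivariance of the Springer isomorphism. The small translation one must perform is to record that $\mathfrak{u}(-\lambda,i) = \mathfrak{u}(\lambda,-i)$ and $\bU(-\lambda,i) = \bU(\lambda,-i)$, so that every statement about $\phi_{\spr}$ on $\bU(\lambda,-i)$ can be read off from the axioms \textup{(KD1)} applied to $-\lambda$.

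First I would establish that $\gamma$ is a morphism of varieties. By the $\bG$-equivariance of $\phi_{\spr}$ recalled in the discussion preceding \cref{def:Kawanaka-datum}, the restriction gives an isomorphism $\phi_{\spr}:\bU(\lambda,-1)\simto \mathfrak{u}(\lambda,-1)$, and post-composing with the linear projection $\pi:\mathfrak{u}(\lambda,-1)\twoheadrightarrow\lie{g}(\lambda,-1)$ produces $\gamma$, which is thus a morphism and is surjective because $\pi$ is.

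The key step is to verify that $\gamma$ is a group homomorphism. For $u,v \in \bU(\lambda,-1)$, axiom \textup{(KD1)($\lambda$2)} with $i=1$ applied to $-\lambda$ yields
\begin{equation*}
\phi_{\spr}(uv) - \phi_{\spr}(u) - \phi_{\spr}(v) \;\in\; \mathfrak{u}(-\lambda,2) \;=\; \mathfrak{u}(\lambda,-2) \;=\; \ker(\pi),
\end{equation*}
so $\gamma(uv) = \gamma(u) + \gamma(v)$. For the kernel, axiom \textup{(KD1)($\lambda$1)} applied to $-\lambda$ says that $\phi_{\spr}(\bU(\lambda,-2))=\mathfrak{u}(\lambda,-2)$; since $\phi_{\spr}$ is bijective and $\mathfrak{u}(\lambda,-2)$ is exactly the kernel of $\pi$ restricted to $\mathfrak{u}(\lambda,-1)$, this identifies $\ker(\gamma)$ with $\bU(\lambda,-2)$.

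Once $\gamma$ is a surjective homomorphism of algebraic groups with kernel $\bU(\lambda,-2)$, the preimage $\bU(\lambda,\lie{m}):=\gamma^{-1}(\lie{m})$ of any subspace $\lie{m}\subseteq \lie{g}(\lambda,-1)$ is automatically a closed subgroup containing the normal subgroup $\bU(\lambda,-2)=\ker(\gamma)$. Finally, the compatibility with $F$ and with conjugation by $a\in\bG$ reduces to three standard ingredients: \emph{(i)} $\phi_{\spr}$ commutes with $F$ and with $\bG$-conjugation by definition of a Springer (iso)morphism; \emph{(ii)} \cref{eq:Frob-weight-space,eq:conj-cochar-spaces} give $F(\lie{g}(\lambda,i))=\lie{g}(F\cdot\lambda,i)$ and $a\cdot\lie{g}(\lambda,i)=\lie{g}({}^a\lambda,i)$, which pass to the projections $\pi$; \emph{(iii)} the already recorded identities $F(\bU(\lambda,i))=\bU(F\cdot\lambda,i)$ and ${}^a\bU(\lambda,i)=\bU({}^a\lambda,i)$. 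Applying these to $v\in\bU(\lambda,\lie{m})$ shows that $\gamma_{F\cdot\lambda}(F(v))=F(\gamma_{\lambda}(v))\in F(\lie{m})$ and $\gamma_{{}^a\lambda}({}^a v)=a\cdot\gamma_{\lambda}(v)\in a\cdot\lie{m}$, which gives the two equalities $F(\bU(\lambda,\lie{m}))=\bU(F\cdot\lambda,F(\lie{m}))$ and ${}^a\bU(\lambda,\lie{m})=\bU({}^a\lambda,a\cdot\lie{m})$. No step presents a substantive obstacle; the only mild care required is in tracking the sign of the cocharacter when invoking the Kawanaka datum axioms.
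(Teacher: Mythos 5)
Your proof is correct and follows essentially the same route as the paper's own argument: you reduce everything to the Kawanaka datum axioms (KD1) applied to the opposite cocharacter $-\lambda$, using $(-\lambda 2)$ with $i=1$ to show $\gamma$ is a homomorphism and $(-\lambda 1)$ to identify the kernel, then invoke the $F$- and $\bG$-equivariance of $\phi_{\spr}$ for the final two equalities. You spell out the last compatibility statements more explicitly than the paper, which simply declares them clear, but the underlying argument is identical.
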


\begin{proof}
That $\gamma$ is a morphism of varieties is clear and as $\phi_{\spr}$ is an isomorphism $\bU(\lambda,-1) \to \lie{u}(\lambda,-1)$ it is clearly surjective. Note that for any $i \in \mathbb{Z}$ we have $\bU(\lambda,-i) = \bU(-\lambda,i)$ and $\lie{u}(\lambda,-i) = \lie{u}(-\lambda,i)$. For any $u,v \in \bU(\lambda,-1)$ we have
\begin{equation*}
\phi_{\spr}(uv) + \lie{u}(\lambda,-2) = \phi_{\spr}(u)+\phi_{\spr}(v)+ \lie{u}(\lambda,-2),
\end{equation*}
which follows from ($-\lambda$2), so $\gamma$ is a homomorphism. Its kernel is $\bU(\lambda,-2)$ by ($-\lambda$1). The remaining statements are clear.
\end{proof}

Now consider the bilinear form $\omega : \lie{g}(\lambda,-1) \times \lie{g}(\lambda,-1) \to K$ defined by
\begin{equation}\label{eq:defomega}
\omega(x,y) = \kappa(\phi_{\spr}(u),[x,y]).
\end{equation}
Under our assumptions $\omega$ is a non-degenerate alternating bilinear form on $\lie{g}(\lambda,-1)$, see \cite[5.9]{Tay16}. For a subspace $V \subseteq \lie{g}(\lambda,-1)$ we denote by
\begin{equation*}
V^{\ddagger} = \{x \in \lie{g}(\lambda,-1) \mid \omega(x,y) = 0\text{ for all }y \in V\}
\end{equation*}
the perpendicular space of $V$ with respect to $\omega$.

Now let $\lie{m} \subseteq \lie{g}(\lambda,-1)$ be an $F$-stable Lagrangian subspace, by which we mean that $\lie{m} = \lie{m}^{\ddagger}$. As in \cref{lem:is-subgrp} we have a corresponding $F$-stable closed subgroup $\bU(\lambda,\lie{m}) \leqslant \bU(\lambda,-1)$ and we set $U(\lambda,\lie{m}) := \bU(\lambda,\lie{m})^F$. By Chevalley's commutator formula we have $[U(\lambda,-1),U(\lambda,-2)] \subseteq U(\lambda,-3)$ so it follows from \cref{eq:ker-eta} that the character $\eta_u^G$ of $U(\lambda,-2)$ is invariant under $U(\lambda,-1)$. Moreover, it can be extended to a character $\tilde{\eta}_u^G$ of $U(\lambda,\lie{m})$, see \cite[5.13(ii)]{Tay16} for instance.

We will denote by $\zeta_{u,\lambda}^G := \Ind_{U(\lambda,\lie{m})}^{U(\lambda,-1)}(\tilde{\eta}_u^G)$ the induction of the extension. It is an irreducible character of $U(\lambda,-1)$ that satisfies
\begin{equation}\label{eq:ind-char-values}
\Ind_{U(\lambda,-2)}^{U(\lambda,-1)}(\eta_u^G) = q^{\dim \lie{g}(\lambda,-1)/2}\zeta_{u,\lambda}^G,
\end{equation}
see the proof of \cite[5.15]{Tay16}. In particular, the character $\zeta_{u,\lambda}^G$ depends only on $\eta_u^G$ and not on the choice of extension $\tilde{\eta}_u^G$. It follows from \cref{eq:ind-char-values,eq:conj-eta-char} that for any $x \in G$ we have
\begin{equation}\label{eq:conj-zeta-char}
{}^x\zeta_{u,\lambda}^G = \zeta_{{}^xu,{}^x\lambda}^G
\end{equation}

\begin{definition}[Kawanaka]\label{def:GGGC}
Fix a Kawanaka datum $\mathcal{K} := (\phi_{\spr},\kappa,\chi_p)$ for $\bG$. To the pair $(u,\lambda)$ where:
	\begin{itemize}
	\item $u$ is an $F$-stable unipotent element of $\bG$,
	\item $\lambda \in \mathcal{D}_u(\bG)^F$ is an $F$-stable Dynkin cocharacter associated to $u$,
\end{itemize}
we associate the \emph{Generalised Gelfand--Graev Character}
\begin{equation*}
\Gamma_u^G = \Ind_{U(\lambda,-1)}^G(\zeta_{u,\lambda}^G).
\end{equation*}
\end{definition}

By \cref{lem:rat-conj-cochar-to-neg,eq:conj-zeta-char} we have $\Gamma_u^G$ does not depend upon the choice of cocharacter $\lambda \in \mathcal{D}_u(\bG)^F$. Moreover, $\Gamma_u^G = \Gamma_{{}^xu}^G$ for any $x \in G$. We must now relate the definition given in \cref{def:GGGC} with that occurring previously in the literature, see \cite{Kaw86,Lu92,Tay16}. By \cref{cor:conj-to-neg-space-rat} there exists an element $g \in \bG^F$ such that ${}^g\lambda = -\lambda$ so clearly ${}^gU(\lambda,i) = U({}^g\lambda,i) = U(-\lambda,i) = U(\lambda,-i)$ for any $0 \neq i \in \mathbb{Z}$. Hence, conjugating we get that
\begin{equation*}
\Gamma_u^G = {}^g\Gamma_u^G = \Ind_{U(\lambda,1)}^G(\zeta_{{}^gu,-\lambda}^G).
\end{equation*}
Here $\zeta_{{}^gu,-\lambda}^G$ is an irreducible character obtained, as in \cref{eq:ind-char-values}, from the linear character of $U(\lambda,2)$ given by $v \mapsto \chi_q(\kappa(g\cdot \phi_{\spr}(u), \phi_{\spr}(v)))$.

This setting now matches, almost exactly, that used in \cite{Kaw86,Tay16}, except there the element $g\cdot \phi_{\spr}(u)$ should be replaced by $\phi_{\spr}(u)^{\dag}$ where ${}^{\dag} : \lie{g} \to \lie{g}$ is an $\mathbb{F}_q$-opposition automorphism of the Lie algebra. This notion depends upon the choice of a Chevalley basis for $\lie{g}$. Now $\phi_{\spr}(u)$ and $\phi_{\spr}(u)^{\dag}$ are known to be $\bG$-conjugate, see \cite[Prop.~5.3]{Tay16}, but not necessarily $\bG^F$-conjugate. This means that $\Gamma_u^G$, as defined in \cref{def:GGGC}, is a generalised Gelfand--Graev character as defined in \cite{Kaw86,Tay16}. However, they may differ up to a permutation of the rational classes inside a given $\bG$-conjugacy class.

The arguments in \cite{Tay16} can be carried out verbatim with the element $g\cdot \phi_{\spr}(u)$ replacing the element $-\phi_{\spr}(u)^{\dag}$. The key point here is that if $\lie{s} \subseteq \lie{g}$ is a $(-\lambda)$-invariant complement subspace as in \cref{lem:sigma-exists} then $\Sigma = g\cdot \phi_{\spr}(u) + \lie{s}$ is a transverse slice to the nilpotent orbit $\bG\cdot \phi_{\spr}(u)$. For instance, the proof of the key result \cite[Prop.~6.9]{Tay16} relies only on the fact that $-\phi_{\spr}(u)^{\dag} \in \lie{g}(\lambda,-2)$.

\begin{rem}
The definition used here seems to be the correct one. For instance, it matches that used in \cite{Lu92}. Indeed, in \cite{Lu92} Lusztig works with an $\lie{sl}_2$-triple $\{e,h,f\}$. His linear character on $U(\lambda,2)$ is defined as above but using the element $-f$. As is easily checked, in this case the elements $e$ and $-f$ are in the same $\SL_2(q)$-orbit.
\end{rem}

\subsection{The Structure of the Radical}\label{subsec:strut-rad}
To define Kawanaka characters we will need to consider the structure of the unipotent radical $U(\lambda,-1)$. The kernel $\Ker(\eta_u^G) \lhd U(\lambda,-1)$ of the linear character $\eta_u^G$ is a normal subgroup of $U(\lambda,-1)$ as $\eta_u^G$ is $U(\lambda,-1)$-invariant. Our interest is in the structure of the quotient group $Q = U(\lambda,-1)/\Ker(\eta_u^G)$. The following are immediate from Chevalley's commutator formula:
\begin{itemize}
	\item $U(\lambda,-2)/U(\lambda,-3)$ is an elementary abelian $p$-group,
	\item $[U(\lambda,-1),U(\lambda,-1)] \subseteq U(\lambda,-2)$ and $[U(\lambda,-2),U(\lambda,-1)] \subseteq U(\lambda,-3)$.
\end{itemize}
Since $U(\lambda,-3) \leqslant \Ker(\eta_u^G)$ this implies $[Q,Q] \leqslant U(\lambda,-2)/\Ker(\eta_u^G) \leqslant Z(Q)$. Moreover, as $U(\lambda,-2)/\Ker(\eta_u^G)$ necessarily has a faithful irreducible character, it must be a cyclic group. Hence, it must be cyclic of order $p$ as $U(\lambda,-2)/\Ker(\eta_u^G)$ is a quotient of the elementary abelian group $U(\lambda,-2)/U(\lambda,-3)$. The following was observed by Kawanaka in \cite[\S2.2]{Ka87}.

\begin{lem}\label{lem:rad-extraspecial}
Assume $p \neq 2$. The group $Q = U(\lambda,-1)/\Ker(\eta_u^G)$ is either cyclic of order $p$ or is an extraspecial $p$-group of exponent $p$.
\end{lem}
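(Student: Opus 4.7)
The argument splits according to whether $\lie g(\lambda,-1)$ is zero. If $\lie g(\lambda,-1)=0$ then $U(\lambda,-1)=U(\lambda,-2)$, and the discussion immediately preceding the statement already exhibits $Q$ as cyclic of order $p$. Assume henceforth $\lie g(\lambda,-1)\neq 0$; the goal is to show that $Q$ is extraspecial of exponent $p$. That same preceding discussion provides the chain $[Q,Q]\le U(\lambda,-2)/\Ker(\eta_u^G)\le Z(Q)$ in which the middle group is cyclic of order $p$, and shows that $Q/Z(Q)$ is a quotient of the elementary abelian group $U(\lambda,-1)/U(\lambda,-2)\cong\lie g(\lambda,-1)^F$ (using the isomorphism $\gamma$ of \cref{lem:is-subgrp}). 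It therefore suffices to establish three things: that $[Q,Q]$ is non-trivial, that $Z(Q)\le U(\lambda,-2)/\Ker(\eta_u^G)$, and that every element of $Q$ has order dividing $p$.

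The core calculation underlying the first two statements is a commutator identity. For $x,y\in U(\lambda,-1)$, the approximate-bracket clause of \cref{def:Kawanaka-datum}(KD1)$(-\lambda 2)$ gives
$\phi_{\spr}([x,y])\equiv c_1[\phi_{\spr}(x),\phi_{\spr}(y)]\pmod{\lie u(\lambda,-3)}$,
while the $\lambda(\mathbb G_m)$-invariance of $\kappa$ forces $\phi_{\spr}(u)\in\lie g(\lambda,2)$ to be $\kappa$-orthogonal to $\lie u(\lambda,-3)$ (for weight reasons, since $\kappa(\lie g(\lambda,i),\lie g(\lambda,j))=0$ unless $i+j=0$). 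Combining these and using \cref{eq:defomega} yields
\[
\eta_u^G([x,y])\;=\;\chi_q\bigl(c_1\,\omega(\bar x,\bar y)\bigr),
\]
where $\bar x,\bar y\in\lie g(\lambda,-1)^F$ are the images of $\phi_{\spr}(x),\phi_{\spr}(y)$ under the projection $\lie u(\lambda,-1)\to\lie g(\lambda,-1)$. Since $\omega$ is a non-degenerate $\mathbb F_q$-bilinear alternating form on $\lie g(\lambda,-1)^F$, its image is all of $\mathbb F_q$; as $\chi_q$ is non-trivial, some commutator escapes $\Ker(\eta_u^G)$, proving $[Q,Q]\neq 1$. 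Conversely, if $x\in U(\lambda,-1)$ lifts a central element of $Q$ then the $\mathbb F_q$-linear form $\bar y\mapsto c_1\omega(\bar x,\bar y)$ takes values inside $\Ker(\chi_q)$; but an $\mathbb F_q$-subspace of $\mathbb F_q$ contained in that proper $\mathbb F_p$-subspace can only be $\{0\}$, so non-degeneracy of $\omega$ forces $\bar x=0$, i.e.\ $x\in U(\lambda,-2)$. This delivers the extraspecial structure, with $Z(Q)=[Q,Q]=U(\lambda,-2)/\Ker(\eta_u^G)$ of order $p$ and $Q/Z(Q)\cong\lie g(\lambda,-1)^F$ elementary abelian.

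For exponent $p$, we exploit that $Q$ has nilpotency class $2$ and $p$ is odd, so the identity $(xy)^p=x^py^p[y,x]^{\binom{p}{2}}=x^py^p$ in $Q$ makes the $p$-th power map a group homomorphism $Q\to Q$. Its image lies in $Z(Q)$ because $Q/Z(Q)$ has exponent $p$, and it factors as $Q\to Q/Z(Q)\to Z(Q)$. To see the second map is zero it suffices to check on a generating set of $Q/Z(Q)$: the images of the root subgroup elements $u_\alpha(t)$ attached to roots $\alpha$ with $\langle\alpha,\lambda\rangle=-1$ span $\lie g(\lambda,-1)^F\cong Q/Z(Q)$, and each such element satisfies $u_\alpha(t)^p=u_\alpha(pt)=1$ in $U(\lambda,-1)$ because the root subgroup $\bU_\alpha\cong\mathbb G_a$ has characteristic $p$. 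Hence the $p$-power homomorphism vanishes on generators, hence identically, and $Q$ has exponent dividing $p$.

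The main technical point is really the commutator identity exhibiting $Q$ as a Heisenberg-type group attached to the symplectic form $\omega$, since it requires combining the Kawanaka-datum axioms with a careful weight bookkeeping to obtain the clean formula for $\eta_u^G([x,y])$. Once that formula is established, the extraspecial structure is forced by non-degeneracy of $\omega$, and exponent $p$ follows from the fact that root subgroups of a reductive group in characteristic $p$ are themselves of exponent $p$.
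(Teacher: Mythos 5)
Your proof is correct, and it takes a somewhat different route from the paper's on the central structural point. The paper identifies $Z(Q)$ with $U(\lambda,-2)/\Ker(\eta_u^G)$ by citing Geck's thesis (Satz 1.4.1), whereas you re-derive it directly from the Kawanaka-datum axioms: the key formula $\eta_u^G([x,y]) = \chi_q\bigl(c_1\,\omega(\bar x,\bar y)\bigr)$ — obtained by combining ($-\lambda2$) with the weight-degree orthogonality $\kappa\bigl(\lie g(\lambda,i),\lie g(\lambda,j)\bigr)=0$ for $i+j\neq 0$ — reduces the determination of $Z(Q)$ and $[Q,Q]$ to the non-degeneracy of the symplectic form $\omega$ already established in the text. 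This is a genuine improvement on the exposition in the sense that it is self-contained and moreover makes transparent the Heisenberg-type structure of $Q$ that is exploited later in \cref{ssec:kawnakadef}. Your exponent argument is essentially the same as the paper's (the $p$-power map is a homomorphism by the class-$2$ identity for odd $p$, and vanishes on the images of root group elements). Both treatments share the same minor imprecision there: the root group elements $x_\alpha(t)$ with $t\in K$ are not rational, so their images do not literally lie in $Q$; the cleanest repair is to note that the corresponding map $\bar\mu\colon \bU(\lambda,-1)\to\bU(\lambda,-2)/\bU(\lambda,-3)$ induced by $p$-th powers is a homomorphism of algebraic groups that kills every $\bU_\alpha$, hence kills all of $\bU(\lambda,-1)$, and one then restricts to $F$-fixed points. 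Finally, your reduction to the case $\lie g(\lambda,-1)\neq 0$ at the outset and your verification that the centrality calculation forces $\bar x = 0$ (using that the image of an $\mathbb{F}_q$-linear form into $\mathbb{F}_q$ cannot be a proper non-zero $\mathbb{F}_p$-subspace) are both sound.
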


\begin{proof}
It is shown in \cite[Satz 1.4.1]{GeThesis} that we have $Z(Q) = U(\lambda,-2)/\Ker(\eta_u^G)$. Therefore $Q/Z(Q) \simeq  U(\lambda,-1)/U(\lambda,-2)$ is elementary abelian and $Z(Q)$ is cyclic of order $p$. This shows that $Q$ is either cyclic or an extraspecial group. Now consider the question of the exponent. If $x,y \in Q$ then by \cite[8.6]{Asch} we have $(xy)^p = x^py^p[x,y]^{p(p-1)/2} = x^py^p$ because $[x,y] \in Z(Q)$ and $Z(Q)$ has exponent $p$. Here we use our assumption that $p$ is odd. As any non-trivial element of $U(\lambda,-1)$ (hence of its quotient $Q$) can be written as a product of elements of order $p$, namely from the root groups, it follows that $Q$ has exponent $p$.
\end{proof}

Let us continue to assume that $p \neq 2$. Recall that for any extraspecial $p$-group one can associate a symplectic form $\omega_Q : \overline{Q} \times \overline{Q}  \to Z(Q)$ on $\overline{Q}  = Q/Z(Q)$ given by taking commutators, see \cite[23.10]{Asch} for more details. Let $\Aut^*(Q) \leqslant \Aut(Q)$ be the kernel of the restriction map $\Aut(Q) \to \Aut(Z(Q))$. It is clear that this group acts on $\overline{Q} $ and preserves the form $\omega_Q$. 
By \cite[Thm.~1]{Win72} this induces an isomorphism $\Out^*(Q) := \Aut^*(Q)/\Inn(Q) \simto \Sp(\overline{Q}  \mid \omega_Q)$ onto the symplectic group defined by the form. We record here the following to be used below.

\begin{lem}\label{lem:extra-iso-lift}
Assume $p\neq 2$ and $Q_1$ and $Q_2$ are extraspecial $p$-groups of exponent $p$ of the same finite cardinality. Let $\overline{Q}_i = Q_i/Z(Q_i)$. 
We assume that we are given:
\begin{itemize}
\item a finite $p'$-group $B$ and homomorphisms $\varphi_i : B \to \Aut^*(Q_i)$,
\item a $B$-equivariant isomorphism $\overline{\pi} : \overline{Q}_1 \simto \overline{Q}_2$,
\item and an isomorphism $\gamma : Z(Q_1) \simto Z(Q_2)$ such that $\omega_{Q_2}(\overline{\pi}(x),\overline{\pi}(y)) = \gamma(\omega_{Q_1}(x,y))$ for all $x,y \in \overline{Q}_1$.
\end{itemize}
Then there exists a $B$-equivariant isomorphism $\pi : Q_1\to Q_2$ lifting $\overline{\pi}$. Such an isomorphism is unique up to composing with a $B$-invariant element of $\Inn(Q_1)$.
%
%
%
\end{lem}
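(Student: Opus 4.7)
The hypothesis $\varphi_i(B) \subseteq \Aut^*(Q_i)$ forces $B$ to fix $Z(Q_i)$ pointwise, so $\gamma$ is automatically $B$-equivariant. The plan is to work in three stages: first produce \emph{some} isomorphism $\pi_0 \colon Q_1 \to Q_2$ lifting $\overline{\pi}$ and restricting to $\gamma$, then modify it to be $B$-equivariant by a cohomology-vanishing argument, and finally handle uniqueness.

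For the existence of $\pi_0$, I would choose a symplectic basis of $\overline{Q}_1$ for $\omega_{Q_1}$ and lift its elements to elements of order $p$ in $Q_1$, which is possible because $Q_1$ has exponent $p$. The images of this basis under $\overline{\pi}$ form a symplectic basis of $\overline{Q}_2$ with respect to the form matched by $\gamma$, which I lift similarly to order-$p$ elements of $Q_2$. Because $\overline{\pi}$ and $\gamma$ transport $\omega_{Q_1}$ to $\omega_{Q_2}$, the commutator relations defining $Q_1$ on these generators are satisfied by their images, so the assignment extends to an isomorphism $\pi_0$ whose restriction to centres is $\gamma$.

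For the upgrade step, I would observe that the set $\mathcal{I}$ of lifts of $\overline{\pi}$ restricting to $\gamma$ is a torsor under the abelian $p$-group $M := \Hom(\overline{Q}_1, Z(Q_2))$: an element $\mu \in M$ twists a lift by $(\pi\cdot\mu)(q) := \pi(q)\,\mu(\overline{q})$, and any two lifts differ by a unique such twist. The group $B$ acts on $\mathcal{I}$ by $b\cdot\pi := \varphi_2(b)\circ\pi\circ\varphi_1(b)^{-1}$ and on $M$ through its action on $\overline{Q}_1$; measuring the failure of $\pi_0$ to be $B$-fixed against the torsor structure yields a $1$-cocycle $a \colon B \to M$ via $b\cdot\pi_0 = \pi_0\cdot a_b$. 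Since $\gcd(|B|, p) = 1$ and $M$ is a $p$-group, $H^1(B, M) = 0$, so $a$ is a coboundary $a_b = \mu - b\cdot\mu$, and the twisted lift $\pi := \pi_0\cdot\mu$ is $B$-equivariant.

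For uniqueness, if $\pi'$ is another $B$-equivariant lift, then $\pi'\circ\pi^{-1} \in \Aut^*(Q_2)$ acts trivially on $\overline{Q}_2$, so by the Winter isomorphism $\Out^*(Q_2) \simto \Sp(\overline{Q}_2 \mid \omega_{Q_2})$ it is inner: $\pi' = c_g\circ\pi$ for some $g \in Q_2$. Comparing $B$-equivariance of $\pi$ and $\pi'$ forces $c_g$ to commute with every $\varphi_2(b)$, which yields $\varphi_2(b)(g) = g z_b$ for some $z_b \in Z(Q_2)$; a direct computation, using that $B$ fixes $Z(Q_2)$ pointwise, shows $b \mapsto z_b$ is a homomorphism $B \to Z(Q_2)$, hence trivial by coprimality. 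So $g$ is $B$-invariant, giving the claimed uniqueness. The main obstacle is the first step: producing $\pi_0$ amounts to invoking the classification of extraspecial $p$-groups of exponent $p$ (or equivalently checking that the symplectic data determines the group up to the ambiguity $M$), after which the two coprimality vanishing arguments are formal.
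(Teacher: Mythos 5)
Your proposal is correct and rests on the same two pillars as the paper's proof: uniqueness of the isomorphism type of extraspecial $p$-groups of exponent $p$, and the vanishing of $H^1$ of a $p'$-group with coefficients in an abelian $p$-group. The implementation of the first step differs. The paper invokes the classification abstractly to get some isomorphism $\phi : Q_1 \to Q_2$ and then uses Winter's structural description of $\Aut(Q_1)$ (surjectivity of $\Aut(Q_1) \to \Aut(Z(Q_1))$ and of $\Aut^*(Q_1) \to \Sp(\overline{Q}_1)$) to pre-compose $\phi$ with an automorphism correcting $\overline{\phi}$ to $\overline{\pi}$; you instead build the lift $\pi_0$ by hand from a symplectic basis and order-$p$ lifts, which amounts to re-proving the relevant piece of the classification rather than quoting it. For the upgrade step, the two arguments are the same in substance: the paper measures the failure of equivariance by a cocycle with values in $\Inn(Q_2)$, whereas you use $M = \Hom(\overline{Q}_1, Z(Q_2))$; these are canonically isomorphic as $B$-modules (the symplectic form identifies $\overline{Q}_2 \cong \Inn(Q_2)$ with $\Hom(\overline{Q}_2, Z(Q_2))$, and $\overline{\pi}$ transports this to $M$), so the cohomological content is identical. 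Your uniqueness argument is somewhat more explicit than the paper's, which asserts the final ``$B$-invariant inner automorphism'' conclusion with only a terse justification; writing out the homomorphism $b \mapsto z_b$ into $Z(Q_2)$ and killing it by coprimality, as you do, is a clean way to pin this down.
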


\begin{proof}
There is only one isomorphism class of extraspecial groups of exponent $p$ of a given cardinality, so there exists some isomorphism $\phi : Q_1 \simto Q_2$ which necessarily factors through an isomorphism $\overline{\phi} : \overline{Q}_1 \simto \overline{Q}_2$ satisfying $\omega_{Q_2}(\overline{\phi}(x),\overline{\phi}(y)) = \phi(\omega_{Q_1}(x,y))$ for all $x,y \in \overline{Q}_1$. It is shown by Winter \cite[Thm.~1]{Win72} that the restriction map $\Aut(Q_1) \to \Aut(Z(Q_1))$ is surjective and the kernel $\Aut^*(Q_1)$ has a complement in $\Aut(Q_1)$. It follows that there exists some automorphism ${\alpha} \in \Aut(Q_1)$ such that $\pi = \alpha \phi$ is an isomorphism $Q_1 \simto Q_2$ which lifts $\overline{\pi}$. The choice of $\alpha$ is unique up to composing with an automorphism trivial on $Z(Q_1)$ and $Q_1/Z(Q_1)$, hence an element of $\Inn(Q_1)$.

Let $p_i : \Aut^*(Q_i) \twoheadrightarrow \Aut(\overline{Q}_i)$ be the natural surjection, whose kernel $\Inn(Q_i) \cong \overline{Q}_i$ is an abelian $p$-group. Note that we have $p_2(\pi\alpha\pi^{-1}) = \overline{\pi} p_1(\alpha) \overline{\pi}^{-1}$ for all $\alpha \in \Aut^*(Q_1)$. Let $\psi_i = p_i\circ \varphi_i : B \to \Aut(\overline{Q}_i)$ be the composition. The $B$-equivariance of $\overline{\pi}$ implies that $\psi_2(a) = \overline{\pi}\psi_1(a)\overline{\pi}^{-1}$ for any $a \in B$. In turn, this implies that $q(a) = \pi\varphi_1(a)\pi^{-1}\varphi_2(a)^{-1} \in \Inn(Q_2)$ for all $a \in B$. The map $q : B \to \Inn(Q_2)$ is a $1$-cocycle in the sense that $q(ab) = q(a)({}^{\varphi_2(a)}q(b))$ for all $a,b \in B$. However as $B$ and $\Inn(Q_2)$ have coprime orders the cohomology group $H^1(B,\Inn(Q_2))$ is trivial \cite[Thm.~9.42]{Rot} so $q$ is a $1$-coboundary. This means there exists an $\iota \in \Inn(Q_2)$ such that $q(a) = \iota \varphi_2(a) \iota^{-1}\varphi_2(a)^{-1}$ for all $a \in B$. Hence $\pi' := \iota^{-1}\pi$ is $B$-equivariant.
\end{proof}

Later we will use an explicit version of this form, defined as follows. By definition of $\eta_{u}^G$ the map $\gamma : U(\lambda, -2) \to \mathbb{F}_p$ defined by
\begin{equation*}
\gamma(v) = \Tr_{\mathbb{F}_q/\mathbb{F}_p}\big(\kappa(\phi_{\spr}(u),\phi_{\spr}(v))\big)
\end{equation*}
induces an isomorphism between $Z(Q) = U(\lambda,-2)/\Ker(\eta_{u}^G)$ and $\mathbb{F}_p^+$. Using this identification we get a symplectic form $\gamma\circ\omega_Q$ on $\overline{Q}$ with values in $\mathbb{F}_p$ given by
\begin{equation}\label{eq:omegaQ}
\gamma(\omega_Q(xZ(Q),yZ(Q))) = \gamma([x,y]) = \Tr_{\mathbb{F}_q/\mathbb{F}_p}\big(\kappa(\phi_{\spr}(u),\phi_{\spr}([x,y])))\big).
\end{equation}

\subsection{Isotypic Morphisms}

Now let $\iota : \bG \to \widetilde{\bG}$ be an isotypic morphism defined over $\mathbb{F}_q$, see \cref{subsec:reductive-groups}. We will say that $\iota$ is \emph{separable} if the restriction $\iota : \bG_{\der} \to \widetilde{\bG}_{\der}$ of $\iota$ to the derived subgroup is a separable isogeny. As we assume $\bG$ is proximate this implies $\widetilde{\bG}$ is proximate. Indeed, if $\pi : \bG_{\simc} \to \bG_{\der}$ is a simply connected covering then $\iota\circ\pi : \bG_{\simc} \to \widetilde{\bG}_{\der}$ is a simply connected covering. By assumption both $\pi$ and $\iota$ are separable so $\iota\circ\pi$ is separable.

As the restriction $\iota : \bG_{\der} \to \widetilde{\bG}_{\der}$ is a seperable isogeny and $\mathcal{U}(\bG) = \mathcal{U}(\bG_{\der})$ we get that the restriction $\iota : \mathcal{U}(\bG) \to \mathcal{U}(\widetilde{\bG})$ is an isomorphism of varieties, see \cite[3.3]{Tay16}. Similarly we have the restriction of the differential $\delta := \mathrm{d}_1\iota : \mathcal{N}(\bG) \to \mathcal{N}(\widetilde{\bG})$ is an isomorphism of varieties.

Recall that $\mathcal{K} = (\phi_{\spr},\kappa,\chi_p)$ is a fixed Kawanaka datum of $\bG$. As $\widetilde{\bG}$ is proximate there exists a Kawanaka datum $\widetilde{\mathcal{K}} = (\widetilde{\phi}_{\spr},\widetilde{\kappa},\chi_p)$ for $\widetilde{\bG}$. We say that $\mathcal{K}$ and $\widetilde{\mathcal{K}}$ are \emph{$\iota$-compatible} if the following hold:
\begin{itemize}
	\item $\widetilde{\phi}_{\spr} \circ \iota  = \delta \circ \phi_{\spr}$
	\item $\kappa(x,y) = \widetilde{\kappa}(\delta(x),\delta(y))$ for any $x,y \in \mathcal{N}(\bG)$.
\end{itemize}
With this notion in place we may prove the following, which unifies the discussion in \cite[14.12, 14.13]{Tay16} and provides more details.

\begin{lem}\label{lem:induction-GGGC}
Assume $\iota : \bG \to \widetilde{\bG}$ is a separable isotypic morphism defined over $\mathbb{F}_q$ then
\begin{enumerate}
	\item there exist $\iota$-compatible Kawanaka data $\mathcal{K} = (\phi_{\spr},\kappa,\chi_p)$ and $\widetilde{\mathcal{K}} = (\widetilde{\phi}_{\spr},\widetilde{\kappa},\chi_p)$.
\end{enumerate}
Moreover, if $\mathcal{K}$ and $\widetilde{\mathcal{K}}$ are $\iota$-compatible Kawanaka data then
\begin{enumerate}[resume]
	\item for each $u \in \mathcal{U}(\bG)^F$ we have $\iota_*(\Gamma_u^G) = \Gamma_{\iota(u)}^{\widetilde{G}}$ where $\Gamma_u^G$ is defined with respect to $\mathcal{K}$ and $\Gamma_{\iota(u)}^{\widetilde{G}}$ is defined with respect to $\widetilde{\mathcal{K}}$.
\end{enumerate}
\end{lem}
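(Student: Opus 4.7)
For part~(i) I would start with any Kawanaka datum $\mathcal{K} = (\phi_{\spr},\kappa,\chi_p)$ for $\bG$ (which exists since $\bG$ is proximate) and transport it to $\widetilde{\bG}$. Because $\iota$ restricts to a separable isogeny $\bG_{\der} \to \widetilde{\bG}_{\der}$, its restriction to unipotent varieties $\iota : \mathcal{U}(\bG) \to \mathcal{U}(\widetilde{\bG})$ is an isomorphism of varieties, and likewise $\delta : \mathcal{N}(\bG) \to \mathcal{N}(\widetilde{\bG})$ is an isomorphism. Setting
\begin{equation*}
\widetilde{\phi}_{\spr} := \delta \circ \phi_{\spr} \circ \iota^{-1} : \mathcal{U}(\widetilde{\bG}) \to \mathcal{N}(\widetilde{\bG})
\end{equation*}
gives a Springer isomorphism for $\widetilde{\bG}$ automatically satisfying $\widetilde{\phi}_{\spr} \circ \iota = \delta \circ \phi_{\spr}$ and its $F$- and $\widetilde{\bG}$-equivariance both reduce, via $Z(\widetilde{\bG})$ acting trivially by conjugation on unipotent elements, to the corresponding properties of $\phi_{\spr}$. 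For the bilinear form, I would transport $\kappa$ restricted to $\Lie(\bG_{\der})$ through $\delta$ to obtain a $\widetilde{\bG}$-invariant form on $\Lie(\widetilde{\bG}_{\der})$, then extend to all of $\Lie(\widetilde{\bG}) = \Lie(\widetilde{\bG}_{\der}) + \Lie(Z(\widetilde{\bG})^{\circ})$ by choosing any nondegenerate form on the central summand and declaring the two summands orthogonal. Since $\mathcal{N}(\bG) \subseteq \Lie(\bG_{\der})$ and $\mathcal{N}(\widetilde{\bG}) \subseteq \Lie(\widetilde{\bG}_{\der})$, the compatibility $\kappa(x,y) = \widetilde{\kappa}(\delta(x),\delta(y))$ on nilpotent elements holds automatically.

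Verifying (KD1) uses the observation that for any cocharacter $\widetilde{\lambda}$ of $\widetilde{\bG}$ the subgroups $\bU(\widetilde{\lambda},i)$ and the adjoint weight spaces depend only on the image of $\widetilde{\lambda}$ in $\widetilde{\bG}_{\der}$; such a cocharacter lifts (up to a finite isogeny of $\mathbb{G}_m$ that preserves the grading) to a cocharacter $\lambda$ of $\bG_{\der}$, along which $\iota$ intertwines the respective filtrations and $\delta$ intertwines the respective weight spaces. The relations in (KD1) for $\widetilde{\phi}_{\spr}$ then transfer directly from those of $\phi_{\spr}$. For (KD2), root spaces of $\widetilde{\bG}$ lie in $\Lie(\widetilde{\bG}_{\der})$ and the centre $\Lie(Z(\widetilde{\bG})^{\circ})$ lies in the Lie algebra of every maximal torus, so the orthogonal of a root space computed in $\Lie(\widetilde{\bG})$ has the correct block form.

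For part~(ii), fix $\lambda \in \mathcal{D}_u(\bG)^F$ and set $\widetilde{\lambda} := \iota \circ \lambda$; the $\iota$-compatibility of Springer isomorphisms together with the $F$-equivariance of $\phi_{\spr}$ and $\delta$ ensures $\widetilde{\lambda} \in \mathcal{D}_{\iota(u)}(\widetilde{\bG})^F$. The map $\iota$ restricts to an $F$-equivariant isomorphism $\bU(\lambda,i) \simto \bU(\widetilde{\lambda},i)$ for every $i \neq 0$, and a direct computation using $\iota$-compatibility gives
\begin{equation*}
\eta_{\iota(u)}^{\widetilde{G}}(\iota(v)) = \chi_q\!\bigl(\widetilde{\kappa}(\widetilde{\phi}_{\spr}(\iota(u)),\widetilde{\phi}_{\spr}(\iota(v)))\bigr) = \chi_q\bigl(\kappa(\phi_{\spr}(u),\phi_{\spr}(v))\bigr) = \eta_u^G(v),
\end{equation*}
so the linear characters on $U(\lambda,-2)$ and $U(\widetilde{\lambda},-2)$ correspond under $\iota$. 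Since the alternating form $\omega$ from \cref{eq:defomega} transports through $\delta$, a Lagrangian $\lie{m} \subseteq \lie{g}(\lambda,-1)$ maps to a Lagrangian $\delta(\lie{m})$ and the extended characters match as well, giving $\zeta_{\iota(u),\widetilde{\lambda}}^{\widetilde{G}} \circ \iota = \zeta_{u,\lambda}^G$.

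To finish, I would apply the transitivity $(\kappa\circ\iota)_* = \kappa_*\iota_*$ recalled in \cref{sec:basic-setup}. The composition $U(\lambda,-1) \hookrightarrow G \xrightarrow{\iota} \widetilde{G}$ factors as $U(\lambda,-1) \xrightarrow{\iota|_U} U(\widetilde{\lambda},-1) \hookrightarrow \widetilde{G}$, and since $\iota|_U$ is an isomorphism $(\iota|_U)_*$ sends $\zeta_{u,\lambda}^G$ to $\zeta_{\iota(u),\widetilde{\lambda}}^{\widetilde{G}}$. Therefore
\begin{equation*}
\iota_*(\Gamma_u^G) = \iota_*\,\Ind_{U(\lambda,-1)}^G(\zeta_{u,\lambda}^G) = \Ind_{U(\widetilde{\lambda},-1)}^{\widetilde{G}}(\zeta_{\iota(u),\widetilde{\lambda}}^{\widetilde{G}}) = \Gamma_{\iota(u)}^{\widetilde{G}}.
\end{equation*}
The main obstacle is part~(i), specifically checking (KD2) under the block decomposition and correctly aligning cocharacters of $\widetilde{\bG}$ with those of $\bG$ up to the isogeny; once this setup is in place, part~(ii) is essentially formal.
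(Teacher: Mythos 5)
Your approach to part (ii) is essentially correct and parallels the paper's argument, differing only in that you track the Lagrangian and the extension $\tilde{\eta}^G_u$ explicitly, whereas the paper factors the induction through $U(\lambda,-2)$ and invokes \cref{eq:ind-char-values} to sidestep the Lagrangian altogether.

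Part (i), however, has a genuine gap. You propose to transport $\kappa|_{\Lie(\bG_{\der})}$ through $\delta$ to a form on $\Lie(\widetilde{\bG}_{\der})$ and then ``extend to all of $\Lie(\widetilde{\bG}) = \Lie(\widetilde{\bG}_{\der}) + \Lie(Z(\widetilde{\bG})^{\circ})$'' with a direct-sum convention. But this decomposition of $\Lie(\widetilde{\bG})$ fails in general, even under the standing hypotheses that $p$ is good and the group is proximate. Take $\widetilde{\bG} = \GL_n$ with $p \mid n$: then $\GL_n$ is proximate and all primes are good, yet $\Lie(Z(\GL_n)^{\circ}) = K\cdot I_n \subseteq \Lie(\SL_n)$, so the sum $\Lie(\SL_n) + K\cdot I_n = \Lie(\SL_n)$ has codimension one in $\Lie(\GL_n)$. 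There is no block decomposition, so the form is not defined on all of $\Lie(\widetilde{\bG})$, and your subsequent verification of (KD2) — which relies on the same block structure — breaks down for the same reason. The paper avoids this entirely by choosing both $\kappa$ and $\widetilde{\kappa}$ as pullbacks of a single fixed form $\kappa_{\ad}$ on $\Lie(\bG_{\ad})$ along the adjoint quotient maps $\pi\circ\iota : \bG \to \bG_{\ad}$ and $\pi : \widetilde{\bG} \to \bG_{\ad}$; separability of $\iota$ guarantees $\pi\circ\iota$ really is an adjoint quotient, compatibility $\kappa = \widetilde{\kappa}\circ(\delta\times\delta)$ is immediate, and (KD2) is supplied by the cited result for such pullbacks. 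If you want to keep your transport-and-extend strategy, you would first need to establish that in the case at hand the decomposition does hold (e.g., because $\widetilde{\bG}$ is a direct product of a semisimple group and a torus), or replace the extension step with the adjoint-quotient construction.
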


\begin{proof}
(i). From the preceding discussion it is clear that $\widetilde{\phi}_{\spr} = \delta\circ\phi_{\spr}\circ\iota^{-1}$ is a Springer isomorphism. Let $\pi : \widetilde{\bG} \to \bG_{\ad}$ be an adjoint quotient of $\widetilde{\bG}$ then, because $\iota$ is separable, we have $\pi\circ\iota : \bG \to \bG_{\ad}$ is also an adjoint quotient of $\bG$, see \cite[V, Prop.~22.15]{Bor91}. As in \cite[5.6]{Tay16} we may assume that $\kappa(x,y) = \kappa_{\ad}(\mathrm{d}(\pi\circ\iota)(x),\mathrm{d}(\pi\circ\iota)(y))$ and $\widetilde{\kappa}(x,y) = \kappa_{\ad}(\mathrm{d}\pi(x),\mathrm{d}\pi(y))$ where $\kappa_{\ad}$ is a fixed bilinear form on $\Lie(\bG_{\ad})$. Hence, the resulting Kawanaka data are $\iota$-compatible.

(ii). If $\lambda \in \widecheck{X}(\bG)$ is a cocharacter then for any $i \in \mathbb{Z}$ we have $\widetilde{\lie{g}}(\iota_*(\lambda),i) = \delta(\lie{g}(\lambda,i))$. In particular, if $e \in \mathcal{N}(\bG)$ is nilpotent then $\iota_*(\mathcal{D}_e(\bG)) = \mathcal{D}_{\delta(e)}(\widetilde{\bG})$. Hence, for any unipotent element $u \in \mathcal{U}(\bG)$ we have
\begin{equation*}
\iota_*(\mathcal{D}_u(\bG)) = \iota_*(\mathcal{D}_{\phi_{\spr}(u)}(\bG)) = \mathcal{D}_{\delta(\phi_{\spr}(u))}(\widetilde{\bG}) = \mathcal{D}_{\widetilde{\phi}_{\spr}(\iota(u)))}(\widetilde{\bG}) = \mathcal{D}_{\iota(u)}(\widetilde{\bG}).
\end{equation*}
In other words, if we fix a Dynkin cocharacter $\lambda\in \mathcal{D}_u(\bG)$ then $\iota_*(\lambda) \in \mathcal{D}_{\iota(u)}(\widetilde{\bG})$ is a Dynkin cocharacter for $\iota(u)$.

Let $j : U_G(\lambda,-2) \to G$ be the natural inclusion map then $\Ind_{U_G(\lambda,-2)}^G = j_*$. We have $\iota_*\circ j_* = (\iota\circ j)_* = (\lambda\circ\gamma)_* = \lambda_*\circ\gamma_*$ where $\lambda : U_{\widetilde{G}}(\iota_*(\lambda),-2) \to \widetilde{G}$ is the natural inclusion and $\gamma : U_G(\lambda,-2) \to U_{\widetilde{G}}(\iota_*(\lambda),-2)$ is the isomorphism given by the restriction of $\iota$. Using the compatibility between $\mathcal{K}$ and $\widetilde{\mathcal{K}}$ we have for all $v \in U_G(\lambda,-2)$ that
\begin{align*}
\eta_{u}^G(v) &= \chi_q\big(\kappa(\phi_{\spr}(u),\phi_{\spr}(v))\big)\\
&= \chi_q\big(\tilde{\kappa}(\delta(\phi_{\spr}(u)),\delta(\phi_{\spr}(v)))\big)\\
&= \chi_q\big(\tilde{\kappa}( \tilde{\phi}_{\spr}(\iota(u)),\tilde{\phi}_{\spr}(\iota(v)))\big)\\
&= \eta_{\iota(u)}^{\widetilde{G}}(\iota(v)).
\end{align*}
This shows that $\gamma_*(\eta_{u}^G) = \eta_{\iota(u)}^{\widetilde{G}}$. As $\lambda_* = \Ind_{U_{\widetilde{G}}(\iota_*(\lambda),-2)}^{\widetilde{G}}$ it follows from \cref{eq:ind-char-values} that $\iota_*(\Gamma_u^G) = \Gamma_{\iota(u)}^{\widetilde{G}}$. That $\dim(\lie{g}(\lambda,-1)) = \dim(\widetilde{\lie{g}}(\iota_*(\lambda),-1))$ is obvious.
\end{proof}

\subsection{Centralisers of Semisimple Elements}\label{sec:cent-s/s-elt}
Now assume, as at the start of this section, that $u \in \mathcal{U}(\bG)^F$ and $s \in C_{\bG}(u)^F$ is a semisimple element. To ease notation we set $\bG_s = C_{\bG}^{\circ}(s)$, $G_s = \bG_s^F$, and $\lie{g}_s = \Lie(\bG_s)$. As pointed out in \cref{lem:subs-prox-are-prox} the group $\bG_s$ is proximate because $\bG$ is proximate and $p$ is good for $\bG$. Hence, we can construct the GGGC of $G_s$ associated to $u$. To relate $\Gamma_u^G$ and $\Gamma_u^{G_s}$ we will need the following.

\begin{prop}\label{prop:res-kawanaka-datum}
Assume $\mathcal{K} = (\phi_{\spr},\kappa,\chi_p)$ is a Kawanaka datum for $\bG$ then the restricted triple $\mathcal{K}|_{\bG_s} = (\phi_{\spr}|_{\mathcal{U}(\bG_s)},\kappa|_{\lie{g}_s\times \lie{g}_s},\chi_p)$ is a Kawanaka datum for $\bG_s$.
\end{prop}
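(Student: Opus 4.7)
The plan is to verify each component of the Kawanaka datum and both axioms (KD1), (KD2) for $\bG_s$ by systematic reduction to the corresponding properties of $\mathcal{K}$ on $\bG$, exploiting the inclusions $\bG_s \leqslant \bG$ and $\lie{g}_s \leqslant \lie{g}$.

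First I would check that $\phi_{\spr}$ restricts to a Springer isomorphism $\mathcal{U}(\bG_s) \simto \mathcal{N}(\bG_s)$. Since $\phi_{\spr}$ is $\bG$-equivariant it is in particular $s$-equivariant, and for $u \in \mathcal{U}(\bG_s)$ the image $\phi_{\spr}(u)$ is fixed by $s$; as $p$ is good for $\bG$ we have $\lie{g}^s = \lie{g}_s$, so $\phi_{\spr}(u) \in \mathcal{N}(\bG_s)$. Conversely, if $x \in \mathcal{N}(\bG_s)$ then $u := \phi_{\spr}^{-1}(x)$ satisfies $sus^{-1} = u$ by $s$-equivariance; being unipotent it lies in $C_{\bG}^\circ(s) = \bG_s$ (a standard consequence of goodness of $p$). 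So the restriction is a bijection, and its $\bG_s$-equivariance, $F$-equivariance, and homeomorphism/isomorphism property are inherited automatically. The form $\kappa|_{\lie{g}_s \times \lie{g}_s}$ is patently symmetric, $\mathbb{F}_q$-defined, and $\bG_s$-invariant (as $\bG_s \leqslant \bG$), while $\chi_p$ is unchanged, so the raw triple is in order.

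Next I would establish (KD1) for $\bG_s$. For $\lambda \in \widecheck{X}(\bG_s) \subseteq \widecheck{X}(\bG)$ the two key compatibilities are $\lie{g}_s(\lambda, i) = \lie{g}(\lambda, i) \cap \lie{g}_s$ (obvious from the weight-space definitions) and $\bU_{\bG_s}(\lambda, i) = \bU_{\bG}(\lambda, i) \cap \bG_s$ for every $i \neq 0$. Granting these, axiom ($\lambda 1$) for $\bG_s$ reads $\phi_{\spr}(\bU_{\bG}(\lambda, 2) \cap \bG_s) = \lie{u}_{\lie{g}}(\lambda, 2) \cap \lie{g}_s$, which follows at once from ($\lambda 1$) for $\bG$ together with $s$-equivariance of $\phi_{\spr}$. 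For ($\lambda 2$), with the same constants $c_1, c_2$, observe that when $u, v \in \bU_{\bG_s}(\lambda, i)$ both terms of each prescribed difference lie in $\lie{g}_s$, while by ($\lambda 2$) for $\bG$ those differences lie in $\lie{u}_{\lie{g}}(\lambda, i+1)$ and $\lie{u}_{\lie{g}}(\lambda, 2i+1)$ respectively; intersecting with $\lie{g}_s$ yields precisely the required containments in $\lie{u}_{\lie{g}_s}(\lambda, i+1)$ and $\lie{u}_{\lie{g}_s}(\lambda, 2i+1)$.

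Finally, for (KD2), any maximal torus $\bS$ of $\bG_s$ is also a maximal torus of $\bG$ (as $\bG$ and $\bG_s$ have the same rank), and for any $\alpha \in \Phi_{\bG_s}(\bS) \subseteq \Phi_{\bG}(\bS)$ the root spaces coincide, $\lie{g}_{s, \alpha} = \lie{g}_{\alpha}$. The orthogonal of $\lie{g}_{s,\alpha}$ with respect to $\kappa|_{\lie{g}_s \times \lie{g}_s}$ is simply $\lie{g}_\alpha^{\perp} \cap \lie{g}_s$; applying (KD2) for $\bG$ and intersecting with $\lie{g}_s = \Lie(\bS) \oplus \bigoplus_{\beta \in \Phi_{\bG_s}(\bS)} \lie{g}_\beta$ gives exactly the required decomposition. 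I expect the main (though essentially notational) obstacle to be the compatibility $\bU_{\bG_s}(\lambda, i) = \bU_{\bG}(\lambda, i) \cap \bG_s$, which is standard in good characteristic and follows from $\bP_{\bG_s}(\lambda) = \bP_{\bG}(\lambda) \cap \bG_s$ together with the compatibility of unipotent radicals with taking centralisers of semisimple elements.
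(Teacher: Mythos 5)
Your proposal is correct and follows essentially the same route as the paper's proof: restrict $\phi_{\spr}$ using $\bG$-equivariance, observe that $\lie{g}_s(\lambda,i) = \lie{g}(\lambda,i)\cap\lie{g}_s$ and $\bU_{\bG_s}(\lambda,i) = \bU_{\bG}(\lambda,i)\cap\bG_s$, deduce (KD1) by intersecting the $\bG$-statements with $\lie{g}_s$, and then verify (KD2) by noting that maximal tori and root spaces of $\bG_s$ are maximal tori and root spaces of $\bG$. The only minor discrepancies are stylistic: the paper invokes the fact that $\lie{g}_s$ is the infinitesimal centraliser of $s$ (citing Borel--Tits) rather than your ``$\lie{g}^s = \lie{g}_s$'' and, for surjectivity of $\phi_{\spr}|_{\mathcal{U}(\bG_s)}$ onto $\mathcal{N}(\bG_s)$, the relevant hypothesis is proximity of $\bG$ (a blanket assumption in that section) rather than goodness of $p$ alone, since that is what guarantees $A_{\bG}(s)$ is a $p'$-group and hence that unipotent elements of $C_{\bG}(s)$ land in $C_{\bG}^{\circ}(s)$ --- but this does not affect the validity of the argument.
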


\begin{proof}
As $s$ is semisimple we have $\lie{g}_s = \{x \in \lie{g} \mid s \cdot x = x\}$ is the infinitesimal centraliser of $s$, see \cite[10.1]{BorTits65}. Hence we have $\mathcal{N}(\bG_s) = \mathcal{N}(\bG) \cap \lie{g}_s$ and $\mathcal{U}(\bG_s) = \mathcal{U}(\bG) \cap \bG_s$. By the $\bG$-equivariance of $\phi_{\spr}$ we have $\phi_{\spr}(\mathcal{U}(\bG_s)) = \mathcal{N}(\bG_s)$, which shows that $\phi_{\spr}|_{\mathcal{U}(\bG_s)}$ is a Springer isomorphism for $\bG_s$.

Let $\lambda \in \widecheck{X}(\bG_s)$ be a cocharacter of $\bG_s$. It is naturally a cocharacter of $\bG$ and we have
 two parabolic subgroups $\bP_{\bG_s}(\lambda)\leqslant \bP_{\bG}(\lambda)$ with unipotent radicals $\bU_{\bG_s}(\lambda) \leqslant \bU_{\bG}(\lambda)$. We need to check that the properties (KD1) and (KD2) of \cref{def:Kawanaka-datum} hold for $\bU_{\bG_s}(\lambda)$, given that they hold for $\bU_{\bG}(\lambda)$.

Since $\lambda(\mathbb{G}_m) \subset \bG_s$ we have that ${}^s\lambda = \lambda$. Therefore $s$ acts on $\lie{g}(\lambda,i)$ for any $i \in \mathbb{Z}$ and $\lie{g}_s(\lambda,i) = \lie{g}(\lambda,i) \cap \lie{g}_s$. In particular, if $i > 0$ then $\lie{u}_{\lie{g}_s}(\lambda,i) = \lie{u_g}(\lambda,i) \cap \lie{g}_s$ and $\bU_{\bG_s}(\lambda,i) = \bU_{\bG}(\lambda,i) \cap \bG_s$. Now ($\lambda$1) in (KD1) for $\bU_{\bG}(\lambda)$ states that $\phi_{\spr}(\bU_{\bG}(\lambda,2)) \subseteq \lie{u_g}(\lambda,2)$. As $\phi_{\spr}(\mathcal{U}(\bG_s)) \subseteq \lie{g}_s$ we get that
\begin{equation*}
\phi_{\spr}(\bU_{\bG_s}(\lambda,2)) = \phi_{\spr}(\bU_{\bG}(\lambda,2) \cap \bG_s) \subseteq  \lie{u_g}(\lambda,2) \cap \lie{g}_s = \lie{u}_{\lie{g}_s}(\lambda,2).
\end{equation*}
After the comment following \cref{rem:kaw-iso} we see that ($\lambda$1) holds for $\bU_{\bG_s}(\lambda)$. The same argument shows that ($\lambda$2) holds for $\bU_{\bG_s}(\lambda)$ so $\mathcal{K}|_{\bG_s}$ satisfies (KD1).

Assume $\bS \leqslant \bG_s$ is a maximal torus of $\bG_s$ then $\bS \leqslant \bG$ is also a maximal torus of $\bG$ because $\bG_s$ has maximal rank. If $\Phi_{\bG_s}(\bS)$ and $\Phi_{\bG}(\bS)$ are the roots of $\bG_s$ and $\bG$ respectively, defined with respect to $\bS$, then it is clear that
\begin{equation*}
\Phi_{\bG_s}(\bS) = \{\alpha \in \Phi_{\bG}(\bS) \mid \lie{g}_{\alpha} \subseteq \lie{g}_s\}.
\end{equation*}
Moreover, as root spaces are $1$-dimensional we have $(\lie{g}_s)_{\alpha} = \lie{g}_{\alpha}$ for all $\alpha \in \Phi_{\bG_s}(\bS)$. This implies that $(\lie{g}_s)_{\alpha}^{\perp} = \lie{g}_{\alpha}^{\perp} \cap \lie{g}_s$ so we see immediately that (KD2) holds for $\mathcal{K}|_{\bG_s}$ as it holds for $\mathcal{K}$.
\end{proof}

\begin{cor}\label{cor:restriction-of-eta}
Assume $\lambda \in \mathcal{D}_u(\bG_s)^F$ is a Dynkin cocharacter associated to $u$. If $\eta_{u}^G$, resp., $\eta_{u}^{G_s}$, is defined with respect to the Kawanaka datum $\mathcal{K}$, resp., $\mathcal{K}|_{\bG_s}$, then
\begin{equation*}
\eta_{u}^{G_s} = \Res^{U_G(\lambda,-2)}_{U_{G_s}(\lambda,-2)}(\eta_{u}^G)
\end{equation*}
as linear characters of $U_{G_s}(\lambda,-2)$.
\end{cor}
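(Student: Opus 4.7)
The plan is essentially to unwind the definitions of $\eta_u^G$ and $\eta_u^{G_s}$ and observe that they agree pointwise on the subgroup $U_{G_s}(\lambda,-2)$. There is no serious obstacle here, since the bulk of the work has already been carried out in \cref{prop:res-kawanaka-datum}.

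First I would check that the statement makes sense, i.e.\ that $U_{G_s}(\lambda,-2)$ is actually a subgroup of $U_G(\lambda,-2)$. Since $\lambda \in \widecheck{X}(\bG_s)$ commutes with $s$, the argument in the proof of \cref{prop:res-kawanaka-datum} shows $\lie{g}_s(\lambda,i) = \lie{g}(\lambda,i) \cap \lie{g}_s$ for every $i \in \mathbb{Z}$, and in particular $\bU_{\bG_s}(\lambda,-2) = \bU_{\bG}(\lambda,-2) \cap \bG_s$. Taking $F$-fixed points gives $U_{G_s}(\lambda,-2) \leqslant U_G(\lambda,-2)$, so the restriction map is well-defined. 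I would also note that by \cref{lem:rat-conj-cochar-to-neg} and the fact that $\phi_{\spr}(u) \in \lie{g}_s(\lambda,2)$, the linear character $\eta_u^{G_s}$ is indeed defined on $U_{G_s}(\lambda,-2)$.

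Next, for any $v \in U_{G_s}(\lambda,-2)$ I would apply the definitions directly. By \cref{prop:res-kawanaka-datum} the restricted datum $\mathcal{K}|_{\bG_s} = (\phi_{\spr}|_{\mathcal{U}(\bG_s)}, \kappa|_{\lie{g}_s\times\lie{g}_s}, \chi_p)$ is a Kawanaka datum for $\bG_s$, so by construction
\begin{equation*}
\eta_u^{G_s}(v) = \chi_q\big(\kappa|_{\lie{g}_s\times\lie{g}_s}(\phi_{\spr}|_{\mathcal{U}(\bG_s)}(u), \phi_{\spr}|_{\mathcal{U}(\bG_s)}(v))\big).
\end{equation*}
Since $u,v \in \mathcal{U}(\bG_s)$, the $\bG$-equivariance of $\phi_{\spr}$ combined with the identification $\phi_{\spr}(\mathcal{U}(\bG_s)) = \mathcal{N}(\bG_s) \subseteq \lie{g}_s$ gives $\phi_{\spr}|_{\mathcal{U}(\bG_s)}(u) = \phi_{\spr}(u)$ and similarly for $v$. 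Both elements lying in $\lie{g}_s$, the restriction of $\kappa$ agrees with $\kappa$, so
\begin{equation*}
\eta_u^{G_s}(v) = \chi_q\big(\kappa(\phi_{\spr}(u),\phi_{\spr}(v))\big) = \eta_u^G(v) = \Res^{U_G(\lambda,-2)}_{U_{G_s}(\lambda,-2)}(\eta_u^G)(v),
\end{equation*}
which is the claimed equality.

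In short, the corollary is a direct consequence of the definition of $\eta$ together with the compatibility of the two Kawanaka data already established in \cref{prop:res-kawanaka-datum}; the only point requiring a brief verification is the inclusion of the domains, which in turn reduces to the equality $\lie{g}_s(\lambda,-2) = \lie{g}(\lambda,-2) \cap \lie{g}_s$ noted above.
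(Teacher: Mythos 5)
Your argument is correct and is exactly the immediate unwinding of definitions that the paper leaves to the reader (the statement is labelled a corollary to \cref{prop:res-kawanaka-datum} with no proof given). The only minor quibble is the citation of \cref{lem:rat-conj-cochar-to-neg} to justify that $\eta_u^{G_s}$ is defined: what you actually need is that $\phi_{\spr}(u) \in \lie{g}_s(\lambda,2)$ (so that the construction in \cref{ssec:gggcdef} applies with the restricted datum), which follows from $\lambda \in \mathcal{D}_u(\bG_s)$ rather than from that lemma; otherwise the proof is clean.
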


\subsection{Dynkin Cocharacters}\label{eq:dynk-cochar-centraliser}
Let us end by addressing the relationship between the sets of Dynkin cocharacters $\mathcal{D}_u(\bG_s)$ and $\mathcal{D}_u(\bG)$. As $\phi_{\spr}|_{\mathcal{U}(\bG_s)}$ is a Springer isomorphism onto its image we may, and will, assume that the set $\mathcal{D}_u(\bG_s)$ of Dynkin cocharacters is defined with respect to $\phi_{\spr}|_{\mathcal{U}(\bG_s)}$ so that
\begin{equation*}
\mathcal{D}_u(\bG_s) = \mathcal{D}_{\phi_{\spr}(u)}(\bG_s),
\end{equation*}
c.f., \cref{subsec:canonical-para-levi}. By the result of Fowler--R\"ohrle, c.f., \cref{thm:fowler-rohrle}, we get that $\mathcal{D}_u(\bG_s) = \mathcal{D}_u(\bG,\bG_s)$.

\section{Kawanaka Characters and their Values on Mixed Classes}\label{sec:kawanakageneral}

From now on we assume that $\bG$ is proximate and we fix a Kawanaka datum $\mathcal{K} = (\phi_{\spr},\kappa,\chi_p)$ for $\bG$. We recall here Kawanaka's modified version of the GGGCs and state a first result on their values.   

\subsection{Admissible Coverings} 
Let $u \in \mathcal{U}(\bG)^F$ be a unipotent element of $G = \bG^F$. Recall from \cref{ssec:rationalclassesrep} that the $\bG^F$-orbits on $\Cl_\bG(u)^F$ are parametrised by the $F$-conjugacy classes of the finite group $A_\bG(u)$.
On the other hand, Lusztig's classification of unipotent characters \cite{Lu84} involves a certain quotient of $A_{\bG}(u)$ known as the \emph{canonical quotient}. This classification will be recalled in \cref{ssec:unipotentchar}.

As $u$ is $F$-fixed the Frobenius endomorphism acts on $A_{\bG}(u)$. We will consider short exact sequences of the form
\begin{equation*}
1 \longrightarrow N \longrightarrow A_{\bG}(u) \longrightarrow \bar{A} \longrightarrow 1
\end{equation*}
so that $\bar{A}$ is a quotient of $A_{\bG}(u)$. We say such a sequence, or quotient $\bar{A}$, is \emph{defined over $\mathbb{F}_q$} if $F(N) = N$ so that $F$ acts on $\bar{A}$. To construct Kawanaka characters we will need to be able to lift the quotient $\bar{A}$ to a subgroup of $C_{\bG}(u)$. Such a lift will need to satisfy the conditions encapsulated in the following definition.

\begin{definition}\label{def:admissiblesplitting}
Let $u \in \mathcal{U}(\bG)^F$ be a rational unipotent element. A pair $(A,\lambda)$ consisting of a subgroup $A \leqslant C_{\bG}(u)$ and a Dynkin cocharacter $\lambda \in \mathcal{D}_u(\bG)^F$ is said to be \emph{admissible} for $u$ if the following hold:
\begin{admissible}
 \item\label{en:K0} $A \subset \bL(\lambda)^F$ so that $A$ is a finite group,
 \item\label{en:K1} $A$ consists of semisimple elements,
 \item\label{en:K2} $a \in C_{\bL(\lambda)}^\circ(C_A(a))$ for all $a \in A$.
\end{admissible}
Now assume $\bar{A}$ is a quotient of the component group $A_{\bG}(u)$ defined over $\mathbb{F}_q$. We say that an admissible pair $(A,\lambda)$ for $u$, as above, is an \emph{admissible covering} for $\bar{A}$ if:
\begin{admissible}[resume]
 \item\label{en:K3} the restriction to $A$ of the map $C_\bG(u) \twoheadrightarrow \bar A$ fits into the following 
 short exact sequence
 \begin{equation*}
 1 \longrightarrow Z \longrightarrow A \longrightarrow \bar A \longrightarrow 1
 \end{equation*}
 where $Z \leqslant Z(A)$ is a central subgroup with $Z\cap [A,A] = \{1\}$.
\end{admissible}
Note we necessarily have that $F$ must act trivially on $\bar{A}$ for such a covering to exist.
\end{definition}

The optimal scenario here is to be able to find an admissible covering of the entire component group $A_{\bG}(u)$, assuming $F$ acts trivially on $A_{\bG}(u)$. Unfortunately, even when $F$ acts trivially on $A_{\bG}(u)$ it will not always be possible to find such a covering, see the remark preceding \cref{prop:admissible-splitting-ortho}.

The main issue here is that condition \cref{en:K2} might not hold in general. Several of the following sections will be concerned with showing that admissible coverings exist when $\bG$ is simple and $\bar{A}$ is taken to be Lusztig's canonical quotient. We now make a few remarks on the definition of admissible coverings that are to be used later on.

\begin{rmk}\label{rmk:K3-and-centralisers}
Let $a \in A$ and $\bar a$ be its image in $\bar A$. Then under the assumption \cref{en:K3} there is a short exact sequence
\begin{equation*} 
1 \longrightarrow Z \longrightarrow C_A(a) \longrightarrow C_{\bar A}(\bar a) \longrightarrow 1.
\end{equation*}
Indeed, if $b \in A$ is such that $ab \in ba Z$ then $a^{-1}b^{-1}ab \in Z \cap [A,A]$, and therefore $ab = ba$. This shows the surjectivity of the map $C_A(a) \longrightarrow C_{\bar A}(\bar a)$. The fact that its kernel is $Z$ is obvious.
\end{rmk}

\begin{rem}\label{rem:cyclic-centraliser}
Assume $\lambda \in \widecheck{X}(\bG)$ is a cocharacter and $A \leqslant \bL(\lambda)$ is any subgroup consisting of semisimple elements. If $a \in A$ is an element with $C_A(a)$ cyclic then we have $C_{A}(a) \leqslant C_{\bL(\lambda)}^{\circ}(C_{A}(a))$. Indeed, if $b \in C_{A}(a)$ is a generator then $b \in C_{\bL(\lambda)}^{\circ}(b) = C_{\bL(\lambda)}^{\circ}(C_{A}(a))$ by \cite[Prop.~2.5]{DiMibook} because $b$ is semisimple.
\end{rem}

\begin{rem}\label{rem:in-max-torus}
Assume $\lambda \in \widecheck{X}(\bG)^F$ is a cocharacter and $\bT \leqslant \bL(\lambda)$ is an $F$-stable maximal torus. If $A \leqslant \bT^F$ is any subgroup then for any $a \in A$ we have
\begin{equation*}
a \in A \leqslant \bT \leqslant C_{\bL(\lambda)}^{\circ}(A) \leqslant C_{\bL(\lambda)}^{\circ}(C_{A}(a)).
\end{equation*}
Hence \cref{en:K2} will automatically hold in this situation.
\end{rem}

In the case of classical groups we will not work directly with adjoint groups but with symplectic and special orthogonal groups. The following allows us to descend to adjoint groups. For this we introduce the following notation: for any $u \in \bG$ we denote by $Z_{\bG}(u)$ the image of the natural map $Z(\bG) \to A_{\bG}(u)$.

\begin{lem}\label{lem:coveringforGad}
Let $\pi : \bG \to \bG_{\ad}$ be an adjoint quotient of $\bG$. If $u \in \bG$ is unipotent then we have a short exact sequence
\begin{equation*}
1 \longrightarrow Z_{\bG}(u) \longrightarrow A_{\bG}(u) \longrightarrow A_{\bG_{\ad}}(\pi(u)) \longrightarrow 1,
\end{equation*}
Assume we have a short exact sequence
\begin{equation*}
1 \longrightarrow N \longrightarrow A_{\bG}(u) \longrightarrow \bar{A} \longrightarrow 1,
\end{equation*}
as above, with $Z_{\bG}(u) \leqslant N$ then $\bar{A}$ is naturally a quotient of $A_{\bG_{\ad}}(\pi(u))$. Moreover, if $(A,\lambda)$ is an admissible covering for $u$ of $\bar{A}$ then $(\pi(A),\pi\circ\lambda)$ is an admissible covering for $\pi(u)$ of $\bar{A}$.
\end{lem}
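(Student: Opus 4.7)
The plan is to tackle the three assertions in turn. For the exact sequence, I would first establish that the restriction $\pi : C_{\bG}(u) \to C_{\bG_{\ad}}(\pi(u))$ is surjective with kernel $Z(\bG)$. The kernel is immediate since $Z(\bG) \leqslant C_{\bG}(u)$; for surjectivity, given $g \in \bG$ with $\pi(g) \in C_{\bG_{\ad}}(\pi(u))$, the element $z := gug^{-1}u^{-1}$ lies in $\ker \pi = Z(\bG)$ and is therefore semisimple (the $K$-points of $Z(\bG)$ are contained in every maximal torus). Since $z$ is central, $u$ and $gug^{-1} = zu$ commute, so uniqueness of the Jordan decomposition of the unipotent element $gug^{-1}$ forces $z = 1$. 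A dimension count, using that $\pi$ is an isogeny, gives $\dim \pi(C_{\bG}^{\circ}(u)) = \dim C_{\bG}(u) - \dim Z(\bG) = \dim C_{\bG_{\ad}}^{\circ}(\pi(u))$, hence $\pi(C_{\bG}^{\circ}(u)) = C_{\bG_{\ad}}^{\circ}(\pi(u))$; passing to the quotient then yields the claimed exact sequence, with $Z_{\bG}(u)$ identified as the image of $Z(\bG)$. The factorisation claim for $\bar A$ is then immediate from this exact sequence and the universal property of the quotient.

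For the admissibility of $(\pi(A),\pi\circ\lambda)$, I would verify \cref{en:K0}--\cref{en:K3} one at a time. Condition \cref{en:K1} is automatic, as morphisms of algebraic groups preserve semisimplicity. For \cref{en:K0}, I would first show that $\bL_{\bG_{\ad}}(\pi\circ\lambda) = \pi(\bL(\lambda))$: given a preimage $g$ of an element centralising $(\pi\circ\lambda)(\mathbb{G}_m)$, the map $k \mapsto g\lambda(k)g^{-1}\lambda(k)^{-1}$ is a morphism $\mathbb{G}_m \to Z(\bG_{\der})$ from a connected group to a finite one, hence trivial, so $g \in \bL(\lambda)$. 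Combined with $F$-equivariance of $\pi$ and the identification of weight spaces under $\mathrm{d}\pi$ showing $\pi\circ\lambda \in \mathcal{D}_{\pi(u)}(\bG_{\ad})^F$ (mirroring the argument in the proof of \cref{lem:induction-GGGC}(ii)), this delivers \cref{en:K0}. For \cref{en:K3}, I would identify the kernel $Z'$ of $\pi(A) \twoheadrightarrow \bar A$ as the image of $Z$ in $\pi(A)$; centrality is clear since $Z \leqslant Z(A)$, and any element of $Z' \cap [\pi(A),\pi(A)]$ lifts, after correcting by an element of $A \cap Z(\bG) \leqslant Z$ (the inclusion being a consequence of $Z_{\bG}(u) \leqslant N$), to an element of $Z \cap [A,A] = \{1\}$.

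The main obstacle will be condition \cref{en:K2}, which is precisely where the hypothesis $Z \cap [A,A] = \{1\}$ from the original covering plays its essential role. The key step is to verify that centralisers descend cleanly, namely $\pi(C_{A}(a)) = C_{\pi(A)}(\pi(a))$ for every $a \in A$. Indeed, if $b \in A$ satisfies $\pi([a,b]) = 1$, then $[a,b] \in A \cap Z(\bG) \leqslant Z$, whence $[a,b] \in Z \cap [A,A] = \{1\}$. Given this, $\pi(C_{\bL(\lambda)}^{\circ}(C_A(a)))$ is a closed connected subgroup of $\bL_{\bG_{\ad}}(\pi\circ\lambda)$ centralising $C_{\pi(A)}(\pi(a))$, and hence is contained in $C_{\bL_{\bG_{\ad}}(\pi\circ\lambda)}^{\circ}(C_{\pi(A)}(\pi(a)))$; applying $\pi$ to the hypothesis $a \in C_{\bL(\lambda)}^{\circ}(C_A(a))$ then delivers \cref{en:K2}.
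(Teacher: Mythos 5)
Your proposal is correct and follows essentially the same route as the paper: establish the first exact sequence by showing $\pi^{-1}(C_{\bG_{\ad}}(\pi(u))) = C_{\bG}(u)$ using that $\ker\pi = Z(\bG)$ is semisimple and $u$ is unipotent, then verify \cref{en:K0}--\cref{en:K3} one by one, with the key observation for \cref{en:K2} being that $A \cap Z(\bG) \leqslant Z$ (a consequence of $Z_\bG(u) \leqslant N$) together with $Z \cap [A,A] = \{1\}$, which forces $\pi(C_A(a)) = C_{\pi(A)}(\pi(a))$. The only divergence worth noting is that for \cref{en:K0} you prove the equality $\bL_{\bG_{\ad}}(\pi\circ\lambda) = \pi(\bL(\lambda))$, which is more than is needed --- the paper only uses the trivial inclusion $\pi(\bL(\lambda)) \leqslant \bL_{\bG_{\ad}}(\pi\circ\lambda)$ to conclude $\pi(A) \leqslant \bL_{\bG_{\ad}}(\pi\circ\lambda)^F$; your extra work is correct (the commutator $k \mapsto [g,\lambda(k)]$ lands in the finite group $Z(\bG)\cap\bG_{\der}$, hence is constant) but not required.
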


\begin{proof}
The first short exact sequence is clear because $C_{\bG}(u) = \pi^{-1}(C_{\bG_{\ad}}(\pi(u)))$, which follows as $u$ is unipotent and $\Ker(\pi) = Z(\bG)$ consists of semisimple elements. We denote by $F : \bG_{\ad} \to \bG_{\ad}$ the Frobenius endomorphism induced by that on $\bG$, i.e., we have $F\circ\pi = \pi \circ F$. Moreover, we set $\bL = C_{\bG}(\lambda)$ and $\bM = C_{\bG_{\ad}}(\pi\circ\lambda)$. It is clear that $\pi(\bL) \leqslant \bM$ so certainly $\pi(A) \leqslant \bM^F$, because $\pi$ is $F$-equivariant, so \cref{en:K0} holds. That \cref{en:K1} holds is clear because the image of a $p'$-element is again a $p'$-element.

By assumption it follows that we have a short exact sequence
\begin{equation*}
1 \longrightarrow \pi(Z) \longrightarrow \pi(A) \longrightarrow \bar A\longrightarrow 1,
\end{equation*}
with the notation as in \cref{en:K3}. Note that the image of $Z$ in $A_{\bG}(u)$ must contain $Z_{\bG}(u)$ by assumption. Certainly $\pi(Z) \leqslant Z(\pi(A))$ and as $[\pi(A),\pi(A)] = \pi([A,A])$ we have $\pi(Z) \cap [\pi(A),\pi(A)] = \pi(Z) \cap \pi([A,A])$. Now if $x = \pi(y) = \pi(z)$ with $y \in [A,A]$ and $z \in Z$ then $y=zz'$ for some $z' \in  A \cap \Ker \pi  \subset Z$ and \cref{en:K3} for $A$ forces $y=1$ hence $x=1$. This shows that \cref{en:K3} holds for $\pi(A)$.

Finally let us consider \cref{en:K2}. For any $a \in A$ we certainly have $\pi(C_A(a)) \leqslant C_{\pi(A)}(\pi(a))$. Now assume $\pi(b) \in C_{\pi(A)}(\pi(a))$ for some $b \in A$. It follows from the discussion in \cref{rmk:K3-and-centralisers} that $bab^{-1}a^{-1} \in Z$ so $b \in C_A(a)$. Hence $\pi(C_A(a)) = C_{\pi(A)}(\pi(a))$ and
\begin{equation*}
\pi(a) \in \pi(C_{\bL}^{\circ}(C_A(a))) = \pi(C_{\bL}(C_A(a)))^{\circ} \leqslant C_{\bM}^{\circ}(\pi(C_A(a))) = C_{\bM}^{\circ}(C_{\pi(A)}(\pi(a)))
\end{equation*}
for any $a \in A$ so \cref{en:K2} holds.
\end{proof}

\subsection{Admissible Representatives for $\Cl_\bfG(u)^F$}
We have seen in \cref{prop:rat-class-param} that if $u \in \mathcal{U}(\bG)^F$ then the $F$-conjugacy classes of $A_{\bG}(u)$ parametrise the $G$-conjugacy classes of $\Cl_{\bG}(u)^F$. Now assume $(A,\lambda)$ is an admissible pair for $u$ as in \cref{def:admissiblesplitting}. Being a subgroup of $G$ the group $A$ acts on $\Cl_{\bG}(u)^F$ by conjugation. However, it also acts by conjugation on $A_{\bG}(u)$ via the natural homomorphism $A \to A_{\bG}(u)$. The following shows that we can choose representatives that, roughly speaking, are equivariant with respect to these actions.

\begin{lemma}\label{lem:admissiblerep}
Assume $(A,\lambda)$ is an admissible pair for $u \in \mathcal{U}(\bG)^F$. Then there exists a set of unipotent elements $\{u_a \mid a \in A\} \subseteq \Cl_\bfG(u)^F$ satisfying the following conditions, for all $a \in A$:
\begin{enumerate}
 \item the $G$-conjugacy class of $u_a$ corresponds to the image of $a$ in $A_{\bG}(u)$ under the correspondence of \cref{prop:rat-class-param},
 \item for all $b \in A$ we have $bu_ab^{-1} = u_{bab^{-1}}$,
 \item for all $b \in C_A(a)$ we have $u_a \in C_{\bG}^{\circ}(b)^F$ and $\lambda \in \mathcal{D}_{u_a}(C_{\bG}^{\circ}(b))$.
\end{enumerate}
Any set $\{u_a \mid a \in A\}$ satisfying these conditions will be called a set of \emph{admissible representatives}.
\end{lemma}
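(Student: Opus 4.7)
The strategy is to construct each $u_a$ as a conjugate $g_a u g_a^{-1}$, where $g_a \in \bL(\lambda)$ solves the Lang--Steinberg equation $g_a^{-1}F(g_a)=a$ and the assignment $a\mapsto g_a$ is made $A$-equivariant so that (ii) holds by design. The ingredient which makes this possible is the admissibility axiom \cref{en:K2}.

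Concretely, I would first fix a set of representatives for the $A$-conjugacy classes of $A$. For each representative $a_0$, the subgroup $C_A(a_0)\leqslant A\leqslant\bL(\lambda)^F$ is pointwise $F$-fixed by \cref{en:K0}, so $C_{\bL(\lambda)}^\circ(C_A(a_0))$ is a closed connected $F$-stable subgroup of $\bL(\lambda)$. By \cref{en:K2} we have $a_0\in C_{\bL(\lambda)}^\circ(C_A(a_0))$, so the Lang--Steinberg theorem produces $g_{a_0}$ in this subgroup with $g_{a_0}^{-1}F(g_{a_0})=a_0$. For any $a$ in the $A$-orbit of $a_0$, write $a=ba_0b^{-1}$ with $b\in A$ and set $g_a:=bg_{a_0}b^{-1}$. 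This is unambiguous: if $b'\in A$ also satisfies $a=b'a_0(b')^{-1}$, then $(b')^{-1}b\in C_A(a_0)$ commutes with $g_{a_0}$ by construction, so $bg_{a_0}b^{-1}=b'g_{a_0}(b')^{-1}$.

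Setting $u_a:=g_a u g_a^{-1}$, a direct computation yields $g_a^{-1}F(g_a)=a$, so $u_a\in\Cl_\bG(u)^F$ and (i) follows from \cref{prop:rat-class-param}. For (ii), writing $a=b_0 a_0 b_0^{-1}$, any $b\in A$ satisfies $bab^{-1}=(bb_0)a_0(bb_0)^{-1}$, and the construction then gives $g_{bab^{-1}}=(bb_0)g_{a_0}(bb_0)^{-1}=bg_a b^{-1}$; combined with $b\in C_\bG(u)$, this yields $u_{bab^{-1}}=bu_a b^{-1}$. For (iii), if $b\in C_A(a)$ then (ii) gives $bu_a b^{-1}=u_a$; since $b$ is semisimple (by \cref{en:K1}) and $u_a$ unipotent, the standard fact that a unipotent element commuting with a semisimple element lies in its connected centraliser yields $u_a\in C_\bG^\circ(b)^F$. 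For the cocharacter statement, $g_a\in\bL(\lambda)=C_\bG(\lambda)$ gives ${}^{g_a}\lambda=\lambda$, whence $\lambda\in{}^{g_a}\mathcal{D}_u(\bG)=\mathcal{D}_{u_a}(\bG)$; moreover $b\in\bL(\lambda)$ commutes with $\lambda(\mathbb{G}_m)$, placing the connected subgroup $\lambda(\mathbb{G}_m)$ inside $C_\bG^\circ(b)$, so $\lambda\in\widecheck{X}(C_\bG^\circ(b))$. Applying \cref{thm:fowler-rohrle} to the maximal-rank connected reductive subgroup $C_\bG^\circ(b)\leqslant\bG$ then places $\lambda$ in $\mathcal{D}_{u_a}(C_\bG^\circ(b))$, as required.

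The crux of the argument is the well-definedness of the $A$-equivariant assignment $a\mapsto g_a$: one needs $g_{a_0}$ to centralise the entire finite group $C_A(a_0)$, and a naive application of Lang--Steinberg in $\bL(\lambda)$ would not suffice. It is precisely axiom \cref{en:K2} which permits us to shrink the ambient group to $C_{\bL(\lambda)}^\circ(C_A(a_0))$ before invoking Lang--Steinberg, thereby forcing $g_{a_0}$ to commute with $C_A(a_0)$ automatically. The remaining verifications are then essentially formal, with the Fowler--R\"ohrle theorem doing the last bit of work needed to descend $\lambda$ from $\bG$ to each connected centraliser $C_\bG^\circ(b)$.
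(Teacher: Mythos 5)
Your proof is correct and follows essentially the same strategy as the paper's: solve the Lang--Steinberg equation inside $C_{\bL(\lambda)}^\circ(C_A(a))$ (which is where \cref{en:K2} enters) to force $g_a$ to centralise $C_A(a)$, then arrange $A$-equivariance of $a\mapsto g_a$ and deduce (i)--(iii) by direct computation. Your orbit-by-orbit organization of the construction is a slightly more explicit way of justifying the paper's ``without loss of generality $g_{bab^{-1}} = bg_ab^{-1}$''; otherwise the two arguments coincide, including the appeal to the Fowler--R\"ohrle theorem (\cref{thm:fowler-rohrle}, via \cref{eq:dynk-cochar-centraliser}) for the cocharacter claim in (iii).
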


\begin{proof}
Given $a \in A$, we can use \cref{en:K2} to find $g_a \in C_{\bfL(\lambda)}^\circ(C_A(a))$ such that $g_a^{-1} F(g_a) = a$. If we set $u_a := g_a u g_a^{-1}$ then (i) follows by the definition of the correspondence. Every $b \in A$ is $F$-stable therefore 
$$(bg_ab^{-1})^{-1} F(bg_ab^{-1}) = bab^{-1}.$$
Since $g_a$ centralises $C_A(a)$ this shows that we can assume, without loss of generality, that $g_{bab^{-1}} = bg_ab^{-1}$. Consequently for all $b \in A \subset C_G(u)$ we have 
$$bu_ab^{-1} = bg_aug_a^{-1} b^{-1} = bg_ab^{-1}ubg_a^{-1} b^{-1} = u_{bab^{-1}}$$
which proves (ii). In particular $u_a \in C_G^\circ(b)$ whenever $b$ commutes with $a$. 

Finally, since $b \in L(\lambda) = C_{\bG}(\lambda)^F$ the image of $\lambda$ lies in $C_{\bfG}^{\circ}(b)$ so $\lambda \in \mathcal{D}_u(\bG,C_\bfG^\circ(b)) = \mathcal{D}_u(C_\bfG^\circ(b))$, see \cref{eq:dynk-cochar-centraliser}. Therefore ${}^{g_a}\lambda = \lambda$ lies in $\mathcal{D}_{u_a}({}^{g_a}C_\bfG^\circ(b))$. Thus (iii) follows from the fact that by construction $g_a \in C_\bfG^\circ(b)$ whenever $b \in C_A(a)$. 
\end{proof}

\subsection{Weil Representations of Symplectic Groups}\label{subsec:weil}
In this section we assume that $p = \Char(K)$ is odd. Moreover, $V$ is a finite-dimensional $K$-vector space equipped with a Frobenius endomorphism $F : V \to V$ endowing $V$ with an $\mathbb{F}_q$-rational structure and $\omega : V \times V \to K$ is a non-degenerate alternating bilinear form defined over $\mathbb{F}_q$.

Following G\'erardin \cite{Ger} we define the \emph{Heisenberg group} $H := \Hei(V^F \mid \omega)$ of the symplectic space $(V^F \mid \omega)$ to be the set $V \times \mathbb{F}_q^+$ with group law defined by
\begin{equation*}
(v,k)(v',k') = (v+v',k+k'+\omega(v,v')/2).
\end{equation*}
Note, this definition is such that $(0,\omega(v,v')) = [(v,k),(v',k')]$ for all $(v,k),(v',k') \in H$, where $[-,-]$ denotes the commutator in $H$, and $Z := Z(H) = \{(0,k) \mid k \in \mathbb{F}_q^+\}$.

Let $S = \Sp(V\mid \omega)^F$ be the symplectic group determined by the form $\omega$. We have a natural action of $S$ on $H$ given by its action on $V$, in particular this action fixes pointwise $Z$. We denote by $\SHei(V^F \mid \omega)$ and $SH$ the semidirect product $S \ltimes H$. Now assume $\eta \in \Irr(Z)$ is a non-trivial linear character of the centre. There exists a unique irreducible character $\zeta_{\eta} \in \Irr(H)$ whose restriction to $Z$ is a multiple of $\eta$. It is supported only on $Z$ and $\zeta_{\eta}(1) = [H:Z]^{1/2}$, see \cite[Lem.~1.2]{Ger}. The unicity implies this character is invariant under $S$.

\begin{thm}[{}{G\'erardin, \cite[Thm.~2.4]{Ger}}]\label{thm:weil-extension}
Recall our assumption that $p$, hence $q$, is odd. There exists an extension $\tilde{\zeta}_{\eta} \in \Irr(SH)$ of the character $\zeta_{\eta} \in \Irr(H)$. If we stipulate that $\overline{\tilde{\zeta}_{\eta}} \neq \tilde{\zeta}_{\eta}$ when $(\dim(V),q) = (2,3)$ then this extension is unique. Furthermore, if $t \in S$ is semisimple then for any two non-trivial irreducible characters $\eta,\eta' \in \Irr(Z)$ we have $\tilde{\zeta}_{\eta}(t) = \tilde{\zeta}_{\eta'}(t)$.
\end{thm}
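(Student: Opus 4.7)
My plan is to construct $\tilde\zeta_\eta$ explicitly via the Schr\"odinger model of the Weil representation, then deduce uniqueness and the $\eta$-independence of the character value on semisimple elements from the resulting formulas.

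\emph{Existence.} I would fix an $F$-stable Lagrangian $L \subseteq V$, which exists as $V^F$ is a non-degenerate symplectic $\mathbb{F}_q$-vector space. The preimage $P$ of $L^F$ in $H$ under the projection $(v,k) \mapsto v$ is a maximal abelian subgroup, and $\eta$ extends to a linear character $\hat\eta$ of $P$ via the quotient $P \twoheadrightarrow Z$. By Mackey's criterion $\Ind_P^H \hat\eta$ is irreducible with central character $\eta$, hence equals $\zeta_\eta$ by the unicity of such a character. This realises $\zeta_\eta$ on $\mathbb{C}[V^F/L^F]$. For each $s \in S$ the $S$-invariance of $\zeta_\eta$ yields an intertwiner $A(s)$, unique up to scalar, so that $s \mapsto A(s)$ gives a projective representation $\bar A : S \to \PGL(\mathbb{C}[V^F/L^F])$. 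The obstruction to lifting $\bar A$ to an honest representation is a class in $H^2(S,\mathbb{C}^\times)$; using $q$ odd and the classical triviality of the Schur multiplier of $\Sp_{2n}(\mathbb{F}_q)$ (together with an explicit check in the residual rank-one case), this lift exists and produces the required $\tilde\zeta_\eta$.

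\emph{Uniqueness.} Any two extensions of $\zeta_\eta$ differ by inflation of a linear character of $SH/H \cong S$, so uniqueness reduces to the computation of $S^{ab}$. Since $\Sp_{2n}(\mathbb{F}_q)$ is perfect whenever $q$ is odd and $(n,q) \neq (1,3)$, uniqueness holds in all these cases. In the remaining exceptional case $(\dim V, q) = (2,3)$, $S \cong \SL_2(\mathbb{F}_3)$ has abelianisation $\mathbb{Z}/3\mathbb{Z}$, giving three extensions; the stipulation $\overline{\tilde\zeta_\eta} \neq \tilde\zeta_\eta$, combined with the fact that the two non-trivial linear characters of $\SL_2(\mathbb{F}_3)$ are complex conjugates of each other, then singles out the canonical extension.

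\emph{Independence of $\eta$ on semisimple classes.} Given semisimple $t \in S$, I would decompose $V^F = V^t \oplus V_t$ where $V^t = \Ker(t-1)$ and $V_t = \mathrm{image}(t-1)$; the semisimplicity of $t$ forces this to be an orthogonal direct sum decomposition for $\omega$. The Heisenberg group factorises as a central product $H \cong H^t *_Z H_t$, where $H^t, H_t$ are the Heisenberg groups of the two factors, $t$ preserves this factorisation, and $\tilde\zeta_\eta(t)$ splits accordingly. The $V^t$-contribution is evaluated at the identity and yields a power of $q$ manifestly independent of $\eta$. On the $V_t$-factor, where $(t-1)$ is invertible, a direct calculation in the Schr\"odinger model expresses $\tilde\zeta_\eta(t)$ as a Gauss sum for the quadratic form $Q_t(v) = \tfrac12\omega(v,(t+1)(t-1)^{-1}v)$ on $V_t^F$; the standard transformation law for Weil indices under replacing $\eta$ by $\eta'(x) = \eta(ax)$, combined with the normalisation fixed in the existence step, shows that this contribution is likewise independent of the non-trivial $\eta$. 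The hardest part will be producing the genuine (non-projective) lift in the existence step: for $n \geq 2$ one can invoke the triviality of the Schur multiplier of $\Sp_{2n}(\mathbb{F}_q)$, but at rank one one must write down the operators $A(s)$ on generators of $\SL_2(\mathbb{F}_q)$ and verify the defining relations by hand, which is the technical heart of Weil's original construction.
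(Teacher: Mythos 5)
The paper supplies a proof only of the final assertion, that $\tilde\zeta_\eta(t)=\tilde\zeta_{\eta'}(t)$ for $t$ semisimple; existence and uniqueness are cited directly to G\'erardin. For that last part your route is genuinely different. You compute: split $V^F = V^t\oplus V_t$ orthogonally, factor $H$ as a central product, evaluate the $V^t$-piece as $q^{\dim V^t/2}$ and the $V_t$-piece as a Gauss sum for the Cayley form $Q_t$, and use that $\dim V_t$ is even to see the Gauss sum is insensitive to scaling the additive character. This is essentially G\'erardin's own computation, and it has the side benefit of producing the explicit value $\pm q^{\dim V^t/2}$, which the paper also needs (it cites G\'erardin's Cor.~4.8.1 for it in \cref{lem:weil-signs}). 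The paper instead argues by pure Clifford theory: embed $SH$ in $C\ltimes H$ with $C$ the conformal symplectic group, observe that conjugation by $g\in C$ carries $\tilde\zeta_\eta$ to $\tilde\zeta_{\mu(g)\eta}$ by uniqueness of the extension, note that ${}^gt$ is $S$-conjugate to $t$ because semisimple centralisers in the simply connected group $\Sp(V)$ are connected, and finish with the transitivity of $\mathbb{F}_q^\times$ on the nontrivial additive characters. The paper's argument is shorter and computation-free but does not recover the explicit value.

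Two caveats on the parts you prove that the paper does not. First, the stipulation $\overline{\tilde\zeta_\eta}\neq\tilde\zeta_\eta$ cannot single out a unique extension in the way you describe: since $\eta$ has odd order, $\overline{\tilde\zeta_\eta}$ restricts on $H$ to $\zeta_{\bar\eta}\neq\zeta_\eta$, so $\overline{\tilde\zeta_\eta}\neq\tilde\zeta_\eta$ holds for every one of the three candidate extensions over $\SL_2(\mathbb{F}_3)$; none is self-conjugate and the condition eliminates nothing. The canonical extension has to be pinned down by a different mechanism (compatibility across field extensions, in G\'erardin), and your reading of the stipulation should not be relied on. Second, the appeal to triviality of the Schur multiplier of $\Sp_{2n}(\mathbb{F}_q)$ overreaches: $\SL_2(\mathbb{F}_9)$ has multiplier $\mathbb{Z}/3\mathbb{Z}$, so for that case one must show directly that the specific $2$-cocycle attached to the projective Weil representation is a coboundary (which it is). Neither issue affects the semisimple-value computation, which is the only part the paper itself proves.
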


\begin{proof}
The statement concerning semisimple elements is implied by \cite[Thm.~2.4]{Ger} but here is a direct proof. Let $\bC = \Sp(V \mid \omega)\cdot Z(\GL(V))$ be the conformal symplectic group and let $C = \bC^F$ be the fixed points under the natural Frobenius endomorphism induced from that on $V$. We have a surjective homomorphism $\mu : C \to \mathbb{F}_q^{\times}$ such that $\omega(gx,gy) = \mu(g)\omega(x,y)$.

The group $C$ acts automorphically on $H = V \times \mathbb{F}_q^+$ by setting $g\cdot (v,k) = (gv, \mu(g)k)$. This clearly extends the action of $S \lhd C$ on $H$ so $SH \lhd C \ltimes H$. We define an action of $\mathbb{F}_q^{\times}$ on $\Irr(\mathbb{F}_q^+)$ given by $(\xi\eta)(k) = \eta(\xi k)$ for all $k \in \mathbb{F}_q^+$ and $\xi \in \mathbb{F}_q^{\times}$. Now assume $\eta \in \Irr(Z)$ is non-trivial then for any $g \in C$ we have ${}^g\zeta_{\eta}$ is an irreducible character of $H$ whose restriction to $Z$ is a multiple of $\mu(g)\eta$. Hence ${}^g\zeta_{\eta} = \zeta_{\mu(g)\eta}$ and it follows that ${}^g\tilde{\zeta}_{\eta} = \tilde{\zeta}_{\mu(g)\eta}$.

If $t \in S$ is semisimple and $g \in C$ then ${}^gt \in S$ is $S$-conjugate to $t$. Indeed, from the definition of $\bC$ it follows that ${}^gt$ and $t$ are $\Sp(V\mid\omega)$-conjugate. However as $t$ is semisimple its centraliser in $\Sp(V\mid\omega)$ is connected because the symplectic group is simply connected, therefore $t$ and ${}^gt$ must be $S = \Sp(V\mid\omega)^F$-conjugate. The statement now follows because, in the action defined above, $\mathbb{F}_q^{\times}$ acts transitively on the non-trivial characters of $\mathbb{F}_q^+$.
\end{proof}

\begin{definition}
The unique extension $\tilde{\zeta}_{\eta}$ of $\zeta_{\eta}$ specified in \cref{thm:weil-extension} will be called the \emph{Weil extension}. A representation of $S = \Sp(V\mid \omega)$ with character $\Res_S^{SH}(\tilde{\zeta}_{\eta})$ is called a (reducible) \emph{Weil representation} of $S$.
\end{definition}

\subsection{Kawanaka Characters for Admissible Pairs}\label{ssec:kawnakadef}
\begin{assumption}
From now, until the end of this section we assume that $p \neq 2$. We fix a unipotent element $u \in \mathcal{U}(\bG)^F$, an admissible pair $(A,\lambda)$ for $u$, and a set $\{u_a = {}^{g_a}u \mid a \in A\}$ of admissible representatives as in \cref{lem:admissiblerep}.
\end{assumption}

Let $\bL = \bL(\lambda) = C_{\bG}(\lambda)$ and $L = \bL^F$. For any $a \in A$ the group $C_A(a) \leqslant C_L(u_a)$ normalises $U(\lambda,-1)$. Moreover, by \cref{eq:conj-eta-char} we see that $C_A(a)$ fixes the character $\zeta_{u_a}^G := \zeta_{u_a,\lambda}^G$ under the natural conjugation action. We wish to define an extension of $\zeta_{u_a}^G$ to the group $C_A(a)U(\lambda,-1) = C_A(a) \ltimes U(\lambda,-1)$, which is a semidirect product as $C_A(a)$ is contained in the Levi complement $\bL$. Following Kawanaka we get such an extension by using the (reducible) Weil representation of the symplectic group.

Let $V = \lie{g}(\lambda,-1)$ and let $H = \Hei(V^F \mid \omega_a)$ be the Heisenberg group and $S = \Sp(V \mid \omega_a)^F$ the symplectic group, as in \cref{subsec:weil}. Here $\omega_a$ is the form defined as in \cref{ssec:gggcdef} with respect to the element $u_a$. We denote by $SH$ the semidirect product $S \ltimes H$. The endomorphism of $H$
$$(v,k) \longmapsto (v,\Tr_{\mathbb{F}_q/\mathbb{F}_p}(k))$$
has central kernel $K$ and $H/K$ is an extra special $p$-group of exponent $p$. 

It follows from \cref{eq:ind-char-values} that $\Ker(\eta_{u_a}^G) = \Ker(\zeta_{u_a}^G)$ and, furthermore, the quotient group $Q := U(\lambda,-1)/\Ker(\zeta_{u_a}^G)$ is either a cyclic group of order $p$ or an extraspecial $p$-group of exponent $p$, see \cref{lem:rad-extraspecial}. Assume $Q$ is extraspecial. The group $C_A(a) \leq C_L(u_a)$, through its action on $U(\lambda,-1)$, fixes the character $\zeta_{u_a}^G$ so it acts on the quotient $Q$. Clearly this action preserves $Z(Q)$ and, in fact, it pointwise fixes $Z(Q)$ since it fixes $\eta_{u_a}^G$. Hence we have a homomorphism $C_A(a) \to \Aut^*(Q)$, in the notation of \cref{subsec:strut-rad}.

As in the proof of \cref{lem:is-subgrp}, the Springer morphism induces a $C_A(a)$-equivariant group isomorphism $\overline{\pi} : \overline{Q} \simto V^F$, where $\overline{Q} = Q/Z(Q)$. We can identify $Z(Q)$ with $Z(H/K)$ via the isomorphism $z \mapsto (0,\gamma(z))+K$, where $\gamma : Z(Q) \to \mathbb{F}_p$ is as in \cref{subsec:strut-rad}. With this identification we have $\overline{\pi}$ intertwines the forms $\omega_Q$ and $\omega_a$ as in \cref{lem:extra-iso-lift}, see \cref{eq:omegaQ}.

Certainly $C_A(a)$ stabilises the form $\omega_a$ so we have a homomorphism $C_A(a) \to S$. We have a natural homomorphism $S \to \Aut^*(H/K)$ and so by composition we obtain a homomorphism $C_A(a) \to \Aut^*(H/K)$. By \cref{lem:extra-iso-lift} the isomorphism $\overline{\pi}$ lifts to a $C_A(a)$-equivariant isomorphism $\pi : Q \simto H/K$. This then lifts, trivially, to an isomorphism $C_A(a)\ltimes Q \to C_A(a)\ltimes H/K$. Recapitulating we have the following commutative diagram of homomorphisms
\begin{center}
\begin{tikzcd}
C_A(a)\ltimes U(\lambda,-1) \ar[r,twoheadrightarrow] & C_A(a) \ltimes Q  \ar[r,"\sim"]  & C_A(a) \ltimes H/K\ar[r,hook] &  S \ltimes H/K& SH \ar[l,twoheadrightarrow] \\
U(\lambda,-1) \ar[r,twoheadrightarrow] \ar[u,hook] &  Q   \ar[r,"\sim"]   \ar[u,hook]&  H/K    \ar[u,hook]& H  \ar[ur,hook]\ar[l,twoheadrightarrow] 
\end{tikzcd}
\end{center}

By deflation followed by inflation along the bottom row of the previous diagram, we may view $\zeta_{u_a}^G$ as an irreducible character of $H$ whose restriction to $Z(H)$ is a multiple of a non-trivial irreducible character, namely the inflation of $\eta_{u_a}^G$. After \cref{thm:weil-extension} this can be extended to a unique character of $SH$, the Weil extension, which clearly has $K$ in its kernel. Deflating, restricting and inflating 
along the top row of the diagram, we obtain an extension of $\zeta_{u_a}^G$ to $C_A(a)\ltimes U(\lambda,-1)$ which we call the \emph{Weil extension}. We formalise our discussion in the following definition.

\begin{definition}\label{def:zeta-ext}
For any $a \in A$ we define an extension $\tilde{\zeta}_{u_a}^G \in \Irr(C_A(a) \ltimes U(\lambda,-1))$ of $\zeta_{u_a}^G$ as follows:
\begin{itemize}
	\item if $\zeta_{u_a}^G$ is linear then $\tilde{\zeta}_{u_a}^G$ is the unique extension whose restriction to $C_A(a)$ is a multiple of the trivial character,
	\item if $\zeta_{u_a}^G$ is not linear then we take $\tilde{\zeta}_{u_a}^G$ to be the Weil extension, as defined above.
\end{itemize}
\end{definition}

\begin{rem}
In defining the Weil extension above we made a choice of a $B$-invariant isomorphism $Q \simto H/K$ lifting $\overline{Q} \simto V^F$. As stated in \cref{lem:extra-iso-lift} any other choice is obtained by composing with an element of $\Inn(Q)$ centralised by $C_A(a)$. This is realised by an inner automorphism of $C_A(a)\ltimes Q$ so the extension is independent of the choice we made.
\end{rem}

The purpose of defining the extension in this way is that we may state the following concerning its values.

\begin{lem}[G\'erardin]\label{lem:weil-signs}
There exists a class function $\varepsilon \in \Class(A)$ such that for each $a \in A$ and $t \in C_A(a)$ the following hold:
\begin{enumerate}
	\item $\varepsilon(t) \in \{\pm1\}$,
	\item $\tilde{\zeta}_{u_a}^G(t) = \varepsilon(t)q^{\dim(\lie{g}_t(\lambda,-1))/2}$.
\end{enumerate}
\end{lem}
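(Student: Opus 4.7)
The plan is to compute $\tilde\zeta_{u_a}^G(t)$ by unwinding the Weil-extension construction preceding \cref{def:zeta-ext} and then invoking G\'erardin's explicit formula for Weil character values on semisimple elements; the sign $\varepsilon$ is read off a posteriori. When $\zeta_{u_a}^G$ is linear the formula is immediate: $\zeta_{u_a}^G(1) = q^{\dim\lie{g}(\lambda,-1)/2} = 1$ forces $\lie{g}(\lambda,-1) = 0$, so $\lie{g}_t(\lambda,-1) = 0$ as well; by \cref{def:zeta-ext} the extension $\tilde\zeta_{u_a}^G$ is trivial on $C_A(a)$, giving $\tilde\zeta_{u_a}^G(t) = 1$ and $\varepsilon \equiv 1$.

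Assume now $\zeta_{u_a}^G$ is non-linear and write $V = \lie{g}(\lambda,-1)$, $H = \Hei(V^F \mid \omega_a)$ and $S_a = \Sp(V \mid \omega_a)^F$. Tracing through the commutative diagram preceding \cref{def:zeta-ext} shows that $\tilde\zeta_{u_a}^G(t) = \tilde\zeta_\eta(\bar t)$, where $\bar t \in S_a$ is the image of $t$ acting on $(V,\omega_a)$ and $\eta$ is any non-trivial character of $Z(H)$. By \cref{en:K1} the element $t$ is semisimple, so it acts semisimply on $V$ and $\bar t$ is semisimple in $S_a$. G\'erardin's explicit formula for Weil characters on semisimple elements (which refines and implies the second assertion of \cref{thm:weil-extension}) then gives
\[
\tilde\zeta_\eta(\bar t) = \varepsilon_a(t)\, q^{\dim_{\mathbb{F}_q} V^{\bar t}/2}
\]
for some sign $\varepsilon_a(t) \in \{\pm 1\}$ independent of $\eta$. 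Since $t$ is semisimple, $V^{\bar t} = (V^F)^t$ has $\mathbb{F}_q$-dimension $\dim_K \lie{g}_t(\lambda,-1)$, which establishes (ii) modulo showing that $\varepsilon_a(t)$ only depends on $t$.

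To eliminate the dependence on $a$ I would set $\varepsilon(t) := \varepsilon_1(t)$, valid since $t \in A = C_A(1)$. From $u_a = g_a u g_a^{-1}$ together with the $\bG$-invariance of $\kappa$ and the $\bG$-equivariance of $\phi_\spr$ one computes $\omega_a(g_a x, g_a y) = \omega(x,y)$, so $g_a$ is a symplectic isomorphism $(V,\omega) \simto (V,\omega_a)$; by part (iii) of \cref{lem:admissiblerep} the element $g_a$ centralises $t$, hence the induced isomorphism $S \simto S_a$ carries the image of $t$ in $S$ to $\bar t$, and the Weil character values agree, whence $\varepsilon_a(t) = \varepsilon(t)$. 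For the class-function property, given $s \in A$, part (ii) of \cref{lem:admissiblerep} yields $s u_t s^{-1} = u_{sts^{-1}}$, and the $G$-equivariance of the Weil extension (see \cref{eq:conj-zeta-char}) gives ${}^s\tilde\zeta_{u_t}^G = \tilde\zeta_{u_{sts^{-1}}}^G$; evaluating at $sts^{-1}$ and noting that $\dim\lie{g}_{sts^{-1}}(\lambda,-1) = \dim\lie{g}_t(\lambda,-1)$ (since $s \in L \leqslant \bL(\lambda)$ preserves the $\lambda$-grading) forces $\varepsilon(sts^{-1}) = \varepsilon(t)$.

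The principal technical obstacle is that \cref{thm:weil-extension} as stated records only the independence of $\tilde\zeta_\eta(\bar t)$ from $\eta$, whereas the explicit value $\pm q^{\dim V^{\bar t}/2}$ on semisimple elements requires the full strength of G\'erardin's computation, which must be carefully applied in the reducible-Weil setting with the normalisations of \cref{def:zeta-ext}. One must also treat the exceptional case $(\dim V, q) = (2,3)$ of \cref{thm:weil-extension}, where uniqueness of the extension relies on the normalisation $\overline{\tilde\zeta_\eta} \neq \tilde\zeta_\eta$; verifying that this normalisation is preserved under the symplectic identification induced by $g_a$ is the subsidiary check required by the independence step.
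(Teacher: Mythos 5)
Your outline tracks the paper's proof closely: handle the linear case directly, invoke G\'erardin's formula for the Weil character on a semisimple element to get the value for $a=1$, and then reduce the case $a \neq 1$ to $a = 1$ by a symplectic isomorphism between $(V,\omega)$ and $(V,\omega_a)$. The gap is in that last reduction. You propose to use conjugation by $g_a$ as the isomorphism $(V,\omega)\simto(V,\omega_a)$ and observe that $g_a$ centralises $t$, so ``the induced isomorphism $S\simto S_a$ carries the image of $t$ in $S$ to $\bar t$, and the Weil character values agree.'' But $g_a$ is not $F$-fixed (it satisfies $g_a^{-1}F(g_a)=a\neq 1$), so $\pi(g_a)$ does not stabilise $V^F$ and does not induce a homomorphism $\Sp(V\mid\omega)^F\to\Sp(V\mid\omega_a)^F$, nor a homomorphism of the corresponding Heisenberg groups over $\mathbb{F}_q$. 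Without an $\mathbb{F}_q$-rational isometry between the two symplectic spaces there is no map of the finite groups $\SHei(V^F\mid\omega_a)\to\SHei(V^F\mid\omega)$ along which to transport Weil extensions, and the equality of values you want cannot be read off.

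The paper repairs this by a Lang--Steinberg argument: choose $h\in\Sp(V\mid\omega)$ with $\pi(a)=h^{-1}F(h)$ and set $\phi = h\,\pi(g_a^{-1})$. A direct check shows $\phi\in\GL(V)^F$, and $\omega_a(x,y)=\omega(\phi x,\phi y)$, so $\phi$ is an $\mathbb{F}_q$-rational isometry and conjugation by $\phi$ genuinely identifies $\SHei(V^F\mid\omega_a)$ with $\SHei(V^F\mid\omega)$. The price is that $\phi$ no longer centralises $t$: instead ${}^\phi t={}^h t$, which is $\Sp(V\mid\omega)$-conjugate but not a priori $\Sp(V\mid\omega)^F$-conjugate to $t$. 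One then uses that $\Sp$ is simply connected, so the centraliser of the semisimple $t$ is connected, and another application of Lang--Steinberg gives $\Sp(V\mid\omega)^F$-conjugacy of $t$ and ${}^\phi t$; since Weil characters are class functions, the values agree. Your argument omits both of these steps (the replacement of $g_a$ by a rational $\phi$, and the conjugacy of $t$ and ${}^\phi t$ over $\mathbb{F}_q$), and without them the reduction to $a=1$ does not go through.
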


\begin{proof}
If $V = \lie{g}(\lambda,-1) = \{0\}$ then this is clear as $\zeta_u^G$ is linear and the function $\varepsilon$ is defined by simply taking $\varepsilon(t) = 1$ for all $t \in A$.

We now consider the case where $V \neq \{0\}$. In this case none of the characters $\zeta_{u_a}^G$ are linear. Consider first the case where $a = 1$. It is shown by G\'erardin in \cite[Cor.~4.8.1]{Ger} that there exists a sign $\varepsilon(t)$ such that
\begin{equation*}
\tilde{\zeta}_{u}^G(t) = \varepsilon(t)q^{\dim(\lie{g}_t(\lambda,-1))/2}.
\end{equation*}
This gives a function $\varepsilon : A \to \{\pm1\}$. We see that $\varepsilon$ is a class function because if $t_1,t_2 \in A$ are conjugate in $A$ then they are conjugate in $C_L(u)$.

We now consider the case where $a \neq 1$. Let us denote by $\pi : \bL \to \GL(V)$ the natural map so that $\pi(s)v = s\cdot v$ for all $s \in \bL$ and $v \in V$. Recall that $u_a = {}^{g_a}u$ where $g_a \in C_{\bL}^{\circ}(C_A(a))$ satisfies $g_a^{-1}F(g_a) = a$. As $a \in A \leqslant C_L(u)$ we have $\pi(a) \in \Sp(V\mid \omega)$, where $\omega = \omega_1$. We pick an element $h \in \Sp(V\mid \omega)$ such that $\pi(a) = h^{-1}F(h)$ then $\pi(g_a)h^{-1} \in \GL(V)^F$. Let $\phi = h\pi(g_a^{-1}) \in \GL(V)^F$. From the definitions we see that for all $x,y \in V$ we have
\begin{equation*}
\omega_a(x,y) = \omega(g_a^{-1}\cdot x,g_a^{-1}\cdot y) = \omega(\phi x,\phi y)
\end{equation*}
where the last equality follows because $h \in \Sp(V \mid \omega)$. Thus we obtain an isomorphism $\SHei(V^F \mid \omega_a) \to \SHei(V^F \mid \omega)$ given by $s \mapsto {}^{\phi}s = \phi s \phi^{-1}$.

By transport of structure we obtain a character ${}^{\phi}\tilde{\zeta}_{u_a}^G$ on $\SHei(V^F\mid\omega)$. The restriction of this character to $Z(\Hei(V^F \mid \omega))$ is clearly a multiple of a non-trivial irreducible character. Hence the restriction of ${}^{\phi}\tilde{\zeta}_{u_a}^G$ is the character of a (reducible) Weil representation of $\Sp(V \mid \omega)^F$. As $t \in C_A(a)$ is semisimple it follows from \cref{thm:weil-extension} that ${}^{\phi}\tilde{\zeta}_{u_a}^G(t) = \tilde{\zeta}_u^G(t)$. However, as ${}^{g_a}t = t$ we have that ${}^{\phi}t = {}^ht$ so ${}^{\phi}t$ and $t$ are $\Sp(V\mid\omega)$-conjugate elements. They must, therefore, be $\Sp(V\mid\omega)^F$-conjugate so ${}^{\phi}\tilde{\zeta}_{u_a}^G(t) = {}^{\phi}\tilde{\zeta}_{u_a}^G({}^{\phi}t) = \tilde{\zeta}_{u_a}^G(t)$. This shows that $\tilde{\zeta}_u^G(t) = \tilde{\zeta}_{u_a}^G(t)$ for all $t \in C_A(a)$.
\end{proof}

\begin{definition}
We will call the class function $\varepsilon \in \Class(A)$ defined in \cref{lem:weil-signs} the \emph{Weil-sign} character of $A$.
\end{definition}

As we define it here, the Weil-sign character of $C_A(a)$ is not necessarily a character of $C_A(a)$, only a class function. In almost all of the situations we consider in this paper it will be a genuine character. However, we will not need this fact here. With all of this in hand we are now ready to define Kawanaka characters.

\begin{definition}\label{def:kaw-chars}
Assume $a \in A$ and $\psi \in \Irr(C_A(a))$ and let $\tilde{\zeta}_{u_a}^G \in \Irr(C_A(a)\cdot U(\lambda,-1))$ be the extension of $\zeta_{u_a}^G \in \Irr(U(\lambda,-1))$ defined in \cref{def:zeta-ext}. We define the \emph{Kawanaka character} associated to the pair $(a,\psi)$ to be
\begin{equation*}
K_{(a,\psi)}^G :=  \Ind_{C_A(a)\ltimes U(\lambda,-1)}^G \left(\tilde\zeta_{u_a}^G \otimes \Inf_{C_A(a)}^{C_A(a)\ltimes U(\lambda,-1)} \psi \right).
\end{equation*}
When the ambient finite group is clear we will denote it simply by $K_{(a,\psi)}$.
\end{definition}

\begin{rem}\label{rem:kaw-chars-def}
Assume $a,b \in A$ then we have ${}^b\tilde{\zeta}_{u_a}^G = \tilde{\zeta}_{u_{bab^{-1}}}^G$. Indeed, the restriction of ${}^b\tilde{\zeta}_{u_a}^G$ to $U(\lambda,-1)$ coincides with ${}^b\zeta_{u_a}^G = \zeta_{bu_ab^{-1}}^G = \zeta_{u_{bab^{-1}}}^G$ by \cref{eq:conj-zeta-char,lem:admissiblerep}. Hence the equality follows from the unicity of the Weil extension. This implies immediately that for any $\psi \in \Irr(C_A(a))$ we have $K_{({}^ba,{}^b\psi)} = K_{(a,\psi)}$.

We note in addition that it is immediately clear from the definition that the corresponding GGGCs
\begin{equation}\label{eq:GGGCs-sum-Kaw}
\Gamma_{u_a} = \sum_{\psi \in \Irr(C_A(a))} \psi(1)K_{(a,\psi)}
\end{equation}
are a sum of Kawanaka characters. This follows simply by decomposing the regular representation of $C_A(a)$.
\end{rem}

\subsection{Character Formula}
We now obtain a character formula for the values of Kawanaka characters on mixed classes in terms of GGGCs. A formula of this kind was stated, without proof, by Kawanaka \cite[Lem.~2.3.5]{Kaw86}. A similar formula already appears in \cite[Prop.~3.5]{Sho06} and \cite[Satz 3.2.11]{Wings}. As this may be needed in different situations in the future we state the first part purely in the context of arbitrary finite groups.

\begin{lem}\label{lem:ind-formula}
Assume $G$ is a finite group, $p > 0$ is a prime, $U \leqslant G$ is a $p$-subgroup of $G$, and $B \leqslant N_G(U)$ is a $p'$-group. Let $H = BU$ and suppose $\gamma = \Ind_H^G(\chi \otimes \Inf_B^H\psi )$ for some class functions $\chi \in \Class(H)$ and $\psi \in \Class(B)$. Moreover, let $sv = vs \in G$ with $s\in G$ a $p'$-element and $v \in G$ a $p$-element.
\begin{enumerate}
	\item If $s$ is not $G$-conjugate to an element of $B$ then $\gamma(sv) = 0$.
	\item If $s \in B$ then for each $t \in B$ which is $G$-conjugate to $s$ choose an element $x_t \in G$ such that ${}^{x_t}s = t$ and set $v_t = {}^{x_t}v$ then
    \begin{equation*}
    \gamma(sv) = \frac{1}{|B|}\sum_{\substack{t \in B\\ t\sim_G s}} \psi(t)\Ind_{C_U(t)}^{C_G(t)}(t\cdot \chi)(v_t)
    \end{equation*}
    where $t\cdot\chi \in \Class(C_U(t))$ is the function defined by $(t\cdot\chi)(g) = \chi(tg)$ for any $g \in C_U(t)$.
\end{enumerate}
\end{lem}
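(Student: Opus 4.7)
My plan is to start from the standard induction formula
\[
\gamma(sv) = \frac{1}{|H|}\sum_{\substack{y \in G\\ ysvy^{-1} \in H}} f(ysvy^{-1}), \qquad f := \chi\otimes\Inf_B^H\psi,
\]
and analyse the summation set using the semidirect structure $H = B\ltimes U$. The preliminary I will record first is this: since $\gcd(|B|,|U|) = 1$ we have $B \cap U = 1$, so every $h \in H$ factorises uniquely as $h = bu$ with $b \in B$ and $u \in U$, and the map $H \twoheadrightarrow H/U = B$ sends $h$ to $b$. Moreover the $p'$-part $h_{p'}$ lies in $bU$, and applying Schur--Zassenhaus to the cyclic $p'$-subgroup $\langle h_{p'}\rangle$ of $H$ yields $u' h_{p'} u'^{-1} \in B$ for some $u' \in U$; comparing images in $H/U$ forces $u' h_{p'} u'^{-1} = b$. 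Consequently the $p$-part $h_p = h_{p'}^{-1}h$ lies in $U$.

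Applied to the Jordan decomposition $(ysy^{-1})(yvy^{-1})$ of $ysvy^{-1}$, this shows that whenever $ysvy^{-1} \in H$ the semisimple part $ysy^{-1}$ is $U$-conjugate, hence $G$-conjugate, to a unique $t \in B$, while $yvy^{-1} \in U$. Statement (i) is then immediate, since if no element of $B$ is $G$-conjugate to $s$ the summation range is empty.

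For (ii) I partition the summation by the element $t \in B$ just described, restricting to $t \sim_G s$. For such $t$ the corresponding partial sum $S_t$ can be rewritten by introducing an auxiliary variable $u \in U$ subject to $uysy^{-1}u^{-1} = t$: the map $(u,y) \mapsto y$ has fibres of size $|C_U(t)|$ over the contributing $y$, so this change multiplies the sum by $|C_U(t)|$. Setting $z := uy$ transforms the conditions into $z \in X_t := \{z \in G : zsz^{-1} = t\}$ and $zvz^{-1} \in U$, while $u$ ranges freely over $U$, contributing a factor $|U|$. Parametrising $X_t = C_G(t)\,x_t$ via $z = c x_t$ with $c \in C_G(t)$, the remaining constraint becomes $cv_tc^{-1} \in U \cap C_G(t) = C_U(t)$, and since $f$ is an $H$-class function with $f(t\cdot w) = \psi(t)\chi(tw)$ for $w \in U$, the partial sum is
\[
S_t = \frac{|U|}{|C_U(t)|}\,\psi(t)\sum_{\substack{c \in C_G(t)\\ cv_tc^{-1} \in C_U(t)}}\chi(t \cdot cv_tc^{-1}) = |U|\,\psi(t)\,\Ind_{C_U(t)}^{C_G(t)}(t\cdot\chi)(v_t),
\]
where the inner sum is recognised as Frobenius' formula for $\Ind_{C_U(t)}^{C_G(t)}(t\cdot\chi)$ evaluated at $v_t$. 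Summing over $t$ and dividing by $|H| = |B||U|$ yields the claimed formula.

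The main obstacle is bookkeeping rather than any new idea: one must verify that the partition by $t$ is well-defined (which is the content of the preliminary structural fact about $H$), that the $(u,y)$-parametrisation has the correct fibre size $|C_U(t)|$, and that the $H$-class-function property of $f$ legitimises the replacement $f(ysvy^{-1}) = f(t \cdot cv_tc^{-1})$ at each stage.
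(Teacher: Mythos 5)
Your proof is correct and follows essentially the same route as the paper: you start from the Frobenius induction formula, use Schur--Zassenhaus to pin down the $p'$-part of ${}^y(sv)$ as $U$-conjugate to a unique element $t \in B$ while the $p$-part lands in $U$, introduce an auxiliary $U$-variable whose fibre over each contributing $y$ has size $|C_U(t)|$, change variables so that $U$ contributes a free factor of $|U|$, and recognise the inner sum as the induction formula over $C_G(t)$. The paper presents the bookkeeping as a single global double sum $(x,y)\in G\times U$ followed by the translation $x\mapsto yx$ rather than partitioning by $t$ first, but the content — the fibre count $|C_U(t)|$, the cancellation of $|U|$ against $|H|$, and the reparametrisation $z = c\,x_t$ with $c\in C_G(t)$ — is identical.
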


\begin{proof}
By definition we have
\begin{equation*}
\gamma(sv) = \frac{1}{|B| |U|} \sum_{\begin{subarray}{c} x \in G  \\ {}^x (sv) \in BU \end{subarray}} \chi({}^x (sv)) \psi({}^x (sv))
\end{equation*}
where we identify $\psi$ with the inflation. Since $B$ is a $p'$-group the decomposition ${}^x(sv) =({}^x s) ({}^xv)$ is the decomposition of an element into its $p'$-part and $p$-part. The $p$-element ${}^x v$ must lie in the (unique) Sylow $p$-subgroup $U$ of $BU$. Let $b \in B$ be the projection of the $p'$-element ${}^x s$ on $B$ so that $({}^x s) U = b U$. The cyclic groups generated by $b$ and by ${}^x s$ are complements of $U$ in $\langle b,U \rangle$. By the Schur--Zassenhaus theorem they must be conjugate, therefore ${}^x s\in {}^y B$ for some $y \in U$. This shows in particular that (i) holds.

Now assume $s \in B$. If ${}^x s \in {}^y B$ and ${}^x s \in {}^{y'} B$ for $y,y' \in U$ then $t = {}^{y^{-1}x} s$ and $t' = {}^{{y'}^{-1}x} s$ are elements of $B$ which are conjugate under $U$, hence we can write $t' = ztz^{-1}$ for some $z \in U$. But $t't^{-1} =  z(tz^{-1}t^{-1})$ forces $t = t'$ since $B$ normalises $U$. This proves that $y$ and $y'$ differ by left multiplication by an element of $C_U({}^xs)$. We get
\begin{align*}
\gamma(sv) &= \frac{1}{|B| |U|} \sum_{\begin{subarray}{c} (x,y) \in G \times U \\ {}^{y^{-1}x} s \in B \\ {}^xv \in U \end{subarray}} \frac{1}{|C_U({}^xs)|} \chi({}^x (sv)) \psi({}^x (sv)) \\
&= \frac{1}{|B|} \sum_{\begin{subarray}{c} x \in G\\ {}^{x} s \in B \\ {}^xv \in U \end{subarray}} \frac{1}{|C_U({}^xs)|} \chi({}^x(sv)) \psi({}^x s)
\end{align*}
where the last equality is obtained by translating $x$ to $yx$, and using the fact that $\psi$ is trivial on $U$. For each $t \in B$ with $t\sim_G s$ we fix an element $x_t \in G$ such that ${}^{x_t} s = t$ and we set $v_t = {}^{x_t} v$. Then ${}^x s = t$ if and only if $x \in C_G(t) x_t$ and in that case ${}^{xx_t^{-1}} {v_t} = {}^x v \in C_G(t)$ since $v \in C_G(s)$. Replacing $x$ by $xx_t$ and letting $x$ run over $C_G(t)$ we get
\begin{align*}
\gamma(sv) &= \frac{1}{|B|} \sum_{\begin{subarray}{c} t \in B \\ t \sim_G s \end{subarray}} \psi(t) \frac{1}{|C_U(t)|} \sum_{\begin{subarray}{c} x \in C_G(t) \\ {}^xv_t \in C_U(t) \end{subarray}}  \chi(t({}^xv_t))\\
&= \frac{1}{|B|} \sum_{\begin{subarray}{c} t \in B \\ t \sim_G s \end{subarray}} \psi(t)\Ind_{C_U(t)}^{C_G(t)}(t\cdot \chi)(v_t).\qedhere
\end{align*}
\end{proof}

\begin{prop}\label{lem:kawanaka-values}
Assume $a \in A$ and $\psi \in \Irr(C_A(a))$. If $sv = vs \in G$ is an element with $s \in G$ semisimple and $v \in G$ unipotent.
\begin{enumerate}
 \item If $s$ is not $G$-conjugate to an element of $C_A(a)$ then $K_{(a,\psi)}(sv) = 0$.
 \item If $s \in C_A(a)$ then for each $t \in C_A(a)$ which is $G$-conjugate to $s$, choose an element $x_t \in G$ such that ${}^{x_t} s =t$, and set $v_t = {}^{x_t} v$ then
 \begin{equation*}
 K_{(a,\psi)}(sv) = \frac{1}{|C_A(a)|} \sum_{\begin{subarray}{c} t \in C_A(a) \\ t \, \sim_{G} s \end{subarray}} \psi(t)\varepsilon(t) \Gamma_{u_a}^{C_G(t)}(v_t)
 \end{equation*}
 where $\varepsilon$ is the Weil-sign character as in \cref{lem:weil-signs} and $\Gamma_{u_a}^{C_G(t)} :=  \Ind_{C_G^\circ(t)}^{C_G(t)} (\Gamma_{u_a}^{C_G^\circ(t)})$ is the induction of the GGGC associated to the unipotent element $u_a$ in the connected reductive group $C_\bfG^\circ(t)$.
\end{enumerate}
\end{prop}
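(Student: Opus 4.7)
The plan is to derive the statement from the general induction formula of \cref{lem:ind-formula}, applied with $U = U(\lambda,-1)$, $B = C_A(a)$, $H = B \ltimes U$ and $\chi = \tilde\zeta_{u_a}^G$. The hypotheses hold because $U$ is a $p$-subgroup, $C_A(a)$ is a $p'$-group by \cref{en:K1}, and $C_A(a) \subseteq \bL(\lambda)^F$ normalises $\bU(\lambda,-1)^F$ since $\bL(\lambda)$ preserves every weight space of $\lambda$. Part (i) then follows immediately from \cref{lem:ind-formula}(i), while \cref{lem:ind-formula}(ii) reduces (ii) to establishing, for each $t \in C_A(a)$ with $t \sim_G s$, the equality
\begin{equation*}
\Ind_{C_U(t)}^{C_G(t)}(t \cdot \tilde\zeta_{u_a}^G)(v_t) = \varepsilon(t)\,\Gamma_{u_a}^{C_G(t)}(v_t).
\end{equation*}

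Set $\bG_t := C_\bG^\circ(t)$. Because $t$ is semisimple and commutes with $\lambda$ (since $t \in \bL(\lambda)$), a root-space argument shows that $C_{\bU(\lambda,-1)}(t)$ is connected and coincides with $\bU_{\bG_t}(\lambda,-1)$; taking $F$-fixed points gives $C_U(t) = U_{\bG_t}(\lambda,-1)$. By property (iii) of admissible representatives in \cref{lem:admissiblerep} we have $\lambda \in \mathcal{D}_{u_a}(\bG_t)^F$; by \cref{prop:res-kawanaka-datum} the restricted datum $\mathcal{K}|_{\bG_t}$ is a Kawanaka datum for $\bG_t$; and \cref{cor:restriction-of-eta} identifies $\eta_{u_a}^{\bG_t^F}$ with the restriction of $\eta_{u_a}^G$. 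Combined with induction in stages applied to $\Gamma_{u_a}^{C_G(t)} = \Ind_{U_{\bG_t}(\lambda,-1)}^{C_G(t)}(\zeta_{u_a,\lambda}^{\bG_t^F})$, this reduces the claim to the pointwise identity
\begin{equation*}
\tilde\zeta_{u_a}^G(tg) = \varepsilon(t)\,\zeta_{u_a,\lambda}^{\bG_t^F}(g) \qquad \text{for all } g \in C_U(t).
\end{equation*}

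This pointwise identity is the core of the proof and will be extracted from the Weil-representation picture developed in \cref{ssec:kawnakadef}. There the character $\tilde\zeta_{u_a}^G$ is identified with the unique Weil extension $\tilde\zeta_\eta$ of the semidirect product $\Sp(V \mid \omega_a)^F \ltimes \Hei(V^F \mid \omega_a)$, where $V = \lie{g}(\lambda,-1)$. Because $t$ acts semisimply on $V$ and preserves $\omega_a$, one obtains an orthogonal decomposition $V = V_t \oplus V_t^\perp$, with $V_t = \lie{g}_t(\lambda,-1)$ the fixed subspace and both summands non-degenerate symplectic subspaces. This induces a central product decomposition of the Heisenberg group, compatible with the inclusion $\Sp(V_t) \times \Sp(V_t^\perp) \hookrightarrow \Sp(V)$, and a corresponding tensor factorisation of the Weil representation. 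Since $t$ acts trivially on $V_t$ and $g \in C_U(t)$ maps (via the Springer identification) into the $V_t$-factor, evaluating the trace at $tg$ factorises as
\begin{equation*}
\tilde\zeta_\eta(tg) = \tilde\zeta_\eta^{V_t^\perp}(t) \cdot \zeta_\eta^{V_t}(\bar g).
\end{equation*}
The first factor equals $\varepsilon(t)$ by \cref{lem:weil-signs} applied to the symplectic space $V_t^\perp$, in which $t$ has no non-zero fixed vectors; the second factor, by construction, coincides with $\zeta_{u_a,\lambda}^{\bG_t^F}(g)$.

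The principal obstacle lies in the tensor factorisation of the Weil extension with respect to the splitting $V = V_t \oplus V_t^\perp$. One must verify that the unique extension produced by \cref{thm:weil-extension} on the ambient symplectic-Heisenberg group restricts to the tensor product of the unique extensions on the two symplectic factors, and that this is compatible both with the homomorphism $C_A(a) \to \Sp(V \mid \omega_a)^F$ and with the Springer identification of the extraspecial quotient of $U(\lambda,-1)$ with $\Hei(V^F \mid \omega_a)/K$. The remaining bookkeeping—induction in stages, and the elementary properties of Heisenberg characters on centralisers—is routine.
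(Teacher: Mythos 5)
Your reduction to \cref{lem:ind-formula}, the identification $C_U(t)=U_{\bG_t}(\lambda,-1)$ via \cref{prop:res-kawanaka-datum,cor:restriction-of-eta}, and the target pointwise identity $\tilde\zeta_{u_a}^G(tg)=\varepsilon(t)\zeta_{u_a,\lambda}^{\bG_t^F}(g)$ all agree with the paper. Where you diverge is in how that identity is established, and there your argument has a real gap. You propose to factor the Weil extension across the orthogonal splitting $V=V_t\oplus V_t^\perp$ and to verify that the unique extension on $\SHei(V^F\mid\omega_a)$ restricts to the outer tensor product of the unique extensions on the two factors, but you explicitly leave this as ``the principal obstacle'' without resolving it. This is not routine bookkeeping: the uniqueness clause in \cref{thm:weil-extension} has an exceptional branch when $(\dim,q)=(2,3)$, so whenever $\dim V_t^\perp=2$ and $q=3$ one must separately check which of the two candidate extensions the restriction actually produces, and in general one needs an explicit model of the Weil representation (or G\'erardin's character formulas) to justify compatibility with the central-product decomposition of the Heisenberg group and with the Springer identification of $Q$ with $\Hei(V^F\mid\omega_a)/K$.

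The paper avoids all of this. The key observation is that, once one knows $\Res_{C_U(t)}^{U}(\zeta_{u_a}^G)$ is a \emph{scalar multiple of the single irreducible} $\zeta_{u_a}^{G_t}$ (which follows from \cref{cor:restriction-of-eta} together with the extraspecial structure of $Q$ and its subgroup lying over $V_t^F$), and that $\langle t\rangle\cdot C_U(t)$ is a \emph{direct} product (as $t$ is a $p'$-element fixing $C_U(t)$ pointwise under conjugation), elementary Clifford theory for direct products forces
\begin{equation*}
\Res^{C_A(a)\ltimes U}_{\langle t\rangle\times C_U(t)}\bigl(\tilde\zeta_{u_a}^G\bigr) = \chi\otimes\zeta_{u_a}^{G_t}
\end{equation*}
for some character $\chi$ of $\langle t\rangle$. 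Evaluating at $(t,1)$ and comparing with \cref{lem:weil-signs} (which is already established in the paper from G\'erardin's trace formula and therefore does not need re-deriving) gives $\chi(t)=\zeta_{u_a}^{G_t}(1)^{-1}\tilde\zeta_{u_a}^G(t)=\varepsilon(t)$. This yields the pointwise identity at once, with no factorisation of the Weil representation needed. You should replace your tensor-factorisation step with this argument; it both closes the gap and shows that the heavy input you were worrying about is already encapsulated in \cref{lem:weil-signs}.
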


\begin{proof}
We let $\bU = \bU(\lambda,-1)$, $U = U(\lambda,-1)$, and $B = C_A(a)$. After \cref{lem:ind-formula} we see that (i) holds and we need only show (ii). We consider the formula in (ii) of \cref{lem:ind-formula} evaluated at $sv = vs$ with $s \in C_A(a)$. Let $t \in C_A(a)$ and assume $x_t \in G$ is such that $t = {}^{x_t}s$ then we set $v_t = {}^{x_t}v$. We need to show that
\begin{equation}\label{eq:first-equality}
\Ind_{C_U(t)}^{C_G(t)}(t\cdot \tilde{\zeta}_{u_a}^G)(v_t) = \varepsilon(t)\Gamma_{u_a}^{C_G(t)}(v_t).
\end{equation}
Let $\bG_t = C_{\bG}^{\circ}(t)$, $G_t = \bG_t^F$, and $\lie{g}_t = \Lie(\bG_t)$, as in \cref{sec:cent-s/s-elt}. Calculating the restriction using \cref{cor:restriction-of-eta} we see that
\begin{align*}
\Res_{C_U(t)}^{C_A(a)\cdot U}(\tilde{\zeta}_{u_a}^G) &= \Res_{C_U(t)}^{U}(\zeta_{u_a}^G) = q^{(\dim(\lie{g}(\lambda,-1))-\dim(\lie{g}_t(\lambda,-1)))/2}\zeta_{u_a}^{G_t}
\end{align*}
is a multiple of an irreducible character of $C_U(t) = U_{G_t}(\lambda,-1)$.

As $\langle t\rangle\cdot C_U(t)$ is a direct product it follows that every irreducible constituent of the restriction $\Res_{\langle t\rangle \cdot C_U(t)}^{C_A(a)\cdot U}(\tilde{\zeta}_{u_a}^G)$ is of the form $\psi \otimes \zeta_{u_a}^{G_t}$ with $\psi \in \Irr(\langle t\rangle)$. Hence
\begin{equation*}
\Res_{\langle t\rangle \cdot C_U(t)}^{C_A(a)\cdot U}(\tilde{\zeta}_{u_a}^G) = \chi \otimes \zeta_{u_a}^{G_t}
\end{equation*}
where $\chi$ is a (not necessarily irreducible) character of $\langle t\rangle$. After \cref{lem:weil-signs} we see that
\begin{equation*}
\chi(t) = \zeta_{u_a}^{G_t}(1)^{-1}\tilde{\zeta}_{u_a}^G(t) = \varepsilon(t)
\end{equation*}
is given by the Weil-sign character. Thus, we get that
\begin{equation*}
\Ind_{C_U(t)}^{C_G(t)}(t\cdot \tilde{\zeta}_{u_a}^G)(v_t) = \varepsilon(t)\Ind_{C_U(t)}^{C_G(t)}(\zeta_{u_a}^{G_t})(v_t)
\end{equation*}
from which the statement follows.
\end{proof}

\section{Fourier Transform of Kawanaka Characters}\label{sec:fourier}

Our goal in this section is to determine the multiplicity of a unipotent character in a given Kawanaka character.

\subsection{Lusztig's Classification of Unipotent Characters}\label{ssec:unipotentchar}
If $\mathcal{G}$ is a finite group then the irreducible representations of the Drinfeld double of $\mathcal{G}$ are parametrised by the set of orbits
\begin{equation*}
\mathscr{M}(\mathcal{G}) = \{(a,\psi) \mid a \in \mathcal{G}, \psi\in\Irr(C_{\mathcal{G}}(a)) \}/\mathcal{G}.
\end{equation*}
Here the group $\mathcal{G}$ acts by simultaneous conjugation and the orbit of a pair $(a,\psi)$ is denoted by $[a,\psi]$. Following Lusztig \cite[\S4]{Lu84}, we associate to any two pairs $[a,\phi], [b,\psi] \in \mathscr{M}(\mathcal{G})$ the Fourier coefficient
\begin{equation*}
\{[b,\phi],[a,\psi]\} = \sum_{\substack{x \in A \\ {}^x a \in C_A(b)}} \frac{\phi({}^xa) \overline{\psi(b^x)}}{|C_A(a)| |C_A(b)|}.
\end{equation*}
It is easily checked that the definition does not depend upon the choice of representatives for the equivalence classes $[a,\phi]$ and $[b,\psi]$.

More generally, assume $F : \mathcal{G} \to \mathcal{G}$ is an automorphism then we can consider the coset $\mathcal{G}F \subseteq \mathcal{G}\rtimes \langle F\rangle$. The group $\mathcal{G}$ acts on this coset by conjugation and we can consider the set
\begin{equation*}
\mathscr{M}(\mathcal{G},F) = \{(aF,\phi) \mid a \in \mathcal{G}, \phi\in\Irr(C_{\mathcal{G}}(aF)) \}/\mathcal{G},
\end{equation*}
where again $\mathcal{G}$ acts on the pairs by simultaneous conjugation and the orbit of a pair $(aF,\phi)$ is denoted by $[aF,\phi]$.

Recall from \cref{sec:introduction} that to each family $\mathscr{F} \in \Fam(W)$ there is a corresponding unipotent conjugacy class $\mathcal{O}_{\mathscr{F}} \subseteq \bG$, related via the Springer correspondence. A unipotent class of the form $\mathcal{O}_{\mathscr{F}}$ is said to be \emph{special}. For any family $\mathscr{F} \in \Fam(W)$ Lusztig has defined a quotient $\bar{A}_{\mathscr{F}}$ of the component group $A_{\bG}(u)$, with $u \in \mathcal{O}_{\mathscr{F}}$, known as the \emph{canonical quotient}. Recall that we have a dual family $\mathscr{F}^* = \mathscr{F}\otimes\sgn_W$ given by tensoring with the sign character. Following \cite[\S13]{Lu84} we have the set $\Uch(\mathscr{F}^*)$ is parameterised by $\mathscr{M}(\bar A_{\mathscr{F}},F)$ where $\bar A_{\mathscr{F}}$ is Lusztig's canonical quotient, see also \cite[Thm.~0.4]{Lu14}.

When $F$ acts trivially on $\bar A_{\mathscr{F}}$ then $C_{\bar A_{\mathscr{F}}}(aF) = C_{\bar A_{\mathscr{F}}}(a)$ and we have a bijection $\mathscr{M}(\bar A_{\mathscr{F}},F) \to \mathscr{M}(\bar A_{\mathscr{F}})$ given by $[aF,\phi] \longmapsto [a,\phi]$. When $\bfG/Z(\bG)$ is simple and $Z(\bG)$ is connected we can choose $u \in \mathcal{O}_{\mathscr{F}}^F$ such that $F$ acts trivially on $A_\bfG(u)$, and therefore on $\bar A_{\mathscr{F}}$, see \cite[Prop.~2.4]{Tay13}. Since the canonical quotient depends only on $W$ and $F$, this shows that $F$ acts trivially on $\bar A_{\mathscr{F}}$ whenever $\bfG$ is simple. 
 
\subsection{Fourier Transform}\label{ssec:fourierdef}
\begin{assumption}
We fix a unipotent element $u \in \mathcal{U}(\bG)^F$ and an admissible pair $(A,\lambda)$ as in \cref{def:admissiblesplitting}. Moreover, we assume that the Kawnaka characters are defined for $\Cl_{\bG}(u)$, with respect to $(A,\lambda)$, as in \cref{def:kaw-chars}.
\end{assumption}

For each orbit $[a,\psi] \in \mathscr{M}(A)$ we denote by $K_{[a,\psi]}$ the Kawanaka character $K_{(a,\psi)}$. This is well defined by \cref{rem:kaw-chars-def}. Using the Fourier coefficient defined in \cref{ssec:unipotentchar} we can define the Fourier transform of Kawanaka characters as follows: given $[b,\phi] \in \mathscr{M}(A)$, we set
\begin{align*}
F_{[b,\phi]} &:= \sum_{[a,\psi] \in \mathscr{M}(A)} \{[b,\phi],[a,\psi]\} K_{[a,\psi]}.
\end{align*}
We will also use the following equivalent expression for $F_{[b,\phi]}$, namely
\begin{equation}\label{eq:fourier-trans-kawanaka-alt}
F_{[b,\phi]} = \frac{1}{|A|}\sum_{x \in A}\sum_{a \in C_A(b)} \sum_{\psi \in \Irr(C_A(a^x))} \frac{\phi(a) \overline{\psi(b^x)}}{|C_A(b)|}K_{(a^x,\psi)}.
\end{equation}

Considering the special case where $b=1$ we get the following expression for the Fourier transform
\begin{align*}
F_{[1,\phi]} &= \frac{1}{|A|}\sum_{x \in A}\sum_{a \in A}\sum_{\psi \in \Irr(C_A(a^x))} \frac{\phi(a) \overline{\psi(1)}}{|A|} K_{(a^x,\psi)}\\
&= \frac{1}{|A|} \sum_{a \in A} \phi(a) \Big(\sum_{\psi \in \Irr(C_A(a))} \psi(1) K_{(a,\psi)} \Big)\\
& = \frac{1}{|A|} \sum_{a \in A} \phi(a) \Gamma_{u_a}
\end{align*}
where $\Gamma_{u_a}$ is the generalised Gelfand-Graev character (GGGC) of $G$ associated with $u_a$, see \cref{ssec:gggcdef,ssec:kawnakadef}. Note, that for the second equality we performed the change of variables $a \mapsto a^x$ and used the fact that $\phi({}^xa) = \phi(a)$ as $\phi \in \Irr(A)$.

The linear combination $F_{[1,\phi]}$ is often referred to as the \emph{Mellin transform} of the generalised Gelfand-Graev characters, studied for example in \cite{Lu92,Ge97,DLM}. It is unipotently supported and vanishes on many conjugacy classes. The following proposition generalises this observation to the other Fourier transforms.

\begin{proposition}\label{prop:fourier-values}
Recall our choice of unipotent element $u \in \mathcal{U}(\bG)^F$ and admissible pair $(A,\lambda)$. Consider a pair $[b,\phi] \in \mathscr{M}(A)$ and an element $sv = vs \in G$ with $s$ semisimple and $v$ unipotent. Then:
\begin{enumerate}
 \item $F_{[b,\phi]}(sv)=0$ if $s$ is not $G$-conjugate to $b$,
 \item for $s=b$ we have
$$F_{[b,\phi]} (bv) = \frac{\varepsilon(b)}{|C_A(b)|} \sum_{a \in C_A(b)}  \phi(a) \Gamma_{u_a}^{C_G(b)}(v),$$
where $\varepsilon$ is the Weil-sign character of $A$.
\end{enumerate}
\end{proposition}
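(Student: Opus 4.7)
The plan is to substitute the Kawanaka character formula from Proposition~\ref{lem:kawanaka-values} into the expression \eqref{eq:fourier-trans-kawanaka-alt} for $F_{[b,\phi]}$ and simplify using column orthogonality for the characters of $C_A(a^x)$. Evaluating at $sv$ and interchanging the order of summation to place $\psi$ innermost, the inner sum
\begin{equation*}
\sum_{\psi \in \Irr(C_A(a^x))} \overline{\psi(b^x)}\psi(t)
\end{equation*}
equals $|C_{C_A(a^x)}(b^x)|$ when $t \sim_{C_A(a^x)} b^x$ and vanishes otherwise. Note that $b^x$ genuinely lies in $C_A(a^x)$, precisely because the outer sum is restricted to $a \in C_A(b)$.

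Part (i) is then immediate: any surviving $t$ must be $C_A(a^x)$-conjugate to $b^x$ and hence $G$-conjugate to $b$, while Proposition~\ref{lem:kawanaka-values} simultaneously requires $t \sim_G s$; if $s \not\sim_G b$ then no such $t$ exists, and the whole expression vanishes.

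For part (ii) we set $s = b$ and reparametrise the surviving $t$-sum as $t = cb^xc^{-1}$ with $c$ ranging over $C_A(a^x)$, which absorbs the multiplicity factor $|C_{C_A(a^x)}(b^x)|$ produced by orthogonality. Since the Weil-sign character $\varepsilon$ is a class function on $A$ by Lemma~\ref{lem:weil-signs}, we have $\varepsilon(cb^xc^{-1}) = \varepsilon(b)$. The crux is the evaluation of $\Gamma_{u_{a^x}}^{C_G(t)}(v_t)$: setting $y_t := cx^{-1}$ gives $t = y_t b y_t^{-1}$ and $v_t = y_t v y_t^{-1}$ (the latter being valid since $b$ commutes with $v$), so the class-function property converts this to $\Gamma_{y_t^{-1}u_{a^x}y_t}^{C_G(b)}(v)$. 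The decisive identity $y_t^{-1}u_{a^x}y_t = u_a$ follows from two applications of Lemma~\ref{lem:admissiblerep}(ii): since $c \in C_A(a^x)$ we have $cu_{a^x}c^{-1} = u_{ca^xc^{-1}} = u_{a^x}$, and applying the same compatibility with $x$ yields $xu_{a^x}x^{-1} = u_{xa^xx^{-1}} = u_a$. The resulting value $\Gamma_{u_a}^{C_G(b)}(v)$ is then independent of both $c$ and $x$, so the $c$- and $x$-summations simply contribute factors $|C_A(a^x)| = |C_A(a)|$ and $|A|$ respectively, cancelling exactly against the prefactors of \eqref{eq:fourier-trans-kawanaka-alt} to produce the formula claimed.

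The main technical obstacle is precisely the identity $y_t^{-1}u_{a^x}y_t = u_a$, which relies on the compatibility property ${}^b u_a = u_{{}^ba}$ of admissible representatives from Lemma~\ref{lem:admissiblerep}(ii) together with the observation that $C_A(a^x) \subseteq C_G(u_{a^x})$; once this is secured, the remainder of the argument is essentially routine bookkeeping with characters and conjugacy classes.
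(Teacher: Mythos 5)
Your proposal is correct and follows the same overall strategy as the paper: substitute \cref{lem:kawanaka-values} into \eqref{eq:fourier-trans-kawanaka-alt}, kill the $\psi$-sum by column orthogonality in $\Irr(C_A(a^x))$, then reparametrise the surviving $t$-sum and absorb the conjugations into the GGGC indices using \cref{lem:admissiblerep}(ii). Your reparametrisation $t = cb^xc^{-1}$ with $c$ running over all of $C_A(a^x)$ (deliberately overcounting to cancel $|C_{C_A(a^x)}(b^x)|$) is combinatorially equivalent to the paper's choice of $t = r^x$ with $r \sim_{C_A(a)} b$, and the conjugator you extract, $y_t = cx^{-1}$, matches the paper's $x_t = x^{-1}y_r$ after the substitution $y_r = {}^xc$.

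One point of justification is looser than it should be. You write that ``the class-function property converts this to $\Gamma_{y_t^{-1}u_{a^x}y_t}^{C_G(b)}(v)$,'' but $\Gamma_{u_{a^x}}^{C_G(t)}$ is a class function on $C_G(t)$, not on $G$, and $v$ does not lie in $C_G(t)$. What is actually needed is the $\bG$-equivariance of the whole GGGC construction under conjugation by elements of $G$: conjugation by $y_t \in G$ carries the finite reductive group $C_G^\circ(b)$ onto $C_G^\circ(t)$, takes the restricted Kawanaka datum of one onto that of the other (because the ambient $\mathcal{K}$ is $\bG$-invariant), and hence carries $\Gamma_{u_a}^{C_G^\circ(b)}$ to $\Gamma_{{}^{y_t}u_a}^{C_G^\circ(t)}$. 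The paper justifies this by invoking \cref{lem:induction-GGGC}(ii) for the inner automorphism $\mathrm{Int}(x^{-1})$ (and, separately, for $\mathrm{Int}(y_r^{-1})$); you should cite the same statement rather than appeal to ``class-function'' formalities. Once this is in place, your computation of the conjugated unipotent index $y_t^{-1}u_{a^x}y_t = u_a$ via two applications of \cref{lem:admissiblerep}(ii) is exactly right, and the bookkeeping closes the argument as you say.
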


\begin{proof}Let $sv$ be the Jordan decomposition of an element of $G$. Given
$[a,\psi] \in \mathscr{M}(A)$, the values of $K_{[a,\psi]}$ at $sv$ can be computed from \cref{lem:kawanaka-values}. Consequently we have, using \cref{eq:fourier-trans-kawanaka-alt}, that
\begin{align*}
F_{[b,\phi]}(sv) &= \frac{1}{|A|}\sum_{x \in A}\sum_{a \in C_A(b)}\sum_{\psi \in \Irr(C_A(a^x))} \frac{\phi(a) \overline{\psi(b^x)}}{|C_A(a^x)||C_A(b)|}\sum_{\substack{t \in C_A(a^x)\\ t \sim_G s}} \psi(t)\varepsilon(t)\Gamma_{u_{a^x}}^{C_G(t)}(v_t)\\
&= \frac{1}{|A|}\sum_{x \in A}\sum_{a \in C_A(b)}\sum_{\substack{t \in C_A(a^x)\\ t \sim_G s}} \frac{\phi(a)\varepsilon(t)\Gamma_{u_{a^x}}^{C_G(t)}(v_t)}{|C_A(a^x)||C_A(b)|} \left(\sum_{\psi \in \Irr(C_A(a^x))}\overline{\psi(b^x)}\psi(t)\right),
\end{align*}
where $v_t = {}^{x_t} v$ for some $x_t$ such that ${}^{x_t} s= t$.
From the orthogonality relations the sum $\sum_{\psi \in \Irr(C_A(a^x))} \overline{\psi(b^x)} \psi(t)$ is zero unless $t$ and $b^x$ are conjugate under $C_A(a^x)$, in which case it equals $|C_{C_A(a^x)}(t)|$. In particular $F_{[b,\phi]} (sv) = 0$ if $s$ is not $G$-conjugate to $b$, which proves~(i).

Let us now consider the case where $s=b$. In this case the previous equality becomes
\begin{equation*}
F_{[b,\phi]} (bv) = \frac{1}{|A|}\sum_{x \in A}\sum_{a \in C_A(b)}\sum_{\substack{t \in C_A(a^x) \\ t \sim_{C_A(a^x)} b^x}} \frac{ |C_{C_A(a^x)}(t)|}{|C_A(a^x)||C_A(b)| } \phi(a)\varepsilon(t) \Gamma_{u_{a^x}}^{C_G(t)} (v_t).
\end{equation*}
Note that we have $C_A(a^x) = C_A(a)^x$ and that if $r \in C_A(a)$ then $r^x \sim_{C_A(a^x)} b^x$ if and only if $r \sim_{C_A(a)} b$. Now if $t = r^x \in C_A(a^x)$ and $y \in C_A(a)$ is an element such that ${}^yb = r$ then we have ${}^{x^{-1}y}b = r^x = t$. Hence, if $t = r^x \in C_A(a^x)$ contributes to the sum above then we may assume that $x_t = x^{-1}y_r$ where $y_r \in C_A(a)$ satisfies ${}^{y_r}b = r$. Putting this together the previous equality becomes
\begin{equation*}
F_{[b,\phi]} (bv) = \frac{1}{|A|}\sum_{x \in A}\sum_{a \in C_A(b)}\sum_{\substack{r \in C_A(a) \\ r \sim_{C_A(a)} b}} \frac{ |C_{C_A(a)}(r)|}{|C_A(a)||C_A(b)| } \phi(a)\varepsilon(r) {}^{y_r^{-1}x}\Gamma_{u_{x^{-1}ax}}^{C_G(r^x)} (v).
\end{equation*}
Note that $\varepsilon$ is a class function on $A$, see \cref{lem:weil-signs}.

Recall from \cref{lem:admissiblerep} that $u_{x^{-1}ax} = x^{-1}u_ax$ for any $x,a \in A$ so by (ii) of \cref{lem:induction-GGGC} we get that
\begin{equation*}
\Gamma_{u_{x^{-1}ax}}^{C_G(r^x)} = \Gamma_{x^{-1}u_ax}^{C_G(r^x)} = {}^{x^{-1}}\Gamma_{u_a}^{C_G(r)}.
\end{equation*}
An entirely analogous argument shows that ${}^{y_r^{-1}}\Gamma_{u_a}^{C_G(r)} = \Gamma_{u_a}^{C_G(b)}$ because $y_r \in C_A(a)$ and ${}^{y_r}b = r$. Therefore, we get that
\begin{align*}
F_{[b,\phi]} (bv) &= \frac{1}{|A|}\sum_{x \in A}\sum_{a \in C_A(b)}\sum_{\substack{r \in C_A(a) \\ r \sim_{C_A(a)} b}} \frac{ |C_{C_A(a)}(b)|}{|C_A(a)||C_A(b)| } \phi(a)\varepsilon(b) \Gamma_{u_a}^{C_G(b)} (v)\\
&= \frac{\varepsilon(b)}{|C_A(b)|}\sum_{a \in C_A(b)} \phi(a)\Gamma_{u_a}^{C_G(b)} (v).
\end{align*}
This proves (ii).
\end{proof}

\subsection{Translation by Central Elements}\label{subsec:translation}

It will be convenient to formulate a version of \cref{prop:fourier-values} in terms of the Mellin transforms of GGGCs. For that purpose we introduce the translation operator. 

\begin{definition}
Given a central function $f$ on a finite group $H$ and $z \in Z(H)$ we denote by $z \cdot f$ the class function on $H$ obtained by translation by $z$. More precisely, $z \cdot f$ satisfies $ (z \cdot f) (h) = f(zh)$ for all $g \in G$. 
\end{definition}

\begin{rmk}
Since multiplication by $z$ is an $H$-equivariant bijection of $H$ the corresponding translation by $z$ is an isometry of $\Class(H)$, so if $f, f' \in \Class(H)$ then $\langle z\cdot f , f' \rangle_H = \langle f , z^{-1}\cdot f' \rangle_H$.
\end{rmk}

If we define the following class function on $C_G^\circ(b) := C_{\bG}^{\circ}(b)^F$
$$\Gamma_{(b,\phi)} = \frac{1}{|C_A(b)|} \sum_{a \in C_A(b)}\phi(a) \Gamma_{u_a}^{C_G^\circ(b)}$$
then \cref{prop:fourier-values} can be stated as follows.

\begin{corollary}\label{cor:fourier-values}
Under the assumptions of \cref{prop:fourier-values}
$$ F_{[b,\phi]} = \varepsilon(b)\Ind_{C_G^\circ(b)}^G \big( b^{-1} \cdot \Gamma_{(b,\phi)} \big).$$
\end{corollary}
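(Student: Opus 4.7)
The plan is to verify the identity by evaluating both sides at an arbitrary element $g = sv$ of $G$ with Jordan decomposition, and matching the resulting values to Proposition~\ref{prop:fourier-values}. Since $\Gamma_{u_a}^{C_G^\circ(b)}$ is a GGGC, it is unipotently supported, hence so is $\Gamma_{(b,\phi)}$. As $b$ is central in $C_\bG^\circ(b)$, the translated function $b^{-1}\cdot \Gamma_{(b,\phi)}$ is supported on elements $h \in C_G^\circ(b)$ of the form $h = bu$ with $u \in C_G^\circ(b)$ unipotent, i.e., those whose semisimple part equals $b$.

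Applying the standard formula for an induced class function, I would compute
\begin{equation*}
\Ind_{C_G^\circ(b)}^G\bigl(b^{-1}\cdot\Gamma_{(b,\phi)}\bigr)(sv) = \frac{1}{|C_G^\circ(b)|}\sum_{\substack{x \in G\\ x^{-1}svx \in C_G^\circ(b)}} \bigl(b^{-1}\cdot\Gamma_{(b,\phi)}\bigr)(x^{-1}svx).
\end{equation*}
By the support property just noted, any contributing $x$ must satisfy $x^{-1}sx = b$ (its semisimple part). This immediately yields \textbf{(i)} of Proposition~\ref{prop:fourier-values}: the expression vanishes unless $s$ is $G$-conjugate to $b$.

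For $s = b$ the sum is over $x \in C_G(b)$. Noting that $v \in C_G(b)$ and that unipotent elements of $C_\bG(b)$ lie in $C_\bG^\circ(b)$, every such $x$ contributes, and $x^{-1}bvx = b\cdot (x^{-1}vx)$ gives $(b^{-1}\cdot\Gamma_{(b,\phi)})(x^{-1}bvx) = \Gamma_{(b,\phi)}(x^{-1}vx)$. Decomposing $C_G(b) = \bigsqcup_{\alpha} x_\alpha C_G^\circ(b)$ over coset representatives of $A_b := C_G(b)/C_G^\circ(b)$ and using the $C_G^\circ(b)$-invariance of $\Gamma_{(b,\phi)}$ yields
\begin{equation*}
\Ind_{C_G^\circ(b)}^G\bigl(b^{-1}\cdot\Gamma_{(b,\phi)}\bigr)(bv) = \sum_{\alpha} \Gamma_{(b,\phi)}\bigl(x_\alpha^{-1}vx_\alpha\bigr).
\end{equation*}

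Finally I would substitute the definition of $\Gamma_{(b,\phi)}$ and recognise that the identical coset-sum manipulation applied to the induction defining $\Gamma_{u_a}^{C_G(b)} = \Ind_{C_G^\circ(b)}^{C_G(b)}(\Gamma_{u_a}^{C_G^\circ(b)})$ gives $\Gamma_{u_a}^{C_G(b)}(v) = \sum_{\alpha} \Gamma_{u_a}^{C_G^\circ(b)}(x_\alpha^{-1}vx_\alpha)$. Swapping the order of summation and multiplying by $\varepsilon(b)$ recovers precisely the formula in part~\textbf{(ii)} of Proposition~\ref{prop:fourier-values}, completing the proof. The argument is essentially bookkeeping; the only point requiring care is the support analysis for $b^{-1}\cdot\Gamma_{(b,\phi)}$ together with the observation that unipotent elements of $C_\bG(b)$ lie in $C_\bG^\circ(b)$, which ensures no contributions are lost when passing between $C_G^\circ(b)$ and $C_G(b)$.
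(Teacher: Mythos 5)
Your proof is correct and follows essentially the same route as the paper's: both exploit that $b^{-1}\cdot\Gamma_{(b,\phi)}$ is supported on elements $bv$ with $v$ unipotent, deduce via uniqueness of Jordan decomposition that only $x \in C_G(b)$ contribute to the induction, and then match the resulting expression against \cref{prop:fourier-values} using $\Gamma_{u_a}^{C_G(b)} = \Ind_{C_G^\circ(b)}^{C_G(b)}\Gamma_{u_a}^{C_G^\circ(b)}$. The explicit coset decomposition you carry out is a slightly more verbose way of arriving at the identity $\Ind_{C_G^\circ(b)}^G\big(b^{-1}\cdot\Gamma_{(b,\phi)}\big)(bv) = \big(\Ind_{C_G^\circ(b)}^{C_G(b)}\Gamma_{(b,\phi)}\big)(v)$ that the paper writes down directly.
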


\begin{proof} Since GGGCs are unipotently supported, the class function $b^{-1} \cdot \Gamma_{(b,\phi)}$ can take non-zero values only at elements of the form  $b v$ where $v \in \mathcal{U}(C_{\bG}^\circ(b))^F$ is unipotent. Furthermore, if $x \in G$ is such that ${}^x (bv) = bv'=v'b$ with $v'$ unipotent then $x \in C_G(b)$, which shows that 
$$\Ind_{C_G^\circ(b)}^G \big( b^{-1} \cdot \Gamma_{(b,\phi)} \big)(bv) = \frac{1}{|C_G^\circ(b)|} \sum_{x \in C_G(b)}  \Gamma_{(b,\phi)} ({}^x v) = (\Ind_{C_G^\circ(b)}^{C_G(b)} \Gamma_{(b,\phi)})(v).$$
The result follows from \cref{prop:fourier-values}.
\end{proof}

When $H:=\bfH^F$ is a finite reductive group, the translation operator is compatible with Alvis--Curtis duality $D_\bfH$. We refer the reader to \cite[Chap.~8]{DiMibook} for a definition of this operator.

\begin{lemma}\label{lem:translationandduality} Let $\bfH$ be a connected reductive group with Frobenius endomorphism $F$.
If $z \in Z(\bfH)^F = Z(\bH^F)$ then $z \cdot D_\bfH(f) = D_\bfH(z\cdot f)$ for every class function $f$ on $H$.
\end{lemma}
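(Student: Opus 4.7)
The plan is to unfold Alvis--Curtis duality into its definition as an alternating sum of compositions of Harish--Chandra induction and restriction, and to show that translation by $z$ commutes with each of these two building blocks. Since $z \in Z(\bfH)^F$ lies in every Levi subgroup of $\bfH$, and translation by $z$ is just precomposition with the multiplication map $x \mapsto zx$ (which is compatible with all the inclusions and projections appearing in the parabolic setup), this should be essentially a direct verification.

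More concretely, I would first recall that $D_\bfH = \sum_{I \subseteq \Delta}(-1)^{|I|} R_{\bL_I}^\bfH\circ {}^*R_{\bL_I}^\bfH$, where $\bL_I$ runs over $F$-stable standard Levi subgroups of $F$-stable parabolics $\bP_I = \bL_I \ltimes \bU_I$, and it suffices to prove that for every such Levi both ${}^*R_{\bL_I}^\bfH(z \cdot f) = z \cdot {}^*R_{\bL_I}^\bfH(f)$ and $R_{\bL_I}^\bfH(z \cdot f) = z \cdot R_{\bL_I}^\bfH(f)$ hold. For restriction this is immediate from the formula
\begin{equation*}
({}^*R_{\bL_I}^\bfH f)(\ell) = \frac{1}{|U_I|}\sum_{u \in U_I} f(\ell u),
\end{equation*}
together with the fact that $z \in Z(\bfH) \subseteq \bL_I$ commutes with every $u \in U_I$, so that $f(z\ell u) = f((z\ell)u)$. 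For induction the same principle applies: writing $R_{\bL_I}^\bfH(f)$ as the induction from $P_I$ to $H$ of the inflation $\tilde f$ of $f$ along $\bP_I \twoheadrightarrow \bL_I$, one uses the centrality of $z$ to commute it past conjugation and to observe that $\tilde f(zp) = \tilde f(p \cdot z) = (z \cdot f)\sim(p)$ for $p \in P_I$, from which $R_{\bL_I}^\bfH(z \cdot f)(h) = R_{\bL_I}^\bfH(f)(zh)$ follows directly from the standard induction formula.

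Combining these two compatibilities termwise in the alternating sum yields $D_\bfH(z \cdot f) = z \cdot D_\bfH(f)$. I do not foresee any serious obstacle here; the only mild subtlety is making the inflation step clean, i.e.\ verifying that translating the Levi-level function by $z$ and then inflating to $\bP_I$ agrees with first inflating and then translating by $z$ viewed as an element of $\bP_I$. This follows from $z \in \bL_I$ and the fact that the inflation map factors through the projection $\bP_I \to \bL_I$ which sends $z \mapsto z$.
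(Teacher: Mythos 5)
Your proof is correct and follows essentially the same route as the paper's: unfold $D_\bfH$ into its alternating sum $\sum_I (-1)^{|I|} R_{\bL_I}^\bfH\circ {}^*R_{\bL_I}^\bfH$ and check that translation by $z$ commutes with each summand. The only difference is that the paper cites Bonnaf\'e \cite[10.2, 10.3]{Bon06} for the fact that translation by a central element of $\bL_I$ commutes with $R_{\bL_I}^\bfH\circ {}^*R_{\bL_I}^\bfH$, whereas you verify this directly from the explicit formulas for Harish--Chandra induction and restriction; your version is more self-contained but adds nothing conceptually new.
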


\begin{proof} Since $z$ is central, it is contained in any maximal torus of $\bfH$, therefore it lies in the centre of any standard Levi subgroup $\bfL$ of $\bfH$. But the translation by any element of the centre of $\bfL$ commutes with the operator $R_{L}^{H} \circ {}^*R_{L}^{H}$ on class functions, see \cite[10.2, 10.3]{Bon06}. Therefore the translation by $z$ commutes with $D_\bfH$. 
\end{proof}

We will be particularly interested in translations by semisimple elements $s \in G$ in their centraliser $C_G^\circ(s)$ or $C_G(s)$ (note that $s \in C_\bfG^\circ(s)$ by \cite[Prop.~2.5]{DiMibook}). Combining the previous results we can compute the scalar product of the Alvis--Curtis dual of $F_{[b,\phi]}$ with any class function on $G$ in terms of duals of GGGCs of $C_G^\circ(b)$.

\begin{corollary}\label{cor:scalarofkawanaka}
Under the assumptions of \cref{prop:fourier-values}, given any class function $f \in \Class(G)$ we have
$$\begin{aligned}
\big\langle D_\bfG (F_{[b,\phi]}) , f\big\rangle_G &\, =  \pm \varepsilon(b)\big\langle D_{C_\bfG^\circ(b)}(\Gamma_{(b,\phi)}),b \cdot \Res_{C_G^\circ(b)}^{G}(f) \big\rangle_{C_G^\circ(b)} \\
&\, =  \pm \frac{\varepsilon(b)}{|C_A(b)|} \sum_{a \in C_A(b)}\phi(a)  \big\langle D_{C_\bfG^\circ(b)}(\Gamma_{u_a}^{C_G^\circ(b)}),b \cdot \Res_{C_G^\circ(b)}^{G}(f) \big\rangle_{C_G^\circ(b)}.\end{aligned}$$

\end{corollary}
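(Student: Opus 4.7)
The plan is to start from the explicit expression $F_{[b,\phi]} = \varepsilon(b)\Ind_{C_G^\circ(b)}^G(b^{-1}\cdot\Gamma_{(b,\phi)})$ furnished by \cref{cor:fourier-values} and move the Alvis--Curtis duality across the inner product in stages. First I would use the self-adjointness of $D_\bfG$ on $\Class(G)$ (which follows from the adjunction $\langle R^G_L f, g\rangle_G = \langle f, {}^*R^G_L g\rangle_L$, so that each summand $R^G_L\circ{}^*R^G_L$ of $D_\bfG$ is self-adjoint) to write
\begin{equation*}
\langle D_\bfG(F_{[b,\phi]}), f\rangle_G = \varepsilon(b)\langle \Ind_{C_G^\circ(b)}^G(b^{-1}\cdot\Gamma_{(b,\phi)}), D_\bfG(f)\rangle_G.
\end{equation*}
Frobenius reciprocity followed by the isometry property of translation by $b^{-1}$ (valid since $b \in Z(C_G^\circ(b))$) then gives
\begin{equation*}
\langle D_\bfG(F_{[b,\phi]}), f\rangle_G = \varepsilon(b)\langle \Gamma_{(b,\phi)}, b\cdot\Res_{C_G^\circ(b)}^G(D_\bfG(f))\rangle_{C_G^\circ(b)}.
\end{equation*}

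The crux of the argument is then to replace $b\cdot\Res_{C_G^\circ(b)}^G(D_\bfG(f))$, up to a global sign, by $D_{C_\bfG^\circ(b)}(b\cdot\Res_{C_G^\circ(b)}^G(f))$. These two class functions on $C_G^\circ(b)$ need not agree everywhere, but $\Gamma_{(b,\phi)}$ is a linear combination of GGGCs and is therefore unipotently supported; only the values of $b\cdot\Res(D_\bfG(f))$ at unipotent $v \in C_G^\circ(b)$ matter, equivalently only the values of $D_\bfG(f)$ at the mixed elements $bv$. Here I would invoke the classical character formula for Alvis--Curtis duality at a Jordan-decomposed element (due to Digne--Michel), which states that for any class function $h$ on $G$ and any $sv = vs$ with $s \in G$ semisimple and $v \in C_G^\circ(s)$ unipotent,
\begin{equation*}
(D_\bfG h)(sv) = \pm (D_{C_\bfG^\circ(s)}(\Res_{C_G^\circ(s)}^G h))(sv),
\end{equation*}
the sign depending only on the $\mathbb{F}_q$-ranks of $\bfG$ and $C_\bfG^\circ(s)$. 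Taking $s=b$ and combining this with \cref{lem:translationandduality} (which commutes the translation by $b$ with $D_{C_\bfG^\circ(b)}$) yields the required equality of the two functions on the unipotent set, hence on the support of $\Gamma_{(b,\phi)}$.

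To finish, self-adjointness of $D_{C_\bfG^\circ(b)}$ transfers the duality from the second argument of the pairing onto $\Gamma_{(b,\phi)}$, producing the first displayed equality of the corollary. The second equality is immediate from the definition $\Gamma_{(b,\phi)} = \frac{1}{|C_A(b)|}\sum_{a\in C_A(b)}\phi(a)\Gamma_{u_a}^{C_G^\circ(b)}$, by linearity of $D_{C_\bfG^\circ(b)}$ and of the inner product. The only step of real substance is the Digne--Michel character formula for $D_\bfG$ at a mixed element; all remaining manipulations are formal bookkeeping with adjunctions, Frobenius reciprocity, and translations. Consequently this is where I expect any conceptual obstacle to lie, but the formula is standard and readily applicable here.
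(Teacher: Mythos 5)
Your proof is correct and follows essentially the same route as the paper: the formal manipulations (self-adjointness of $D_\bfG$, Frobenius reciprocity, the translation adjunction) to reduce to the value of $D_\bfG(f)$ on mixed elements $bv$, then the restriction formula for Alvis--Curtis duality at a Jordan-decomposed element (Digne--Michel, cited in the paper as \cite[Cor.~8.16]{DiMibook}) combined with \cref{lem:translationandduality}, and finally self-adjointness of $D_{C_\bfG^\circ(b)}$. The paper compresses the opening adjunction steps into one line but the argument is identical.
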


\begin{proof}Let $f$ be a class function on $G$. Using adjunction and \cref{cor:fourier-values} we get
$$ \big\langle D_\bfG (F_{[b,\phi]}) , f\big\rangle_G  = \varepsilon(b)\big\langle \Gamma_{[b,\phi]} , b \cdot \Res_{C_G^\circ(b)}^{G}(D_\bG(f)) \rangle_{C_G^\circ(b)}.$$
Since  $\Gamma_{[b,\phi]}$ is unipotently supported, we are only interested in the value of the class function $\Res_{C_G^\circ(b)}^{G}(D_\bG(f))$ at elements of the form $b^{-1} v$ where $v \in \mathcal{U}(C_{\bG}^\circ(b))^F = \mathcal{U}(C_{\bG}^\circ(b^{-1}))^F$. By \cite[Cor.~8.16]{DiMibook} we have in that case
$$\Res_{C_G^\circ(b)}^{G} D_\bG(f) (b^{-1} v) = \pm D_{C_\bG^\circ(b)} (\Res_{C_G^\circ(b)}^{G} f) (b^{-1} v)$$
and the result follows from \cref{lem:translationandduality}.
\end{proof}

\subsection{Restriction of Character Sheaves}\label{ssec:charsheaves}

We want to use \cref{cor:scalarofkawanaka} when $f$ is the characteristic function of a character sheaf. To this end we recall recent results by Lusztig on the restriction of such sheaves \cite{Lu15}.

Let us denote by $\USh(\bG)$ the set of (isomorphism classes of) unipotent character sheaves on $\bG$. We have a partition
\begin{equation*}
\USh(\bG) = \bigsqcup_{\mathscr{F} \in \Fam(W)} \USh(\mathscr{F})
\end{equation*}
indexed by the families of $W$, see \cite[III, Cor.~16.7]{Lu86}. The character sheaves in $\USh(\mathscr{F})$ have a \emph{unipotent support}, which turns out to be the class $\mathcal{O}_{\mathscr{F}}$. The intrinsic properties characterising the class $\mathcal{O}_{\mathscr{F}}$ as the unipotent support are as follows.

Given a conjugacy class $\mathcal{O}$ of $\bfG$, we denote by $\mathcal{O}_{\uni}$ the unipotent conjugacy class consisting of the unipotent parts of the elements in $\mathcal{O}$. It is shown in \cite[Thm.~10.7]{Lu92}, see also \cite[Thm.~13.8]{Tay16} for the extension to good characteristic, that the following two properties hold:
\begin{itemize}
 \item if $\mathcal{F} \in \USh(\mathscr{F})$ and $\mathcal{O}$ is a conjugacy class then $\mathcal{F}_{|\mathcal{O}} = 0$ if $\dim \mathcal{O}_{\uni} > \dim \mathcal{O}_{\mathscr{F}}$ or if $\dim \mathcal{O}_{\uni} = \dim  \mathcal{O}_{\mathscr{F}}$ and $\mathcal{O}_{\uni} \neq \mathcal{O}_{\mathscr{F}}$;
 \item there exists a conjugacy class $\mathcal{O} \subseteq \bG$, with $\mathcal{O}_{\uni} = \mathcal{O}_{\mathscr{F}}$, and a unipotent character sheaf $\mathcal{F}' \in \USh(\mathscr{F})$ such that $\mathcal{F}'_{|\mathcal{O}} \neq 0$.
\end{itemize}

A priori it is not clear from the definition that given a unipotent character sheaf $\mathcal{F} \in \USh(\mathscr{F})$  there exists a class $\mathcal{O} \subseteq \bG$ such that $\mathcal{O}_{\uni} = \mathcal{O}_\mathscr{F}$ and $\mathcal{F}_{|\mathcal{O}} \neq 0$. However, this ambiguity has recently been resolved by Lusztig \cite{Lu15}, giving a geometric interpretation of the parametrisation of $F$-stable unipotent character sheaves which parallels that of unipotent characters. We recall his result here.

Assume $\mathscr{F} \in \Fam(W)^F$ is a family and let $u \in \mathcal{O}_\mathscr{F}$ be a unipotent element. Recall that $\bar A_{\mathscr{F}}$ denotes Lusztig's canonical quotient of $A_\bG(u)$. Let $\mathcal{O}$ be a conjugacy class with $\mathcal{O}_{\uni} = \mathcal{O}_{\mathscr{F}}$ and let $\mathcal{F}\in \USh(\mathscr{F}) $ be a character sheaf such that $\mathcal{F}_{|\mathcal{O}} \neq 0$. Pick an element $b \in C_{\bG}(u)$ such that $\mathcal{O} = \Cl_\bG(bu)$. This semisimple element $b$ is unique up to conjugation by $C_\bfG(u)$. Moreover, if $\mathcal{O}' = \Cl_\bG(b'u)$ is another class with $b' \in C_\bG(u)$ such that $\mathcal{F}_{|\mathcal{O}'} \neq 0$ then the images of $b$ and $b'$ in $\bar A_{\mathscr{F}}$ are conjugate under $\bar A_{\mathscr{F}}$.

As in \cref{subsec:translation} one can consider the translation by $b$ on $C_\bG^\circ(b)$. We will denote by $b^*$ the pull-back of $C_\bG(b)$-equivariant sheaves on $C_\bG^\circ(b)$ along this map. If $\mathcal{C} = \Cl_{C_\bG(b)}(u)$ then it is shown in \cite[\S 1.7]{Lu15} that
\begin{equation}\label{eq:restriction-of-sheaf}
b^*( \mathcal{F}_{|b\mathcal{C}}) \simeq \mathcal{E}[-\dim \mathcal{O} - \dim Z(\bL)]
\end{equation}
for some (not-necessarily irreducible) $C_\bfG(b)$-equivariant local system $\mathcal{E}$ on $\mathcal{C}$. Here $\bfL$ is a Levi subgroup of $\bG$ attached to the cuspidal support of $\mathcal{F}$.

Denote by $\bar b$ the image of $b$ in $\bar A_{\mathscr{F}}$. It is explained in \cite[\S 2.3]{Lu15} that the local system $\mathcal{E}$ is obtained from an irreducible representation $\psi$ of $C_{\bar A_{\mathscr{F}}}(\bar b)$ through the natural map $A_{C_\bfG(b)}(u) = A_\bG(bu) \longrightarrow C_{\bar A_{\mathscr{F}}}(\bar b)$.
Note that $(\bar b,\psi)$ is well-defined up to $\bar A_{\mathscr{F}}$-conjugation, and in fact it characterises the character sheaf $\mathcal{F}$, see \cite[Thm.~2.4]{Lu15}.

\begin{theorem}[Lusztig]\label{thm:lusztig}
The map $\mathcal{F} \mapsto [\bar b,\psi]$ induces a bijection between $\USh(\mathscr{F})$ and the elements of $\mathscr{M}(\bar A_{\mathscr{F}})$.
\end{theorem}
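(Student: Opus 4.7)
The plan is to verify that the assignment is well-defined, establish injectivity via the recovery of $\mathcal{F}$ from its restrictions to mixed classes, and then conclude by a cardinality comparison using the already known equality $|\USh(\mathscr{F})| = |\mathscr{M}(\bar{A}_{\mathscr{F}})|$ coming from Lusztig's classification of unipotent character sheaves.

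First I would check well-definedness. The discussion preceding the statement already records that for a character sheaf $\mathcal{F} \in \USh(\mathscr{F})$, any two classes $\mathcal{O}, \mathcal{O}'$ with $\mathcal{O}_{\uni} = \mathcal{O}'_{\uni} = \mathcal{O}_{\mathscr{F}}$ and non-vanishing restriction give semisimple parts $b, b'$ with $\bar A_{\mathscr{F}}$-conjugate images $\bar b, \bar b'$. After fixing such a $b$, the unipotent element $u \in \mathcal{O}_{\mathscr{F}}$ is unique up to $C_{\bG}(b)$-conjugacy, so the class $\mathcal{C} = \Cl_{C_{\bG}(b)}(u)$ is intrinsic. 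The formula \cref{eq:restriction-of-sheaf} then produces a unique local system $\mathcal{E}$ on $\mathcal{C}$, and through the natural surjection $A_{C_{\bG}(b)}(u) \twoheadrightarrow C_{\bar A_{\mathscr{F}}}(\bar b)$ this $\mathcal{E}$ corresponds to a representation $\psi$ whose $\bar A_{\mathscr{F}}$-conjugacy class depends only on $\mathcal{F}$.

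Next I would prove injectivity. The restriction of $\mathcal{F}$ to the open subset of $\bG$ consisting of elements whose unipotent part lies in $\mathcal{O}_{\mathscr{F}}$ is stratified by the mixed classes $\Cl_{\bG}(bu)$. By \cref{eq:restriction-of-sheaf} the restriction to each stratum is determined up to shift by the pair $(\bar b, \psi)$. Two character sheaves $\mathcal{F}, \mathcal{F}' \in \USh(\mathscr{F})$ yielding the same pair therefore agree on this open subset; since unipotent character sheaves are simple perverse sheaves determined by their generic behaviour on the corresponding stratum, this forces $\mathcal{F} \simeq \mathcal{F}'$. Finally, the parametrisation of $\USh(\mathscr{F})$ through the nonabelian Fourier transform in \cite[\S 4,~\S 13]{Lu84} gives the cardinality equality $|\USh(\mathscr{F})| = |\mathscr{M}(\bar A_{\mathscr{F}})|$, so an injection between these finite sets of equal cardinality must be a bijection.

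The principal obstacle is establishing \cref{eq:restriction-of-sheaf} itself: showing that the restriction of a unipotent character sheaf to the translate $b\mathcal{C}$ is concentrated in a single degree and is pulled back under the translation map from a $C_{\bG}(b)$-equivariant local system on $\mathcal{C}$ that is governed by the canonical quotient $\bar A_{\mathscr{F}}$ rather than by the full component group $A_{\bG}(bu)$. This requires the parabolic induction machinery for character sheaves: one reduces $\mathcal{F}$ to cuspidal data on a Levi subgroup $\bL$ attached to the cuspidal support, applies the generalised Springer correspondence to identify the local systems on nilpotent pieces of $\bL$, and then tracks the action of the relative Weyl group $N_{\bG}(\bL)/\bL$ on the induced complex. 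The descent of the equivariance from $A_{\bG}(bu)$ to $C_{\bar A_{\mathscr{F}}}(\bar b)$ follows from the intrinsic description of the canonical quotient in terms of these relative Weyl group actions on the Springer fibres over $\mathcal{O}_{\mathscr{F}}$.
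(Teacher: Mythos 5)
The paper gives no proof of this theorem: it is stated with attribution to Lusztig and cited directly to \cite[Thm.~2.4]{Lu15}, the surrounding text only explaining how the assignment $\mathcal{F} \mapsto [\bar b,\psi]$ is constructed. Your proposal therefore attempts something the paper does not, and while the well-definedness and cardinality portions are unobjectionable as reconstructions of what Lusztig does, the injectivity step contains a genuine gap.

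You assert that the locus of $g \in \bG$ with $g_{\uni} \in \mathcal{O}_{\mathscr{F}}$ is open in $\bG$ and that a simple perverse sheaf $\mathcal{F}$ is determined by its restriction there. Neither holds. The locus is constructible but not open, and, more seriously, it is in general of dimension strictly smaller than that of the support of $\mathcal{F}$. A principal series unipotent character sheaf has support equal to all of $\bG$, yet the condition $g_{\uni} \in \mathcal{O}_{\mathscr{F}}$ forces the semisimple part of $g$ to centralise a point of $\mathcal{O}_{\mathscr{F}}$; already for $\mathcal{O}_{\mathscr{F}}$ the regular class this forces the semisimple part to be central, so the locus has dimension roughly $\dim\bG$ minus the rank, far below $\dim\bG$. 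A simple perverse sheaf is determined by its restriction to a dense open subset of its support, not to an arbitrary low-dimensional stratum, so the argument as written does not show that two character sheaves with the same invariant $[\bar b,\psi]$ coincide. That the stalks along $\Cl_{\bG}(bu)$ nevertheless pin down $\mathcal{F}$ is precisely the deep content of \cite{Lu15}. Your closing paragraph correctly identifies the ingredients such an argument would require (cuspidal support, generalised Springer correspondence, descent of equivariance to the canonical quotient), but it names them without deploying them, and the injectivity step you actually carry out would not survive without them.
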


Let us draw the consequences of this result on the values of the characteristic function of $\mathcal{F}$. We assume now that: $\mathscr{F} \in \Fam(W)^F$ is $F$-stable, $u \in \mathcal{O}_{\mathscr{F}}^F$ is $F$-fixed, and that $F$ acts trivially on $\bar A_{\mathscr{F}}$. This last assumption is satisfied, for instance, when $\bfG$ is simple, see \cite[Prop.~2.4]{Tay13}. With these assumptions it follows from \cref{thm:lusztig} that every character sheaf with unipotent support $\mathcal{O}_{\mathscr{F}}$, i.e., any character sheaf contained in $\USh(\mathscr{F})$, is $F$-stable. The choice of an isomorphism between $\mathcal{F}$ and $F^*\mathcal{F}$ defines a class function $\chi_{\mathcal{F}}$ on $G$.

Since $\mathcal{F}$ is a simple perverse sheaf, this class function is well-defined up to a scalar. We can normalise it so that if $\mathcal{F} = \mathcal{F}_{[\bar b,\psi]}$ is the character sheaf corresponding to $[\bar b,\psi] \in \mathscr{M}(\bar A_{\mathscr{F}})$ then for every $x \in C_\bfG^\circ(b)$ such that ${}^x(bu)$ is $F$-stable, there exists $d_{\mathcal{F}} \in \frac{1}{2} \mathbb{Z}$ such that we have 
\begin{equation}\label{eq:valuecharsheaf}
\chi_\mathcal{F}\big({}^x(bu)\big) = \chi_\mathcal{F}\big(b ({}^xu)\big) = q^{d_{\mathcal{F}}}\psi\big(\, \overline{x^{-1}F(x)}\, \big)
\end{equation}
where $\overline{x^{-1}F(x)}$ is the image of $x^{-1}F(x)$ in $C_{\bar A}(\bar b)$. Indeed, $x^{-1}F(x)$ centralises both $u$ and $b$, therefore its image in $\bar A_{\mathscr{F}}$ centralises $\bar b$, the image of $b$. Furthermore, using the normalisation in \cite[V, \S 25.1]{Lu86} and \cref{eq:restriction-of-sheaf} the value of $d_\mathcal{F}$ is given by
\begin{equation}\label{eq:valueofd}
d_\mathcal{F} = \frac{1}{2} \big(\dim\bG- \dim \Cl_\bG(bu) -\dim Z(\bL) \big) = \frac{1}{2} \big(\dim C_\bG(bu)-\dim Z(\bL) \big).   
\end{equation}

Finally, let us note that in \cite[\S1.7]{Lu15} it is also shown that the irreducible summands of the local system $\mathcal{E}$ in \cref{eq:restriction-of-sheaf}, considered as a $C_\bG^\circ(b)$-equivariant local system, lie in a Springer series attached to a Levi subgroup of the form $C_{\bL^x}^{\circ}(b)$ of $C_{\bG}^\circ(b)$ where $x \in \bG$ is such that $b$ is an isolated element of $\bL^x$. Consequently $Z^{\circ}(\bL^x) = Z^{\circ}( C_{\bL^x}^{\circ}(b))$ so that 
\begin{equation}\label{eq:valueofdbis}
d_\mathcal{F} = \frac{1}{2} \big(\dim C_\bG(bu)-\dim Z( C_{\bL^x}^\circ(b)) \big).   
\end{equation}

\subsection{Projection on the Family} 
\begin{assumption}
From now until the end of this section we fix an $F$-stable family $\mathscr{F} \in \Fam(W)^F$ and a unipotent element $u \in \mathcal{O}_{\mathscr{F}}^F$. We assume that $(A,\lambda)$ is an admissible covering for the canonical quotient $\bar{A}_{\mathscr{F}}$ of $A_{\bG}(u)$, see \cref{def:admissiblesplitting}, and that the Kawanaka characters are defined for $\mathcal{O}_{\mathscr{F}}$, with respect to $(A,\lambda)$.
\end{assumption}

Associated to the $F$-stable unipotent class $\mathcal{O}_{\mathscr{F}}$ we get from \cref{ssec:charsheaves} a family of class functions on $G$ parametrised by $\mathscr{M}(\bar A_{\mathscr{F}})$, given by the characteristic functions of the unipotent character sheaves with unipotent support $\mathcal{O}_{\mathscr{F}}$. On the other hand, we have constructed in \cref{ssec:fourierdef}  a family $\{F_{[b,\Phi]}\}_{[b,\Phi]\in \mathscr{M}(A)}$ of class functions on $G$ parametrised by $\mathscr{M}(A)$ whenever $(A,\lambda)$ is an admissible pair for $u \in \mathcal{O}_{\mathscr{F}}^F$.

Our next goal is to establish the following proposition, which computes the scalar products between these two families. For this we will need certain properties of the Alvis--Curtis dual $D_{\bG}(\Gamma_u)$ of a GGGC. This introduces some restrictions on the size of the field $q$ when $Z(\bG)$ is not connected. We recall that there exists a bound $q_0(\bG)$, depending only on the root system of $\bG$, such that if $q > q_0(\bG)$ then the main results of \cite{Lu90} hold.

If $Z(\bG)$ is connected then by work of Shoji \cite{Sho96} one may take $q_0(\bG) = 1$. In the following we will use properties of GGGCs stated in \cite[\S2]{Ge97}. As mentioned in \cite{Ge97} these properties are known to hold when $q > q_0(\bG)$ and $p$ is good by results of Lusztig \cite{Lu92}, with extensions to good characteristic provided in \cite{Tay16}.

\begin{proposition}\label{prop:fouriermultiplicities}
Recall that $(A,\lambda)$ is an admissible covering for $\bar{A}_{\mathscr{F}}$. For any $a \in A$ we denote by $\bar{a} \in \bar{A}_{\mathscr{F}}$ the image of $a$ under the map $A \to A_{\bG}(u) \to \bar{A}_{\mathscr{F}}$. To $[a,\psi] \in \mathscr{M}(\bar A_{\mathscr{F}})$ and $[b,\phi] \in \mathscr{M}(A)$ we associate:
\begin{itemize}
 \item the characteristic function $\chi_{[a,\psi]}$ of the unipotent character sheaf associated to $[a,\psi]$, normalised so that \eqref{eq:valuecharsheaf} holds,
 \item the Fourier transform $F_{[b,\phi]}$ defined in \cref{ssec:fourierdef}.
\end{itemize}
If $q > q_0(\bG)$ then the following hold: 
\begin{enumerate}
 \item if $a$ and $\bar b$ are not conjugate under $\bar A_{\mathscr{F}}$, then $\langle F_{[b,\phi]}, D_\bfG(\chi_{[a,\psi]}) \rangle_G = 0$,
 \item if $a = \bar b$ then 
 $$ \langle F_{[b,\phi]}, D_\bfG(\chi_{[a,\psi]}) \rangle_{G} = \varepsilon(b)\zeta \langle \phi, \widetilde \psi \rangle_{C_A(b)}$$
 for some root of unity $\zeta$, where $\widetilde \psi$ is the irreducible character of $C_A(b)$ obtained by inflation of $\psi$ through the map $C_A(b) \twoheadrightarrow C_{\bar A}(\bar b) = C_{\bar A}(a) $. 
\end{enumerate} 
\end{proposition}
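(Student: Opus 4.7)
The plan is to push the entire scalar product down to the centraliser $\bH := C_\bG^\circ(b)$ and there match it against Lusztig's Mellin-transform identity for GGGCs. Since Alvis--Curtis duality is a self-adjoint isometry on class functions, \cref{cor:scalarofkawanaka} applied with $f := \chi_{[a,\psi]}$ yields
\begin{equation*}
\langle F_{[b,\phi]}, D_\bG(\chi_{[a,\psi]})\rangle_G \;=\; \pm\,\frac{\varepsilon(b)}{|C_A(b)|}\sum_{c \in C_A(b)}\phi(c)\, J_c, \quad J_c := \langle D_\bH(\Gamma_{u_c}^{\bH^F}),\; b\cdot\Res^G_{\bH^F}(\chi_{[a,\psi]})\rangle_{\bH^F}.
\end{equation*}
Using adjunction of $D_\bH$, \cref{lem:translationandduality}, and the compatibility of Alvis--Curtis duality with restriction to a centraliser \cite[Cor.~8.16]{DiMibook}, each $J_c$ rewrites, up to a uniform sign, as a finite sum over $v \in \mathcal{U}(\bH)^F$ of $\Gamma_{u_c}^{\bH^F}(v)\cdot \overline{\chi_{[a,\psi]}(bv)}$, so everything hinges on the values $\chi_{[a,\psi]}(bv)$ for $v$ unipotent in $\bH^F$.

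For part (i), I invoke the description of the support of $\chi_{[a,\psi]}$ recalled in \cref{ssec:charsheaves} (see in particular \cref{thm:lusztig}): this function vanishes on any mixed element $sv \in G$ with $v \in \mathcal{O}_{\mathscr{F}}^F$ unipotent whenever the image of $s$ in $\bar{A}_{\mathscr{F}}$ is not $\bar{A}_{\mathscr{F}}$-conjugate to $a$. Applied with $s = b$ this forces $\chi_{[a,\psi]}(bv) = 0$ for every unipotent $v \in \bH^F$ when $\bar{b}$ and $a$ are not $\bar{A}_{\mathscr{F}}$-conjugate, so each $J_c$ is zero and (i) follows.

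For part (ii), assume $a = \bar{b}$. Combining \cref{eq:restriction-of-sheaf} and \cref{eq:valuecharsheaf} with the admissible-covering condition \cref{en:K3} and \cref{rmk:K3-and-centralisers}, the restriction of $v \mapsto \chi_{[a,\psi]}(bv)$ to $\mathcal{O}_{\mathscr{F}}^F \cap \bH^F$ is identified with $q^{d_{\mathcal{F}}}$ times the characteristic function of an $\bH$-equivariant local system on $\mathcal{O}_{\mathscr{F}} \cap \bH$ whose component on the $\bH$-orbit of $u$ is attached via the Springer correspondence in $\bH$ to the inflation $\widetilde{\psi}$ through $C_A(b) \twoheadrightarrow C_{\bar{A}_{\mathscr{F}}}(\bar{b})$. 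Substituting this into $J_c$ and invoking Lusztig's Mellin-transform identity for GGGCs inside $\bH^F$---namely the identification of $D_\bH$ applied to the Mellin transform of $\{\Gamma_{u_c}^{\bH^F}\}_c$ with suitably normalised characteristic functions of Springer character sheaves on $\bH$ (\cite[Thm.~7.3]{Lu92}, extended to good characteristic in \cite[\S14]{Tay16} and valid for $q > q_0(\bG)$)---gives $J_c = q^{d_{\mathcal{F}}}\cdot \zeta_0 \cdot \overline{\widetilde{\psi}(c)}$ up to root-of-unity scalars, with the contributions from the non-principal $\bH$-orbits of $\mathcal{O}_{\mathscr{F}}^F \cap \bH^F$ being orthogonal to $\widetilde{\psi}$ and thus vanishing after summation against $\phi(c)$. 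Collecting the $q$-powers and roots of unity into a single scalar $\zeta$ yields the identity $\zeta\cdot \langle \phi,\widetilde{\psi}\rangle_{C_A(b)}$ in~(ii).

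The main technical obstacle is the bookkeeping of scalars: one must verify that the various normalising factors---the $q^{d_{\mathcal{F}}}$ factors coming from \cref{eq:valueofd} and \cref{eq:valueofdbis}, the signs from Alvis--Curtis duality both in $\bG$ and in $\bH$, and the root-of-unity scalars from Shoji's normalisation in the GGGC--character sheaf correspondence---assemble into a single root of unity $\zeta$ that is independent of $c$ and can therefore be extracted from the sum. A secondary subtlety is that the Springer series on $\bH$ used in Lusztig's identification is in general non-principal, being controlled by the cuspidal support $\bL$ of $\mathcal{F}$; the intrinsic dimension formula \cref{eq:valueofdbis}, which expresses $d_{\mathcal{F}}$ using $Z^{\circ}(C_{\bL^x}^{\circ}(b))$ rather than $Z(\bL)$, is precisely what makes this matching transparent, and justifies the assumption $q > q_0(\bG)$.
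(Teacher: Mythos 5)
Your proposal shares the paper's overall architecture---reduce to $\bH := C_\bG^\circ(b)$ via \cref{cor:scalarofkawanaka}, express the restricted character sheaf values as $Y$-functions using \cref{eq:restriction-of-sheaf,eq:valuecharsheaf}, then invoke the Lusztig/Geck formula for $\langle D_\bH(\Gamma_{u_c}), Y\rangle$---but it omits the step that isolates the single $\bH$-class of $u$ as the only contributor, and this is where the real work lies.

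The pairing $\langle D_\bH(\Gamma_{(b,\phi)}), b\cdot\Res\chi_{[a,\psi]}\rangle$ is a priori a sum over \emph{all} unipotent $v \in \bH^F$. The paper controls this sum by combining three facts: (a) the support condition for dual GGGCs from \cite[2.4(c)]{Ge97}, namely $D_\bH(\Gamma_{u_c})(v) \neq 0$ forces $\Cl_\bH(u) \subseteq \overline{\Cl_\bH(v)}$; (b) the unipotent support of the character sheaf, which gives $\dim\Cl_\bG(v) < \dim\Cl_\bG(u)$ or $\Cl_\bG(v) = \Cl_\bG(u)$ when $\chi_{[a,\psi]}(bv) \neq 0$; and (c) \cref{thm:fusion-unip-classes}, which upgrades $\Cl_\bG(u) = \Cl_\bG(v)$ together with the closure condition from (a) to $\Cl_\bH(u) = \Cl_\bH(v)$. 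Without (c), nothing prevents a contribution from a $v$ lying in a different $\bH$-class of $\mathcal{O}_\mathscr{F} \cap \bH$ whose closure contains $\Cl_\bH(u)$---this is precisely the non-trivial fusion phenomenon that Section 4 of the paper is devoted to ruling out, via transverse slices. Your ``orthogonality'' reasoning does not substitute for this: the Geck formula $\langle D_\bH(\Gamma_{u_c}), Y\rangle = \zeta q^{-d}\overline{Y(u_c)}$ is stated for $Y$-functions on the orbit $\Cl_\bH(u)$ itself, not for $Y$-functions on other orbits in the $\bG$-class, so the claimed vanishing of those terms is unjustified.

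The same gap affects part (i). You conclude ``$\chi_{[a,\psi]}(bv) = 0$ for every unipotent $v \in \bH^F$'' from Lusztig's description of the restriction, but \cref{thm:lusztig} only describes the values on classes $\Cl_\bG(bv)$ with $v \in \mathcal{O}_\mathscr{F}$; for $v$ in a \emph{smaller} unipotent class the character sheaf may well be non-zero at $bv$. These smaller-class contributions must be killed by the same (a)+(b)+(c) combination before the conjugacy criterion of \cref{thm:lusztig} can be applied, which the paper does before arriving at \cref{eq:firsteq}. As written, your proposal has a genuine gap at exactly the point where the paper invests its new geometric input.
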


\begin{proof}
By \cref{cor:scalarofkawanaka}, the scalar product between $F_{[b,\phi]}$ and $D_\bfG(\chi_{[a,\psi]})$ is given by
$$\langle F_{[b,\phi]}, D_\bfG(\chi_{[a,\psi]}) \rangle_{G} = \pm \varepsilon(b)\big\langle D_{C_\bfG^\circ(b)}(\Gamma_{(b,\phi)}), b \cdot \Res_{C_G^\circ(b)}^{G} \chi_{[a,\psi]} \big\rangle_{C_G^\circ(b)}.$$
With our assumption that $q > q_0(\bG)$ it is known that the class function $D_{C_\bfG^\circ(b)}(\Gamma_{(b,\phi)})$ is unipotently supported and 
$D_{C_\bfG^\circ(b)}(\Gamma_{(b,\phi)})(v) \neq 0$ forces the $C_\bfG^\circ(b)$-conjugacy class of $u$ to lie in the closure of the $C_\bfG^\circ(b)$-conjugacy class of $v$, see \cite[2.4(c)]{Ge97}. On the other hand, by definition of the unipotent support of character sheaves, if  $v \in C_G^\circ(b)$ is such that $\chi_{[a,\psi]}(bv) \neq 0$ then $\dim \Cl_\bG(v) < \dim \Cl_\bG(u)$ or $\Cl_\bG(u) = \Cl_\bG(v)$.

Therefore, for the product $D_{C_\bfG^\circ(b)}(\Gamma_{(b,\phi)})(v) \overline{\chi_{[a,\psi]}(bv)}$ to be non-zero we need to have that $\Cl_\bG(u) = \Cl_\bG(v)$, and that the $C_\bfG^\circ(b)$-conjugacy class of $u$ lies in the closure of the $C_\bfG^\circ(b)$-conjugacy class of $v$. By \cref{thm:fusion-unip-classes}, this forces the $C_\bfG^\circ(b)$-conjugacy classes of $u$ and $v$ to coincide. Consequently, 
\begin{equation}\label{eq:firsteq}
\langle F_{[b,\phi]}, D_\bfG(\chi_{[a,\psi]}) \rangle_{G} = \pm \frac{\varepsilon(b)}{|C_G^\circ(b)|} \sum_{\begin{subarray}{c}v \in \mathcal{U}(C_\bG^\circ(b))^F \\[3pt] v \sim_{C_\bfG^\circ(b)} u \end{subarray}} D_{C_\bfG^\circ(b)}(\Gamma_{(b,\phi)})(v) \overline{\chi_{[a,\psi]}(bv)}.
\end{equation}
Now for $v$ conjugate to $u$ under $C_\bfG^\circ(b)$ the $\bfG$-conjugacy classes of $bv$ and $bu$ coincide. Therefore from \cref{thm:lusztig} we deduce that $\chi_{[a,\psi]}(bv) = 0$ unless the image $\bar b$ of $b$ in $\bar A_{\mathscr{F}}$ is conjugate to $a$. This proves (i).

Now assume that $a = \bar b$.  Let us consider the map
$$ \beta : A_{C_\bfG^\circ(b)}(u) \to C_{\bar A_{\mathscr{F}}}(a).$$
If $c \in C_A(b)$ then $b \in C_A(c)$ and by \cref{en:K2} we have $c \in C_\bG^\circ(C_A(c)) \leqslant C_\bG^\circ(b)$, which shows that $C_A(b) \leqslant C_\bG^\circ(b)$. Now by \cref{rmk:K3-and-centralisers} the map $C_A(b) \to C_{\bar A_{\mathscr{F}}}(\bar b) = C_{\bar A_{\mathscr{F}}}(a)$ is surjective. Since $A \leqslant C_\bG(u)$ then $C_A(b) \leqslant C_{C_\bG^\circ(b)}(u)$ and we deduce that the map $\beta$ is surjective.

We denote by $\beta^* \psi$ the irreducible character of $A_{C_\bfG^\circ(b)}(u)$ obtained from $\psi$ by inflation along $\beta$. Following \cite[(2.2)]{Ge97}, we define the following class function on $C_G^\circ(b)$:
$$Y_{(u,\beta^*\psi)}(g) = \left\{ \begin{array}{ll} \psi \big(\, \overline{x^{-1} F(x)}\,\big) & \text{if $g = xux^{-1}$ for some $x \in C_\bfG^\circ(b)$}, \\
0 & \text{otherwise.} \end{array} \right.$$
where $\overline{x^{-1} F(x)}$ is the image of $x^{-1} F(x)$ in $C_{\bar A_{\mathscr{F}}}(\bar b) = C_{\bar A_{\mathscr{F}}}(a)$. Note that $F$ acts trivially on $\bar A_{\mathscr{F}}$ so that one can extend $\beta^* \psi$ trivially to $A_{C_\bfG^\circ(b)}(u) \rtimes \langle F\rangle$, in which case $Y_{(u,\beta^*\psi)}$ agrees with the definition given in \cite[(2.2)]{Ge97}. The class function $Y_{(u,\beta^*\psi)}$ is also the (normalised) characteristic function of the irreducible local system corresponding to $\beta^* \psi$.

From the equations  \eqref{eq:valuecharsheaf} and \eqref{eq:firsteq} we deduce that
\begin{equation}\label{eq:second}
\langle F_{[b,\phi]}, D_\bfG(\chi_{[a,\psi]}) \rangle_{G} = \pm \varepsilon(b)q^{d_{\mathcal{F}}} \langle D_{C_\bfG^\circ(b)}(\Gamma_{(b,\phi)}), Y_{(u,\beta^* \psi)} \rangle_{C_G^\circ(b)},
\end{equation}
where $\mathcal{F}$ is the character sheaf associated to $[a,\psi]$. By \cite[(2.4) and (2.3.c)]{Ge97}, there is a root of unity $\zeta$ and a half-integer $d$ (both depending on the local system associated to $(u,\beta^*\psi)$) such that
for every $c \in C_A(b)$ we have
$$\
\langle D_{C_\bfG^\circ(b)}(\Gamma_{u_c}^{C_G^\circ(b)}), Y_{(u,\beta^* \psi)} \rangle_{C_G^\circ(b)}  = \zeta q^{-d} \overline{Y_{(u,\beta^* \psi)}(u_c)}= \zeta q^{-d} \overline{\psi(\overline c)}.$$
By \cite[(2.2.b)]{Ge97} the explicit value of $d$ is given by 
$$ d= \frac{1}{2} (\dim C_\bG(su) - \dim Z(\bM))$$
for $\bM$ a Levi subgroup of  $ C_\bG^\circ(b)$ attached to the cuspidal support of the pair $(u,\beta^* \psi)$ in the generalised Springer correspondence. We actually have $d = d_{\mathcal{F}}$ by \cref{eq:valueofdbis}.

Using the definition of $\Gamma_{(b,\phi)}$ in terms of the various generalised Gelfand-Graev characters $\Gamma_{u_c}^{C_G^\circ(b)}$ for $c \in C_A(b)$ we get
$$ \begin{aligned} \langle D_{C_\bfG^\circ(b)}(\Gamma_{[b,\phi]}), Y_{(u,\beta^*\psi)} \rangle_{C_G^\circ(b)} 
& \, = \frac{1}{|C_A(b)|} \sum_{c\in C_A(b)} \phi(c) \langle D_{C_\bfG^\circ(b)}(\Gamma_{u_c}^{C_G^\circ(b)}), Y_{(u,\beta^*\psi)}  \rangle_{C_G^\circ(b)} \\ 
& \, = \frac{\zeta q^{-d}}{|C_A(b)|} \sum_{c\in C_A(b)} \phi(c)   \overline{\psi (\bar c)} \\
& \, =  \zeta q^{-d} \langle \phi, \widetilde \psi \rangle_{C_A(b)} \end{aligned}$$
where $\widetilde \psi$ is the inflation of $\psi$ through the map $C_A(b) \twoheadrightarrow C_{\bar A_{\mathscr{F}}}(\bar b) = C_{\bar A_{\mathscr{F}}}(a) $. Then (ii) follows from \eqref{eq:second}.
\end{proof}

\subsection{Kawanaka's Conjecture}
In the previous section we computed the multiplicity of the characteristic functions of character sheaves with unipotent support $\mathcal{O}_{\mathscr{F}}$ in the Alvis--Curtis dual of the Fourier transforms of Kawanaka characters. We now translate this result to deduce the multiplicity of the unipotent characters with unipotent support $\mathcal{O}_{\mathscr{F}}$ in the Alvis--Curtis dual of the Kawanaka characters.

As was mentioned in \cref{ssec:unipotentchar} we have an involutive permutation of the families given by tensoring with the sign character of $W$. At the level of unipotent classes this corresponds to Spaltenstein duality, see \cite[Cor.~3.5]{BaVo85} and \cite[Chp.~3]{SpBk}. On unipotent characters we have an involutive permutation induced by Alvis--Curtis duality. For any $F$-stable family $\mathscr{F} \in \Fam(W)^F$ we have a bijection $\Uch(\mathscr{F}) \to \Uch(\mathscr{F}^*)$ given by $\rho \mapsto \pm D_{\bG}(\rho)$, where the sign is the unique choice making $\pm D_{\bG}(\rho)$ a character.

As explained in \cite[13.1.3]{Lu84} the irreducible characters in $\Uch(\mathscr{F}^*)$ are parameterised by $\mathscr{M}(\bar{A}_{\mathscr{F}})$. The following theorem proves the conjecture stated by Kawanaka in \cite[2.4.5]{Ka87} under the assumption that an admissible covering of Lusztig's canonical quotient exists.

\begin{theorem}\label{thm:kawanakaconj}
Recall that $\mathscr{F} \in \Fam(W)^F$ is a family and $(A,\lambda)$ is an admissible covering of Lusztig's canonical quotient $\bar{A}_{\mathscr{F}}$ of $A_{\bG}(u)$ with $u \in \mathcal{O}_{\mathscr{F}}^F$. We will additionally assume that either:
\begin{enumerate}
 \item $A$ is abelian,
 \item or $A \cong \bar A_{\mathscr{F}}$.
\end{enumerate}
If $q > q_0(\bG)$ then given $[a,\psi] \in \mathscr{M}(A)$, the character $K_{[a,\psi]}$ has at most one unipotent constituent in $\Uch(\mathscr{F}^*)$ and it occurs with multiplicity at most one. Furthermore, every unipotent character in $\Uch(\mathscr{F}^*)$ occurs in some $K_{[a,\psi]}$.
\end{theorem}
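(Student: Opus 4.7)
The plan is to compute the projection of $K_{[a,\psi]}$ onto $\mathbb{C}\cdot\Uch(\mathscr{F}^*)$ by first Fourier-inverting to the $F_{[b,\phi]}$, applying \cref{prop:fouriermultiplicities}, and then translating back to unipotent characters via Alvis--Curtis duality combined with Shoji's theorem.

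First, under the assumption $q > q_0(\bG)$, Shoji's theorem identifies the normalised characteristic functions $\{\chi_{[c,\chi]}\}_{[c,\chi]\in\mathscr{M}(\bar A_{\mathscr{F}})}$ of character sheaves in $\USh(\mathscr{F})$ as a second basis of $\mathbb{C}\cdot\Uch(\mathscr{F})$, the change-of-basis matrix being Lusztig's Fourier matrix for $\bar A_{\mathscr{F}}$ up to a diagonal of roots of unity. Since Alvis--Curtis duality restricts to a signed bijection $\Uch(\mathscr{F})\xrightarrow{\sim}\Uch(\mathscr{F}^*)$, the system $\{D_\bG(\chi_{[c,\chi]})\}_{[c,\chi]\in\mathscr{M}(\bar A_{\mathscr{F}})}$ is then a basis of $\mathbb{C}\cdot\Uch(\mathscr{F}^*)$. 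Determining the projection of $K_{[a,\psi]}$ onto this subspace thus reduces to showing that the scalar products $\langle K_{[a,\psi]}, D_\bG(\chi_{[c,\chi]})\rangle_G$, viewed as a function of $[c,\chi]\in\mathscr{M}(\bar A_{\mathscr{F}})$, are -- up to Shoji's diagonal of roots of unity -- proportional to a single row of Lusztig's Fourier matrix for $\bar A_{\mathscr{F}}$.

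Second, I would Fourier-invert the Kawanaka side using that the Fourier matrix for $A$ is (essentially) an involution: one writes $K_{[a,\psi]}$ as a linear combination of the $F_{[b,\phi]}$ whose coefficients are entries of Lusztig's Fourier matrix for $A$. Substituting this into $\langle K_{[a,\psi]}, D_\bG(\chi_{[c,\chi]})\rangle_G$ and applying \cref{prop:fouriermultiplicities} reduces the scalar product to a sum over $[b,\phi]\in\mathscr{M}(A)$ whose first coordinate maps to $c$ in $\bar A_{\mathscr{F}}$, with summand proportional to $\{[b,\phi],[a,\psi]\}\cdot\varepsilon(b)\cdot\langle\phi,\widetilde\chi\rangle_{C_A(b)}$, where $\widetilde\chi$ is the inflation of $\chi$ through $C_A(b)\twoheadrightarrow C_{\bar A_{\mathscr{F}}}(c)$. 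In case (ii), when $A\cong\bar A_{\mathscr{F}}$, the inner orthogonality $\langle\phi,\widetilde\chi\rangle_{C_A(b)}=\delta_{[b,\phi],[c,\chi]}$ isolates a single term, yielding an $\varepsilon$-twisted entry of Lusztig's matrix for $\bar A_{\mathscr{F}}$; comparing this with the Shoji expansion of $\rho_{[d,\eta]}$ from the first step and invoking orthogonality of Fourier matrices pins down the unique unipotent constituent $\rho_{[d,\eta]}$. In case (i), when $A$ is abelian, the inner sum collapses to the single character $\phi=\widetilde\chi$ and the outer sum runs over a single coset of $Z=\ker(A\twoheadrightarrow\bar A_{\mathscr{F}})$; a further orthogonality argument over $Z$ then isolates a single unipotent character. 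Surjectivity of the induced map $[a,\psi]\mapsto\rho_{[a,\psi]}$ follows from \cref{eq:GGGCs-sum-Kaw} together with the standard fact that every unipotent character pairs non-trivially with some GGGC.

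The main obstacle is the combinatorics of the signs and roots of unity: Shoji's theorem introduces a diagonal of roots of unity comparing Lusztig's normalisation with \cref{eq:valuecharsheaf}; the Weil-sign character $\varepsilon$ of \cref{lem:weil-signs} contributes additional signs; and the factor $\zeta_{[c,\chi]}$ appearing in \cref{prop:fouriermultiplicities} is a third root of unity depending on the cuspidal datum of the underlying character sheaf. Verifying that these three sources conspire to produce an overall $\pm 1$, so that the unique surviving constituent has multiplicity exactly one rather than some non-integer root of unity, is the technical core of the argument. A subsidiary point, anticipated by the remark following \cref{thm:mainB}, is that the resulting bijection $[a,\psi]\leftrightarrow\rho_{[a,\psi]}$ is intrinsic only up to the involution of $\mathscr{M}(\bar A_{\mathscr{F}})$ induced by multiplication by $\varepsilon$, and so need not coincide with Lusztig's own parametrisation of $\Uch(\mathscr{F}^*)$.
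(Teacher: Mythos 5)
Your plan is essentially the paper's proof --- Fourier-invert $K_{[a,\psi]}$ into the $F_{[b,\phi]}$, apply \cref{prop:fouriermultiplicities} and \cref{eq:GGGCs-sum-Kaw} --- but the paper takes a shorter route that sidesteps the sign-and-root-of-unity bookkeeping you flag as the "technical core." Instead of computing all scalar products $\langle K_{[a,\psi]}, D_\bG(\chi_{[c,\chi]})\rangle_G$ and then comparing with a row of the Fourier matrix, the paper passes directly to the projection $\widehat K_{[a,\psi]}$ of $K_{[a,\psi]}$ onto the span of $\Uch(\mathscr{F}^*)$ and computes the \emph{norm} $\langle\widehat K_{[a,\psi]},\widehat K_{[a,\psi]}\rangle_G$. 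When $A\cong\bar A_{\mathscr{F}}$, the family $\{\widehat F_{[b,\phi]}\}$ is orthonormal by \cref{prop:fouriermultiplicities}, and unitarity of the Fourier involution then gives $\|\widehat K_{[a,\psi]}\|^2 = \|\widehat F_{[a,\psi]}\|^2 = 1$ directly, with no sign-tracking. When $A$ is abelian, the paper expands $\|\widehat K_{[a,\psi]}\|^2$ as a sum of $|A|^2$ roots of unity divided by $|A|^2$, so the triangle inequality yields $\|\widehat K_{[a,\psi]}\|^2 \le 1$. That is exactly the assertion "at most one constituent, at most multiplicity one."

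Two concrete issues with your version. First, you claim in case (i) that a "further orthogonality argument over $Z$ then \emph{isolates} a single unipotent character." This is strictly more than the theorem asserts and more than the method proves: the terms $\zeta_{[b,\phi]}\zeta_{[bz,\phi]}^{-1}\varepsilon(b)\varepsilon(bz)$ for $z\in Z$ are a priori arbitrary roots of unity, and without knowing that $\zeta_{[b,\phi]}=\zeta_{[bz,\phi]}$ and $\varepsilon(b)=\varepsilon(bz)$ one cannot rule out cancellation (so the projection could even be zero). The remark immediately following \cref{thm:kawanakaconj} addresses exactly this. Second, you describe verifying that the "three sources of roots of unity conspire to produce an overall $\pm 1$" as the technical core; in fact the argument works \emph{because} one never needs such a conspiracy --- roots of unity have modulus one, so the norm bound follows without computing them. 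Pinning down \emph{which} unipotent character is the constituent (the content of your Shoji/change-of-basis discussion) would indeed require that finer control, but it is not needed for the theorem and the paper does not attempt it.

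Your surjectivity argument via \cref{eq:GGGCs-sum-Kaw} and the fact that every element of $\Uch(\mathscr{F}^*)$ appears in some $\Gamma_{u_a}$ is correct and is the paper's argument.
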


\begin{proof} We start by recalling from \cref{ssec:kawnakadef} that the generalised Gelfand-Graev character associated to $u_a$ is 
$$\Gamma_{u_a}  = \sum_{\psi \in \Irr(C_A(a))} \psi(1) K_{[a,\psi]}.$$
Since every unipotent character of $\mathscr{F}$ occurs in $\Gamma_{u_a}$ for some $a \in A$ by \cite[Prop.~15.4]{Tay16}, we deduce that every unipotent character of $\mathscr{F}$ occurs in some $K_{[a,\psi]}$. 

Now let $\widehat K_{[a,\psi]}$, resp., $\widehat F_{[b,\phi]}$, be the projection of $K_{[a,\psi]}$, resp., $F_{[b,\phi]}$, on the space spanned by the unipotent characters in $\Uch(\mathscr{F}^*)$. It is also the span of the Alvis--Curtis dual of the characteristic functions of character sheaves with unipotent support $\Cl_\bG(u)$. Recall from \cref{def:admissiblesplitting} that $Z$ denotes the kernel of the surjective map $A \twoheadrightarrow \bar A$. Then \cref{prop:fouriermultiplicities} shows that given $[b,\phi] \in \mathscr{M}( A)$ we have  
\begin{equation}\label{eq:hatF}
\widehat F_{[b,\phi]} = \left\{ \begin{array}{ll} 0 & \text{if $Z \not\subset \Ker(\phi)$}, \\
\varepsilon(b)\zeta_{[b,\phi]} D_G(\chi_{[\bar b,\bar\phi]}) & \text{otherwise}, \end{array}\right.
\end{equation}
where $\zeta_{[b,\phi]}$ is some root of unity and $\bar \phi$ is the irreducible character of $C_{\bar A}(\bar b) \simeq C_A(b)/Z$ induced by $\phi$ (see \cref{rmk:K3-and-centralisers} for the latter isomorphism). 

Let us first assume that $\bar A = A$, i.e., that $Z=1$. Then by \cref{eq:hatF} the family  $\{\widehat F_{[b,\phi]}\}_{[b,\phi] \in \mathscr{M}(A)}$ is an orthonormal family of class functions. Since the Fourier transform on $\mathscr{M}(A)$ is a unitary involution, we get
$$\langle \widehat K_{[a,\psi]} , \widehat K_{[a,\psi]} \rangle_G = \langle \widehat F_{[a,\psi]} , \widehat F_{[a,\psi]} \rangle_G = 1$$
which shows that $\widehat K_{[a,\psi]}$ is an irreducible unipotent character.

Now assume that $A$ is abelian. In that case the class function  $K_{[a,\psi]}$ can be written
$$\begin{aligned} 
 \widehat K_{[a,\psi]} & =  \frac{1}{|A|}\sum_{(b, \phi)\in \mathscr{M}(A)} \psi(b) \overline{\phi(a)}  \widehat F_{[b,\phi]} \\ 
& = \frac{1}{|A|}\sum_{\begin{subarray}{c} (b, \phi) \in \mathscr{M}(A) \\ 
Z \subset \Ker(\phi) \end{subarray}} \psi(b) \overline{\phi(a)}  \varepsilon(b)\zeta_{[b,\phi]} D_G(\chi_{[\bar b,\bar\phi]}).
\end{aligned}
$$
We deduce that the inner product of $\widehat K_{[a,\psi]}$ with itself is given by
$$\begin{aligned} 
 \langle \widehat K_{[a,\psi]} , \widehat K_{[a,\psi]} \rangle_G & = \frac{1}{|A|^2}\sum_{\begin{subarray}{c} (b, \phi), (b', \phi') \in \mathscr{M}(A)  \ \\ 
Z \subset \Ker(\phi) \cap  \Ker(\phi') \\
(\bar b,\bar \phi) = (\bar b', \bar \phi') \end{subarray}} \psi(b)\overline{\psi(b')}\overline{\phi(a)}\phi'(a)  \varepsilon(b)\varepsilon(b')\zeta_{[b,\phi]} \zeta_{[b',\phi']}^{-1}\\
& =  \frac{1}{|A|^2}\sum_{\begin{subarray}{c} [b, \phi] \in \mathscr{M}(A)  \ \\
z \in Z \\ 
Z \subset \Ker(\phi) \end{subarray}} \psi(z)  \varepsilon(b)\varepsilon(bz)\zeta_{[b,\phi]} \zeta_{[bz,\phi]}^{-1}\\ \end{aligned}$$
where we have used that the irreducible characters of $A$ are linear since $A$ is abelian. Each term in the previous sum is a root of unity, and there are $|A| |A/Z| |Z| = |A|^2$ terms in the sum. Therefore 
 $\langle \widehat K_{[a,\psi]} , \widehat K_{[a,\psi]} \rangle_G \leq 1$, and at most one unipotent character occurs in  $\widehat K_{[a,\psi]}$, and it occurs with multiplicity at most one.
\end{proof}

\begin{rmk} If one can prove that  $\zeta_{[b,\phi]} =  \zeta_{[bz,\phi]}$ and $\varepsilon(b)=\varepsilon(bz)$ for every $z \in Z$ then one does not need the extra assumption on $A$ given in \cref{thm:kawanakaconj}. Furthermore in that case one can show that $\widehat K_{[a,\psi]}$ is either zero if $Z \nsubseteq \Ker(\psi)$ or is a single unipotent character otherwise. 
\end{rmk}

\part{Admissible coverings for simple groups}

\section{Admissible Coverings for Classical Groups}\label{ssec:symplectic}
\begin{assumption}
In this section we assume that $\Char(K) = p \neq 2$.
\end{assumption}

\subsection{Notation for classical groups}\label{subsec:notation-classical}
Assume $\mathbb{F}$ is a field with $\Char(\mathbb{F}) \neq 2$ and $V$ is a finite dimensional $\mathbb{F}$-vector space then we denote by $\GL(V)$ the general linear group of invertible $\mathbb{F}$-linear transformations $V \to V$. As usual $\SL(V) = \{f \in \GL(V) \mid \det(f) = 1\} \leqslant \GL(V)$ denotes the special linear group. If $g \in \GL(V)$ and $\eta \in \mathbb{F}^{\times}$ then we denote by $V_{\eta}(g) = \{v \in V \mid gv = \eta v\}$ the $\eta$-eigenspace of $g$.

We have an action of $\GL(V)$ on the bilinear forms $\mathcal{B} : V \times V \to \mathbb{F}$ defined by ${}^g\mathcal{B}(v,w) = \mathcal{B}(gv,gw)$ for all $g \in \GL(V)$ and $v,w \in V$. The stabiliser of $\mathcal{B}$ under this action is denoted by $\GL(V\mid\mathcal{B})$. We also set $\SL(V\mid\mathcal{B}) = \SL(V) \cap \GL(V\mid\mathcal{B})$. If $\mathcal{B}$ is a non-degenerate symmetric bilinear form then we also write $\Or(V) = \Or(V\mid\mathcal{B}) = \GL(V\mid\mathcal{B})$ and $\SO(V) = \SO(V\mid\mathcal{B}) = \SL(V\mid\mathcal{B})$ for the corresponding orthogonal and special orthogonal groups. If $\mathcal{B}$ is a non-degenerate alternating bilinear form then we denote by $\Sp(V) = \Sp(V \mid\mathcal{B}) = \SL(V \mid\mathcal{B}) = \GL(V \mid\mathcal{B})$ the corresponding symplectic group.

\subsection{Partitions and unipotent classes of classical groups}\label{subsec:combinatorics}
Recall that a \emph{partition} is a (possibly empty) finite sequence $(\mu_1,\dots,\mu_r)$ of positive integers $\mu_1 \geqslant \cdots \geqslant \mu_r > 0$. If $\mu$ is empty then we define $|\mu| = 0$ otherwise we define $|\mu| = \mu_1+\cdots+\mu_r$. If $N \geqslant 0$ is an integer then we denote by $\mathcal{P}(N)$ the set of all \emph{partitions of $N$}, i.e., the set of all partitions $\mu$ satisfying $|\mu|=N$. If $V$ has dimension $N$ then the unipotent classes of $\GL(V)$ are labelled by  $\mathcal{P}(N)$ according to the Jordan type of the unipotent elements.

The class corresponding to $\mu \in \mathcal{P}(N)$ will be denoted by $\mathcal{O}_\mu$. For example, we have $\mathcal{O}_{(1^N)}$ is the trivial class $\{1\}$ and $\mathcal{O}_{(N)}$ is the regular unipotent  class. We recall that there is a natural partial order $\unlhd$ on the set $\mathcal{P}(n)$ given by the \emph{dominance} ordering. Under the above bijection we have $\unlhd$ coincides with the partial order $\preceq$ on unipotent classes introduced in \cref{sec:introduction}. In other words,
$$ \mu \unlhd \nu \iff \mathcal{O}_\mu \subseteq  \overline{\mathcal{O}_\nu}.$$

Given a partition $\mu$ of $N$ and $m \in \{1,\ldots,N\}$ we will write $r_m(\mu) := \#\{1 \leqslant j \leqslant r \mid \mu_j = m\}$ for the number of parts of $\mu$ equal to $m$. 
If the partition in question is clear then we simply write $r_m$ instead of $r_m(\mu)$, so that
$\mu$ can be written $\mu = (N^{r_N}, \dots ,2^{r_2}, 1^{r_1})$. We denote by $\mathcal{P}_{\varepsilon}(N) \subseteq \mathcal{P}(N)$ the set of partitions $\mu$ for which $r_m \equiv 0 \pmod{2}$ for all $1 \leqslant m \leqslant N$ such that $(-1)^m = \varepsilon$. The partition $\mu \in \mathcal{P}_1(N)$ is said to be \emph{very even} if each part $\mu_i$ is even.

Assume $V$ is endowed with a non-degenerate alternating bilinear form. For any $\mu \in \mathcal{P}_{-1}(N)$ we have $\Sp(V) \cap \mathcal{O}_{\mu} \neq \emptyset$. When $\mathbb{F}$ is algebraically closed the map $\mu \mapsto \Sp(V) \cap \mathcal{O}_{\mu}$ gives a bijection between $\mathcal{P}_{-1}(N)$ and the unipotent classes of $\Sp(V)$. We will denote again by $\mathcal{O}_{\mu}$ the intersection $\Sp(V) \cap \mathcal{O}_{\mu}$ when $\mathbb{F}$ is algebraically closed. Assuming now $V$ is endowed with a non-degenerate symmetric bilinear form then the same statements hold if we replace $\mathcal{P}_{-1}(N)$ with $\mathcal{P}_1(N)$ and $\Sp(V)$ with $\Or(V)$. We again denote by $\mathcal{O}_{\mu}$ the intersection.

Now consider the case of $\SO(V)$. Still assuming $\mathbb{F}$ is algebraically closed we have the intersection $\mathcal{O}_{\mu} := \Or(V) \cap \mathcal{O}_{\mu}$ is a single $\SO(V)$ class unless $\mu$ is very even in which case $\mathcal{O}_{\mu} =\mathcal{O}_{\mu}^+ \sqcup \mathcal{O}_{\mu}^-$ is a disjoint union of two $\SO(V)$ conjugacy classes. We will not need to differentiate between these two classes here. To unify the notation we set $\mathcal{O}_{\mu}^+ = \mathcal{O}_{\mu}^- = \mathcal{O}_{\mu}$ when $\mu$ is not very even.

\subsection{Lifting component groups}\label{subsec:component-groups}

The group $\Sp(V)$ has no torsion primes. Therefore by \cite[Thm.~II.5.8]{SpSt} every finite subgroup of commuting semisimple elements lies in a maximal torus of $\Sp(V)$. An extra assumption is needed for orthogonal groups.

\begin{lem}\label{lem:con-comp-cent}
Assume $\mathcal{B}$ is a non-degenerate symmetric bilinear form on $V$ and let $\bG = \SL(V\mid \mathcal{B}) = \SO(V)$. Furthermore, we assume $\bA = \langle a_1,\dots,a_k\rangle \leqslant \bG$ is a subgroup generated by commuting semisimple elements $a_i$ of $\bG$ satisfying $a_i^2 = \Id_V$. If $V_{-1}(a_i)\cap V_{-1}(a_j) = \{0\}$ for all $i \neq j$ then there exists a torus $\bS \leqslant \bG$ containing $\bA$.
\end{lem}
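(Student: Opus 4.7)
The plan is to build a maximal torus of $\bG = \SO(V)$ containing $\bA$ by finding an orthogonal decomposition of $V$ into non-degenerate summands adapted to $\bA$. Since the generators $a_1,\dots,a_k$ pairwise commute and satisfy $a_i^2 = \Id_V$, the group $\bA$ is abelian of exponent dividing $2$, and we obtain a simultaneous eigenspace decomposition
\begin{equation*}
V \;=\; \bigoplus_{\varepsilon \in \{\pm 1\}^k} V_\varepsilon, \qquad V_\varepsilon \;:=\; \bigcap_{i=1}^k V_{\varepsilon_i}(a_i).
\end{equation*}
Because each $a_i \in \Or(V)$ has order dividing $2$ and $\Char(\mathbb{F}) \neq 2$, the $(+1)$- and $(-1)$-eigenspaces of $a_i$ are $\mathcal{B}$-orthogonal (if $\mathcal{B}(v,w) = \mathcal{B}(a_iv,a_iw) = -\mathcal{B}(v,w)$, then $\mathcal{B}(v,w)=0$). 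It follows that the above decomposition of $V$ into joint eigenspaces is orthogonal.

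The hypothesis now enters: if $\varepsilon \in \{\pm 1\}^k$ has $-1$ in two distinct positions $i \neq j$, then $V_\varepsilon \subseteq V_{-1}(a_i) \cap V_{-1}(a_j) = \{0\}$. Hence the only nonzero joint eigenspaces correspond to $\varepsilon = (+1,\dots,+1)$ and to the tuples $\varepsilon^{(i)}$ having $-1$ only in position $i$. Writing $W_0 := V_{(+1,\dots,+1)}$ and $W_i := V_{\varepsilon^{(i)}}$ for $1 \leqslant i \leqslant k$, we get an orthogonal decomposition
\begin{equation*}
V \;=\; W_0 \oplus W_1 \oplus \cdots \oplus W_k,
\end{equation*}
preserved by $\bA$, with $a_i$ acting as $-\Id$ on $W_i$ and as $+\Id$ on $W_j$ for all $j \neq i$ (and trivially on $W_0$). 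Each summand inherits a non-degenerate symmetric bilinear form, and computing $1 = \det(a_i) = (-1)^{\dim W_i}$ forces $\dim W_i$ to be even for every $1 \leqslant i \leqslant k$ (so $W_0$ is odd-dimensional iff $\dim V$ is odd).

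Via the natural embedding $\SO(W_0) \times \SO(W_1) \times \cdots \times \SO(W_k) \hookrightarrow \SO(V)$, I now choose for each $0 \leqslant i \leqslant k$ a maximal torus $\bS_i \leqslant \SO(W_i)$. Since $\dim W_i$ is even for $i \geqslant 1$, the element $-\Id_{W_i}$ lies in every maximal torus of $\SO(W_i)$ (in the standard hyperbolic realisation it corresponds to setting all coordinates to $-1$). The product $\bS := \bS_0 \times \bS_1 \times \cdots \times \bS_k$ is a torus of $\SO(V)$ of rank $\sum_{i=0}^k \lfloor \dim W_i/2 \rfloor = \lfloor \dim V / 2\rfloor$, hence a maximal torus, and by construction each $a_i$ equals the element whose $W_i$-component is $-\Id_{W_i}$ and whose other components are trivial, so $a_i \in \bS$ and therefore $\bA \leqslant \bS$. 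The only delicate step is the reduction to the star-shaped decomposition $V = W_0 \oplus W_1 \oplus \cdots \oplus W_k$; once the hypothesis is used to kill the joint eigenspaces with two or more $-1$'s, everything else is a direct construction using that $-\Id$ sits inside any maximal torus of an even-dimensional special orthogonal group.
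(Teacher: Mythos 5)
Your argument is correct and follows essentially the same route as the paper's: both exploit the hypothesis to reduce to the orthogonal decomposition $V = W_0 \oplus V_{-1}(a_1) \oplus \cdots \oplus V_{-1}(a_k)$, observe each $V_{-1}(a_i)$ is even-dimensional because $a_i \in \SO(V)$, note that $a_i$ acts on each summand by $\pm\Id$ and hence centrally in the corresponding $\SO$, and then take a product of maximal tori. The paper phrases this as "pick any maximal torus in $C_{\bG}^\circ(\bA)$" once it has identified $C_{\bG}^\circ(\bA)$ as a product of special orthogonal groups, whereas you spell out the same torus directly; the content is identical.
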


\begin{proof}
Let $\overline{\bG} = \Or(V)$. By assumption we have a basis $(v_1,\dots,v_n)$ of $V$ such that each $v_i$ is an eigenvector of each $a_j$. If $W = V_1(a_1)\cap \cdots \cap V_1(a_k)$ then our assumptions imply that we have an orthogonal decomposition
\begin{equation*}
V = W \oplus V_{-1}(a_1) \oplus \cdots \oplus V_{-1}(a_k).
\end{equation*}
In particular the restriction of $\mathcal{B}$ to each of these summands is non-degenerate.
Clearly any element $f \in C_{\overline{\bG}}(\bA)$ must preserve this subspace decomposition. So we have
\begin{equation*}
C_{\overline{\bG}}(\bA) = \Or(W) \times \Or(V_{-1}(a_1)) \times \cdots \times \Or(V_{-1}(a_k))
\end{equation*}
and $C_{\bG}^{\circ}(\bA) = C_{\overline{\bG}}^{\circ}(\bA)$ is a corresponding direct product of special orthogonal groups. Now $a_i|_{V_{-1}(a_j)} = (-1)^{\delta_{i,j}}\Id_{V_{-1}(a_j)} \in Z(\SO(V_{-1}(a_j)))$ because $V_{-1}(a_j)$ has even dimension as $a_j \in \SO(V)$. We may therefore pick any maximal torus $\bS$ in $C_{\bG}^\circ(\bA)$.
\end{proof}

\subsection{Nilpotent Orbits}\label{sec:nil-orbit-classical}
For the rest of this section we will assume that $V_0$ is an $\mathbb{F}_q$-vector space of dimension $N$ and $\mathcal{B} : V_0 \times V_0 \to \mathbb{F}_q$ is a non-degenerate bilinear form for which there exists a sign $\varepsilon \in \{\pm 1\}$ such that
\begin{equation}\label{eq:alt-sym-cond}
\mathcal{B}(v,w) = \varepsilon\mathcal{B}(w,v)\qquad\text{for all }v,w \in V_0.
\end{equation}
In other words, $\mathcal{B}$ is alternating if $\varepsilon = -1$ and symmetric if $\varepsilon = 1$. Extending scalars we obtain a $K$-vector space $V = K \otimes_{\mathbb{F}_q} V_0$ and a non-degenerate bilinear form $\mathcal{B} : V \times V \to K$. We set $\bG = \SL(V\mid\mathcal{B})$ and $\overline{\bG} = \GL(V\mid\mathcal{B})$.

The vector space $V$ has a natural Frobenius endomorphism $F : V \to V$ satisfying $F(k\otimes v) = k^q\otimes v$ on simple tensors. We have the fixed point space $V^F$ is an $\mathbb{F}_q$-vector space naturally identified with $V_0$. From $F$ we obtain Frobenius endomorphisms $F :\bG \to \bG$ and $F : \overline{\bG} \to \overline{\bG}$ given by precomposing with the Frobenius on $V$. It is clear that we have $\bG^F \cong \SL(V_0 \mid \mathcal{B})$ and $\overline{\bG}^F \cong \GL(V_0 \mid \mathcal{B})$.

We fix a Springer isomorphism $\phi_{\spr} : \mathcal{U}(\bG) \to \mathcal{N}(\bG)$ which gives a bijection $\mathcal{U}(\bG)/\bG \to \mathcal{N}(\bG)/\bG$, see \cref{thm:springer-morphism}. Recall that we have a bijection $\mathcal{P}_{\varepsilon}(N) \to \mathcal{U}(\bG)/\bG$ defined as in \cref{subsec:combinatorics}. If $\mu \in \mathcal{P}_{\varepsilon}(N)$ and $e \in \phi_{\spr}(\mathcal{O}_{\mu}^{\pm})$ then the Jordan form of $e$ is also described by $\mu$. This follows from \cref{thm:springer-morphism} as there is at least one Springer isomorphism for which this is true. For instance, one can take a Cayley map as in \cite[Thm.~III.3.14]{SpSt}.

Following \cite{Jan04} we describe how to get a representative of the nilpotent orbit $\phi_{\spr}(\mathcal{O}_{\mu})$. Let $\mu = (\mu_1 \geqslant \cdots \geqslant \mu_r > 0)$. We fix a basis
\begin{equation*}
\mathcal{V} = (v_{s,i}^{(m)} \mid 1 \leqslant m \leqslant N\text{ with }r_m \neq 0\text{ and } 1 \leqslant s \leqslant r_m, 1 \leqslant i \leqslant m)
\end{equation*}
of $V$ and set $V_s^{(m)} = \Span_K\{v_{s,i}^{(m)} \mid 1 \leqslant i \leqslant m\}$ and $V^{(m)} = \bigoplus_{s=1}^{r_m} V_s^{(m)}$. If $r_m = 0$ then we implicitly assume that $V_s^{(m)} = V^{(m)} = \{0\}$. We define a bijection $\bar{\phantom{x}} : \mathcal{V} \to \mathcal{V}$ by setting
\begin{equation*}
\bar{v}_{s,i}^{(m)} = \begin{cases}
v_{s,m+1-i}^{(m)} &\text{if }(-1)^m = -\varepsilon\\
v_{r_m+1-s,m+1-i}^{(m)} &\text{if }(-1)^m = \varepsilon.
\end{cases}
\end{equation*}
Note we have $r_m$ is even if $(-1)^m = \varepsilon$. Following \cite[1.11]{Jan04} we may choose the basis $\mathcal{V}$ such that
\begin{equation}\label{eq:basis-prop}
\mathcal{B}(v_{s,i}^{(m)},\bar{v}_{t,j}^{(n)}) = \varepsilon\mathcal{B}(\bar{v}_{t,j}^{(n)},v_{s,i}^{(m)}) = (-1)^{i+1}\delta_{i,j}\delta_{s,t}\delta_{m,n}
\end{equation}
where the indices run over all possible relevant values.

Now, let $e_s^{(m)} \in \lie{gl}(V_s^{(m)})$ be the nilpotent element defined such that
\begin{equation*}
e_s^{(m)}v_{s,i}^{(m)} = \begin{cases}
v_{s,i+1}^{(m)} &\text{if }1 \leqslant i < m\\
0 &\text{if }i = m.
\end{cases}
\end{equation*}
Up to permuting the basis elements, the matrix of $e_s^{(m)}$ is a single Jordan block of size $m$. Let $\lie{g} = \Lie(\bG)$. We have $e_{\mu} = \sum_{m=1}^N\sum_{s=1}^{r_m} e_s^{(m)} \in \lie{g}$ is a nilpotent element whose Jordan normal form is parameterised by $\mu$, see \cite[1.11]{Jan04}.

We now define some elements that will be used below in the construction of admissible splittings. 
Let $\mathcal{I} = \{1\leqslant k \leqslant N \mid r_k \neq 0$ and $(-1)^k =-\varepsilon\}$. For each $k \in \mathcal{I}$ and  $1 \leqslant s \leqslant r_k$ the bilinear form  $\mathcal{B}$ restricts to a non-degenerate bilinear form on $V_s^{(k)}$ by \cref{eq:basis-prop} (with the same $\varepsilon$). Therefore we can form the involution  $a_s^{(k)} \in \prod_{m \in \mathcal{I}}\prod_{t=1}^{r_m} \GL(V_t^{(m)} \mid \mathcal{B}) \leqslant \overline{\bG}$  defined such that
\begin{equation*}
a^{(k)}_s|_{V_t^{(m)}} = (-1)^{\delta_{s,t}\delta_{k,m}}\Id_{V_t^{(m)}}
\end{equation*}
for all $m \in \mathcal{I}$ and $1 \leqslant t \leqslant r_m$. It is clear from the definition that $a_s^{(k)} \in C_{\overline{\bG}}(e_{\mu})$ and $\det(a_s^{(k)}) = (-1)^k$ so $a_s^{(k)} \in \bG$ if and only if $k$ is even, i.e., if and only if $\varepsilon = -1$. Note that these involutions pairwise commute. 

\begin{rem}\label{rem:F-fixed-rep}
As the $\mathbb{F}_q$-subspace $V^F \subseteq V$ contains a basis of $V$ there exists an element $g \in \GL(V)$ such that $g\mathcal{V} \subseteq V^F$ so the bilinear form ${}^g\mathcal{B}$ is $F$-fixed. This implies $h = g^{-1}F(g) \in \overline{\bG} = \GL(V \mid \mathcal{B})$. We have a corresponding Frobenius endomorphism $F' = h\circ F : V \to V$ and the nilpotent element $e_{\mu} \in \lie{g}^{F'}$ is $F'$-fixed.

As $\overline{\bG}^{\circ} = \bG$ we see that by replacing $g$ by $gx$, with $x \in \overline{\bG}$, we may replace $h$ by any element of the coset $h\bG$. With this remark we see that when $\varepsilon = -1$ we may assume $h=1$ because $\overline{\bG} = \bG$. This amounts to saying we may assume $\mathcal{V} \subseteq V^F$. Assume now $\varepsilon = 1$ and $\mu$ is not very even then there exists an odd integer $1 \leqslant k \leqslant N$ with $r_k(\mu) \neq 0$. As $k$ is odd the element $a_1^{(k)} \in C_{\overline{\bG}}(e_{\mu})$ is not contained in $\bG$ so we may assume that either $h = 1$ or $h = a_1^{(k)}$. In particular, as $h \in C_{\overline{\bG}}(e_{\mu})$ we get that $e_{\mu} \in \lie{g}^F$ and each $a_s^{(k)}$ is $F$-stable.
\end{rem}

\subsection{Centraliser in the Levi Factor}\label{sec:admissible-split-classical}
We continue with the setup in \cref{sec:nil-orbit-classical}. Following \cite[3.3, 3.4]{Jan04} we let $\mathcal{V}_i = (v_{s,j}^{(m)} \mid i = 2j-1-m)$, for $i \in \mathbb{Z}$, be part of the basis $\mathcal{V}$ and set $V(i) = \Span_K\mathcal{V}_i$. We then get a grading $V = \bigoplus_{i \in \mathbb{Z}} V(i)$ of $V$ and a corresponding grading $\lie{gl}(V) = \bigoplus_{i \in \mathbb{Z}} \lie{gl}(V,i)$ such that $e_{\mu} \in \lie{gl}(V,2)$. In turn, we get a grading $\lie{g} = \bigoplus_{i \in \mathbb{Z}} \lie{g}(i)$ by setting $\lie{g}(i) = \lie{gl}(V,i) \cap \lie{g}$. It follows from \cite[3.5, 5.4]{Jan04} and \cref{lem:assoc-dynkin-cocharacters} that there exists a cocharacter $\lambda \in \mathcal{D}_{e_\mu}(\bG)^F$ such that $\lie{g}(\lambda,i) = \lie{g}(i)$ for all $i \in \mathbb{Z}$, see also \cref{rem:F-fixed-rep}. If we set
\begin{equation*}
\overline{\bL}(\lambda) = \{g \in \overline{\bG} \mid g(V(i)) = V(i)\text{ for all }i \in \mathbb{Z}\}
\end{equation*}
then $\bL(\lambda) = \overline{\bL}(\lambda) \cap \bG$, see \cite[3.11]{Jan04}.

We now aim to describe more concretely the group $\bL(\lambda)$. By definition, if $g \in \bL(\lambda)$ then we get an element $g_i = g|_{V(i)} \in \GL(V(i))$ for any $i \in \mathbb{Z}$. If $V(i) \neq \{0\}$ then we have $\mathcal{B}(V(i),V(-i)) \neq 0$, see \cite[3.4(1)]{Jan04}, but $\mathcal{B}(V(i),V(i)) = \mathcal{B}(V(-i),V(-i)) = 0$. It follows that $\mathcal{B}$ restricts to a non-degenerate bilinear form on $V(i)\oplus V(-i)$ for any $i > 0$ and we have $g_{-i} = {}^{\text{tr}}g_i^{-1}$ is the inverse transpose of $g_i$, see \cite[4.5(1)]{Jan04}.

With this we see that an element $g \in \bL(\lambda)$ is uniquely determined by its restrictions $g|_{V(i)}$ for all $i \geqslant 0$. In other words, we have an isomorphism
\begin{equation}\label{eq:L-iso}
\begin{aligned}
\bL(\lambda) &\to \SL(V(0)\mid\mathcal{B}) \times \prod_{i > 0} \GL(V(i))\\
g &\mapsto (g|_{V(0)},g|_{V(1)},g|_{V(2)},\dots),
\end{aligned}
\end{equation}
see \cite[4.5]{Jan04}.

Given $1 \leq m \leq N$, we now let $W_m = \bigoplus_{s=1}^{r_m} Kv_{s,1}^{(m)}$ then certainly $\dim(W_m) = r_m$. As in \cite[3.7(4)]{Jan04} the space $W_m$ can be endowed with a non-degenerate bilinear form $\mathcal{B}_m : W_m\times W_m \to K$ defined by setting $\mathcal{B}_m(v,w) = \mathcal{B}(v,e_{\mu}^{m-1}w)$. According to \cite[3.8, Prop.~2]{Jan04} the map
\begin{equation}\label{eq:Ce-iso}
\begin{aligned}
C_{\overline{\bL}(\lambda)}(e_\mu) &\to \prod_{m=1}^N \GL(W_m\mid\mathcal{B}_m)\\
f &\mapsto (f|_{W_1},f|_{W_2},\dots,f|_{W_N})
\end{aligned}
\end{equation}
is an isomorphism of algebraic groups and we have
\begin{equation}\label{eq:Ce-iso-types}
\GL(W_m\mid\mathcal{B}_m) = \begin{cases}
\Or(W_m) &\text{if }(-1)^m = -\varepsilon,\\
\Sp(W_m) &\text{if }(-1)^m = \varepsilon.
\end{cases}
\end{equation}

Recall that $\mathcal{I} = \{1\leqslant k \leqslant N \mid r_k \neq 0$ and $(-1)^k =-\varepsilon\}$. The group $B = \langle a_s^{(k)} \mid k \in \mathcal{I}$ and $1 \leqslant s \leqslant r_k \rangle \leqslant C_{\overline{\bG}}(e_{\mu})$ is an elementary abelian $2$-group. It is clear from the definition that $a_s^{(k)}(V(i)) = V(i)$ for all $i \in \mathbb{Z}$ so $B \leqslant C_{\overline{\bL}(\lambda)}(e_{\mu})$. Moreover, the natural surjective map $C_{\overline{\bL}(\lambda)}(e_{\mu}) \to A_{\overline{\bL}(\lambda)}(e_{\mu}) = C_{\overline{\bL}(\lambda)}(e_{\mu})/C_{\overline{\bL}(\lambda)}^{\circ}(e_{\mu})$ restricts to a surjective map $B \to A_{\overline{\bL}(\lambda)}(e_{\mu})$. If $\bar{a}_s^{(k)}$ denotes the image of $a_s^{(k)}$ in $A_{\overline{\bL}(\lambda)}(e_{\mu})$ then we have $\bar{a}_s^{(k)} = \bar{a}_t^{(k)}$ for any $1 \leqslant s,t \leqslant r_k$. From this we see that the subgroup $A = \langle a_1^{(k)} \mid k \in \mathcal{I}\rangle \leqslant B$ maps isomorphically onto $A_{\overline{\bL}(\lambda)}(e_{\mu})$ by \cref{eq:Ce-iso,eq:Ce-iso-types}.

\begin{prop}\label{prop:admiss-split-symp}
Assume $\mathcal{B}$ is an alternating bilinear form so that $\bG = \Sp(V) = \SL(V\mid\mathcal{B})$ is a symplectic group. Any unipotent class $\mathcal{O}_{\mu} \in \mathcal{U}(\bG)/\bG$, with $\mu \in \mathcal{P}_{-1}(N)$ is $F$-stable. Moreover, there exists en element $u \in \mathcal{O}_{\mu}^F$, a finite $2$-group $A \leqslant C_{\bG}(u)^F$, and a cocharacter $\lambda \in \mathcal{D}_u(\bG)^F$, such that $(A,\lambda)$ is an admissible covering of $A_{\bG}(u)$.
\end{prop}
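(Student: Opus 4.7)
The plan is to make the construction from Sections 8.4 and 8.5 explicit and then check the four conditions \cref{en:K0}–\cref{en:K3} directly. In the symplectic case the situation is particularly clean because $\overline{\bG}=\bG$, so the involutions $a_s^{(k)}$ all live in $\bG$, and the group $A$ we will construct turns out to be isomorphic to the full component group $A_{\bG}(u)$.

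First I would set up the candidate. Since $\varepsilon=-1$, \cref{rem:F-fixed-rep} lets me choose the basis $\mathcal{V}\subseteq V^F$, so that the nilpotent element $e_{\mu}$ lies in $\lie{g}^F$, each involution $a_s^{(k)}$ is $F$-stable, and the grading cocharacter $\lambda\in\mathcal{D}_{e_\mu}(\bG)$ of \cref{sec:admissible-split-classical} is $F$-stable. Setting $u=\phi_{\spr}^{-1}(e_\mu)$, the $F$-equivariance of $\phi_{\spr}$ gives $u\in\mathcal{O}_{\mu}^F$; this also shows that $\mathcal{O}_{\mu}$ is $F$-stable (alternatively, $F$ preserves Jordan type, so acts trivially on the parametrisation by $\mathcal{P}_{-1}(N)$). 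Now take $A=\langle a_1^{(k)}\mid k\in\mathcal{I}\rangle$, which is an $F$-stable elementary abelian $2$-group contained in $C_{\overline{\bL}(\lambda)}(e_\mu)$. Since $\det(a_s^{(k)})=(-1)^k$ and $k$ is even for every $k\in\mathcal{I}$ (because $\varepsilon=-1$), we indeed have $A\leqslant \bG=\bL(\lambda)^F\cdot\bU(\lambda)^F$, and in fact $A\leqslant \bL(\lambda)^F$.

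Next I would verify \cref{en:K0}–\cref{en:K3}. Condition \cref{en:K0} is now immediate, and \cref{en:K1} holds because each generator is an involution and $p\neq 2$. For \cref{en:K2}, the key input is that $\bG=\Sp(V)$ has no torsion primes (\cref{subsec:component-groups}), so the abelian collection of commuting semisimple elements $A\cup\lambda(\mathbb{G}_m)$ lies in a common maximal torus $\bT$ of $\bG$; since $\bT$ centralises $\lambda(\mathbb{G}_m)$ we have $\bT\leqslant\bL(\lambda)$. As $A$ is abelian, $C_A(a)=A$ for all $a\in A$, and
\[
a\in A\subseteq\bT\subseteq C_{\bL(\lambda)}^{\circ}(A)=C_{\bL(\lambda)}^{\circ}(C_A(a)),
\]
which is precisely \cref{en:K2} (the argument being exactly that of \cref{rem:in-max-torus}). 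For \cref{en:K3}, the paragraph before the proposition, combined with \cref{prop:bumper-para-levi}\,(ii) and the identification $\overline{\bL}(\lambda)=\bL(\lambda)$ in the symplectic case, shows that the natural map $A\to A_{\bL(\lambda)}(e_\mu)\simto A_{\bG}(u)$ is an isomorphism. Hence we may take $\bar A=A_{\bG}(u)$ with $Z=\{1\}$; the condition $Z\cap[A,A]=\{1\}$ is trivial.

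Essentially no real obstacle remains here: everything has been arranged by \cref{sec:nil-orbit-classical,sec:admissible-split-classical}, and the symplectic case is the easiest case because $\overline{\bG}=\bG$ (so no determinant condition to fight with) and $A$ already realises the entire component group. The genuinely subtle work—matching the subgroup $A$ against a \emph{proper} quotient $\bar A_{\mathscr F}$ of $A_{\bG}(u)$, and doing so inside $\SO(V)$ where the involutions $a_s^{(k)}$ with $k$ odd do not belong to $\bG$—is postponed to the orthogonal analogue \cref{prop:admissible-splitting-ortho} and the passage to adjoint groups via \cref{lem:coveringforGad}.
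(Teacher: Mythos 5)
Your proposal is correct and tracks the paper's construction closely up to the verification of \cref{en:K2}, where you take a genuinely different route. The paper shows directly, using the isomorphism $\bL(\lambda)\simto \Sp(V(0))\times\prod_{i>0}\GL(V(i))$ of \cref{eq:L-iso} and the observation that $a_1^{(k)}|_{V(0)}=\Id_{V(0)}$ for even $k$, that $A$ lands inside the explicit diagonal torus $\bS_1\times\bS_2\times\cdots$, and then invokes \cref{rem:in-max-torus}. You instead appeal to the torsion-prime criterion of \cref{subsec:component-groups}: since $\Sp(V)$ has no torsion primes, $A$ sits inside a maximal torus, and one wants this torus to lie in $\bL(\lambda)$. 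As phrased, your claim that ``$A\cup\lambda(\mathbb{G}_m)$ lies in a common maximal torus'' overreaches the statement of \cite[Thm.~II.5.8]{SpSt}, which concerns \emph{finite} subgroups of commuting semisimple elements; the clean patch is to note that $A\leqslant C_\bG(\lambda)=\bL(\lambda)$, that any Levi of $\Sp(V)$ is a product of $\GL$'s and a smaller $\Sp$ and hence itself has no torsion primes, and then apply the finite version inside $\bL(\lambda)$ (which automatically places $\lambda(\mathbb{G}_m)\leqslant Z^\circ(\bL(\lambda))$ in the resulting torus). With this correction your argument is sound. It is worth noting why the paper opts for the explicit torus rather than the torsion-prime shortcut: the latter fails for $\SO(V)$, where $2$ is a torsion prime, which is why \cref{subsec:component-groups} needs \cref{lem:con-comp-cent} and \cref{prop:admissible-splitting-ortho} requires the more delicate choice of generators; the paper's explicit verification for $\Sp$ is the template that generalises, whereas your route is a shortcut available only in the symplectic case.
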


\begin{proof}
By \cref{rem:F-fixed-rep} the nilpotent element $e_{\mu}$ is $F$-fixed hence so is $u = \phi_{\spr}^{-1}(e_{\mu}) \in \mathcal{O}_{\mu}$. In particular $\mathcal{O}_{\mu}$ is $F$-stable. We take $A = \langle a_1^{(k)} \mid k \in \mathcal{I} \rangle \leqslant C_{\bL(\lambda)}(e_{\mu})^F = C_{\bL(\lambda)}(u)^F$, and $\lambda \in \mathcal{D}_u(\bG)^F$ to be as in the beginning of the section. Obviously $(A,\lambda)$ satisfies \cref{en:K0}, \cref{en:K1}, and \cref{en:K3}. We need only to check that \cref{en:K2} holds.

With this in mind let $\bS_i \leqslant \GL(V(i))$ be the diagonal maximal torus defined with respect to the basis $\mathcal{V}_i$. As $k \in \mathcal{I}$ is even we have $a_1^{(k)}|_{V(0)} = \Id_{V(0)}$ because any basis element $v_{s,j}^{(m)} \in \mathcal{V}_0$ has $m$ odd. It is therefore clear that the isomorphism in \cref{eq:L-iso} maps $\bA$ into the torus $\bS_1\times\bS_2\times\cdots$. That \cref{en:K2} holds follows from \cref{rem:in-max-torus}.
\end{proof}

If $\bG$ is a special orthogonal group then it is false, in general, that for a unipotent element $u \in \mathcal{U}(\bG)^F$ one can find an admissible covering for $A_{\bG}(u)$. The issue is showing that \cref{en:K2} holds, see \cref{lem:con-comp-cent}. As an example, consider the partition $\mu = (5,3,1)$. However, what we show now is that for special unipotent classes one can find an admissible covering for Lusztig's canonical quotient, which will be sufficient for our needs.

\begin{prop}\label{prop:admissible-splitting-ortho}
Assume $\mathcal{B}$ is a symmetric bilinear form so that $\bG = \SO(V) = \SL(V\mid\mathcal{B})$ is a special orthogonal group. Let $\mathcal{O}_{\mu} \in \mathcal{U}(\bG)/\bG$ be an $F$-stable \emph{special} unipotent class, with $\mu \in \mathcal{P}_1(N)$. Then there exists an element $u \in \mathcal{O}_{\mu}^F$, a finite $2$-group $A \leqslant C_{\bG}(u)^F$, and a cocharacter $\lambda \in \mathcal{D}_u(\bG)^F$ such that the pair $(A,\lambda)$ is an admissible covering of Lusztig's canonical quotient $\bar{A}$ of $A_{\bG}(u)$.
\end{prop}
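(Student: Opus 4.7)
The plan is to realise the admissible covering inside a maximal torus of $\bL(\lambda)$, so that condition \cref{en:K2} becomes automatic by \cref{rem:in-max-torus}, following the same overall strategy as in \cref{prop:admiss-split-symp}.

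First I will pick the nilpotent representative $e_\mu$, the associated Dynkin cocharacter $\lambda \in \mathcal{D}_{e_\mu}(\bG)^F$ and the unipotent representative $u = \phi_{\spr}^{-1}(e_\mu) \in \mathcal{O}_{\mu}^F$ constructed in \cref{sec:nil-orbit-classical}; by \cref{rem:F-fixed-rep} these can be made $F$-stable at the cost of possibly replacing $F$ by the twisted Frobenius $h\circ F$ associated to an odd part of $\mu$. Via the identifications \cref{eq:L-iso} and \cref{eq:Ce-iso} the Levi $\bL(\lambda)$ decomposes as $\SL(V(0)\mid\mathcal{B}) \times \prod_{i>0} \GL(V(i))$, while the full component group $A_{\overline{\bG}}(u) = A_{\overline{\bL}(\lambda)}(u)$ is carried to $\prod_{k \in \mathcal{I}} \bZ/2$, one factor for each odd $k$ with $r_k \neq 0$, generated by the images of the involutions $a_1^{(k)}$.

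Next I will appeal to the explicit combinatorial description of Lusztig's canonical quotient $\bar A_{\mathscr{F}}$ for a special partition in type $B$ or $D$: after reordering the odd parts of $\mu$ in the standard decreasing fashion, specialness forces these odd parts to organise into \emph{disjoint} pairs $(k_{2j-1}, k_{2j})$ (together with at most one unpaired part in type $B$), and $\bar A_{\mathscr{F}}$ is the elementary abelian $2$-group freely generated by the classes attached to these pairs. I will then take
\begin{equation*}
A := \langle g_j := a_1^{(k_{2j-1})} a_1^{(k_{2j})} \rangle \leqslant \overline{\bG}^F.
\end{equation*}
Each $g_j$ has determinant $(-1)\cdot(-1)=1$ and hence lies in $\bG$, while by construction the natural map $A \to \bar A_{\mathscr{F}}$ is surjective.

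The main obstacle is to check that $A$ is contained in a single $F$-stable maximal torus of $\bL(\lambda)$; once this is established, \cref{en:K0}, \cref{en:K1} and \cref{en:K2} follow from \cref{rem:in-max-torus}, and \cref{en:K3} is automatic because $A$ is abelian and therefore $Z \cap [A,A] = \{1\}$. To verify the torus condition I will restrict each $g_j$ to the various weight spaces $V(i)$. On $V(i)$ with $i > 0$, every $a_1^{(k)}$ is already diagonal in the Jantzen basis $\mathcal{V}_i$, so all restrictions $g_j|_{V(i)}$ lie in a common diagonal maximal torus of $\GL(V(i))$. On $V(0)$, the involution $a_1^{(k)}$ acts by $-1$ on the single line spanned by $v_{1,(k+1)/2}^{(k)}$, and the crucial combinatorial input is that Lusztig's special pairing of the odd parts involves \emph{pairwise disjoint} indices $k_1, k_2, \dots$; hence the $(-1)$-eigenspaces $V_{-1}(g_j|_{V(0)})$ are pairwise transverse and \cref{lem:con-comp-cent}, applied inside $\SO(V(0))$, exhibits a maximal torus of $\SO(V(0))$ containing all the $g_j|_{V(0)}$ simultaneously. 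Assembling these tori produces the desired maximal torus of $\bL(\lambda)$, and $F$-stability is inherited from that of the individual $a_1^{(k)}$ via \cref{rem:F-fixed-rep}.

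The hardest step in this plan is the combinatorial matching of Lusztig's symbol-theoretic recipe for $\bar A_{\mathscr{F}}$ with an explicit choice of generators for $A$ whose $(-1)$-eigenspaces on $V(0)$ decompose cleanly enough for \cref{lem:con-comp-cent} to apply. The specialness of $\mathcal{O}_\mu$ enters precisely here: for a non-special orbit the canonical quotient is a quotient of the full component group by identifications that cannot be realised inside a single torus, and the construction genuinely fails, as already illustrated by $\mu = (5,3,1)$.
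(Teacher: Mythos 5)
Your overall strategy --- land $A$ inside a maximal torus of $\bL(\lambda)$ so that \cref{rem:in-max-torus} gives \cref{en:K2} for free, using \cref{lem:con-comp-cent} to find that torus --- is exactly the one the paper uses. The gap is in the combinatorial heart of the argument, where you assert that ``specialness forces the odd parts to organise into \emph{disjoint} pairs $(k_{2j-1},k_{2j})$, and $\bar A_{\mathscr{F}}$ is freely generated by the classes attached to these pairs.'' This is not a theorem you cite, it is imprecisely stated (which pairing?), and for the natural reading --- consecutive pairs in decreasing order --- it is false. Take $\mu=(7,5,5,3,1)$ in $\SO_{21}$ (type $\mathsf{B}_{10}$): one checks $\mu^t=(5,4,4,3,3,1,1)\in\mathcal{P}_1(21)$, so $\mu$ is special; here $\mathcal{I}=\{1,3,5,7\}$ with $r_1=r_3=r_7=1$, $r_5=2$, and Sommers' description of the kernel of $A_{\bG}(u)\twoheadrightarrow\bar A_{\mathscr{F}}$ (the content of \cite[Thm.~6]{So01} invoked in the paper) identifies $\bar a_1^{(3)}\bar a_1^{(1)}$ with the identity. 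Your consecutive pairs $(7,5),(3,1)$ give $A=\langle a_1^{(7)}a_1^{(5)},\,a_1^{(3)}a_1^{(1)}\rangle$, whose image in $\bar A_{\mathscr{F}}\cong(\mathbb{Z}/2)^2$ has rank at most $1$, so \cref{en:K3} fails. The same issue already appears for $\mu=(5,3,1)$ if one pairs $(3,1)$ rather than $(5,3)$, which shows the pairing genuinely must match Sommers' recipe rather than any ``standard'' one.

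The paper resolves this by a more delicate construction: it first builds generators $c_n=a_1^{(k_{n+1})}a_1^{(k_n)}$ when $r_{k_n}$ is odd and $c_n=a_1^{(k_{n+1})}a_2^{(k_n)}$ when $r_{k_n}$ is even (so that consecutive generators can share the index $k_n$ without sharing a $(-1)$-eigenline --- this is where the assumption $r_{k_n}\geqslant 2$ is exploited, something your all-$a_1^{(k)}$ construction cannot do), and then uses Sommers' theorem to identify which generators already lie in the kernel of the canonical-quotient map and can therefore be discarded. After this pruning the remaining generators have pairwise transverse $(-1)$-eigenspaces on $V(0)$ and \cref{lem:con-comp-cent} applies. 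Without Sommers' theorem you have no control over which products land in the kernel, and without the $a_2^{(k)}$ device you cannot, in general, avoid overlapping eigenspaces while still spanning $\bar A_{\mathscr{F}}$. In the example above the paper's output is $A=\langle a_1^{(5)}a_1^{(3)},\,a_1^{(7)}a_2^{(5)}\rangle$ --- note the shared index $5$ and the essential use of $a_2^{(5)}$ --- which your scheme cannot produce.

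Two smaller points: your final paragraph gestures at why the non-special case fails, but that is not relevant here since the proposition only concerns special classes; and your reduction to an $F$-fixed setup via \cref{rem:F-fixed-rep} is fine and matches the paper. But the missing citation and careful use of \cite[Thm.~6]{So01} is a genuine gap that cannot be repaired by ``the obvious pairing''.
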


\begin{proof}
If $\mu$ is a very even partition, i.e., $\mathcal{I} = \emptyset$, then the statement is trivial because $A_{\bG}(u)$ is trivial. Hence we will assume that $\mu$ is not very even so $\mathcal{I} \neq \emptyset$ and we may drop the $\pm$ superscripts from $\mathcal{O}_{\mu}^{\pm}$.

As above, $u = \phi_{\spr}^{-1}(e_{\mu} )\in \mathcal{O}_{\mu}^F$ is $F$-fixed. We take $\lambda \in \mathcal{D}_u(\bG)^F$ to be as in \cref{sec:admissible-split-classical}. Following \cite{So01} we write $\mathcal{I} = \mathcal{I}_{\mathrm{odd}} \sqcup \mathcal{I}_{\mathrm{ev}}$ where $\mathcal{I}_{\mathrm{odd}} = \{k \in \mathcal{I} \mid r_k(\mu) \equiv 1\pmod{2}\}$ and similarly $\mathcal{I}_{\mathrm{ev}} = \{k \in \mathcal{I} \mid r_k(\mu) \equiv 0\pmod{2}\}$. As in the proof of \cite[Thm.~6]{So01} we write $\mathcal{I}_{\mathrm{odd}} = \{j_1,\dots,j_s\}$ with $j_s \geqslant \cdots \geqslant j_1$ and for each $1 \leqslant m < s$ we let $\mathcal{I}^{(m)} = \{k \in \mathcal{I} \mid j_{m+1} > k > j_m\}$. Note that, by definition we have $\mathcal{I}^{(m)} \subseteq \mathcal{I}_{\mathrm{ev}}$.

It is shown in the proof of \cite[Thm.~6]{So01} that for any odd $1 \leqslant m < t$ and $i \in \mathcal{I}^{(m)}$ we have $\bar{a}_1^{(i)}\bar{a}_1^{(j_m)}$ and $\bar{a}_1^{(j_{m+1})}\bar{a}_1^{(j_m)}$ are in the kernel of the natural map $A_{\bG}(u) \to \bar{A}_{\mathscr{F}}$. Now let us write $\mathcal{I} = \{k_1,\dots,k_t\}$ with $k_t > \cdots > k_1$. For $1 \leqslant n < t$ we set
\begin{equation*}
c_n = \begin{cases}
a_1^{(k_{n+1})}a_1^{(k_n)} &\text{if }k_n \in \mathcal{I}_{\mathrm{odd}}\\
a_1^{(k_{n+1})}a_2^{(k_n)} &\text{if }k_n \in \mathcal{I}_{\mathrm{ev}},
\end{cases}
\end{equation*}
where we note that if $k_n \in \mathcal{I}_{\mathrm{ev}}$ then we necessarily have $r_{k_n}(\mu) > 1$. Let $\widetilde{A} = \langle c_n \mid 1\leqslant n < t \rangle \leqslant C_{\bG}(u)^F$. This is an elementary abelian $2$-group that maps isomorphically onto $A_{\bG}(u)$. Assume $1 \leqslant m < n < t$ then we have $V_{-1}(c_m) \cap V_{-1}(c_n) = \{0\}$ unless $n = m+1$ and $k_n \in \mathcal{I}_{\mathrm{odd}}$.

We claim that for any $1 < n < t$ with $k_n \in \mathcal{I}_{\mathrm{odd}}$ we have either the image of $c_n$ or $c_{n-1}$ in $A_{\bG}(u)$ is in the kernel of the natural map $A_{\bG}(u) \to \bar{A}_{\mathscr{F}}$. Write $k_n = j_m$ with $1 \leqslant m \leqslant t$. Either $k_{n+1} \in \mathcal{I}^{(m)}$ or $k_{n+1} = j_{m+1}$ so if $m$ is odd we have $\bar{a}_1^{(k_{n+1})}\bar{a}_1^{(k_n)}$ is in the kernel. Similarly, either $k_{n-1} \in \mathcal{I}^{(m-1)}$ or $k_{n-1} = j_{m-1}$. If $m$ is even then $\bar{a}_1^{(k_n)}\bar{a}_1^{(k_{n-1})}$ is in the kernel because if $k_{n-1} \in \mathcal{I}^{(m-1)}$ we have
\begin{equation*}
\bar{a}_1^{(k_n)}\bar{a}_1^{(k_{n-1})} = (\bar{a}_1^{(j_m)}\bar{a}_1^{(j_{m-1})})(\bar{a}_1^{(k_{n-1})}\bar{a}_1^{(j_{m-1})}).
\end{equation*}

Now for each $1 < n < t$ with $k_n \in \mathcal{I}_{\mathrm{odd}}$ we remove one of the generators $c_{n-1}$ or $c_n$ from $\widetilde{A}$ which is contained in the kernel of the natural map $A_{\bG}(u) \to \bar{A}_{\mathscr{F}}$. Let $A \leqslant \widetilde{A}$ be generated by what is left then we have a subgroup whose generators, and therefore their restrictions to $V(0)$, satisfy the assumptions of \cref{lem:con-comp-cent}. Consequently there exists a torus $\bS_0 \leqslant \SL(V(0) \mid \mathcal{B})$ containing $a_{|V(0)}$ for every $a \in A$. We may now conclude as in \cref{prop:admiss-split-symp} that \cref{en:K2} holds. Finally, as in the symplectic case it is obvious that $(A,\lambda)$ satisfies \cref{en:K0}, \cref{en:K1}, and \cref{en:K3}. 
\end{proof}

\section{Admissible Coverings for Exceptional Groups}\label{ssec:exc}
\begin{figure}[t]
\definecolor{rnode}{gray}{0.85}
\begin{center}
\begin{tabular}{rl>{\hspace{1cm}}rl}
$\tilde{\B}_n$ & 
\begin{tikzpicture}[baseline,scale=0.75]
\begin{scope}[xshift={-3.3cm}]
\coordinate (0) at (140:1.1);
\coordinate (1) at (220:1.1);
\end{scope}
\coordinate (2) at (-3.3,0);
\coordinate (3) at (-2.2,0);
\coordinate (4) at (-1.1,0);
\coordinate (5) at (0,0);
\coordinate (6) at (1.1,0);

\draw (2) -- (3);
\draw[dashed] (3) -- (4);
\draw (4) -- (5);
\draw (0) -- (2) -- (1);

\draw (0,0.075) -- (1.1,0.075);
\draw (0,-0.075) -- (1.1,-0.075);

\draw (0.75,0) -- (0.35,0.25);
\draw (0.75,0) -- (0.35,-0.25);

\foreach \a in {1,...,6} {
\filldraw (\a) circle (0.12cm);
}

\filldraw[fill=white] (0) circle (0.12cm);

\node[left=0.15cm] at (0) {\tiny $0$};
\node[left=0.15cm] at (1) {\tiny $1$};
\node[below=0.15cm] at (2) {\tiny $2$};
\node[below=0.15cm] at (5) {\tiny $n{-}1$};
\node[below=0.15cm] at (6) {\tiny $n$};
\end{tikzpicture}
&
$\tilde{\C}_n$ & 
\begin{tikzpicture}[baseline,scale=0.75]
\coordinate (0) at (-4.4,0);
\coordinate (1) at (-3.3,0);
\coordinate (2) at (-2.2,0);
\coordinate (3) at (-1.1,0);
\coordinate (4) at (0,0);
\coordinate (5) at (1.1,0);

\draw (-4.4,0.075) -- (-3.3,0.075);
\draw (-4.4,-0.075) -- (-3.3,-0.075);
\draw (1) -- (2);
\draw (3) -- (4);
\draw (0,0.075) -- (1.1,0.075);
\draw (0,-0.075) -- (1.1,-0.075);

\draw (-3.65,0) -- (-4.05,0.25);
\draw (-3.65,0) -- (-4.05,-0.25);

\draw (0.35,0) -- (0.75,0.25);
\draw (0.35,0) -- (0.75,-0.25);

\draw[dashed] (2) -- (3);

\foreach \a in {1,...,5} {
\filldraw (\a) circle (0.12cm);
}

\filldraw[fill=white] (0) circle (0.12cm);

\node[below=0.15cm] at (0) {\tiny $0$};
\node[below=0.15cm] at (1) {\tiny $1$};
\node[below=0.15cm] at (2) {\tiny $2$};
\node[below=0.15cm] at (4) {\tiny $n{-}1$};
\node[below=0.15cm] at (5) {\tiny $n$};

\end{tikzpicture}\\

$\tilde{\D}_n$ & 
\begin{tikzpicture}[baseline,scale=0.75]
\begin{scope}[xshift={-3.3cm}]
\coordinate (0) at (140:1.1);
\coordinate (1) at (220:1.1);
\end{scope}
\coordinate (2) at (-3.3,0);
\coordinate (3) at (-2.2,0);
\coordinate (4) at (-1.1,0);
\coordinate (5) at (0,0);
\coordinate (6) at (40:1.1);
\coordinate (7) at (320:1.1);

\draw (2) -- (3);
\draw[dashed] (3) -- (4);
\draw (4) -- (5);
\draw (7) -- (5) -- (6);
\draw (0) -- (2) -- (1);

\foreach \a in {1,...,7} {
\filldraw (\a) circle (0.12cm);
}

\filldraw[fill=white] (0) circle (0.12cm);

\node[left=0.15cm] at (0) {\tiny $0$};
\node[left=0.15cm] at (1) {\tiny $1$};
\node[below=0.15cm] at (2) {\tiny $2$};
\node[below=0.15cm] at (5) {\tiny $n{-}2$};
\node[right=0.15cm] at (6) {\tiny $n{-}1$};
\node[right=0.15cm] at (7) {\tiny $n$};

\end{tikzpicture}
&
$\tilde{\F}_4$
&
\begin{tikzpicture}[baseline,scale=0.75]
\node[above=0.15cm] at (0,0) {\tiny $0$};
\node[above=0.15cm] at (1.1,0) {\tiny $1$};
\node[above=0.15cm] at (2.2,0) {\tiny $2$};
\node[above=0.15cm] at (3.3,0) {\tiny $3$};
\node[above=0.15cm] at (4.4,0) {\tiny $4$};

\draw (0,0) -- (1.1,0) -- (2.2,0);
\draw (3.3,0) -- (4.4,0);
\draw (2.2,0.075) -- (3.3,0.075);
\draw (2.2,-0.075) -- (3.3,-0.075);

\draw (2.95,0) -- (2.55,0.25);
\draw (2.95,0) -- (2.55,-0.25);

\filldraw[fill=white] (0,0) circle (0.12cm);
\filldraw[fill=red] (1.1,0) circle (0.12cm);
\filldraw (2.2,0) circle (0.12cm);
\filldraw (3.3,0) circle (0.12cm);
\filldraw[fill=red] (4.4,0) circle (0.12cm);
\end{tikzpicture}\\[0.8cm]

$\tilde{\E}_7$ & 
\begin{tikzpicture}[baseline,scale=0.75]
\draw (0,0.55) -- (1.1,0.55) -- (2.2,0.55) -- (3.3,0.55) -- (4.4,0.55) -- (5.5,0.55) -- (6.6,0.55);
\draw (3.3,0.5) -- (3.3,-0.5);

\filldraw[fill=white] (0,0.55) circle (0.12cm);
\filldraw[fill=red] (1.1,0.55) circle (0.12cm);
\filldraw (2.2,0.55) circle (0.12cm);
\filldraw (3.3,0.55) circle (0.12cm);
\filldraw (4.4,0.55) circle (0.12cm);
\filldraw (5.5,0.55) circle (0.12cm);
\filldraw (6.6,0.55) circle (0.12cm);
\filldraw[fill=red] (3.3,-0.55) circle (0.12cm);

\node[above=0.15cm] at (0,0.55) {\tiny $0$};
\node[above=0.15cm] at (1.1,0.55) {\tiny $1$};
\node[above=0.15cm] at (2.2,0.55) {\tiny $3$};
\node[above=0.15cm] at (3.3,0.55) {\tiny $4$};
\node[above=0.15cm] at (4.4,0.55) {\tiny $5$};
\node[above=0.15cm] at (5.5,0.55) {\tiny $6$};
\node[above=0.15cm] at (6.6,0.55) {\tiny $7$};
\node[right=0.15cm] at (3.3,-0.55) {\tiny $2$};
\end{tikzpicture}
&
$\tilde{\G}_2$
&
\begin{tikzpicture}[baseline,scale=0.75]
\node[above=0.15cm] at (0,0) {\tiny $1$};
\node[above=0.15cm] at (1.1,0) {\tiny $2$};
\node[above=0.15cm] at (2.2,0) {\tiny $0$};

\draw (0,0) -- (1.1,0) -- (2.2,0);
\draw (0,0.075) -- (1.1,0.075);
\draw (0,-0.075) -- (1.1,-0.075);

\draw (0.35,0) -- (0.75,0.25);
\draw (0.35,0) -- (0.75,-0.25);

\filldraw (0,0) circle (0.12cm);
\filldraw[fill=red] (1.1,0) circle (0.12cm); 
\filldraw[fill=white] (2.2,0) circle (0.12cm); 
\end{tikzpicture}\\[0.8cm]

$\tilde{\E}_8$ & 
\begin{tikzpicture}[baseline,scale=0.75]
\draw (0,0.55) -- (1.1,0.55) -- (2.2,0.55) -- (3.3,0.55) -- (4.4,0.55) -- (5.5,0.55) -- (6.6,0.55) -- (7.7,0.55);
\draw (2.2,0.5) -- (2.2,-0.5);

\filldraw[fill=red] (0,0.55) circle (0.12cm);
\filldraw (1.1,0.55) circle (0.12cm);
\filldraw (2.2,0.55) circle (0.12cm);
\filldraw (3.3,0.55) circle (0.12cm);
\filldraw (4.4,0.55) circle (0.12cm);
\filldraw (5.5,0.55) circle (0.12cm);
\filldraw[fill=red] (6.6,0.55) circle (0.12cm);
\filldraw[fill=white] (7.7,0.55) circle (0.12cm);
\filldraw (2.2,-0.55) circle (0.12cm);

\node[above=0.15cm] at (0,0.55) {\tiny $1$};
\node[above=0.15cm] at (1.1,0.55) {\tiny $3$};
\node[above=0.15cm] at (2.2,0.55) {\tiny $4$};
\node[above=0.15cm] at (3.3,0.55) {\tiny $5$};
\node[above=0.15cm] at (4.4,0.55) {\tiny $6$};
\node[above=0.15cm] at (5.5,0.55) {\tiny $7$};
\node[above=0.15cm] at (6.6,0.55) {\tiny $8$};
\node[above=0.15cm] at (7.7,0.55) {\tiny $0$};
\node[right=0.15cm] at (2.2,-0.55) {\tiny $2$};
\end{tikzpicture}
&
$\tilde{\E}_6$
&
\begin{tikzpicture}[baseline,scale=0.75]
\draw (0,0.55) -- (1.1,0.55) -- (2.2,0.55) -- (3.3,0.55) -- (4.4,0.55);
\draw (2.2,0.5) -- (2.2,-0.5) -- (2.2,-1.65);

\filldraw (0,0.55) circle (0.12cm);
\filldraw (1.1,0.55) circle (0.12cm);
\filldraw (2.2,0.55) circle (0.12cm);
\filldraw (3.3,0.55) circle (0.12cm);
\filldraw (4.4,0.55) circle (0.12cm);
\filldraw[fill=red] (2.2,-0.55) circle (0.12cm);
\filldraw[fill=white] (2.2,-1.65) circle (0.12cm);

\node[above=0.15cm] at (0,0.55) {\tiny $1$};
\node[above=0.15cm] at (1.1,0.55) {\tiny $3$};
\node[above=0.15cm] at (2.2,0.55) {\tiny $4$};
\node[above=0.15cm] at (3.3,0.55) {\tiny $5$};
\node[above=0.15cm] at (4.4,0.55) {\tiny $6$};
\node[right=0.15cm] at (2.2,-0.55) {\tiny $2$};
\node[right=0.15cm] at (2.2,-1.65) {\tiny $0$};
\end{tikzpicture}
\end{tabular}
\end{center}
\caption{Labelled affine Dynkin diagrams.}
\label{fig:affine-dynkin}
\end{figure}
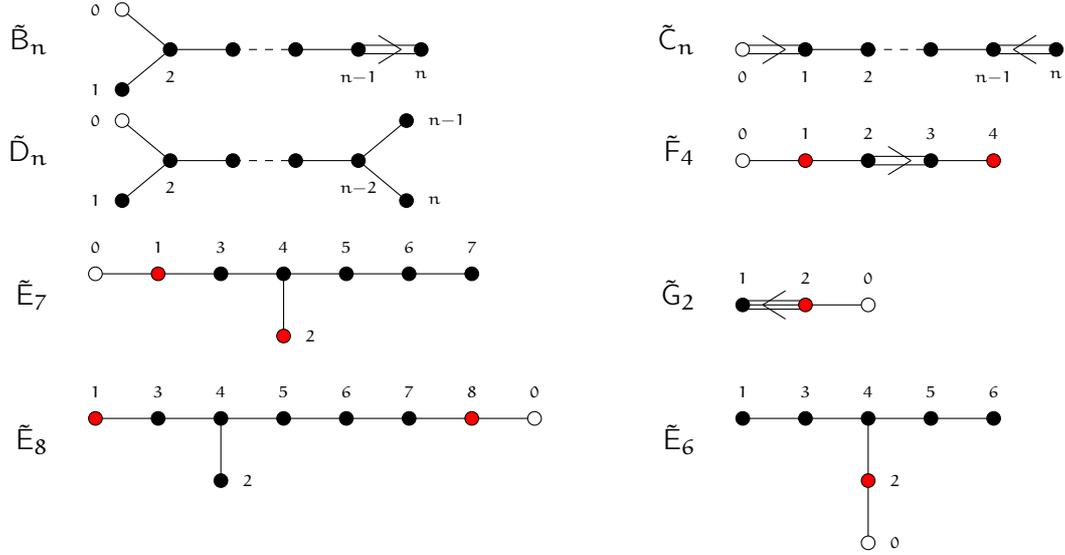

\begin{thm}\label{thm:exceptional-grps}
Assume $\bG$ is an adjoint simple group and $F : \bG \to \bG$ is a Frobenius endomorphism so that either $\bG$ is of exceptional type or $(\bG,F)$ has type ${}^3\D_4$. For each $F$-stable family $\mathscr{F} \in \Fam(W)^F$ there exists a unipotent element $u \in \mathcal{O}_{\mathscr{F}}^F$ and an admissible covering $(A,\lambda)$ of Lusztig's canonical quotient $\bar{A}_{\mathscr{F}}$ of $A_{\bG}(u)$ such that $|A|$ is a product of bad primes for $\bG$.
\end{thm}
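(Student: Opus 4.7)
The plan is to proceed by a case-by-case analysis across the exceptional types $\G_2$, $\F_4$, $\E_6$, $\E_7$, $\E_8$ and ${}^3\D_4$. Since $\bG$ is adjoint and simple, $F$ acts trivially on every canonical quotient $\bar{A}_{\mathscr{F}}$ by \cref{ssec:unipotentchar}, and Lusztig's classification restricts $\bar{A}_{\mathscr{F}}$ to the list $\{1, \mathbb{Z}/2, \mathfrak{S}_3, \mathfrak{S}_4, \mathfrak{S}_5\}$, with $\mathfrak{S}_5$ arising only in $\E_8$. The orders involved use only the primes $2$, $3$, and $5$, each of which is bad for the ambient exceptional group where the corresponding $\bar{A}_{\mathscr{F}}$ can appear, so the constraint on $|A|$ will be automatic once $A$ is chosen to map onto $\bar{A}_{\mathscr{F}}$ with at most a central $2$-group as kernel. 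When $\bar{A}_{\mathscr{F}} = 1$, any $F$-fixed representative $u \in \mathcal{O}_{\mathscr{F}}^F$ together with an $F$-stable Dynkin cocharacter $\lambda \in \mathcal{D}_u(\bG)^F$ from \cref{lem:rat-conj-cochar-to-neg} gives the trivial admissible covering.

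For the non-trivial families I would exploit Borel--de Siebenthal theory via the affine Dynkin diagrams shown in \cref{fig:affine-dynkin}. Fix $u$ and $\lambda$ as above, set $\bL = \bL(\lambda)$, and consider the connected centraliser $\bH := C_{\bL}^{\circ}(u)$. The generators of $\bar{A}_{\mathscr{F}}$ are to be realised as commuting $F$-fixed semisimple elements of order prime to $p$ inside $\bH$: one selects appropriate nodes in the affine Dynkin diagram of a suitable subsystem subgroup of $C_{\bG}^{\circ}(u)$ to produce candidate elements of $\bG$ centralising $u$, uses the fusion algorithm \cref{alg:fusion} (justified by \cref{thm:fowler-rohrle}) to check that their centralisers are of the correct type, and applies a Lang--Steinberg argument inside $\bH$ to refine them to $\bH^F$. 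The subgroup $A$ they generate, possibly together with a central $2$-group accounting for the kernel $Z$, then satisfies \cref{en:K0}, \cref{en:K1}, and \cref{en:K3} by construction.

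The main obstacle will be verifying \cref{en:K2}, namely $a \in C_{\bL}^{\circ}(C_A(a))$ for every $a \in A$. By \cref{rem:in-max-torus} this is automatic whenever $A$ sits inside an $F$-stable maximal torus of $\bL$, which can always be arranged for abelian $\bar{A}_{\mathscr{F}}$; by \cref{rem:cyclic-centraliser} it is automatic whenever $C_A(a)$ is cyclic. Together these observations dispose of most of the cases. The residual situation is a short list of families in $\G_2$, $\F_4$, $\E_7$, and $\E_8$ with $\bar{A}_{\mathscr{F}} = \mathfrak{S}_n$ for $n \in \{3,4,5\}$, where some $C_A(a)$ is non-cyclic. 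For each such case I would construct $A$ explicitly as a subgroup of $\bH^F$ generated by commuting involutions (and an order-$3$ element when $\bar{A}_{\mathscr{F}} = \mathfrak{S}_3$) located inside an explicit $\A_1^r$ or $\A_2 \times \A_1$ subsystem of $\bH$. Verification of \cref{en:K2} for the non-cyclic $A$-conjugacy classes then amounts to an explicit finite check, performed inside the known pseudo-Levi structure of the centraliser of each element in $\bL$, which in turn can be read off from the tables of centralisers of involutions (and their product with order-$3$ elements) in exceptional groups.
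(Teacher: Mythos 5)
Your high-level strategy is broadly the one the paper takes: case-by-case over the exceptional types, the observation that $F$ acts trivially on $\bar{A}_{\mathscr{F}}$ for adjoint simple $\bG$, the observation that $|\bar{A}_{\mathscr{F}}|$ is automatically a product of bad primes, and the identification of \cref{en:K2} as the real difficulty (handled via \cref{rem:cyclic-centraliser} when $C_A(a)$ is cyclic and \cref{rem:in-max-torus} when $A$ sits in a torus). However, there is a genuine conceptual error at the heart of your construction. You write: \emph{``set $\bL = \bL(\lambda)$, and consider the connected centraliser $\bH := C_{\bL}^{\circ}(u)$. The generators of $\bar{A}_{\mathscr{F}}$ are to be realised as commuting $F$-fixed semisimple elements \ldots\ inside $\bH$.''} Any element of $\bH = C_{\bL(\lambda)}^{\circ}(u)$ maps trivially under $C_{\bL(\lambda)}(u) \to A_{\bL(\lambda)}(u) \cong A_{\bG}(u) \twoheadrightarrow \bar{A}_{\mathscr{F}}$, so such elements cannot possibly generate a nontrivial covering of $\bar{A}_{\mathscr{F}}$. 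The elements of $A$ must lie in $C_{\bL(\lambda)}(u)$ but outside the identity component. Moreover, since $\bar{A}_{\mathscr{F}}$ is $\mathfrak{S}_3$, $\mathfrak{S}_4$, or $\mathfrak{S}_5$ in the hard cases, the generators certainly cannot be chosen to commute.

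The paper's construction also runs in the opposite order from yours, and this is not incidental. Rather than fixing $u$ first and hunting for semisimple elements, one reads an isolated element $s$ off the affine Dynkin diagram of $\bG$ itself (not of a subsystem of $C_{\bG}^{\circ}(u)$, as you suggest) and then takes $u$ to be a suitable distinguished unipotent in a pseudo-Levi $\bL_J \leqslant C_{\bG}^{\circ}(s)$. This is what makes $s \in \bL(\lambda)$ automatic (since $\bL_J \leqslant C_{\bG}(s)$) and is what allows one to invoke McNinch--Sommers to see $s \notin C_{\bG}^{\circ}(u)$, i.e., that the image of $s$ in $A_{\bG}(u)$ is genuinely nontrivial. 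Your proposal has no mechanism to check this last point. You also omit what is really the crucial technical tool for the $\mathfrak{S}_3$ cases on the non-distinguished classes $\D_4(a_1)$ and $\D_4(a_1){+}\A_1$ (and for the $\mathfrak{S}_4$, $\mathfrak{S}_5$ cases): the order-$3$ generators of $A$ are realised as quasi-semisimple \emph{normaliser} elements $n_w \in N_{\bG}(\bT_0)$ rather than torus elements, and the groups $C_{\bM}^{\circ}(\sigma)$ and $C_{\bM}^{\circ}(t\sigma)$ and their root systems are computed via the Digne--Michel theory of quasi-central elements, together with a version of the Fowler--R\"ohrle theorem for such centralisers. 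Finally, a small inaccuracy: since for $\mathfrak{S}_3$ every non-central element has cyclic centraliser, the only genuinely non-cyclic $C_A(a)$ (with $a \neq 1$) arise in the $\mathfrak{S}_4$ and $\mathfrak{S}_5$ cases, i.e., only in $\F_4$ and $\E_8$, not in $\G_2$ or $\E_7$ as you state.
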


The goal of this section is to prove \cref{thm:exceptional-grps}. If $\bar{A}_{\mathscr{F}}$ is the canonical quotient attached to the family $\mathscr{F} \in \Fam(W)$ then it is well known that, when $\bG$ is of exceptional type, we have $\bar{A}_{\mathscr{F}} \cong \mathfrak{S}_m$ is a symmetric group with $m \in \{1,2,3,4,5\}$. When $G$ is Steinberg's triality group there is only one non-trivial $\bar{A}_{\mathscr{F}}$ and it is isomorphic to $\mathfrak{S}_2$. Of course, if $\bar{A}_{\mathscr{F}}$ is trivial then we may simply take $A$ to be trivial. Hence, we will only treat the cases where $\bar{A}_{\mathscr{F}}$ is non-trivial. We note here that, as a consequence of our proof, we establish the following which may be of independent interest. Over $\mathbb{C}$ this was shown by Fu--Juteau--Levy--Sommers \cite[Prop.~6.1]{FJLS17}.

\begin{lem}
Recall that $p$ is assumed to be good for $\bG$. Let $\bG$ be an adjoint simple group, $u \in \bG$ a unipotent element, and $\lambda \in \mathcal{D}_u(\bG)$ a Dynkin cocharacter, then there exists a subgroup $A \leqslant C_{\bL(\lambda)}(u)$ such that $C_{\bG}(u) = C_{\bG}^{\circ}(u) \rtimes A$ unless $\bG$ is of type $\E_7$ or $\E_8$ and $u$ is contained in one of the four classes $\mathcal{O}_{\mathscr{F}}$ occurring in \cref{tab:S_2-no-lift-to-inv}. Moreover, when such a complement exists we may assume it is $F$-stable if $u \in \bG^F$ and $\lambda \in \mathcal{D}_u(\bG)^F$.
\end{lem}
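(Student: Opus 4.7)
The plan is to reduce everything to a splitting question inside the connected reductive group $\bL(\lambda)$. By \cref{prop:bumper-para-levi} the natural map $A_{\bL(\lambda)}(u) \to A_{\bG}(u)$ is an isomorphism and $C_{\bU(\lambda)}(u) \leqslant C_{\bG}^{\circ}(u)$, so a subgroup $A \leqslant C_{\bL(\lambda)}(u)$ is a complement to $C_{\bG}^{\circ}(u)$ in $C_{\bG}(u)$ if and only if it is a complement to $C_{\bL(\lambda)}^{\circ}(u)$ in $C_{\bL(\lambda)}(u)$. Hence I would reformulate the statement as asking when the short exact sequence
\begin{equation*}
1 \longrightarrow C_{\bL(\lambda)}^{\circ}(u) \longrightarrow C_{\bL(\lambda)}(u) \longrightarrow A_{\bL(\lambda)}(u) \longrightarrow 1
\end{equation*}
splits. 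Since $\lambda$ is associated to $u$ in the sense of \cref{lem:assoc-dynkin-cocharacters}, I may moreover replace $\bL(\lambda)$ by a smaller Levi $\bM$ in which $u$ is distinguished, so that only finitely many cases need be examined.

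For $\bG$ of type $\A$ the component groups of unipotent elements in the adjoint group are cyclic and lift to semisimple elements in a maximal torus of $C_{\bL(\lambda)}(u)$ via Jordan decomposition, which trivially gives the complement. For types $\B$, $\C$ and $\D$ the subgroups constructed in \cref{ssec:symplectic} already do the job: in the symplectic case the group $A$ of \cref{prop:admiss-split-symp} maps isomorphically onto $A_{\bG}(u)$, and in the orthogonal case the intermediate elementary abelian $2$-group $\widetilde{A}$ appearing in the proof of \cref{prop:admissible-splitting-ortho} is by construction a lift of the full component group $A_{\bG}(u)$ with trivial intersection with $C_{\bG}^{\circ}(u)$; it is the passage from $\widetilde{A}$ to its quotient covering Lusztig's $\bar{A}_{\mathscr{F}}$ that introduces condition \cref{en:K2}, not the existence of a complement.

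For the exceptional types and type ${}^3\D_4$ the verification proceeds case by case over the finite list of distinguished unipotent classes. The component groups occurring are subgroups of $\mathfrak{S}_5$, and their extensions by $C_{\bL(\lambda)}^{\circ}(u)$ can be read off from the explicit centraliser structure available in \Chevie{} or in the tables of Mizuno and Liebeck--Seitz. In every case other than those appearing in \cref{tab:S_2-no-lift-to-inv}, a complement is found; in the four orbits of $\E_7$ and $\E_8$ listed there, each preimage of the generator of $A_{\bG}(u) \cong \mathfrak{S}_2$ inside $C_{\bG}(u)$ turns out to have order $4$, so no complement can exist. The main obstacle of the proof is precisely this case-by-case analysis and the identification of the four exceptional orbits, since a priori the non-splitting could occur sporadically and must be ruled out by direct inspection.

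For the final assertion, assume that $u \in \bG^F$ and $\lambda \in \mathcal{D}_u(\bG)^F$ and let $A$ be a complement produced above. Then $F(A)$ is again a complement so by Schur--Zassenhaus inside $C_{\bL(\lambda)}(u)$ there exists $g \in C_{\bL(\lambda)}^{\circ}(u)$ with $F(A) = gAg^{-1}$. The $C_{\bL(\lambda)}^{\circ}(u)$-orbit of complements is identified with the homogeneous space $C_{\bL(\lambda)}^{\circ}(u)/N_{C_{\bL(\lambda)}^{\circ}(u)}(A)$, on which $F$ acts; applying the Lang--Steinberg theorem to the connected group $C_{\bL(\lambda)}^{\circ}(u)$ produces an element $h \in C_{\bL(\lambda)}^{\circ}(u)$ such that $hAh^{-1}$ is $F$-stable.
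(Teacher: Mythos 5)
Your reduction "replace $\bL(\lambda)$ by a smaller Levi $\bM$ in which $u$ is distinguished" is the main gap, and it fails for a structural reason: by \cref{lem:inj-comp-grp} the map $A_{\bM}(u)\to A_{\bG}(u)$ is only \emph{injective}, not surjective, so splitting the extension over $\bM$ says nothing about lifting all of $A_{\bG}(u)$. If the reduction to the distinguished case were valid, one would find a complement in every case automatically (for distinguished $u$ in an exceptional group $C_{\bL(\lambda)}^{\circ}(u)$ is trivial), yet the four excluded classes $\A_4{+}\A_1$ in $\E_7$ and $\E_6(a_1){+}\A_1$, $\D_7(a_2)$, $\A_4{+}\A_1$ in $\E_8$ are not distinguished. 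So your argument would actually eliminate the very exceptions the lemma asserts. The cases that require real work are precisely the non-distinguished classes with non-trivial $A_{\bG}(u)$ (e.g.\ $\D_4(a_1)$, $\D_4(a_1){+}\A_1$ and the $\mathfrak{S}_2$-families), and these have to be treated directly inside $\bG$, which is what the constructions in \cref{subsec:D4(a1),subsec:D4(a1)A1,sec:inv-lift,sec:inv-not-lift} do.

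Two smaller points. First, your statement that in the four exceptional orbits "each preimage of the generator of $A_{\bG}(u)\cong\mathfrak{S}_2$ inside $C_{\bG}(u)$ turns out to have order $4$" is not what is shown and is not true: the preimages form a full coset of $C_{\bG}^{\circ}(u)$ and have many different orders. The correct obstruction (and what the analysis in \cref{sec:inv-not-lift} establishes) is that no preimage has order $2$, i.e.\ every involution of $C_{\bG}(u)$ lies in $C_{\bG}^{\circ}(u)$. Second, for the classical types you quote the groups $\widetilde A$ from \cref{ssec:symplectic} directly, but those live in $\Sp(V)$ or $\SO(V)$, not in the adjoint quotient $\bG$ the lemma is about; you need to push forward along $\pi:\widetilde\bG\to\bG$ and note that the image $B=\pi(\widetilde A)$ may meet $C_{\bG}^{\circ}(u)$ non-trivially. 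Since $B$ is elementary abelian $2$-group one can then replace $B$ by a complement of $B\cap C_{\bG}^{\circ}(u)$ inside $B$, which is exactly the step the paper performs. Finally, the $F$-stability argument is broadly the right idea, but invoking Schur--Zassenhaus for $C_{\bL(\lambda)}(u)$ is not quite legitimate since $C_{\bL(\lambda)}^{\circ}(u)$ is an infinite algebraic group; one either needs a conjugacy-of-complements result for algebraic groups (justified e.g.\ by the fact that $A$ consists of semisimple $p'$-elements) or, as the paper does, constructs $F$-stable complements directly in each case.
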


\begin{proof}
If $\bG$ is exceptional and $u$ is distinguished then this is clear as $C_{\bG}^{\circ}(u)$ is unipotent and $A_{\bG}(u)$ is a $p'$-group. If $u$ is contained in the class $\D_4(a_1)$ or $\D_4(a_1){+}\A_1$ then this follows from the calculations in \cref{subsec:D4(a1),subsec:D4(a1)A1}. Finally the cases where $A_{\bG}(u) \cong \mathfrak{S}_2$ are covered by the arguments in \cref{sec:inv-lift,sec:inv-not-lift}. Whilst not all classes with $A_{\bG}(u) \cong \mathfrak{S}_2$ are listed in \cref{tab:S_2-lift-to-inv,tab:S_2-no-lift-to-inv} the remaining cases are treated with identical calculations performed in \Chevie, which we omit.

Now consider the case where $\bG$ is classical. Let $\pi : \tilde{\bG} \to \bG$ be an isogeny from a classical group $\Sp(V)$ or $\SO(V)$. This defines a bijection $\mathcal{U}(\tilde{\bG}) \to \mathcal{U}(\bG)$ and for any $\tilde{u} \in \mathcal{U}(\tilde{\bG})$ we have $C_{\tilde{\bG}}(\tilde{u}) = \pi^{-1}(C_{\bG}(u))$, where $u = \pi(\tilde{u})$. We now appeal to the notation of \cref{ssec:symplectic}. Writing $\mathcal{I} = \{k_1,\dots,k_m\}$ then we let $\tilde{A} = \langle a_1^{(k_1)},\dots, a_1^{(k_m)}\rangle$ if $\tilde{\bG} = \Sp(V)$ and $\tilde{A} = \langle a_1^{(k_1)}a_1^{(k_2)},\dots, a_1^{(k_{m-1})}a_1^{(k_m)}\rangle$ if $\tilde{\bG} = \SO(V)$. It is clear that $\tilde{A}$ is an elementary abelian $2$-group and $C_{\tilde{\bG}}(\tilde{u}) = C_{\tilde{\bG}}^{\circ}(\tilde{u}) \rtimes \tilde{A}$.

Let $B = \pi(\tilde{A})$ then $C_{\bG}(u) = C_{\bG}^{\circ}(u)B$ but it may happen that $C_{\bG}^{\circ}(u) \cap B \neq \{1\}$. However, as $B$ is elementary abelian the subgroup $C_{\bG}^{\circ}(u) \cap B \leqslant B$ has a complement, say $A$, in $B$. The group $A$ then gives a complement.
\end{proof}

In \cref{tab:S_2-lift-to-inv,tab:S_2-no-lift-to-inv,tab:families-with-A-nonabelian} below we gather various information regarding Lusztig's canonical quotient in adjoint exceptional groups. This information can be extracted from the data gathered in \cite[Chp.~13]{Car}, or alternatively using \Chevie{}. Indeed, in \cite{Car} one finds all the sets of families $\Fam(W)$ with the corresponding special character in each family. From the explicit description of the Springer correspondence one can then compute the canonical quotient directly from the definition given in \cite[\S13.1]{Lu84}, see also the tables in \cite{So01}.

Proving \cref{thm:exceptional-grps} will involve some detailed calculations in exceptional groups. For this we introduce some notation relating to the Steinberg presentation of $\bG$. Let $\mathcal{R}(\bG,\bT_0,\bB_0) = (X,\Phi,\Delta,\widecheck{X},\widecheck{\Phi},\widecheck{\Delta})$ be the based root datum of $\bG$ defined as in \cref{subsec:reductive-groups}. As in \cite[Prop.~8.1.1]{Spr09} we choose for each root $\alpha \in \Phi$ a closed embedding $x_{\alpha} : K^+ \to \bG$, of the additive group of the field, such that ${}^tx_{\alpha}(c) = x_{\alpha}(\alpha(t)c)$ for all $t \in \bT_0$ and $c\in K$. The image of the differential $d_1x_{\alpha} : \Lie(K^+) \to \Lie(\bG)$ is the corresponding $1$-dimensional root space. We set $e_{\alpha} := d_1x_{\alpha}(1)$.

We also have an element $n_{\alpha} = x_{\alpha}(1)x_{-\alpha}(-1)x_{\alpha}(1) \in N_{\bG}(\bT_0)$ whose image in the Weyl group $W = N_{\bG}(\bT_0)/\bT_0$, denoted by $s_{\alpha}$, is the reflection of $\alpha$ in the natural action of $W$ on $\mathbb{Q} \otimes_{\mathbb{Z}} X$. If $w \in W$ is any element of the Weyl group then we set $n_w := n_{\alpha_1}\cdots n_{\alpha_r}$ where $w = s_{\alpha_1}\cdots s_{\alpha_r}$ is a reduced expression for the element. The element $n_w$ does not depend upon the choice of reduced expression used to define it (see \cite[Prop.~4.3]{Tit66} or \cite[8.3.3, 9.3.2]{Spr09}).

We set $\widecheck{V} = \mathbb{Q} \otimes \widecheck{X}$. For any simple root $\alpha \in \Delta$ we denote by $\widecheck{\omega}_{\alpha} \in \widecheck{V}$ the fundamental dominant coweight corresponding to $\alpha$ so that we have $\langle \alpha,\widecheck{\omega}_{\beta}\rangle = \delta_{\alpha,\beta}$. In what follows we will assume, unless otherwise specified, that $\bG$ is adjoint simple. In that case $\widecheck{X}$ has a basis, as a $\mathbb{Z}$-module, given by $\{\widecheck{\omega}_{\alpha} \mid \alpha \in \Delta\}$. We fix a homomorphism $\iota : \mathbb{Q}^+  \to K^{\times}$ of abelian groups such that $\mathbb{Z} \subseteq \Ker(\iota)$ and $\Ker(\iota)/\mathbb{Z} \leqslant \mathbb{Q}/\mathbb{Z}$ is the $p$-torsion subgroup of $\mathbb{Q/Z}$. If $\gamma \in \widecheck{X}(\bT_0)$ then we consider this as a homomorphism $\mathbb{Q}^+ \to \bT_0$ by composing with $\iota$. For example, we write $\widecheck{\omega}_{\alpha}(\frac{1}{2})$ instead of $\widecheck{\omega}_{\alpha}(\iota(\frac{1}{2}))$.

As $\bG$ is simple we have $\Phi$ is irreducible and, unless specified otherwise, we denote by $\alpha_0 \in \Phi$ the negative of the highest root. We take the simple roots of $\bG$ to be $\Delta = \{\alpha_1,\dots,\alpha_n\}$ with the ordering of the simple roots taken to match the labelling of the Dynkin diagrams in \cref{fig:affine-dynkin}. We let $\tilde{\Delta} = \Delta\cup\{\alpha_0\}$ be the roots of the extended Dynkin diagram.

The elements $e_{\alpha} \in \Lie(\bG)$ for $\alpha \in \Phi$ defined above may, and will, be chosen to form  part of a Chevalley basis for the Lie algebra. The structure constants for the Lie algebra depend upon a series of signs chosen on so-called extraspecial pairs. We follow a standard algorithm to produce the extraspecial pairs and on each extraspecial pair we choose the sign to be $1$. To compute the relevant structure constants, and related information, we use the algorithms defined in \cite[\S3]{CoMuTa04}. We defer to \cite{CoMuTa04} for the details.

Given two roots $\alpha,\beta \in \Phi$ there exists a sign $\eta_{\alpha,\beta} \in \{\pm1\}$ such that ${}^{n_{\alpha}}x_{\beta}(c) = x_{s_{\alpha}\beta}(\eta_{\alpha,\beta}c)$ for all $c \in K$. There is a standard algorithm for calculating these signs as described in \cite[\S3]{CoMuTa04}. If $w \in W$ then there also exists a sign $\eta_{w,\alpha}$ such that ${}^{n_w}x_{\alpha}(c) = x_{w\alpha}(\eta_{w,\alpha}c)$ for all $c \in K$. If $w = s_{\alpha_r} \cdots s_{\alpha_1}$ is a fixed reduced expression for $w$ then we have
\begin{equation*}
\eta_{w,\alpha} = \eta_{\alpha_1,\alpha}\eta_{\alpha_2,s_{\alpha_1}\alpha} \cdots \eta_{\alpha_r,s_{\alpha_{r-1}}\cdots s_{\alpha_1}\alpha}.
\end{equation*}
Hence $\eta_{w,\alpha}$ can be computed from the signs $\eta_{\alpha,\beta}$ defined above.

\subsection{Quasi-semisimple elements}
In this section we recall some recent results of \cite{DiMi18} concerning quasi-semisimple elements. Let $\Sigma \subseteq \Phi$ be a closed subsystem with positive system $\Sigma^+ \subseteq \Sigma$ and $\bT_0 \leqslant \bM \leqslant \bG$ the corresponding subsystem subgroup. We will assume that $\sigma \in N_{\bG}(\bT_0)$ is such that ${}^{\sigma}\Sigma^+ = \Sigma^+$ so certainly ${}^{\sigma}\Sigma = \Sigma$ and ${}^{\sigma}\bM = \bM$. Moreover, if $\Pi \subseteq \Sigma^+$ is the unique simple system contained in $\Sigma^+$ then ${}^{\sigma}\Pi = \Pi$. Moreover, the element $\sigma$ is a \emph{quasi-semisimple} element of the group generated by the coset $\bM\sigma$.

Let $\Sigma/\langle \sigma\rangle$ be the set of orbits of $\langle\sigma\rangle$ acting on $\Sigma$. To each orbit $\mathcal{O} \in \Sigma/\langle \sigma\rangle$ we associate vectors $\beta_{\mathcal{O}}^* = \sum_{\alpha \in \mathcal{O}} \alpha \in V$ and $\beta_{\mathcal{O}} = |\mathcal{O}|^{-1}\beta_{\mathcal{O}}^*$. Similarly we associate vectors $\widecheck{\beta}_{\mathcal{O}}^* = \sum_{\alpha \in \mathcal{O}} \widecheck{\alpha} \in \widecheck{V}$ and $\widecheck{\beta}_{\mathcal{O}} = |\mathcal{O}|^{-1}\widecheck{\beta}_{\mathcal{O}}^*$. Furthermore, we denote by $C_{\sigma,\mathcal{O}} \in K^{\times}$ the unique element such that ${}^{\sigma^{|\mathcal{O}|}}x_{\alpha}(k) = x_{\alpha}(C_{\sigma,\mathcal{O}}k)$ for any $\alpha \in \mathcal{O}$ and $k \in K$.

Following \cite{DiMi18} we say the orbit $\mathcal{O}$ is \emph{special} if there exist two roots $\alpha,\beta \in \mathcal{O}$ such that $\alpha+\beta \in \Sigma$ is a root. The orbit of $\alpha+\beta$ is then said to be \emph{cospecial}. We set $s(\mathcal{O}) = 2$ if the orbit is special and $s(\mathcal{O}) = 1$ otherwise.

\begin{prop}[{}{see, \cite[Prop.~1.11]{DiMi18}}]\label{prop:root-sys-cent-sig}
The group $C_{\bM}^{\circ}(\sigma)$ is a connected reductive algebraic group with maximal torus $C_{\bT_0}^{\circ}(\sigma)$ and Borel subgroup $C_{\bB_0}^{\circ}(\sigma)$. Let $\Sigma_{\sigma}$ be the roots of $C_{\bM}^{\circ}(\sigma)$ with respect to $C_{\bT_0}^{\circ}(\sigma)$ and let $\Pi_{\sigma} \subseteq \Sigma_{\sigma}^+ \subseteq \Sigma_{\sigma}$ be the simple and positive roots determined by $C_{\bB_0}^{\circ}(\sigma)$. If $p \neq 2$ then we have:
\begin{enumerate}
	\item $\Sigma_{\sigma} = \{\beta_{\mathcal{O}} \mid \mathcal{O} \in \Sigma/\langle \sigma\rangle\text{ and }C_{\sigma,\mathcal{O}} = 1\}$,
	\item $\Sigma_{\sigma}^+ = \{\beta_{\mathcal{O}} \mid \mathcal{O} \in \Sigma^+/\langle \sigma\rangle\text{ and }C_{\sigma,\mathcal{O}} = 1\}$,
	\item $\Pi_{\sigma} = \{\beta_{\mathcal{O}} \mid \mathcal{O} \in \Pi/\langle \sigma\rangle\text{ and }C_{\sigma,\mathcal{O}} = 1\} \cup \{2\beta_{\mathcal{O}} \mid \mathcal{O} \in \Pi/\langle \sigma\rangle\text{ is special and }C_{\sigma,\mathcal{O}} = -1\}$.
\end{enumerate}
Moreover, we have $s(\mathcal{O})\widecheck{\beta}_{\mathcal{O}}^* \in \widecheck{X}(C_{\bT_0}^{\circ}(\sigma))$ is the coroot corresponding to $\beta_{\mathcal{O}} \in \Sigma_{\sigma}$. We let $\widecheck{\Pi}_{\sigma} \subseteq \widecheck{\Sigma}_{\sigma}^+ \subseteq \widecheck{\Sigma}_{\sigma}$ be the corresponding sets of coroots.
\end{prop}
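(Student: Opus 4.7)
The overall plan is to reduce the statement to the now-standard theory of quasi-semisimple automorphisms of connected reductive groups, in the spirit of Steinberg and as systematised in Digne--Michel (where in fact this is stated as \cite[Prop.~1.11]{DiMi18}). First I would establish the general structural claim: since $\sigma$ normalises both $\bT_0$ and $\bB_0$ and stabilises the subsystem $\Sigma^+$, it acts as a quasi-semisimple automorphism of $\bM$ which preserves the pair $(\bT_0, \bB_0\cap \bM)$. Steinberg's fixed-point theorem (in the form given in \cite[Thm.~1.8]{DiMi18}) then gives directly that $C_{\bM}^{\circ}(\sigma)$ is connected reductive with maximal torus $C_{\bT_0}^{\circ}(\sigma)$ and Borel $C_{\bB_0}^{\circ}(\sigma)$.

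To identify the roots I would analyse the action of $\sigma$ on $\Lie(\bM) = \Lie(\bT_0) \oplus \bigoplus_{\alpha \in \Sigma}\lie{g}_{\alpha}$ orbit by orbit. For a $\sigma$-orbit $\mathcal{O} \in \Sigma/\langle\sigma\rangle$, the subspace $V_{\mathcal{O}} := \bigoplus_{\alpha \in \mathcal{O}} \lie{g}_{\alpha}$ is $\sigma$-stable, and $\sigma$ permutes the basis $\{e_{\alpha}\}_{\alpha \in \mathcal{O}}$ up to scalars, the composition of these scalars along a cycle being precisely $C_{\sigma,\mathcal{O}}$. When $\mathcal{O}$ is not special, no two roots of $\mathcal{O}$ sum to a root and the whole analysis is linear: the fixed subspace $V_{\mathcal{O}}^{\sigma}$ is nonzero (and then $1$-dimensional) if and only if $C_{\sigma,\mathcal{O}} = 1$, in which case $C_{\bT_0}^{\circ}(\sigma)$ acts on a fixed vector through the restriction of $\beta_{\mathcal{O}}^*$, which coincides with $\beta_{\mathcal{O}}$ on the connected fixed-point torus since all $\alpha \in \mathcal{O}$ restrict identically.

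The subtle part, and the main obstacle, is the case of a special orbit $\mathcal{O}$, where $\sigma$ acts on $V_{\mathcal{O}}$ combined with a cospecial orbit through the Chevalley commutator structure, producing a subalgebra of type $\A_2$ on which $\sigma$ acts as an outer automorphism. Here the use of $p \neq 2$ is essential: the fixed subalgebra is of type $\B_1$ (with a short-root convention), and one shows via an explicit $\lie{sl}_2$-triple computation with the Chevalley basis that the short coroot of this $\B_1$ is $2\widecheck{\beta}_{\mathcal{O}}^*$, which corresponds to the root $2\beta_{\mathcal{O}}$ appearing in $\Pi_{\sigma}$. For $C_{\sigma,\mathcal{O}}=1$ the fixed vector contributes $\beta_{\mathcal{O}}$ exactly as in the non-special case, while for $C_{\sigma,\mathcal{O}}=-1$ one needs to check that it is the doubled root $2\beta_{\mathcal{O}}$ that enters as a simple root, via the sign twist in $\sigma$'s action. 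This explicit Chevalley computation is what makes the case analysis delicate, but with the non-equal signs tracked it closes.

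Finally, the statements on $\Sigma_{\sigma}^+$ and $\Pi_{\sigma}$ follow by restricting the above analysis to $\Sigma^+$ and $\Pi$ respectively, using that $C_{\bB_0}^{\circ}(\sigma)$ is a Borel of $C_{\bM}^{\circ}(\sigma)$ and therefore determines positive and simple systems uniquely. The coroot identification $s(\mathcal{O})\widecheck{\beta}_{\mathcal{O}}^* \in \widecheck{X}(C_{\bT_0}^{\circ}(\sigma))$ is read off from the $\SL_2$ (or $\PGL_2$) fixed-point subgroup associated to each orbit: the factor $s(\mathcal{O}) \in \{1,2\}$ encodes precisely the doubling that happens for special orbits, which is why the two sides of the pairing $\langle \beta_{\mathcal{O}'}, s(\mathcal{O})\widecheck{\beta}_{\mathcal{O}}^*\rangle$ come out to the correct Cartan integers. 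Rather than reproduce the full calculation I would simply invoke \cite[Prop.~1.11]{DiMi18}.
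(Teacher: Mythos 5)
The paper provides no proof of its own for this proposition — it is stated as a citation to \cite[Prop.~1.11]{DiMi18}, and your proposal ultimately does the same thing ("Rather than reproduce the full calculation I would simply invoke \cite[Prop.~1.11]{DiMi18}"). Your sketch of the underlying argument (Steinberg's fixed-point theorem for quasi-semisimple automorphisms, orbit-by-orbit decomposition of $\Lie(\bM)$, the $\A_2 \to \A_1$ fixed-point analysis for special orbits and the resulting doubling encoded by $s(\mathcal{O})$) faithfully captures the proof in Digne--Michel, so this is the same approach as the paper with a correct and useful expansion.
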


\begin{rem}
Assume $\mathcal{O}_1 \in \Sigma/\langle \sigma\rangle$ is a special orbit then $|\mathcal{O}_1| = 2i$ is even and $\alpha+{}^{\sigma^i}\alpha \in \Sigma$ is a root. Moreover, $\{\alpha,{}^{\sigma^i}\alpha\}$ is contained in an irreducible component of type $\A_{2k}$. If $\mathcal{O}_2$ is the cospecial orbit containing $\alpha+{}^{\sigma^i}\alpha$ then $\beta_{\mathcal{O}_2} = 2\beta_{\mathcal{O}_1}$. However, by the calculation in \cite[8.2]{St68} we have $C_{\sigma,\mathcal{O}_1} = -C_{\sigma,\mathcal{O}_2}$ so exactly one of $\beta_{\mathcal{O}_1}$ or $\beta_{\mathcal{O}_2}$ is contained in $\Sigma_{\sigma}$.
\end{rem}

Let $W_{\bM}^{\circ}(\sigma)$ be the Weyl group $N_{C_{\bM}^{\circ}(\sigma)}(C_{\bT_0}^{\circ}(\sigma))/C_{\bT_0}^{\circ}(\sigma)$ of $C_{\bM}^{\circ}(\sigma)$ with respect to $C_{\bT_0}^{\circ}(\sigma)$. We may naturally identify $W_{\bM}^{\circ}(\sigma)$ with a subgroup of the centraliser $W_{\bG}(\sigma) = \{w \in W_{\bG}(\bT_0) \mid {}^{\sigma}w = w\}$. Following \cite{DiMi18} we say $\sigma$ is \emph{quasi-central} if $W_{\bM}^{\circ}(\sigma) = W_{\bG}(\sigma)$. If $\sigma$ is quasi-central then $C_{\sigma,\mathcal{O}} \in \{\pm 1\}$ for all orbits $\mathcal{O} \in \Sigma/\langle \sigma\rangle$ and $C_{\sigma,\mathcal{O}} = 1$ unless the orbit $\mathcal{O}$ is special. We will need the following.

\begin{lem}[{}{see, \cite[(1.24)]{DiMi18}}]
Assume $\sigma$ is quasi-central and $t \in C_{\bT_0}(\sigma)$ then we have $\Sigma_{t\sigma} = \{\beta_{\mathcal{O}} \mid \mathcal{O} \in \Sigma/\langle \sigma\rangle\text{ and }\beta_{\mathcal{O}}^*(t) = C_{\sigma,\mathcal{O}}\}$.
\end{lem}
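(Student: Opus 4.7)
The plan is to apply Proposition~\ref{prop:root-sys-cent-sig}(i) to the element $t\sigma$ (rather than to $\sigma$) and then translate the resulting condition $C_{t\sigma,\mathcal{O}}=1$ into the stated condition on $\beta_{\mathcal{O}}^{*}(t)$.

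First I would check that Proposition~\ref{prop:root-sys-cent-sig}(i) actually applies to $t\sigma$: since $t\in\bT_{0}\subseteq N_{\bG}(\bT_{0})$ acts trivially on $\Sigma$ by conjugation, $t\sigma$ still normalises $\bT_{0}$, $\bM$ and $\bB_{0}\cap\bM$ and preserves the positive system $\Sigma^{+}$. In addition, the $\langle t\sigma\rangle$- and $\langle\sigma\rangle$-orbits on $\Sigma$ coincide for the same reason. The proposition then yields
\begin{equation*}
\Sigma_{t\sigma}\;=\;\{\beta_{\mathcal{O}}\,|\,\mathcal{O}\in\Sigma/\langle\sigma\rangle\ \text{and}\ C_{t\sigma,\mathcal{O}}=1\}.
\end{equation*}

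The core computation is then to express $C_{t\sigma,\mathcal{O}}$ in terms of $C_{\sigma,\mathcal{O}}$ and $t$. Since $t$ commutes with $\sigma$, we have $(t\sigma)^{n}=t^{n}\sigma^{n}$ with $n=|\mathcal{O}|$. Fixing $\alpha\in\mathcal{O}$ and using the relation $^{t^{n}}x_{\alpha}(k)=x_{\alpha}(\alpha(t)^{n}k)$ together with the definition of $C_{\sigma,\mathcal{O}}$, a direct calculation gives
\begin{equation*}
x_{\alpha}\!\left(C_{t\sigma,\mathcal{O}}\,k\right)=\,^{(t\sigma)^{n}}x_{\alpha}(k)=\,^{t^{n}}x_{\alpha}(C_{\sigma,\mathcal{O}}k)=x_{\alpha}\!\left(\alpha(t)^{n}\,C_{\sigma,\mathcal{O}}\,k\right).
\end{equation*}
Because $t\in C_{\bT_{0}}(\sigma)$, the value $\alpha(t)$ is constant on $\mathcal{O}$, so $\alpha(t)^{n}=\prod_{\alpha'\in\mathcal{O}}\alpha'(t)=\beta_{\mathcal{O}}^{*}(t)$. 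Hence $C_{t\sigma,\mathcal{O}}=\beta_{\mathcal{O}}^{*}(t)\cdot C_{\sigma,\mathcal{O}}$.

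To finish, I would invoke the quasi-centrality of $\sigma$, which forces $C_{\sigma,\mathcal{O}}\in\{\pm1\}$ and hence $C_{\sigma,\mathcal{O}}^{-1}=C_{\sigma,\mathcal{O}}$. Combined with the previous identity, this makes the condition $C_{t\sigma,\mathcal{O}}=1$ equivalent to $\beta_{\mathcal{O}}^{*}(t)=C_{\sigma,\mathcal{O}}$, giving the lemma. The only point needing care is the behaviour of special/cospecial orbit pairs $(\mathcal{O}_{1},\mathcal{O}_{2})$, but these are handled automatically: one checks that $\beta_{\mathcal{O}_{1}}^{*}=\beta_{\mathcal{O}_{2}}^{*}$ while $C_{\sigma,\mathcal{O}_{1}}=-C_{\sigma,\mathcal{O}_{2}}$ (as recalled after Proposition~\ref{prop:root-sys-cent-sig}), so exactly one of the two orbits contributes, matching the simple/double-root dichotomy in (iii) of that proposition. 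I do not anticipate a serious obstacle; the main thing to be careful about is simply that $t\sigma$ need not itself be quasi-central, so only part~(i) of the proposition (which makes no such assumption) should be invoked.
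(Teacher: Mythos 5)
The paper does not include its own proof of this lemma; it simply cites \cite[(1.24)]{DiMi18}, so there is no in-text argument to compare against. That said, your proof is correct and complete. You correctly observe that $t\sigma$ is still quasi-semisimple with the same underlying torus $C_{\bT_0}^\circ(t\sigma)=C_{\bT_0}^\circ(\sigma)$ and the same orbit structure on $\Sigma$, so \cref{prop:root-sys-cent-sig}(i) applies (and only part (i) is needed, since $t\sigma$ need not be quasi-central — a point worth flagging, as you do). The computation $C_{t\sigma,\mathcal{O}}=\alpha(t)^{|\mathcal{O}|}\,C_{\sigma,\mathcal{O}}$ from commutativity of $t$ and $\sigma$, together with the identity $\alpha(t)^{|\mathcal{O}|}=\beta_{\mathcal{O}}^*(t)$ (which uses that $\alpha\mapsto\alpha(t)$ is constant on each $\langle\sigma\rangle$-orbit precisely because $t\in C_{\bT_0}(\sigma)$), and the fact that $C_{\sigma,\mathcal{O}}^{-1}=C_{\sigma,\mathcal{O}}$ by quasi-centrality, yields exactly the claimed equivalence. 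Your closing remark on special/cospecial pairs is not logically necessary for the argument but is a useful sanity check and is also accurate, since $\beta_{\mathcal{O}_1}^*=\beta_{\mathcal{O}_2}^*$ while $C_{\sigma,\mathcal{O}_1}=-C_{\sigma,\mathcal{O}_2}$.
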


\begin{rem}
If there are no special orbits then the condition that $\beta_{\mathcal{O}}^*(t) = C_{\sigma,\mathcal{O}}$ is simply the condition that $\beta_{\mathcal{O}}^*(t) = 1$. However, even in this case the root system $\Sigma_{t\sigma}$ is not necessarily a closed subsystem of $\Sigma_{\sigma}$.
\end{rem}

Let us now give an example of these concepts which will be used later to deal with the unipotent class $\D_4(a_1)$ in $\E_6$.

\begin{exmp}\label{exmp:qs-elts-3D4}
Assume $\bG$ is any simple group of type $\E_6$ and let $\bM \leqslant \bG$ be the standard Levi subgroup of type $\D_4$ with basis of simple roots given by $\Pi = \{2,3,4,5\}$. The group $N_W(\Pi) = \{w \in W \mid {}^w\Pi = \Pi\} = \langle s_{\Pi,\alpha_1}, s_{\Pi,\alpha_6}\rangle$ is isomorphic to the symmetric group $\mathfrak{S}_3$, where $s_{\Pi,\alpha} = w_{\Pi\cup\{\alpha\}}w_{\Pi}$ is the product of longest elements. The element $w = s_{\Pi,\alpha_6}s_{\Pi,\alpha_1}$ is of order $3$ and acts on $\Pi$ as the cyclic permutation $(\alpha_2,\alpha_5,\alpha_3)$.

Let $\sigma = n_w$, an element of order $3$, then $\Sigma^+/\langle\sigma\rangle$ contains six orbits
\begin{align*}
\mathcal{O}_1 &= \{1000,0100,0001\} & \mathcal{O}_3 &= \{1010,0110,0011\} & \mathcal{O}_5 &= \{1111\},\\
\mathcal{O}_2 &= \{0010\} & \mathcal{O}_4 &= \{1110,1011,0111\} & \mathcal{O}_6 &= \{1121\}
\end{align*}
where roots are written in Bourbaki convention with respect to $\Pi$. To ease notation we set $\beta_i = \beta_{\mathcal{O}_i}$, $\beta_i^* = \beta_{\mathcal{O}_i}^*$, \dots, etc. The set $\Sigma/\langle \sigma\rangle$ contains no special orbits, hence also no cospecial orbits. As $\sigma$ has odd order it follows that $C_{\sigma,\mathcal{O}} = 1$ for all orbits $\mathcal{O} \in \Sigma/\langle \sigma\rangle$ so $\sigma$ is quasi-central.

The root system $\Sigma_{\sigma}$ is of type $\G_2$ with simple system $\Pi_{\sigma} = \{\beta_1,\beta_2\}$. Here the root $\beta_1$ is short and the root $\beta_2$ is long. Now consider the element $t = \widecheck{\omega}_4(\frac{1}{3}) \in C_{\bT_0}(\sigma)$. Note that $\widecheck{\omega}_4 \in \mathbb{Z}\widecheck{\Phi}$ is contained in the coroot lattice so $t$ is an element of order $3$ and we have
\begin{equation*}
\Sigma_{t\sigma}^+ = \{\beta_{\mathcal{O}} \mid \mathcal{O} \in \Sigma/\langle\sigma\rangle\text{ and } \langle \beta_{\mathcal{O}}^*, \widecheck{\omega}_4\rangle \in 3\mathbb{Z}\} = \{\beta_1,\beta_3,\beta_4\}
\end{equation*}
are the positive roots of a root system of type $\A_2$ with simple system $\{\beta_1,\beta_3\}$. Note that the sum $\beta_1+\beta_4 = 3\beta_1+\beta_2 \in \Sigma_{\sigma}$ is a root not contained in $\Sigma_{t\sigma}$ so this subsystem of $\Sigma_{\sigma}$ is not closed in $\Sigma_{\sigma}$.
\end{exmp}

In the following sections we will need to calculate the fusion map $\mathcal{U}(C_{\bM}^{\circ}(\sigma)) \rightsquigarrow_{\bM} \mathcal{U}(\bM)$, as in \cref{sec:fusion-unip-cls}. Note that $C_{\bM}^{\circ}(\sigma)$ is not a maximal rank subgroup of $\bM$ so the results in \cref{sec:fusion-unip-cls} do not directly apply. However, the fusion map can be calculated with the following result of Fowler--R\"ohrle which is an analogue of \cref{thm:fowler-rohrle} for the group $C_{\bM}^{\circ}(\sigma)$.

\begin{thm}[{}{Fowler--R\"ohrle, \cite[Cor.~3.23]{FowRohr08}}]\label{thm:fowler-rohrle-qss}
Assume $p$ is a good prime for $\bG$ and $\sigma \in \bG$ is a semisimple element. If $u \in \mathcal{U}(C_{\bM}^{\circ}(\sigma))$ is a unipotent element then $\mathcal{D}_u(C_{\bM}^{\circ}(\sigma)) = \mathcal{D}_u(\bM,C_{\bM}^{\circ}(\sigma))$.
\end{thm}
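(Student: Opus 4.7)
My plan is to reduce the equality of the two sets of Dynkin cocharacters to the analogous equality for associated cocharacters, and then exploit the Bala--Carter classification. By Premet's theorem (\cref{lem:assoc-dynkin-cocharacters}) applied to both $C_{\bM}^\circ(\sigma)$ and $\bM$ (in good characteristic $p$) we have $\mathcal{D}_u(C_{\bM}^\circ(\sigma)) = \mathcal{A}_u(C_{\bM}^\circ(\sigma))$ and $\mathcal{D}_u(\bM) = \mathcal{A}_u(\bM)$. Intersecting the latter with $\widecheck{X}(C_{\bM}^\circ(\sigma))$ it therefore suffices to prove the equality
\begin{equation*}
\mathcal{A}_u(C_{\bM}^\circ(\sigma)) = \mathcal{A}_u(\bM) \cap \widecheck{X}(C_{\bM}^\circ(\sigma)).
\end{equation*}
The advantage of working with associated cocharacters is that their definition is phrased in terms of Levi subgroups and $\bL$-distinguished nilpotent elements, so the problem can be tackled by passing Levi decompositions through the operation $\bL \mapsto C_{\bL}^\circ(\sigma)$.

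For the inclusion $\mathcal{A}_u(C_{\bM}^\circ(\sigma)) \subseteq \mathcal{A}_u(\bM)$, the first step is to take a cocharacter $\lambda$ associated to $e = \phi_{\spr}(u)$ in $C_{\bM}^\circ(\sigma)$, together with a witnessing Levi $\bL' \leqslant C_{\bM}^\circ(\sigma)$ such that $e$ is $\bL'$-distinguished and $\lambda(\mathbb{G}_m) \leqslant \bL'_{\der}$. The next step is to produce a Levi $\bL \leqslant \bM$ containing $\bL'$ and witnessing that $\lambda$ is associated to $e$ in $\bM$. I would construct $\bL$ as $C_{\bM}(Z^\circ(\bL'))$: this is a Levi of $\bM$ (since $Z^\circ(\bL')$ is a torus of $\bM$), it contains $\bL'$, and by construction $\lambda(\mathbb{G}_m) \leqslant \bL'_{\der} \leqslant \bL_{\der}$. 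What remains is to check that $e$ is $\bL$-distinguished, which by the Bala--Carter classification amounts to showing that the Levi $\bL$ is minimal among Levi subgroups of $\bM$ containing $e$. For this one uses that $C_{\bL}^\circ(\sigma) = \bL'$ (by the structure theory of centralisers of quasi-semisimple elements, in particular \cref{prop:root-sys-cent-sig}), combined with the fact that $e$ is $\bL'$-distinguished.

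For the reverse inclusion, the argument proceeds symmetrically: starting from $\lambda \in \widecheck{X}(C_{\bM}^\circ(\sigma))$ and a Levi $\bL \leqslant \bM$ witnessing that $\lambda$ is associated to $e$ in $\bM$, I would note that $\sigma$ normalises $\bL$. Indeed $\lambda(\mathbb{G}_m) \leqslant C_{\bM}^\circ(\sigma)$ forces $\sigma$ to centralise $\lambda$ and hence to normalise the Levi determined by it; alternatively, minimality of $\bL$ and uniqueness up to $C_{\bM}(e)$-conjugacy allow one to move $\bL$ into a $\sigma$-stable position (using that $\sigma$ preserves $e$). Then $\bL' := C_{\bL}^\circ(\sigma)$ is a Levi of $C_{\bM}^\circ(\sigma)$ (again using \cref{prop:root-sys-cent-sig}, applied now to the subsystem of $\bL$), and $\lambda(\mathbb{G}_m) \leqslant \bL_{\der} \cap C_{\bM}^\circ(\sigma) \leqslant \bL'_{\der}$. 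It remains to show that $e$ is $\bL'$-distinguished, i.e., every torus of $C_{\bL'}(e)$ is central in $\bL'$; this reduces, via $\bL$-distinguishedness of $e$ and the fact that any $\sigma$-stable torus of $C_{\bL}(e)$ lies in $Z(\bL)$, to a fixed-point calculation in the quasi-semisimple setup.

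The main obstacle is the structural input from quasi-semisimple element theory that is required in both directions: namely, that passing to $\sigma$-fixed points sends Levi subgroups of $\bM$ to Levi subgroups of $C_{\bM}^\circ(\sigma)$ (and that every Levi of $C_{\bM}^\circ(\sigma)$ arises this way), and that $\bL'$-distinguishedness of $e$ ascends and descends correctly through this correspondence. This is delicate because, as illustrated by \cref{exmp:qs-elts-3D4}, $C_{\bM}^\circ(\sigma)$ need not have maximal rank in $\bM$, so the maximal-rank version of the Fowler--R\"ohrle theorem (\cref{thm:fowler-rohrle}) cannot be invoked directly; one really does need the description of $\Sigma_\sigma$ via $\sigma$-orbits on $\Sigma$ combined with the Bala--Carter characterisation of Levi subgroups.
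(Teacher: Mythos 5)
The paper does not actually prove this statement: it is imported verbatim from Fowler and R\"ohrle, citing Corollary~3.23 of \cite{FowRohr08}, so there is no internal argument to compare your plan against. Their proof proceeds via the Kempf--Rousseau theory of optimal destabilising cocharacters, whereas you propose a Lie-theoretic route through Premet's characterisation and Bala--Carter Levi witnesses; these are genuinely different methods. One secondary caveat about your opening reduction: applying \cref{lem:assoc-dynkin-cocharacters} to $C_{\bM}^\circ(\sigma)$ requires $p$ to be good for $C_{\bM}^\circ(\sigma)$, and this is not formally implied by $p$ being good for $\bG$ (for instance $\A_{2n}$ under the graph automorphism yields type $\B_n$, for which $p=2$ is bad even though $p=2$ is good for $\A_{2n}$); Fowler--R\"ohrle sidestep this by working directly with associated cocharacters rather than Dynkin ones.

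The more serious problem is a genuine gap at the step you yourself flag as delicate: transferring distinguishedness across the fixed-point functor $\bL \rightsquigarrow C_{\bL}^\circ(\sigma)$. In the forward direction you set $\bL = C_{\bM}(Z^\circ(\bL'))$, observe $C_{\bL}^\circ(\sigma)=\bL'$, and want to conclude that $e$ is $\bL$-distinguished. Take a $\sigma$-stable maximal torus $\bS$ of $C_{\bL}^\circ(e)$ (it contains $Z^\circ(\bL)$). From $\bL'$-distinguishedness you only get $\bS^{\sigma,\circ}\leqslant Z^\circ(\bL')=Z^\circ(\bL)^{\sigma,\circ}$, and this does \emph{not} force $\bS=Z^\circ(\bL)$, because $\sigma$ may act on the quotient torus $\bS/Z^\circ(\bL)$ with only finitely many fixed points. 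This is precisely the rank-dropping phenomenon which, as you rightly note, prevents an appeal to the maximal-rank version \cref{thm:fowler-rohrle}; it cannot be waved away here, and some extra numerical input is needed (for instance comparing $\dim\Lie(\bL)(\lambda,i)$ with $\dim\Lie(\bL')(\lambda,i)$, or invoking the optimality function from Kempf--Rousseau theory). The reverse direction has a cognate issue: the claim that ``$\lambda(\mathbb{G}_m)\leqslant C_{\bM}^\circ(\sigma)$ forces $\sigma$ to normalise the Levi determined by $\lambda$'' conflates the centraliser Levi $C_{\bM}(\lambda)$ with the (typically much smaller) witnessing Levi $\bL$ in the definition of associated cocharacter; $\sigma$ certainly normalises the former, but need not normalise the latter. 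Your fallback --- replacing $\bL$ by $C_{\bM}(\bS_0)$ for a $\sigma$-stable maximal torus $\bS_0$ of $C_{\bM}^\circ(e)\cap C_{\bM}(\lambda)$ --- is the right repair, but you then face the mirror image of the fixed-point obstruction when proving $e$ is $C_{\bL}^\circ(\sigma)$-distinguished. In short, the plan correctly isolates the bottleneck but does not cross it, which is exactly why the paper outsources this theorem to Fowler and R\"ohrle rather than proving it.
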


With this result we may now use the same algorithm given in \cref{alg:fusion} to calculate the fusion map $\mathcal{U}(C_{\bM}^{\circ}(\sigma)) \rightsquigarrow_{\bM} \mathcal{U}(\bM)$. We note that, as before, to calculate the fusion map we need only that the inclusion $\mathcal{D}_u(C_{\bM}^{\circ}(\sigma)) \subseteq \mathcal{D}_u(\bG)$ holds. Moreover, as $\bM$ has maximal rank inside $\bG$ another application of \cref{alg:fusion} gives the fusion map $\mathcal{U}(\bM) \rightsquigarrow_{\bG} \mathcal{U}(\bG)$. So we may in fact use the algorithm to calculate directly the map $\mathcal{U}(C_{\bM}^{\circ}(\sigma)) \rightsquigarrow_{\bG} \mathcal{U}(\bG)$. We give the following example to illustrate the point.

\begin{exmp}\label{exmp:fusion-3D4}
We use the notation of \cref{exmp:qs-elts-3D4}. Note that we have
\begin{align*}
\widecheck{\beta}_1^* &= 2(\widecheck{\omega}_2+\widecheck{\omega}_3+\widecheck{\omega}_5) - 3\widecheck{\omega}_4,\\
\widecheck{\beta}_2^* &= -(\widecheck{\omega}_2+\widecheck{\omega}_3+\widecheck{\omega}_5) + 2\widecheck{\omega}_4.
\end{align*}
Let $\widecheck{\pi}_i$ be the fundamental dominant coweight of $\Sigma_{\sigma}$ corresponding to $\beta_i$. We then have $\widecheck{\pi}_1 = 2\widecheck{\beta}_1^*+3\widecheck{\beta}_2^* = \widecheck{\omega}_2+\widecheck{\omega}_3+\widecheck{\omega}_5$ and $\widecheck{\pi}_2 = \widecheck{\beta}_1^*+2\widecheck{\beta}_2^* = \widecheck{\omega}_4$. As one might expect we find that, in terms of weighted Dynkin diagrams, the fusion map $\mathcal{U}(C_{\bM}^{\circ}(\sigma)) \rightsquigarrow_{\bM} \mathcal{U}(\bM)$ is given as follows.
\begin{center}
\begin{tikzpicture}[baseline,
vertex/.style={inner sep=0pt,minimum size=4.5mm,draw,circle,fill=white,align=center}]
\draw (0,0) -- (1.1,0);
\draw (0,0.075) -- (1.1,0.075);
\draw (0,-0.075) -- (1.1,-0.075);

\draw (0.35,0) -- (0.75,0.25);
\draw (0.35,0) -- (0.75,-0.25);

\node[vertex] at (-0.1,0) {\small $a$};
\node[vertex] at (1.1,0) {\small $b$};

\node at (2.2,0) {$\rightsquigarrow$};

\begin{scope}[xshift={4cm}]
\coordinate (0) at (140:1.1);
\coordinate (1) at (220:1.1);
\coordinate (2) at (0:1.1);
\coordinate (3) at (0,0);
\end{scope}

\draw (0) -- (3) -- (1);
\draw (3) -- (2);

\node[vertex] at (0) {\small $a$};
\node[vertex] at (1) {\small $a$};
\node[vertex] at (2) {\small $a$};
\node[vertex] at (3) {\small $b$};
\end{tikzpicture}
\end{center}
Recall that $\beta_1$ is the short root. In particular, there is no non-trivial fusion and $C_{\bM}^{\circ}(\sigma)$ meets every $\sigma$-invariant unipotent class of $\bM$ except the subregular class.

Now consider the group $C_{\bM}^{\circ}(t\sigma)$ of type $\A_2$ with simple system $\{\beta_1,\beta_3\}$. Note that $\widecheck{\beta}_3^* = \widecheck{\beta}_1^*+3\widecheck{\beta}_2^*$. The fundamental dominant coweights of $\Sigma_{t\sigma}$ are
\begin{align*}
\frac{1}{3}(2\widecheck{\beta}_1^*+\widecheck{\beta}_3^*) &= \widecheck{\beta}_1^*+\widecheck{\beta}_2^* = (\widecheck{\omega}_2+\widecheck{\omega}_3+\widecheck{\omega}_5) - \widecheck{\omega}_4,\\
\frac{1}{3}(\widecheck{\beta}_1^*+2\widecheck{\beta}_3^*) &= \widecheck{\beta}_1^*+2\widecheck{\beta}_2^* = \widecheck{\omega}_4.
\end{align*}
Thus the fusion in terms of weighted Dynkin diagrams is given as follows
\begin{center}
\begin{tikzpicture}[baseline,
vertex/.style={inner sep=0pt,minimum size=4.5mm,draw,circle,fill=white,align=center}]
\draw (0,0) -- (1.1,0);

\node[vertex] at (-0.1,0) {\small $a$};
\node[vertex] at (1.1,0) {\small $a$};

\node at (2.2,0) {$\rightsquigarrow$};

\begin{scope}[xshift={4cm}]
\coordinate (0) at (140:1.1);
\coordinate (1) at (220:1.1);
\coordinate (2) at (0:1.1);
\coordinate (3) at (0,0);
\end{scope}

\draw (0) -- (3) -- (1);
\draw (3) -- (2);

\node[vertex] at (0) {\small $a$};
\node[vertex] at (1) {\small $a$};
\node[vertex] at (2) {\small $a$};
\node[vertex] at (3) {\small $0$};
\end{tikzpicture}
\end{center}
Hence, we see that $C_{\bM}^{\circ}(t\sigma)$ meets the subregular class of $\bM$.
\end{exmp}

\subsection{Families with $\bar{A}_{\mathscr{F}} \cong \mathfrak{S}_2$ lifting to an involution}\label{sec:inv-lift}
\begin{table}[th]
\centering
\begin{tabular}{*{7}{>{$}c<{$}}}
\toprule
\bG & \chi_{\mathscr{F}} & \mathcal{O}_{\mathscr{F}} & \mathcal{O}_{\mathscr{F}^*} & \Cl_{\bL_J}(u) & J\\
\midrule
\D_4 & \phi_{2,1} & 3^21^2 & 3^21^2 & 4\A_1 & \{0,1,3,4\}\\
\midrule
\F_4
& \phi_{4,13}  & \tilde{\A}_1   & \F_4(a_1) & \A_1{+}\tilde{\A}_1 & \{0,2\}\\
& \phi_{4,1} & \F_4(a_1) & \tilde{\A}_1   & \B_4               & \{0,1,2,3\}\\
\midrule
\E_6
& \phi_{30,15}  & \A_2 & \E_6(a_3)      & 4\A_1       & \{0,1,4,6\}\\
& \phi_{30,3} & \E_6(a_3)      & \A_2  & \A_5{+}\A_1 & \{0,1,3,4,5,6\}\\
\midrule
\E_7
& \phi_{56,30}   & \A_2       & \E_7(a_3)       & 4\A_1        & \{0,3,5,7\}\\
& \phi_{120,25}  & \A_2{+}\A_1       & \E_6(a_1)  & 5\A_1        & \{0,2,3,5,7\}\\
& \phi_{405,15}  & \D_4(a_1){+}\A_1  & \E_6(a_3) & \A_3{+}3\A_1 & \{0,2,3,5,6,7\}\\
& \phi_{420,13} & \A_4       &  \D_5(a_1)      & 2\A_3        & \{0,1,3,5,6,7\}\\
& \phi_{420,10} & \D_5(a_1)            & \A_4  & \D_4{+}2\A_1 & \{0,2,3,4,5,7\}\\
& \phi_{405,8} & \E_6(a_3) & \D_4(a_1){+}\A_1  & \A_5{+}\A_1  & \{0,3,4,5,6,7\}\\
& \phi_{120,4} & \E_6(a_1)      & \A_2{+}\A_1  & \A_7         & \{0,1,3,4,5,6,7\}\\
& \phi_{56,3}  & \E_7(a_3)            & \A_2  & \D_6{+}\A_1  & \{0,2,3,4,5,6,7\}\\
\midrule
\E_8
& \phi_{112,63}   & \A_2   & \E_8(a_3)            & 4\A_1 & \{0,2,5,7\}\\
& \phi_{210,52}   & \A_2{+}\A_1   & \E_8(a_4)      & 5\A_1 & \{0,2,3,5,7\}\\
& \phi_{700,42}   & 2\A_2   & \E_8(a_5)           & \A_2{+}4\A_1 & \{0,1,2,3,5,7\}\\
& \phi_{2268,30} & \A_4   & \E_7(a_3)            & 2\A_3 & \{0,2,4,5,7,8\}\\
& \phi_{2240,28} & \D_4(a_1){+}\A_2   & \E_8(b_6) & \A_3{+}\A_2{+}2\A_1 & \{0,1,2,3,5,6,7\}\\
& \phi_{4200,24} & \A_4{+}2A_1   &  \D_7(a_2)    & \D_4(a_1){+}\A_3 & \{0,2,3,-4,5,7,8\}\\
& \phi_{2800,25} & \D_5(a_1)   & \E_6(a_1)       & \D_4{+}2\A_1 & \{0,2,3,4,5,7\}\\
& \phi_{5600,21} & \E_6(a_3)   & \D_6(a_1)       & \A_5{+}\A_1 & \{0,2,4,5,6,7\}\\
& \phi_{5600,15} & \D_6(a_1)   & \E_6(a_3)       & \D_5{+}2\A_1 & \{0,1,2,3,4,5,7\}\\
& \phi_{2800,13} & \E_6(a_1)   & \D_5(a_1)       & \A_7 & \{0,2,4,5,6,7,8\}\\
& \phi_{2268,10} & \E_7(a_3)        & \A_4       & \D_6{+}\A_1 & \{0,2,3,4,5,6,7\}\\
& \phi_{210,4}  & \E_8(a_4)  & \A_2{+}\A_1       & \D_8 & \{0,2,3,4,5,6,7,8\}\\
& \phi_{112,3}  & \E_8(a_3)        & \A_2       & \E_7{+}\A_1 & \{0,1,2,3,4,5,6,7\}\\
& \phi_{2240,10} & \E_8(b_6) & \D_4(a_1){+}\A_2   & \D_8(a_3) & \{0,2,3,-4,5,-6,7,-8\}\\
& \phi_{700,6}  & \E_8(a_5)      & 2\A_2       & \D_8(a_1) & \{0,2,3,-4,5,6,7,8\}\\
\bottomrule
\end{tabular}
\caption{Families with $\bar{A}_{\mathscr{F}} \cong \mathfrak{S}_2$ lifting to an involution.}
\label{tab:S_2-lift-to-inv}
\end{table}

Each family $\mathscr{F} \in \Fam(W)$ in \cref{tab:S_2-lift-to-inv} satisfies the property that $\bar{A}_{\mathscr{F}} \cong \mathfrak{S}_2$. In the table we list the unique special character $\chi_{\mathscr{F}} \in \mathscr{F}$, the corresponding unipotent class $\mathcal{O}_{\mathscr{F}}$, and the Spaltenstein dual class $\mathcal{O}_{\mathscr{F}^*}$. Moreover, following \cite{So98} we list for each family a set $J$ which encodes the weighted Dynkin diagram for the class $\Cl_{\bL_J}(u)$ of a distinguished unipotent element $u \in \mathcal{U}(\bL_J)$ in a corresponding maximal rank subgroup $\bL_J \leqslant \bG$.

As in \Chevie{} \cite{Chv} $J$ encodes this information as follows. By taking the absolute value of each integer in $J$ we obtain a subset of the extended Dynkin diagram $\tilde{\Delta}$, via the explicit labelling in \cref{fig:affine-dynkin}, and thus a corresponding maximal rank subgroup $\bL_J$. The weights on the weighted Dynkin diagram of the class $\Cl_{\bL_J}(u)$ are determined as follows. If $i \in J$ is positive then the corresponding weight on the diagram is $2$ and if $i$ is negative then the corresponding weight on the diagram is $0$; note that any distinguished class is even. Hence, in this case, $\Cl_{\bL_J}(u)$ is always the regular class in $\bL_J$ except when $\bG$ is of type $\E_8$ and $\mathcal{O}_{\mathscr{F}}$ is either $\A_4{+}2\A_1$, $\E_8(b_6)$, or $\E_8(a_5)$.

With the subset $J$ in hand it is easy to verify in \Chevie{} that $\Cl_{\bG}(u) = \mathcal{O}_{\mathscr{F}}$. In \cref{fig:affine-dynkin} we have listed the extended Dynkin diagrams for exceptional groups. If $s \in \bT_0$ is an isolated involution in $\bG$ then the Dynkin diagram of $C_{\bG}^{\circ}(s)$ is obtained by removing one of the red nodes from the extended diagram. Indeed, up to conjugacy we can, and will, assume that $s = \widecheck{\omega}_{\alpha}(\frac{1}{2})$, with $\alpha \in \Delta$ a red root.

With this we see that each subset $J$ is contained in a subset $K \subseteq \tilde{\Delta}$ such that $C_{\bG}^{\circ}(s) = \bL_K$ where $s \in \bT_0$ is an isolated involution in $\bG$. We note that there could be more than one choice for $K$. Hence we have $\bL_J \leqslant C_{\bG}^{\circ}(s)$ is a Levi subgroup. In particular, this means that $\bL_J = C_{C_{\bG}(s)}^{\circ}(Z^{\circ}(\bL_J)) = C_{\bG}^{\circ}(sZ^{\circ}(\bL_J))$. By \cite[Prop.~15]{McnSo03} there exists an element $t \in sZ^{\circ}(\bL_J)$ such that $\bL_J = C_{\bG}^{\circ}(t)$, so $\bL_J$ is a pseudo-Levi subgroup of $\bG$ in the parlance of \cite{McnSo03}.

As $s$ is isolated in $\bG$ we must have $Z^{\circ}(C_{\bG}^{\circ}(s))$ is trivial; if not then $C_{\bG}^{\circ}(s)$ is contained in the proper Levi subgroup $C_{\bG}(Z^{\circ}(C_{\bG}^{\circ}(s)))$. Hence $s \not\in Z^{\circ}(C_{\bG}^{\circ}(s))$. As $\bL_J$ is a Levi subgroup of $C_{\bG}^{\circ}(s)$ we have the natural map $Z(C_{\bG}^{\circ}(s)) \to Z(\bL_J)/Z^{\circ}(\bL_J)$ is surjective. Using either \cite[Lem.~33]{McnSo03} or a calculation in \Chevie{} we get that $Z(\bL_J)/Z^{\circ}(\bL_J)$ is not trivial in each case. This means $s \not\in Z^{\circ}(\bL_J)$. It thus follows from \cite[Thm.~1]{McnSo03} that $s \not\in C_{\bG}^{\circ}(u)$.

In each case we have the subset $J \subseteq \tilde{\Delta}$ is stable under the permutation $\tilde{\Delta} \to \tilde{\Delta}$ induced by $F$. Hence, we have $\bL_J$ is $F$-stable. Moreover, the class $\Cl_{\bL_J}(u)$ is the unique unipotent class in $\bL_J$ of its dimension so it is also $F$-stable. Therefore we can assume that $u \in \bL_J^F$ is an $F$-fixed element of its conjugacy class. From the description of $s$ above it is clear that we can assume that $s$ is $F$-fixed.

We now take $A = \langle s\rangle$ and we fix a cocharacter $\lambda \in \mathcal{D}_u(\bL_J)^F$. By \cref{thm:fowler-rohrle} we have $\lambda \in \mathcal{D}_u(\bG)^F$ and $s \in \bL(\lambda) = C_{\bG}(\lambda)$ because $\bL_J \leqslant C_{\bG}(s)$. Thus we have $A \leqslant \bL(\lambda)^F$ and the natural maps $A \to A_{\bL(\lambda)}(u) \to A_{\bG}(u)$ are isomorphisms. As $p$ is good for $\bG$ it is necessarily odd so \cref{en:K1} holds and $|A| = 2$ is divisible only by bad primes. We have \cref{en:K2} holds by \cref{rem:cyclic-centraliser} and \cref{en:K3} holds trivially.

\subsection{Families with $\bar{A}_{\mathscr{F}} \cong \mathfrak{S}_2$ not lifting to an involution}\label{sec:inv-not-lift}
\begin{table}[t]
\centering
\begin{tabular}{*{7}{>{$}c<{$}}}
\toprule
\bG & \chi_{\mathscr{F}} & \mathcal{O}_{\mathscr{F}} & \mathcal{O}_{\mathscr{F}^*} & \Cl_{\bL_J}(u) & J\\
\midrule
\E_7
& \phi_{512,11} & \A_4{+}\A_1 & \A_4{+}\A_1 & \A_1{+}2\A_3 & \{2 \mid 0,1,3 \mid 5,6,7\}\\
\midrule
\E_8
& \phi_{4096,11} & \E_6(a_1){+}\A_1 & \A_4{+}\A_1 & \A_7{+}\A_1 & \{2,4,5,6,7,8,0 \mid 1\}\\
& \phi_{4200,12} & \D_7(a_2) & \A_4{+}2A_1 & \D_5{+}\A_3 & \{1,3,4,5,2\mid 7,8,0\}\\
& \phi_{4096,26} & \A_4{+}\A_1 & \E_6(a_1){+}\A_1 & \A_1{+}2\A_3 & \{1 \mid 0,7,8 \mid 2,4,5\}\\
\bottomrule
\end{tabular}
\caption{Families with $\bar{A}_{\mathscr{F}} \cong \mathfrak{S}_2$ not lifting to an involution.}
\label{tab:S_2-no-lift-to-inv}
\end{table}

There are four families where $A_{\bG}(u) = \bar{A}_{\mathscr{F}} \cong \mathfrak{S}_2$ but all involutions in $C_{\bG}(u)$ are contained in $C_{\bG}^{\circ}(u)$. These families are listed in \cref{tab:S_2-no-lift-to-inv}. In each case we either have $\bG$ is of type $\E_7$ or $\E_8$. For each family we list a subset $J \subseteq \tilde{\Delta}$ of the extended Dynkin diagram of $\bG$. We assume $s = \widecheck{\omega}_{\alpha}(\frac{1}{4})$ with $\alpha \in \tilde{\Delta}\setminus J$ then $s$ is an isolated semisimple element of order $4$ with simple system of roots $\tilde{\Delta}\setminus\{\alpha\}$. Hence $J \subseteq \tilde{\Delta}\setminus\{\alpha\}$ so $\bL_J \leqslant C_{\bG}^{\circ}(s)$ is a Levi subgroup of the centraliser. The root $\alpha$ is uniquely determined except in the case where $(\bG,\mathcal{O}_{\mathscr{F}})$ is of type $(\E_8,\A_4{+}\A_1)$ in which case there are two choices. It does not matter which element we choose.

Now we have ${}^nF(s) = s$ for some $n \in N_{\bG}(\bT_0)$ which can be taken to be $1$ if $q \equiv 1 \pmod{4}$ and a representative of the longest element if $q \equiv -1 \pmod{4}$. Assume $g \in \bG$ satisfies $g^{-1}F(g) = n$ then $F({}^gs) = {}^gs$. If $F'(x) = {}^nF(x)$ then conjugation by $g$ intertwines the Frobenius endomorphisms $F$ and $F'$. It suffices to argue with respect to $F'$ and translate under the conjugation action by $g$.

As the longest element is $-1$ we have $\bL_J$ is $F'$-stable and is a Levi subgroup of $C_{\bG}^{\circ}(s)$. If $u \in \bL_J^{F'}$ is a regular unipotent element and $\lambda \in \mathcal{D}_u(\bL_J)^{F'}$ is a corresponding Dynkin cocharacter then $s \in C_{\bL(\lambda)}(u)$. A calculation in \Chevie{} shows that $Z(\bL_J)/Z^{\circ}(\bL_J)$ is of order $4$ in each case so, arguing as in \cref{sec:inv-lift}, we see that $s \not\in C_{\bL(\lambda)}^{\circ}(u)$. It is clear that $A = \langle s\rangle \leqslant \bL(\lambda)^{F'}$ satisfies \cref{en:K0}, \cref{en:K1}, and \cref{en:K3} of \cref{def:admissiblesplitting}, with respect to $F'$, and \cref{en:K2} is satisfied by \cref{rem:cyclic-centraliser}.

\subsection{Families with $\bar{A}_{\mathscr{F}} \cong \mathfrak{S}_3$ and $\mathcal{O}_{\mathscr{F}}$ distinguished}\label{subsec:S_3-distinguished}
\begin{table}[t]
\centering
\begin{tabular}{*{6}{>{$}c<{$}}}
\toprule
\bG & \chi_{\mathscr{F}} & \mathcal{O}_{\mathscr{F}} & \mathcal{O}_{\mathscr{F}^*} & A_{\bG}(u) = \bar{A}_{\mathscr{F}}\\
\midrule
\G_2 & \phi_{2,1} & \G_2(a_1) & \G_2(a_1) & \mathfrak{S}_3\\
\F_4 & \phi_{12,4} & \F_4(a_3) & \F_4(a_3) & \mathfrak{S}_4\\
\E_6 & \phi_{80,7} & \D_4(a_1) & \D_4(a_1) & \mathfrak{S}_3\\
\midrule
\E_7 & \phi_{315,7} & \D_4(a_1) & \E_7(a_5) & \mathfrak{S}_3\\
     & \phi_{315,16} & \E_7(a_5) & \D_4(a_1) & \mathfrak{S}_3\\
\midrule
\E_8 & \phi_{1400,37} & \D_4(a_1) & \E_8(b_5) & \mathfrak{S}_3\\
& \phi_{1400,32} & \D_4(a_1){+}\A_1 & \E_8(a_6) & \mathfrak{S}_3\\
& \phi_{1400,8}  & \E_8(a_6) & \D_4(a_1){+}\A_1 & \mathfrak{S}_3\\
& \phi_{1400,7} & \E_8(b_5) & \D_4(a_1) & \mathfrak{S}_3\\
& \phi_{4480,16} & \E_8(a_7) & \E_8(a_7) & \mathfrak{S}_5\\
\bottomrule
\end{tabular}
\caption{Families with non-abelian $A_{\bG}(u) \cong \bar{A}_{\mathscr{F}}$.}
\label{tab:families-with-A-nonabelian}
\end{table}

Fix an element $u \in \mathcal{O}_{\mathscr{F}}^F$ and a cocharacter $\lambda \in \mathcal{D}_u(\bG)^F$. We have assumed that the class $\mathcal{O}_{\mathscr{F}}$ is distinguished. By definition this means that $C_{\bG}(u)$ contains no non-trivial torus. After (i) of \cref{prop:bumper-para-levi} we have $C_{\bL(\lambda)}(u)$ is reductive so $C_{\bL(\lambda)}^{\circ}(u) = \{1\}$ as its rank must be $0$. Hence, the natural map $C_{\bL(\lambda)}(u) \to A_{\bG}(u)$ is an isomorphism so \cref{en:K3} holds.

 We claim that, after possibly replacing $(u,\lambda)$ by $({}^gu,{}^g\lambda)$ for some $g \in \bG$, we have $(C_{\bL(\lambda)}(u),\lambda)$ is an admissible covering of $A_{\bG}(u) \cong \bar{A}_{\mathscr{F}}$. Firstly, we arrange that \cref{en:K0} holds. Certainly $C_{\bL(\lambda)}(u)$ is $F$-stable but it may not be the case that $F$ acts trivially on $C_{\bL(\lambda)}(u)$. However, as $C_{\bL(\lambda)}(u) \cong \mathfrak{S}_3$ we have $F$ acts as an inner automorphism, say ${}^aF(b) = b$ for some $a \in C_{\bL(\lambda)}(u)$ and all $b \in C_{\bL(\lambda)}(u)$. Now take $g \in \bG$ to be such that $g^{-1}F(g) = a$ then replacing $(u,\lambda)$ by $({}^gu,{}^g\lambda)$ we have $F$ acts trivially on $C_{\bL(\lambda)}(u)$.

Let $A = C_{\bL(\lambda)}(u)$. As $p$ is a good prime for $\bG$, an exceptional group, we have $p \neq 2,3$ so \cref{en:K1} holds. If $a \in A$ then either $C_{A}(a)$ is cyclic or $C_{A}(a) = A$. If $C_{A}(a) = A$ then $a$ must be the identity so $a \in C_{\bL(\lambda)}^{\circ}(C_{A}(a))$. Appealing to \cref{rem:cyclic-centraliser} in the cyclic case we see that \cref{en:K2} holds.

\subsection{Families with $\bar{A}_{\mathscr{F}} \cong \mathfrak{S}_3$ and $\mathcal{O}_{\mathscr{F}}$ of type $\D_4(a_1)$}\label{subsec:D4(a1)}

After \cref{tab:families-with-A-nonabelian} we see that $\bG$ is of type $\E_n$ with $n \in \{6,7,8\}$. Let us note that regardless of the isogeny type of $\bG$ if $u \in \mathcal{U}(\bG)$ is in the class $\D_4(a_1)$ we have $A_{\bG}(u) \cong \mathfrak{S}_3$. Using the exact same argument as in \cref{subsec:S_3-distinguished} it suffices to show that there exists an element $u \in \mathcal{O}_{\mathscr{F}}^F$, a cocharacter $\lambda \in \mathcal{D}_u(\bG)^F$, and an $F$-stable complement $A \leqslant C_{\bL(\lambda)}(u)$ of $C_{\bL(\lambda)}^{\circ}(u)$.

We first consider the case where $\bG$ is simple of type $\E_6$. Let us adopt the notation of \cref{exmp:qs-elts-3D4}. In particular, we have $\sigma = n_w \in N_{\bG}(\bT_0)$ acts on $\Pi$ as the cycle $(\alpha_2,\alpha_5,\alpha_3)$ and $t = \widecheck{\omega}_4(\frac{1}{3}) \in C_{\bT_0}(\sigma)$ is an element of order $3$ with $C_{\bM}^{\circ}(t\sigma)$ of type $\A_2$. The group $C_{\bM}^{\circ}(t)$ is reductive of type $3\A_1$ with a basis of simple roots being given by $J = \{2,3,5\}$. Let $w' = w_{\Delta}w_J$ and let $a = \widecheck{\alpha}_2(\frac{1}{2})\widecheck{\alpha}_3(\frac{1}{2})\widecheck{\alpha}_4(\frac{1}{2})\widecheck{\alpha}_5(\frac{1}{2})n_{w'}$.

The element $a$ has order $2$ and satisfies ${}^at = t^{-1}$ and ${}^a\sigma = \sigma^{-1}$. In particular, as $t \in C_{\bT_0}(\sigma)$ we have ${}^a(t\sigma) = (t\sigma)^{-1}$ so $a$ normalises $\bK := C_{\bM}^{\circ}(t\sigma)$ and $A = \langle t\sigma, a\rangle$ is isomorphic to $\mathfrak{S}_3$. The element $a$ preserves the roots $\Sigma_{t\sigma}$ and acts as the permutation $(\beta_3,-\beta_4)(-\beta_3,\beta_4)$. Hence, $a$ preserves the simple system $\{\beta_3,-\beta_4\}$ of $\Sigma_{t\sigma}$ and $a$ is a quasi-central element of the coset $\bK a$. Let $u \in \mathcal{U}(C_{\bK}^{\circ}(a))$ be a regular unipotent element and $\lambda \in \mathcal{D}_u(C_{\bK}^{\circ}(a),C_{\bT_0}^{\circ}(a,t\sigma))$ a Dynkin cocharacter. Then $u$ is regular in $\bK$ and subregular in $\bM$ by \cref{exmp:fusion-3D4}. Hence $u$ is contained in the class $\D_4(a_1)$ of $\bG$.

As $u \in \bM$ we have $Z(\bM) \leqslant C_{\bG}(u)$ and certainly $Z(\bM) \leqslant \bL(\lambda)$ because $\lambda \in \widecheck{X}(\bT_0)$. We know that $C_{\bL(\lambda)}^{\circ}(u)$ is a $2$-dimensional torus, see \cite[Table 22.1.3]{LiSe}, so $C_{\bL(\lambda)}^{\circ}(u) = Z^{\circ}(\bM)$. Certainly no element of $A$ centralises $\bM$ so $A \cap C_{\bL(\lambda)}^{\circ}(u) = \{1\}$ and $A$ is a complement of $C_{\bL(\lambda)}^{\circ}(u)$. That $A$ is $F$-stable is clear.

Now assume $\bG$ is a simple group of type $\E_7$ or $\E_8$ and let $\bM \leqslant \bG$ be the standard Levi subgroup of type $\E_6$. We have already shown that there exists an element $u \in \mathcal{U}(\bM)^F$ in the class $\D_4(a_1)$, a cocharacter $\lambda \in \mathcal{D}_u(\bM)^F \subseteq \mathcal{D}_u(\bG)^F$, and an $F$-stable complement $A \leqslant C_{\bL_{\bM}(\lambda)}(u)$ of $C_{\bL_{\bM}(\lambda)}^{\circ}(u)$. By \cref{lem:inj-comp-grp} we have $A \leqslant C_{\bL_{\bG}(\lambda)}(u)$ must also be an $F$-stable complement of $C_{\bL_{\bG}(\lambda)}^{\circ}(u)$ so we are done.

\subsection{The family with $\bar{A}_{\mathscr{F}} \cong \mathfrak{S}_3$ and $\mathcal{O}_{\mathscr{F}}$ of type $\D_4(a_1){+}\A_1$}\label{subsec:D4(a1)A1}
In this case we have $\bG$ is of type $\E_8$. Let $\bM \leqslant \bG$ be the standard Levi subgroup of type $\D_4\A_1$ determined by the set of simple roots $\Pi = \{2,3,4,5 \mid 8\} \subseteq \Delta$. We have $N_W(\Pi)$ is a reflection group of type $\B_3$ with generators $\tilde{s}_1 = s_{\Pi,\alpha_6}$, $\tilde{s}_2 = s_{\Pi,\alpha_1}$, and $\tilde{s}_3 = s_{\Pi,\alpha_6+\alpha_7}$, where $s_{\Pi,\beta}$ denotes the product of longest elements $w_{\Pi\cup\{\beta\}}w_{\Pi}$. The element $w = \tilde{s}_1\tilde{s}_2$, of order $3$, acts on $\Pi$ as the cycle $(\alpha_2,\alpha_5,\alpha_3)$. Moreover, the element $\sigma = n_w$ is a quasi-central element of the coset $\bM \sigma$ of order $3$.

Let $t = \widecheck{\omega}_1(\frac{2}{3})\widecheck{\omega}_4(\frac{1}{3})\widecheck{\omega}_6(\frac{2}{3}) \in C_{\bT_0}(\sigma)$, which is an element of order $3$. If we let $\mathcal{O}_7 = \{\alpha_8\}$ and $\mathcal{O}_1,\dots,\mathcal{O}_6$ be as in \cref{exmp:qs-elts-3D4} then we have $\Sigma^+/\langle \sigma\rangle = \{\mathcal{O}_1,\dots,\mathcal{O}_7\}$, where $\Sigma^+$ are the positive roots of $\bM$ containing $\Pi$. Similarly to \cref{exmp:qs-elts-3D4} we have $C_{\bM}^{\circ}(t\sigma)$ is of type $\A_2\A_1$ with positive roots $\Sigma_{t\sigma} = \{\beta_1,\beta_3,\beta_4,\beta_7\}$.

The centraliser $C_{\bM}(t)$ in $\bM$ is the standard Levi subgroup of $\bG$ of type $4\A_1$ with basis of simple roots given by $J = \{2,3,5,8\} \subseteq \Pi$. We let $w' = w_{\Delta}w_J\tilde{s}_3\tilde{s}_2\tilde{s}_1\tilde{s}_3\tilde{s}_2\tilde{s}_3$ and $a = \widecheck{\omega}_2(\frac{1}{2})\widecheck{\omega}_6(\frac{1}{2})n_{w'}$. Then $a$ is an involution satisfying ${}^at = t^{-1}$ and ${}^a\sigma = \sigma^{-1}$. Hence ${}^a(t\sigma) = (t\sigma)^{-1}$ so $a$ normalises $\bK = C_{\bM}^{\circ}(t\sigma)$ and $A = \langle t\sigma,a\rangle$ is isomorphic to $\mathfrak{S}_3$. As in \cref{subsec:D4(a1)} the element $a$ acts on $\Sigma_{t\sigma}$ as the permutation $(\beta_3,-\beta_4)(-\beta_3,\beta_4)$. Hence, $a$ preserves the simple system $\{\beta_3,-\beta_4,\beta_7\}$ of $\Sigma_{t\sigma}$ and $a$ is a quasi-central element of the coset $\bK a$. If $u \in \mathcal{U}(C_{\bK}^{\circ}(a))$ is a regular unipotent element then $u$ is contained in the class $\D_4(a_1){+}\A_1$ of $\bG$. We fix a cocharacter $\lambda \in \mathcal{D}_u(C_{\bK}^{\circ}(a),C_{\bT_0}^{\circ}(a,t\sigma)) \subseteq \mathcal{D}_u(\bG,\bT_0)$.

By design $A \leqslant C_{\bL(\lambda)}(u)$ is an $F$-stable subgroup of $C_{\bL(\lambda)}(u)$. Arguing as in \cref{subsec:S_3-distinguished} we see that it is sufficient to show that $A \cap C_{\bL(\lambda)}^{\circ}(u)$ is trivial. The centraliser $C_W(\Pi) = \langle \tilde{s}_3, \tilde{s}_3^{\tilde{s}_2}, \tilde{s}_3^{\tilde{s}_2\tilde{s}_1} \rangle \leqslant N_W(\Pi)$ is a reflection group of type $3\A_1$. One easily checks that conjugation gives a faithful permutation action of the group $\langle w,w'\rangle \leqslant W$ on the non-identity elements of $C_W(\Pi)$.

The group $C_W(\Pi)$ is the Weyl group of a subsystem subgroup $\bC \leqslant C_{\bG}(\bM)$ of type $3\A_1$, its roots being those that are orthogonal to all of those in $\Sigma$. As $u \in \bM$ and $\lambda \in \mathcal{D}_u(\bM,\bT_0)$, and as $\bC$ is connected, we must have $\bC \leqslant C_{\bL(\lambda)}^{\circ}(u)$. Moreover, it is well known that equality holds in this case, see \cite[Table 22.1.1]{LiSe}. By the above remarks it follows immediately that $A \cap \bC = \{1\}$ as desired.

\subsection{The family with $\bar{A}_{\mathscr{F}} \cong \mathfrak{S}_4$}\label{subsec:S4}
In this case we have $\bG$ is of type $\F_4$ and $\mathcal{O}_{\mathscr{F}}$ is the class $\F_4(a_3)$. Let $u \in \mathcal{O}_{\mathscr{F}}^F$ and $\lambda \in \mathcal{D}_u(\bG)^F$ be an associated cocharacter. The class $\F_4(a_3)$ is distinguished so we can use the argument of \cref{subsec:S_3-distinguished} to show that, after possibly replacing $(u,\lambda)$ by $({}^gu,{}^g\lambda)$ for some $g \in \bG$, we have $(A,\lambda)$ is an admissible covering for $u$ where $A = C_{\bL(\lambda)}(u)$.

The problem here is showing that \cref{en:K2} holds. As $A = C_{\bL(\lambda)}(u) \cong \mathfrak{S}_4$ we have for any $a \in A$ that $C_{A}(a)$ is either: abelian of order $3$ or $4$, a dihedral group of order $8$, or $A$ itself. If $C_{A}(a) = A$ then $a \in Z(A) = \{1\}$ so certainly $a \in C_{\bL(\lambda)}^{\circ}(C_{A}(a))$. Now, in the abelian case $C_{A}(a)$ is generated by at most two elements. As $\bG$ is simply connected it follows from \cite[Thm.~II.5.8]{SpSt} that $C_{A}(a)$ is contained in a maximal torus of $\bG$ so we can apply \cref{rem:in-max-torus}.

With this it suffices to prove that the property in \cref{en:K2} holds when $C_{A}(a)$ is a dihedral group. All elements of $A$ with $C_{A}(a)$ a dihedral group are conjugate, hence we need only show it for one such element. Note also that as \cref{en:K2} makes no reference to the Frobenius we need not concern ourselves with any questions of $F$-stability.

Consider the isolated element $s = \widecheck{\omega}_3(\frac{1}{4})$ of order $4$. If $\alpha_0$ is the root $1242$ then the set $\Pi = \{1,2,0\mid 4\}$ is a basis of simple roots for the centraliser $\bM = C_{\bG}^{\circ}(s)$, which is a semisimple group of type $\A_3\A_1$. Let $w = w_{\Delta}w_{\Pi}$. The element $\sigma = n_w$ is an involution satisfying ${}^{\sigma}\Pi = \Pi$. The set $\Pi/\langle \sigma\rangle$ is a union of three orbits $(\mathcal{O}_1,\mathcal{O}_2,\mathcal{O}_3) = (\{0,1\},\{2\},\{4\})$. One readily checks that $\sigma$ is a quasi-central element of the coset $\bM\sigma$ and the group $C_{\bM}^{\circ}(\sigma)$ is of type $\C_2\A_1$. Here $\{\beta_{\mathcal{O}_1},\beta_{\mathcal{O}_2}\}$ is a simple system of roots for the $\C_2$ component with $\beta_{\mathcal{O}_1}$ the short root.

Pick a regular unipotent element $u \in C_{\bM}^{\circ}(\sigma)$ and a cocharacter $\lambda \in \mathcal{D}_u(C_{\bM}^{\circ}(\sigma),C_{\bT_0}^{\circ}(\sigma))$. One can check that $u$ is regular in $\bM$ and in the class $\F_4(a_3)$ of $\bG$. As ${}^{\sigma}s = s^{-1}$ we have $H = \langle s,\sigma\rangle$ is a dihedral group of order $8$ with centre $Z(H) = \langle s^2\rangle$. Let $A = C_{\bL(\lambda)}(u)$ then clearly $H \leqslant A$ and $H \leqslant C_{A}(s^2)$. As $s^2$ is not the identity we must have $H = C_{A}(s^2)$. The torus $\bS = \{\widecheck{\omega}_1(-2v)\widecheck{\omega}_3(v) \mid v\in \mathbb{Q}\} \leqslant \bT_0 \leqslant \bL(\lambda)$ is centralised by $s$ and $\sigma$ and contains $s^2$. Hence, we have $s^2 \in \bS \leqslant C_{\bL(\lambda)}^{\circ}(C_{A}(s^2))$.

\begin{rem}
This last fact can be checked by hand and also by \Chevie{} using the command \texttt{MatYPerm}. This gives the matrix representation of the element $w$ acting on the cocharacter lattice $\widecheck{X}$.
\end{rem}

\subsection{The family with $\bar{A}_{\mathscr{F}} \cong \mathfrak{S}_5$}
In this case we have $\bG$ is of type $\E_8$ and $\mathcal{O}_{\mathscr{F}}$ is the class $\E_8(a_7)$. We argue exactly as in \cref{subsec:S4}. As $A \cong \mathfrak{S}_5$ we have for any $a \in A$ that $C_{A}(a)$ is either: abelian of order $4$, $5$, or $6$, dihedral of order $8$ or $12$, or $A$ itself. We need only show that \cref{en:K2} holds when $C_{A}(a)$ is dihedral.

{\bfseries Case $C_{A}(a) \cong \mathrm{Dih}_8$}. Consider the isolated element $s = \widecheck{\omega}_6(\frac{1}{4}) \in \bT_0$ of order $4$. Let $\alpha_0$ be the root $23465421$ then the set $\Pi = \{1,3,4,2,5 \mid 8,7,0\}$ is a basis of simple roots for the centraliser $\bM := C_{\bG}(t) = C_{\bG}^{\circ}(t)$, which is a semisimple group of type $\D_5\A_3$. Let $w = w_{\Delta}w_{\Pi}$ then ${}^w\Pi = \Pi$ and $\Pi/\langle w\rangle$ is a union of the following six orbits
\begin{equation*}
(\mathcal{O}_1,\mathcal{O}_2,\mathcal{O}_3,\mathcal{O}_4,\mathcal{O}_5,\mathcal{O}_6) = (\{1\},\{3\},\{4\},\{2,5\},\{0,8\},\{7\}).
\end{equation*}

The element $\sigma = n_w \in N_{\bG}(\bT_0)$ is a quasi-central element of the coset $\bM\sigma$ such that $\sigma^2 = \widecheck{\omega}_6(\frac{1}{2})$. The group $C_{\bM}^{\circ}(\sigma)$ is a semisimple group of type $\B_4\C_2$. The element $t = \widecheck{\omega}_4(\frac{1}{2})\widecheck{\omega}_6(\frac{1}{4}) \in \bT_0$ is centralised by $\sigma$ so the product $t\sigma$ is an involution. Moreover, the group $C_{\bM}^{\circ}(t\sigma)$ is a semisimple group of type $\B_1\B_3\C_2$. Let $\mathcal{O}_0 = \{10100000\}$ then a simple system for $\Sigma_{t\sigma}$ is given by $\{\beta_{\mathcal{O}_4} \mid \beta_{\mathcal{O}_1},\beta_{\mathcal{O}_2},\beta_{\mathcal{O}_0} \mid \beta_{\mathcal{O}_5},\beta_{\mathcal{O}_6}\}$.

Pick a regular unipotent element $u \in C_{\bM}^{\circ}(t\sigma)$. One can check that $u$ is in the class $\D_5(a_1){+}\A_3$ of $\bM$ and the class $\E_8(a_7)$ of $\bG$. Moreover, let us fix a cocharacter $\lambda \in \mathcal{D}_u(C_{\bM}^{\circ}(t\sigma), C_{\bT_0}^{\circ}(t\sigma))$. Note that ${}^{t\sigma}s = {}^{\sigma}s = s^{-1}$ so the group $H = \langle s, t\sigma\rangle$ is a dihedral group of order $8$ with centre $Z(H) = \langle s^2\rangle$. Let $A = C_{\bL(\lambda)}(u)$ then clearly $H \leqslant A$ and $H \leqslant C_{A}(s^2)$. From the list of the possible centralisers in $A$ we see that $H = C_{A}(s^2)$ as $s^2 \neq 1$. The torus $\bS = \{\widecheck{\omega}_6(v)\widecheck{\omega}_7(-2v) \mid v \in \mathbb{Q}\} \leqslant \bT_0 \leqslant \bL(\lambda)$ is centralised by $s$ and $t\sigma$ and contains $s^2$. Hence, we have $s^2 \in \bS \leqslant C_{\bL(\lambda)}^{\circ}(C_{A}(s^2))$.

{\bfseries Case $C_{A}(a) \cong \mathrm{Dih}_{12}$}. Now consider the element $s = \widecheck{\omega}_7(\frac{1}{3})$ of order $3$. Let now $\alpha_0$ be the root $23465431$ then the set $\Pi = \{1,2,3,4,5,6 \mid 8,0\}$ is a basis of simple roots for the centraliser $\bM := C_{\bG}(s) = C_{\bG}^{\circ}(s)$, which is a semisimple group of type $\E_6\A_2$. Let $w = w_{\Delta}w_{\Pi}$ then ${}^w\Pi = \Pi$ and $\langle w\rangle$ acts on $\Pi$ with the orbits
\begin{equation*}
(\mathcal{O}_1,\mathcal{O}_2,\mathcal{O}_3,\mathcal{O}_4,\mathcal{O}_5) = (\{1,6\},\{3,5\},\{4\},\{2\},\{0,8\}).
\end{equation*}

The element $\sigma = \widecheck{\omega}_3(\frac{1}{2})\widecheck{\omega}_5(\frac{1}{2})n_w$ is a quasi-central involution of the coset $\bM\sigma$. Note that the orbit $\mathcal{O}_5$ is special and $C_{\sigma,\mathcal{O}_5} = 1$. The group $C_{\bM}^{\circ}(\sigma)$ is a semisimple group of type $\F_4\A_1$. The element $t = \widecheck{\omega}_2(\frac{1}{2})\widecheck{\omega}_7(\frac{1}{2})$ is an involution centralised by $\sigma$ so $t\sigma$ is also an involution. The group $C_{\bM}^{\circ}(t\sigma)$ is of type $\C_4\A_1$. Let $\mathcal{O}_0 = \{01110000,01011000\}$ then we have $\{\beta_{\mathcal{O}_0},\beta_{\mathcal{O}_1},\beta_{\mathcal{O}_2},\beta_{\mathcal{O}_3} \mid \beta_{\mathcal{O}_5}\}$ is a simple system for $\Sigma_{t\sigma}$.

Pick $u \in C_{\bM}^{\circ}(t\sigma)$ in the class $\C_4(a_1){+}\A_1$, i.e., $u$ is a product of a subregular element of $\C_4$ and a regular element of $\A_1$, and let $\lambda \in \mathcal{D}_u(C_{\bM}^{\circ}(t\sigma),C_{\bT_0}^{\circ}(t\sigma))$. Using the above one can check that $u$ is in the class $\E_6(a_3){+}\A_2$ of $\bM$ and the class $\E_8(a_7)$ of $\bG$. We have ${}^{\sigma}s = s^{-1}$ so $H = \langle t,s,\sigma\rangle \cong \mathrm{Dih}_{12}$ with $Z(H) = \langle t\rangle$. Let $A = C_{\bL(\lambda)}(u)$ then clearly $H \leqslant A$ and $H \leqslant C_{A}(t)$. From the list of possible centralisers in $A$ we must have $H = C_{A}(t)$ as $t \neq 1$. The torus $\bS = \{\widecheck{\omega}_2(v)\widecheck{\omega}_7(-v) \mid v \in \mathbb{Q}\} \leqslant \bT_0 \leqslant \bL(\lambda)$ is centralised by $\sigma$ and contains $t$. Hence, we have $t \in \bS \leqslant C_{\bL(\lambda)}^{\circ}(C_{A}(t))$.

The proof of \cref{thm:exceptional-grps} is now complete.

\section{Proof of \cref{thm:mainA,thm:mainB}}\label{subsec:main-classical}

\begin{assumption}
Let $\ell$ be a prime number different from $p = \Char(K)$. We fix an $\ell$-modular system $(\mathbb{K},\mathbb{O},\mathbb{F})$ which is big enough for all the finite groups encountered.
\end{assumption}

\begin{proof}[Proof of \cref{thm:mainB}]
Recall that $\bG$ is an adjoint simple group and $\mathscr{F} \in \Fam(W)^F$ is an $F$-stable family. It follows from \cref{lem:coveringforGad,prop:admiss-split-symp,prop:admissible-splitting-ortho,thm:exceptional-grps} that, with the assumptions listed in \cref{thm:mainB}, there exists a unipotent element $u \in \mathcal{O}_{\mathscr{F}}^F$ and an admissible covering $A \leqslant C_{\bG}(u)$ of the canonical quotient $\bar{A}_{\mathscr{F}}$ such that $|A|$ is divisible by bad primes only. Note that if $\bG$ is of type $\A$ then $\bar{A}_{\mathscr{F}}$ is trivial and we may take $A$ to be the trivial group.

In each case we see that either $A$ is abelian or $A \cong \bar{A}_{\mathscr{F}}$. If $\bG$ is of type $\A$ then for each unipotent character $\rho \in \Uch(\mathscr{F}^*)$ we set $K_{\rho} := \Gamma_u$, the GGGC, with $u \in \mathcal{O}_{\mathscr{F}}^F$. Now assume $\bG$ is not of type $\A$ then as $p$ is good for $\bG$ we have $p \neq 2$. By \cref{thm:kawanakaconj} we may choose, for each unipotent character $\rho \in \Uch(\mathscr{F}^*)$, a Kawanaka character $K_{\rho} := K_{[a,\psi]}$ such that $\rho$ is the projection of $K_{\rho}$ onto the subspace of $\Class(G)$ spanned by $\Uch(\mathscr{F}^*)$.

Now assume $\rho' \in \Uch(\mathscr{F}'^*)$ is another unipotent character so that $\langle K_{\rho},\rho'\rangle \neq 0$. By \cref{eq:GGGCs-sum-Kaw} there must exist a unipotent element $u \in \mathcal{O}_{\mathscr{F}}^F$ such that $\langle \Gamma_u,\rho'\rangle \neq 0$ because both $K_{\rho}$ and $\Gamma_u$ are characters. As $\mathcal{O}_{\rho'}^* = \mathcal{O}_{\mathscr{F}'}$ is the wave-front set of $\rho'$ it can be deduced from a (dual) result of Geck--Malle \cite[Prop.~4.3]{GeMa00} that $\mathcal{O}_{\mathscr{F}} \preceq \mathcal{O}_{\mathscr{F}'}$, see also \cite[Thm.~9.1]{AcAu07} and \cite[Prop.~15.2]{Tay16}. This completes the proof of \cref{thm:mainB} because $K_{\rho}$ contains only one character in $\Uch(\mathscr{F}^*)$. In addition, since $A$ is an $\ell'$-group, the character $K_\rho$ is the character of a projective $\mathbb{O}G$-module $P_\rho$.
\end{proof}

\begin{proof}[Proof of \cref{thm:mainA}]
We start by reducing to groups with trivial center. 
Let us consider a regular embedding $\iota : \bfG \hookrightarrow \widetilde{\bfG}$ and an adjoint quotient $\pi : \widetilde{\bfG} \twoheadrightarrow \bG_{\mathrm{ad}}$, both defined over $\mathbb{F}_q$. Write $\widetilde{\mathbf{Z}} := Z(\widetilde{\mathbf{G}}) = \mathrm{Ker}\, \pi$. By \cite[Prop.~13.20]{DiMibook} the induced maps on characters induce bijections
\begin{equation}\label{eq:bij-unichar}
 \iota^* : \Uch(\widetilde G) \simto \Uch(G) \quad and \quad \pi_* : \Uch(G_\mathrm{ad}) \simto \Uch(\widetilde G).
 \end{equation}
Here $\iota^*$ identifies with the restriction from $\widetilde G$ to $G$ and $\pi_*$ with the inflation from $G_\mathrm{ad}$ to $\widetilde G$ 
since $\widetilde{\mathbf{Z}}$ is connected. Note that as a consequence, $\widetilde G/G$ acts trivially on the set of unipotent characters of $G$. We claim that \cref{eq:bij-unichar} also holds for Brauer characters whenever $\ell$ is good and does not divide $q |Z(\bfG)_F|$.

We denote by $\mathrm{UBr}(G)$ the set of unipotent irreducible Brauer characters, by which we mean the Brauer characters of the simple $\mathbb{F}G$-modules lying in a unipotent block. By \cite[Thm.~B]{Ge93}, when $\ell$ is good for $G$, the restriction to $G$ of a Brauer character of $\widetilde G$ is multiplicity-free. If we assume in addition that $\ell$ does not divide $q|Z(\bfG)_F|$, then by \cite[Thm.~A]{Ge93} the unipotent characters of $G$ form a basic set for $\mathbb{Z} \mathrm{UBr}(G)$. Since $\widetilde G/G$ acts trivially on $\Uch(G)$ it must therefore act  trivially on $\mathrm{UBr}(G)$, which shows that unipotent Brauer characters of $\widetilde G$ remain irreducible under restriction to $G$. On the other hand, every unipotent character of $\widetilde G$ has $\widetilde Z$ in its kernel by \cref{eq:bij-unichar}. Again, since unipotent characters form a basic set, the same holds for unipotent Brauer characters, and we deduce that inflation induces a bijection $\pi_* : \mathrm{UBr}(G_\mathrm{ad}) \simto \mathrm{UBr}(\widetilde G)$.

For all the finite reductive groups considered, unipotent characters form a basic set for the sum of unipotent $\ell$-blocks so that we have a commutative diagram of isomorphisms
\begin{center}\begin{tikzcd}
\mathbb{Z} \Uch(G) \ar[d,"\mathrm{dec}_G"] 
&  \mathbb{Z} \Uch(\widetilde G) \ar[l,"\iota^*"']  \ar[r,"\pi_*"]\ar[d,"\mathrm{dec}_{\widetilde G}"]
&  \mathbb{Z} \Uch(G_\mathrm{ad}) \ar[d,"\mathrm{dec}_{G_{\mathrm{ad}}}"]\\
 \mathbb{Z}  \mathrm{UBr}(G) 
 &  \mathbb{Z}  \mathrm{UBr}(\widetilde G)  \ar[l,"\iota^*"']  \ar[r,"\pi_*"]
 &  \mathbb{Z}  \mathrm{UBr}(G_\mathrm{ad})
\end{tikzcd}
\end{center}
where the horizontal arrows preserve the bases, and the vertical maps are the $\ell$-decomposition maps. This shows that under the identification of unipotent characters of $G$ and $G_{\mathrm{ad}}$, their decomposition numbers coincide. Note that this identification preserves the unipotent support. Now, using standard reduction techniques, see for example \cite[\S8.8]{Lu84}, we can write $G_{\mathrm{ad}}$ as a product $\prod \bG_i^{F_i}$ where each $\bG_i$ is adjoint simple with Frobenius $F_i$. We conclude that it is enough to prove that \cref{thm:mainA} holds when $\bG$ is adjoint simple.

It follows from \cref{thm:mainB} that for every $F$-stable family $\mathscr{F} \in \Fam(W)^F$ and each unipotent character $\rho \in \Uch(\mathscr{F})$ there exists a projective $\mathbb{O}G$-module $P_{\rho}$ such that the following hold:
\begin{enumerate}[leftmargin=1.25cm,label=(\alph*)]
	\item $\langle [P_\rho], \rho'\rangle_G = \delta_{\rho,\rho'}$ for every $\rho' \in \Uch(\mathscr{F})$,
	\item if $\langle [P_\rho], \rho'\rangle_G \neq 0$ and $\rho' \in \Uch(\mathscr{F}')$ then $\mathcal{O}_{\mathscr{F}'} \preceq \mathcal{O}_{\mathscr{F}}$ (or equivalently for the dual families $\mathcal{O}_{\mathscr{F}^*} \preceq \mathcal{O}_{\mathscr{F}'^*}$).
\end{enumerate}

Consequently this means that for each unipotent character $\rho$ there is a unique indecomposable direct summand $Q_\rho$ of $P_\rho$ such that $\langle [Q_\rho], \rho\rangle \neq 0$. Since the unipotent characters form a basic set of characters for the unipotent blocks, we deduce that the modules $\{Q_\rho\}_{\rho \in \Uch(G)}$ give a complete set of representatives for the PIMs of the unipotent blocks and \cref{thm:mainA} follows.
\end{proof}

\textsc{O.~Brunat, Universit\'e de Paris, Sorbonne Universit\'e, CNRS, Institut de Math\'ematiques de Jussieu-Paris Rive Gauche, IMJ-PRG, F-75013, Paris, France.}\par\nopagebreak
\textit{E-mail address}: \texttt{olivier.brunat@imj-prg.fr}

\medskip

\textsc{O.~Dudas, Universit\'e de Paris, Sorbonne Universit\'e, CNRS, Institut de Math\'ematiques de Jussieu-Paris Rive Gauche, IMJ-PRG, F-75013, Paris, France.}\par\nopagebreak
\textit{E-mail address}: \texttt{olivier.dudas@imj-prg.fr}

\medskip

\textsc{J.~Taylor, University of Southern California, Department of Mathematics, 3620 South Vermont Avenue, Los Angeles, CA 90089, United States.}\par\nopagebreak
\textit{E-mail address}: \texttt{jayt@usc.edu}


\begin{thebibliography}{99}
\setlength{\itemsep}{0pt}

\bibitem{AcAu07}
{\sc P. N. Achar and A.-M. Aubert}, \emph{Supports unipotents de faisceaux caract\`eres},
J. Inst. Math. Jussieu {\bfseries 6}(2) (2007), 173--207.

\bibitem{Asch}
{\sc M.~Aschbacher}, \emph{Finite group theory}. Second edition. Cambridge Studies in Advanced Mathematics, 10. Cambridge University Press, Cambridge, 2000.

\bibitem{BaVo85}
{\sc D. Barbasch and D. Vogan}, \emph{Unipotent representations of complex semisimple groups}, Ann. of Math. (2) {\bf121} (1985), 41--110.

\bibitem{Bon06}
{\sc C. Bonnaf\'e}, \emph{Sur les caract\`eres des groupes r\'eductifs finis \`a centre non connexe: applications aux groupes sp\'eciaux lin\'eaires et unitaires}, Ast\'erisque (2006), no. 306.

\bibitem{BDR17}
{\sc C. Bonnaf{\'e}, J-F. Dat and R. Rouquier}, \emph{Derived categories and Deligne--Lusztig varieties II}, Ann. of Math. (2) {\bf 185} (2017), no. 2, 609--670.

\bibitem{BoRo03} 
{\sc C. Bonnaf{\'e} and R. Rouquier}, \emph{Cat{\'e}gories d{\'e}riv{\'e}es et vari{\'e}t{\'e}s de Deligne--Lusztig}, Publ. Math. Inst. Hautes \`{E}tudes Sci. {\bf97} (2003), 1--59.

\bibitem{Bor91}
{\sc A. Borel}, \emph{Linear algebraic groups}, Graduate Texts in Mathematics {\bf 126}, Second Edition, Springer-Verlag, New York, 1991.

\bibitem{BorTits65}
{\sc A. Borel and J. Tits}, \emph{Groupes r{\'e}ductifs}, Inst. Hautes {\'E}tudes Sci. Publ. Math. {\bf 27} (1965), 55--150.

\bibitem{Bki} 
{\sc N. Bourbaki}, \emph{\'El\'ements de Math\'ematique. Fasc. XXXIV. Groupes
  et Alg\`ebres de Lie. Chapitres IV, V et VI}, Hermann, Paris 1968.

\bibitem{Brou}
{\sc M. Brou{\'e}}, \emph{Isom{\'e}tries de caract{\`e}res et {\'e}quivalences de Morita ou d{\'e}riv{\'e}es},
Inst. Hautes \'{E}tudes Sci. Publ. Math. No. {\bfseries 71} (1990), 45--63.

\bibitem{CaEn04}
{\sc M. Cabanes and M. Enguehard}, \emph{Representation theory of finite reductive groups}, New Mathematical Monographs, 1. Cambridge University Press, Cambridge, 2004.

\bibitem{CaSp17}
{\sc M. Cabanes and B. Sp\"ath}, \emph{Equivariant character correspondences and inductive McKay condition for type $\A$}, J. Reine Angew. Math. {\bf 728} (2017), 153--194.
 
\bibitem{Car}
{\sc R. W. Carter}, \emph{Finite groups of Lie type. Conjugacy classes and complex characters}, Wiley Classics Library. John Wiley \& Sons, Ltd., Chichester, 1993.

\bibitem{Cha18}
{\sc R. Chaneb}, \emph{Basic Sets for Unipotent Blocks of Finite Reductive Groups in Bad Characteristic}, Int. Math. Res. Not. IMRN (2020), available online. DOI: \href{https://doi.org/10.1093/imrn/rnaa019}{10.1093/imrn/rnaa019}.


\bibitem{CoMuTa04}
{\sc A. M. Cohen, S. H. Murray and D. E. Taylor}, \emph{Computing in groups of {L}ie type},
Math. Comp. {\bfseries 73} (2004), no. 247, 1477--1498.

\bibitem{CoMcG93}
{\sc D. H. Collingwood and W. M. McGovern}, \emph{Nilpotent orbits in semisimple Lie algebras}, Van Nostrand Reinhold Mathematics Series. Van Nostrand Reinhold Co., New York, 1993.

\bibitem{Den18}
{\sc D. Denoncin}, \emph{Stable basic sets for finite special linear and unitary groups}, Adv. Math. {\bf 307} (2017), 344--368. 

\bibitem{DLM}
{\sc F.~Digne, G.~Lehrer and J.~Michel}, \emph{On character sheaves and characters of reductive groups at unipotent classes}, Pure Appl. Math. Quarterly {\bf 10} (2014), 459--512.

\bibitem{DiMibook}
{\sc F. Digne and J. Michel}, \emph{Representations of finite groups of Lie type},
London Mathematical Society Student Texts {\bf21}, Cambridge University Press, Cambridge, 1991.

\bibitem{DiMi18}
{\sc F.~Digne and J.~Michel}, \emph{Quasi-semisimple elements}, Proc.~London Math.~Soc. (3) {\bf 116} (2018), 1301--1328.

\bibitem{DipJa89}
{\sc R. Dipper and G. James}, \emph{The q-Schur algebra},
Proc. London Math. Soc. (3) {\bf 59} (1989), no. 1, 23--50.

\bibitem{DuMa18}
{\sc O. Dudas and G. Malle}, \emph{Modular irreducibility of cuspidal unipotent characters},
 Invent. Math.  {\bf211}  (2018),  no. 2, 579--589.

\bibitem{FowRohr08}
{\sc R. Fowler and G. R\"ohrle}, \emph{On cocharacters associated to nilpotent elements of reductive groups}, Nagoya Math. J. {\bf 190} (2008), 105--128.

\bibitem{FJLS17}
{\sc B. Fu, D. Juteau, P. Levy, and E. Sommers}, \emph{Generic singularities of nilpotent orbit closures},
Adv. Math. {\bf 305} (2017), 1--77. 

\bibitem{GeThesis}
{\sc M. Geck}, \emph{Verallgemeinerte Gelfand--Graev Charaktere und Zerlegungszahlen endlicher Gruppen vom Lie-Typ}, Dissertation, RWTH Aachen, 1990.

\bibitem{Ge91}
{\sc M. Geck}, \emph{On the decomposition numbers of the finite unitary groups in non-defining characteristic},
Math. Z. {\bf 207} (1991), no. 1, 83--89.

\bibitem{Ge91bis}
{\sc M. Geck}, \emph{Generalized Gelfand-Graev characters for Steinberg's triality groups and their applications},
Comm. Algebra {\bf 19} (1991), 3249--3269.

\bibitem{Ge93}
{\sc M. Geck}, \emph{Basic sets of Brauer characters of finite groups of Lie type II},
  J. London Math. Soc. {\bf47} (1993), 255--268.

\bibitem{Ge94}
{\sc M. Geck}, \emph{Basic sets of Brauer characters of finite groups of Lie type III},
  Manuscripta Math. {\bf85} (1994), 195--216.

\bibitem{Ge97}
{\sc M. Geck}, \emph{Character sheaves and generalised Gelfand-Graev characters}, Proc. London Math. Soc. {\bf78} (1999), 139--166. 

\bibitem{Ge03}
{\sc M. Geck}, \emph{An introduction to algebraic geometry and algebraic groups}, Oxford Graduate Texts in Mathematics, 10. Oxford University Press, Oxford, 2003.

\bibitem{Ge12}
{\sc M. Geck}, \emph{Remarks on modular representations of finite groups of Lie type in non-defining characteristic},in Algebraic groups and quantum groups, 71--80, Contemp. Math., 565, Amer. Math. Soc., Providence, RI, 2012.

\bibitem{Ge18}
{\sc M. Geck}, \emph{Generalised Gelfand--Graev representations in bad characteristic?}, Transform. Groups, to appear. Preprint: \texttt{arXiv:1810.08937}.

\bibitem{GeHi94}
{\sc M. Geck and G. Hiss}, \emph{Modular representations of finite groups of Lie type in non-defining characteristic}. Finite reductive groups (Luminy, 1994), 195--249, Progr. Math.  {\bf141}, Birkh\"auser Boston, Boston, MA, 1997. 

\bibitem{GeMa00}
{\sc M. Geck and G. Malle}, \emph{On the existence of a unipotent support for the irreducible characters of a finite group of Lie type}, Trans. Amer. Math. Soc. {\bf 352} (2000), 429--456.

\bibitem{GePf}
{\sc M. Geck and G. Pfeiffer}, \emph{Characters of finite Coxeter groups and Iwahori-Hecke algebras}, London Mathematical Society Monographs {\bf21}. The Clarendon Press, Oxford University Press, New York, 2000.

\bibitem{Ger}
{\sc P. G\'erardin}, \emph{Weil representations associated to finite fields},
J. Algebra {\bf 46} (1977), no. 1, 54--101.

\bibitem{GrHi97}
{\sc J. Gruber and G. Hiss},
\emph{Decomposition numbers of finite classical groups for linear primes},
J. Reine Angew. Math. {\bf 485} (1997), 55--91.

\bibitem{Herp13}
{\sc S. Herpel}, \emph{On the smoothness of centralisers in reductive groups}, Trans. Amer. Math. Soc. {\bf 365} (2013), no.\ 7, 3753--3774.

\bibitem{Hiss89}
{\sc G. Hiss}, \emph{On the decomposition numbers of $\G_2(q)$},
J. Algebra {\bf 120} (1989), no. 2, 339--360.


\bibitem{Jan04}
{\sc J. C. Jantzen}, \emph{Nilpotent orbits in representation theory}, in: \emph{Lie theory}, vol.\ 228, Progr. Math. Birkh{\"a}user Boston, Boston, MA, 2004, 1--211.

\bibitem{JMW14}
{\sc D. Juteau, C. Mautner and G. Williamson}, \emph{Parity sheaves},
 J. Amer. Math. Soc.  {\bf27}  (2014),  no. 4, 1169--1212.
 
\bibitem{Kaw86}
{\sc N. Kawanaka}, \emph{Generalized Gelfand--Graev representations of exceptional simple algebraic groups over a finite field. I}, Invent. Math. {\bf 84}(3) (1986), 575--616.

\bibitem{Ka87}
{\sc N. Kawanaka}, \emph{Shintani lifting and Gel'fand-Graev representations}, in The Arcata Conference on Representations of Finite Groups (Arcata, Calif., 1986), 147--163, Proc. Sympos. Pure Math. {\bf47} Part 1, Amer. Math. Soc., Providence, RI, 1987.

\bibitem{Law09}
{\sc R. Lawther}, \emph{Unipotent classes in maximal subgroups of exceptional algebraic groups}, J. Algebra {\bfseries 322} (2009), no. 1, 270--293.

\bibitem{LiSe}
{\sc M. Liebeck and G. Seitz}, \emph{Unipotent and nilpotent classes in simple algebraic groups and Lie algebras}, Mathematical Surveys and Monographs {\bf180}, American Mathematical Society, Providence, RI, 2012.

\bibitem{Lu84}
{\sc G. Lusztig}, \emph{Characters of reductive groups over a finite field}, Annals of Mathematics Studies {\bf107}, Princeton University Press, Princeton, NJ, 1984.

\bibitem{Lu86}
{\sc G. Lusztig}, \emph{Character sheaves. I}, Adv. Math. {\bf 56} (1985), no. 3, 193--237; \emph{Character sheaves. II}, Adv. Math. {\bf 57} (1985), no. 3, 126--265; \emph{Character sheaves. III}, Adv. Math. {\bf 57} (1985), no. 3, 266--315; \emph{Character sheaves. IV}, Adv. Math. {\bf 59} (1986), no. 1, 1--63; \emph{Character sheaves. V}, Adv. Math. {\bf 61} (1985), no. 2, 103--155.

\bibitem{Lu90}
{\sc G. Lusztig}, \emph{Green functions and character sheaves},
Ann. of Math. (2) {\bf 131} (1990), no. 2, 355--408. 

\bibitem{Lu92}
{\sc G. Lusztig}, \emph{A unipotent support for irreducible representations}, Adv.~Math. {\bf 94}(2) (1992), 139--179.

\bibitem{Lu14}
{\sc G. Lusztig}, \emph{Families and Springer's correspondence}, Pacific J. Math. {\bf 267} (2014), no. 2, 431--450. 

\bibitem{Lu15}
{\sc G. Lusztig}, \emph{Restriction of a character sheaf to conjugacy classes}, Bull. Math. Soc. Sci. Math. Roumanie {\bf58} (106) (2015), 297--309.

\bibitem{MT}
{\sc G. Malle and D. Testerman}, \emph{Linear algebraic groups and finite groups of Lie type}, Cambridge Studies in Advanced Mathematics, 133. Cambridge University Press, Cambridge, 2011.

\bibitem{Mcn04}
{\sc G. J. McNinch}, \emph{Nilpotent orbits over ground fields of good characteristic}, Math. Ann. {\bf 329}(1) (2004), 49--85.

\bibitem{Mcn05}
{\sc G. J. McNinch}, \emph{Optimal $\rm SL(2)$-homomorphisms}, Comment. Math. Helv. {\bf80}(2) (2005), 391--426.

\bibitem{McnSo03}
{\sc G.~J.~McNinch and E.~Sommers}, \emph{Component groups of unipotent centralisers in good characteristic}, J. Algebra {\bf 260} (2003), no.~1, 323--337.

\bibitem{Chv}
{\sc J. Michel}, \emph{The development version of the \Chevie{} package of GAP3},
J. Algebra {\bf 435} (2015), 308--336.

\bibitem{NT11}
{\sc G. Navarro and P. H. Tiep}, \emph{A reduction theorem for the Alperin weight conjecture}, Invent. Math. {\bf 184} (2011), no. 3, 529--565. 

\bibitem{OW02}
{\sc T. Okuyama and K. Waki}, \emph{Decomposition numbers of $\mathrm{SU}(3,q^2)$}, J. Algebra {\bf 255} (2002), no. 2, 258--270.
  
\bibitem{Pre03}
{\sc A. Premet}, \emph{Nilpotent orbits in good characteristic and the {K}empf--{R}ousseau theory},
J. Algebra {\bf 260} (2003), no. 1, 338--366.

\bibitem{Ric85}
{\sc R. W. Richardson}, \emph{Finiteness theorems for orbits of algebraic groups},
Indag. Math. {\bf 47} (1985), 337--344.
 
\bibitem{Rot}
{\sc J. J. Rotman}, \emph{An introduction to homological algebra}. Second edition. Universitext. Springer, New York, 2009. 
 
\bibitem{Sho06}
{\sc T. Shoji}, 
\emph{Lusztig's conjecture for finite special linear groups},
Represent. Theory {\bf10} (2006), 164--222.

\bibitem{Sho96}
{\sc T. Shoji}, \emph{On the computation of unipotent characters of finite classical groups},
Appl. Algebra Engrg. Comm. Comput. {\bf 7}(3) (1996), 165--174.

\bibitem{Sob18}
{\sc P. Sobaje}, \emph{Unipotent elements and generalized exponential maps}, Adv. Math. {\bf 333} (2018), 463--496. 

\bibitem{So98}
{\sc E.~Sommers}, \emph{A generalization of the {B}ala-{C}arter theorem for nilpotent orbits}, Internat. Math. Res. Notices {\bf 11} (1998), 539--562.

\bibitem{So01}
{\sc E. Sommers}, \emph{Lusztig's canonical quotient and generalised duality}, J. Algebra {\bf243} (2001), 790--812. 

\bibitem{SpBk}
{\sc N. Spaltenstein}, \emph{Classes unipotentes et sous-groupes de {B}orel}. Lecture Notes in Mathematics, 946. Springer-Verlag, Berlin-New York, 1982.

\bibitem{Sp85}
{\sc N. Spaltenstein}, \emph{On the generalised Springer correspondence for
  exceptional groups}, in Algebraic groups and related topics (Kyoto/Nagoya,
  1983),  317--338, North-Holland, Amsterdam, 1985.

\bibitem{Spr09}
{\sc T. A. Springer}, \emph{Linear algebraic groups}, Modern Birkh{\"a}user Classics, Boston, Birkh{\"a}user Boston Inc., 2009.

\bibitem{SpSt}
{\sc T. A. Springer and R. Steinberg}, \emph{Conjugacy classes}, in Seminar on Algebraic Groups and Related Finite Groups (The Institute for Advanced Study, Princeton, N.J., 1968/69) pp. 167--266, Lecture Notes in Mathematics {\bf131}, Springer, Berlin, 1970.

\bibitem{St68}
{\sc R. Steinberg}, \emph{Endomorphisms of linear algebraic groups}.
Memoirs of the American Mathematical Society, No. 80, American Mathematical Society, Providence, R.I. 1968

\bibitem{Tay13}
{\sc J. Taylor}, \emph{On unipotent supports of reductive groups with a disconnected centre}, J. Algebra {\bf391} (2013), 41--61.

\bibitem{Tay16}
{\sc J. Taylor}, \emph{Generalized Gelfand-Graev representations in small characteristics}, Nagoya Math. J. {\bf 224} (2016), 93--167.  

\bibitem{Tay19}
{\sc J. Taylor}, \emph{The structure of root data and smooth regular embeddings of reductive groups},
Proc. Edinb. Math. Soc. (2) {\bf 62} (2019), no. 2, 523--552.

\bibitem{Tit66}
{\sc J. Tits}, \emph{Normalisateurs de tores. I. Groupes de Coxeter étendus},
J. Algebra {\bf 4} (1966), 96--116. 
  
\bibitem{Wings} 
{\sc E. Wings}, \emph{\"Uber die unipotenten {C}haraktere der {C}hevalley-{G}ruppen vom {T}yp $\F_4$ in guter {C}harakteristik}, Dissertation, RWTH Aachen, 1995.

\bibitem{Win72} 
{\sc D.~L.~Winter}, \emph{The automorphism group of an extraspecial $p$-group}, Rocky Mountain J. Math. {\bf 2} (1972), no. 2, 159--168.

\end{thebibliography}
\end{document}